\newcommand{\blue}{\color{blue}}
\numberwithin{equation}{section}
\mathchardef\emptyset="001F
\newtheorem{Theorem}{Theorem}[section]
\newtheorem{Definition}[Theorem]{Definition}
\newtheorem{Proposition}[Theorem]{Proposition}
\newtheorem{Corollary}[Theorem]{Corollary}
\newtheorem{Lemma}[Theorem]{Lemma}
\newcommand{\nada}[1]{}
\newcommand{\ab}{[a,b]}
\newcommand{\affinesurface}{X^{{\rm aff}}}
\newcommand{\affinesurfaceindice}{{\affinesurface}_{\!\!\!\!\!\!\!(\indiceaffine)}}
\newcommand{\affinesurfacelastcomp}{\widehat X}
\newcommand{\concom}{\Omega}
\newcommand{\currvk}{{\mathcal V}_k}
\newcommand{\currvkRi}{{\mathcal V}_{k,i}}
\newcommand{\currvkRj}{{\mathcal V}_{k,j}}
\newcommand{\dom}{{\rm Dom}}
\newcommand{\eps}{\varepsilon}
\newcommand{\globalminimalmasscurrent}{\mathcal M}
\newcommand{\grad}{\nabla}
\newcommand{\graphofmapvk}{V_k}
\newcommand{\graphofmapu}{U}
\newcommand{\homogeneousmap}{\varphi}
\newcommand{\indice}{j}
\newcommand{\indiceaffine}{\ell}
\newcommand{\indiceP}{k}
\newcommand{\intervallounitario}{I}
\newcommand{\intervalloparametri}{I}
\newcommand{\jump}{S}
\newcommand{\jumpset}{\Sigma}
\newcommand{\mappa}{u}
\newcommand{\mappaaffine}{\affinesurface}
\newcommand{\minimalmasscurrent}{M}
\newcommand{\mres}{\mathbin{\vrule height 1.6ex depth 0pt width
0.13ex\vrule height 0.13ex depth 0pt width 1.3ex}}
\newcommand{\N}{\numberset{N}} 
\newcommand{\newpar}{\alpha}
\newcommand{\numberset}{\mathbb}
\newcommand{\Om}{\Omega} 
\newcommand{\R}{\numberset{R}} 
\newcommand{\rettangolo}{R} 
\newcommand{\repa}{h} 
\newcommand{\rilP}{\overline P} 
\newcommand{\secondavariabile}{\sigma}
\newcommand{\secondavariabileinR}{\sigma}
\newcommand{\segmentoingrassato}{[a,b] \times [-\eps,\eps]}
\newcommand{\Suno}{{\mathbb S}^1}
\newcommand{\unitcircle}{\Suno}
\theoremstyle{definition}
\newtheorem{Remark}[Theorem]{Remark}
\newtheorem{Example}[Theorem]{Example}
 \title{Relaxed area of graphs of piecewise Lipschitz maps 
in the strict $BV$-convergence 
}
\author{
Giovanni Bellettini\footnote{
Dipartimento di Ingegneria dell'Informazione e Scienze Matematiche, Universit\`a di Siena, 53100 Siena, Italy,
and International Centre for Theoretical Physics ICTP,
Mathematics Section, 34151 Trieste, Italy.
E-mail: bellettini@diism.unisi.it
                      }\and
Simone Carano\footnote{
Area of Mathematical Analysis, Modelling, and Applications,
             Scuola Internazionale Superiore di Studi Avanzati ``SISSA'',
Via Bonomea, 265 - 34136 Trieste, Italy. E-mail: scarano@sissa.it
                         }
                          \and
Riccardo Scala\footnote{
Dipartimento di Ingegneria dell'Informazione e Scienze Matematiche, Universit\`a di Siena, 53100 Siena, Italy.
E-mail: riccardo.scala@unisi.it
}
}
\begin{document}
\maketitle

\begin{abstract}
We compute the relaxed Cartesian
area in the strict $BV$-convergence
on a class of piecewise Lipschitz maps from the plane to the plane, 
having jump 
made of several curves allowed to meet 
at a finite number of 
junction points. We show that the domain of this
relaxed  area 
is strictly contained in the domain of the classical $L^1$-relaxed 
area.
\end{abstract}

\noindent {\bf Key words:}~~Area functional, relaxation, strict convergence, Cartesian currents, 
total variation of the Jacobian, Plateau problem.

\vspace{2mm}

\noindent {\bf AMS (MOS) 2022 subject clas\-si\-fi\-ca\-tion:} 49J45, 49Q05, 49Q15, 49Q20, 28A75.

\section{Introduction}\label{sec:introduction}
Let $\Omega\subset\R^2$ be an open bounded set. 
Given $v\in C^1(\Om;\R^2)$, 
the area functional  is defined as 
\begin{align}\label{area_functional}
	\mathcal A(v,\Om):=\int_\Om\sqrt{1+|\nabla v|^2+|Jv|^2}~dx
=\int_\Om|\mathcal M(\nabla v)|~dx,
\end{align}
 where $\mathcal M(\nabla v)=(1,\nabla v_1,\grad v_2,Jv)$ and $Jv=\frac{\partial v_1}{\partial x_1}\frac{\partial v_2}{\partial x_2}-\frac{\partial v_2}{\partial x_1}\frac{\partial v_1}{\partial x_2}$
is the Jacobian determinant of $v$. 
The value $\mathcal A(v,\Om)$ 
is  the $2$-dimensional Hausdorff measure of the graph 
$$G_v:=\{(x,y)\in \Om\times\R^2:y=v(x)\}
$$
of $v$. In order to extend the area functional to a more general class 
of maps one is led to consider the relaxation of \eqref{area_functional}: Namely, for all $u\in L^1(\Om;\R^2)$ one 
chooses a convergence, for instance 
the $L^1$-convergence, and sets
\begin{align}\label{area_functional_rel}
	\overline{\mathcal A}_{L^1}(u,\Om):=\inf\left\{\liminf_{k\rightarrow +\infty}\mathcal A(u_k,\Om),\;u_k\in C^1(\Om;\R^2),\;u_k\rightarrow u\text{ in }L^1(\Om;\R^2)\right\}.
\end{align}
In contrast with the case of real valued maps, for which the 
$L^1$-relaxed area 
is well-understood, in higher dimension, including the case of $\R^2$-valued 
maps considered here,
the analysis of $\overline{\mathcal A}_{L^1}$ has been shown to be very challenging and a lot of questions 
remain open. For instance, it is known that the domain 
$\dom(\overline{\mathcal A}_{L^1}(\cdot, \Om))$ of $\overline{\mathcal A}_{L^1}(\cdot, \Om)$ 
is strictly included in $BV(\Om;\R^2)$, 
but its complete description 
is, so far, not available. 
The main difficulty to treat $\overline{\mathcal A}_{L^1}$ is due to its non-local behaviour: Indeed, for general maps $u$ with the only exception of very trivial cases, 
the set function 
$E \subseteq \Om \mapsto \overline{\mathcal A}_{L^1}(u,E)$
is not subadditive, and this excludes to represent
\eqref{area_functional_rel} in integral form.
As a consequence,
the explicit value of $\overline{\mathcal A}_{L^1}(u,\Om)$ 
is, at the moment,
known only for very specific non-smooth maps
 $u$ enjoying a high degree of symmetry
\cite{BP,S,BES}.

A useful simplification in the relaxation analysis of $\mathcal A$ is to 
consider some variants of  \eqref{area_functional_rel}, for example 
modifying the convergence of $v_k$ to $u$ (see \cite{BePaTe:15,BePaTe:16,DP,GMS3}). 
Even if the $L^1$-convergence seems to be 
natural also with respect to the application to the non-parametric Plateau problem, one can replace the $L^1$-topology with different ones. In some recent works \cite{Mc1,BCS}, instead of relaxing with respect to the $L^1$-topology, the authors have considered relaxation with respect to the strict convergence in $BV(\Om;\R^2)$ (shortly $BV$-relaxed area). 
Namely, one defines
\begin{align}\label{area_functional_rel2}
	\overline{\mathcal A}_{BV}(u,\Om):=\inf\left\{\liminf_{k\rightarrow +\infty}\mathcal{A}(u_k,\Om),\;u_k\in C^1(\Om;\R^2),\;u_k\rightarrow u\text{ strictly }BV(\Om;\R^2)\right\}.
\end{align}
Although the analysis of $\overline{\mathcal A}_{BV}$ seems quite more treatable, a complete picture and description of its behaviour is still missing. It is straightforward that for any $u\in BV(\Om;\R^2)$
$$\overline{\mathcal A}_{BV}(u, \Om)\geq \overline{\mathcal A}_{L^1}(u, \Om),$$
so $\dom(\overline{\mathcal A}_{BV}(\cdot, \Om))\subset \dom(\overline{\mathcal A}_{L^1}(\cdot, \Om))$, and the inclusion is strict as Example \ref{tripunto infinito} below shows.

Strictly related to the area functional is the Jacobian total variation functional, namely
$$TVJ(v, \Om):=\int_\Om|Jv|~dx,$$
valid for all $v\in C^1(\Om;\R^2)$.
Also in this case, to extend $TVJ$ to a larger class of functions, a relaxation procedure is in order. However, 
the choice of the $L^1$-convergence is in some cases not interesting: for instance, if $u\in W^{1,1}(\Omega;\Suno)$, with $\Omega$ simply connected, the corresponding relaxed functional trivializes and becomes constantly null (see \cite[Cor. 5]{BMP}). 
On the other hand, the notion of strict convergence in $BV$ gives rise to a nontrivial relaxed functional which shows to play a crucial 
role in the analysis of $\overline{\mathcal A}_{BV}$. 
Specifically, for $u\in BV(\Omega;\R^2)$ we consider 
\begin{align}\label{TVJ_functional_rel}
	\overline{TVJ}_{BV}(u, \Om):=\inf\left\{\liminf_{k\rightarrow +\infty}TVJ(v_k, \Om),\;v_k\in C^1(\Om;\R^2),\;v_k\rightarrow u\text{ strictly }BV(\Om;\R^2)\right\}.
\end{align}

In the present paper we compute the value of $\overline{\mathcal A}_{BV}(u, \Om)$ for some particular 
piecewise Lipschitz 
maps $u$ which are allowed to jump on curves in turn meeting at junction points.
We refer to Definition \ref{def_MLM} for the details on these maps, and we 
summarize here their features: 
 Let $\Om\subset\R^2$ be a bounded open set of class $C^1$ and $\{\Om_k\}_{k=1,\ldots,N}$ a finite partition of $\Om$ made of 
Lipschitz sets. Suppose that
$\jumpset:=\cup_{k=1}^N
\partial\Om_k$ is the support of a finite family of $C^2$-curves $\alpha_\indiceaffine:
\overline 
\intervalloparametri_\indiceaffine
\rightarrow\overline \Om$, $\indiceaffine
=1,\dots,n$, $\intervalloparametri_\indiceaffine = (a_\indiceaffine, b_\indiceaffine)$. 
 We suppose that the curves $\alpha_\indiceaffine$, 
arc-length parametrized on $\overline \intervalloparametri_\indiceaffine$,
are injective on $\intervalloparametri_\indiceaffine$, 
$\alpha_\indiceaffine(\intervalloparametri_\indiceaffine)
\subset\Om$, and of class $C^2$ 
up to $a_\indiceaffine$ and $b_\indiceaffine$ 
(namely $\dot\alpha_\indiceaffine$ and $\ddot\alpha_\indiceaffine$ 
are continuous on $\overline \intervalloparametri_\indiceaffine$). 
Furthermore, 
we assume that $\alpha_\indiceaffine
(\intervalloparametri_\indiceaffine)$ and $\alpha_\indiceaffine(\intervalloparametri_h)$, 
for $\indiceaffine\neq h$, 
may intersect only at the endpoints. 
Endpoints of $\alpha_l$ are allowed
to belong to $\partial\Om$, 
and we assume such endpoints to be distinct for different curves.
 
 A map $u\in BV(\Om;\R^2)$ 
is called  piecewise Lipschitz if its
restriction 
 to any $\Om_k$ is Lipschitz. Notice that if $p_i$ is a junction point 
and $\concom^i_k$ ($k=1,\dots, N_i$) 
are the connected components of 
$\Om\setminus \jumpset$ having $p_i$ as boundary point, then there exists the limit 
$\beta^i_k:=\lim_{\substack{x\rightarrow p_i\\x\in \concom^i_k}}u(x)$.
 For the sake of simplicity, we assume that the 
enumeration $k=1,\dots, N_i$ respects the counterclockwise 
order of $\concom^i_k$'s around $p_i$.
 
 To introduce our main result, we have to consider also a planar Plateau problem for Lipschitz curves: 
Given a Lipschitz curve $\homogeneousmap:
\unitcircle = \partial B_1
\rightarrow \R^2$ we consider the quantity
 \begin{align}\label{Plateau}
 	P(\homogeneousmap):=\inf\left\{\int_{B_1}|Jv|~dx:
 \,v\in \mathrm{Lip}(B_1;\R^2): v_{|\partial B_1}=
 	\homogeneousmap
 	\right\}.
 \end{align}
 For all $i=1,\dots,m$ 
we denote by $\widetilde\gamma^i$ a Lipschitz curve which 
parametrizes on $\Suno$ the polygon in $\R^2$ with 
vertices $\beta^i_1,\beta^i_2,\dots,\beta^i_{N_i}$, in the order (see Fig. \ref{curve_gamma}). 
Notice carefully that this 
curve may self-intersects. Also,
$P(\varphi)$ is invariant under reparametrizations of 
$\varphi$ (Proposition \ref{independence_of_P}). 
Finally, set $\intervallounitario=[0,1]$.  
The main result of the paper is the following
\begin{Theorem}[\textbf{Relaxation for piecewise Lipschitz
maps}]\label{teo:Lip_npoint}
	Let $u:\Om\rightarrow\R^2$ be a piecewise Lipschitz 
map. Then
	\begin{equation}\label{eq:ril_area_gen}
		\overline{\mathcal{A}}_{BV}(u, \Om)=\int_{\Om\setminus \jumpset}
|\mathcal M(\nabla u)|~dx+\
\sum_{
\indiceaffine
=1}^n\int_{[a_\indiceaffine,b_\indiceaffine]\times \intervallounitario}|\partial_t 
\affinesurfaceindice
\wedge
\partial_s 
\affinesurfaceindice
|dtds+\sum_{i=1}^m P(\widetilde\gamma^i),
	\end{equation}
where, for any $\indiceaffine =1,\dots, n$,
\begin{equation}\label{eq:affine_surface_l}
	\affinesurfaceindice
	(t,s):=(t,su_\indiceaffine^+(t)+(1-s)u_\indiceaffine^-(t))
\qquad \forall (t,s)\in[a_\indiceaffine,b_\indiceaffine]\times \intervallounitario,
\end{equation}
and $u^\pm_\indiceaffine$ are the traces of $u$ on the 
support $\alpha_\indiceaffine(I_\indiceaffine)$ 
of $\alpha_\indiceaffine$.
\end{Theorem}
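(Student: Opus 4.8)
The plan is to prove the two inequalities separately, following the by-now classical scheme for these relaxation problems but adapting it to the strict $BV$-topology and to the multi-curve, multi-junction geometry.

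\emph{Lower bound.} I would first fix a sequence $u_k\in C^1(\Om;\R^2)$ with $u_k\to u$ strictly in $BV(\Om;\R^2)$ and $\liminf_k \mathcal A(u_k,\Om)$ realizing $\overline{\mathcal A}_{BV}(u,\Om)$. The strict convergence gives tightness of $|Du_k|$, hence the graphs $G_{u_k}$, viewed as integer $2$-currents in $\Om\times\R^2$ with bounded mass, converge (up to subsequence) to a current $T$ with $\partial T\mres(\Om\times\R^2)=0$; moreover $T$ decomposes as the ``vertical'' part over $\Om\setminus\jumpset$, which by lower semicontinuity of $\mathcal A$ on the smooth pieces contributes at least $\int_{\Om\setminus\jumpset}|\mathcal M(\nabla u)|\,dx$, plus a part concentrated over $\jumpset$. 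Away from the junction points, on a tubular neighbourhood of each arc $\alpha_\indiceaffine(I_\indiceaffine)$ the map $u$ has a well-defined $BV$ jump structure, and a slicing/blow-up argument along $\alpha_\indiceaffine$ reduces the contribution of this part to a one-dimensional family of Plateau-type problems, whose infimum is exactly $\int_{[a_\indiceaffine,b_\indiceaffine]\times I}|\partial_t\affinesurfaceindice\wedge\partial_s\affinesurfaceindice|\,dt\,ds$, the area of the affine interpolating surface \eqref{eq:affine_surface_l}; this is where I would invoke the structure of $\overline{TVJ}_{BV}$ and the fact that the cheapest filling between two arcs of traces is the ruled surface. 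Near each junction point $p_i$ the localized limit current has boundary the closed curve $\widetilde\gamma^i$ (the polygon through $\beta^i_1,\dots,\beta^i_{N_i}$), so its mass is bounded below by $P(\widetilde\gamma^i)$ by definition \eqref{Plateau}; here I would use a careful localization at $p_i$ and the $C^2$-regularity of the curves to make sure the boundary curve produced in the blow-up is precisely $\widetilde\gamma^i$ up to reparametrization, and then apply Proposition \ref{independence_of_P}.

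\emph{Upper bound.} Here I would construct an explicit recovery sequence. Away from $\jumpset$ one simply mollifies $u$. In a shrinking tubular neighbourhood of each arc $\alpha_\indiceaffine(I_\indiceaffine)$ (of width $\to 0$) one inserts a smoothing that follows the ruled surface $\affinesurfaceindice$: this is a standard ``staircase'' interpolation interpolating in the normal direction between $u^-_\indiceaffine$ and $u^+_\indiceaffine$, whose area contribution converges to the middle term of \eqref{eq:ril_area_gen} and which, by construction, converges strictly in $BV$ to $u$ (the extra total variation is controlled by the length of $\alpha_\indiceaffine$ times the jump size, matching $|Du|$ on $\jumpset$). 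Around each junction point $p_i$, on a small disk $B_{\radiustriple}(p_i)$, one glues in a rescaled copy of a near-optimal competitor for the Plateau problem $P(\widetilde\gamma^i)$, i.e. a Lipschitz map $B_1\to\R^2$ with the right boundary datum and $\int|Jv|$ close to $P(\widetilde\gamma^i)$; one then mollifies to get a $C^1$-map. The delicate point is matching all these local constructions along the common boundaries so that the resulting global map is $C^1$, converges to $u$ strictly (not just in $L^1$), and has total area converging to the right-hand side of \eqref{eq:ril_area_gen} with no excess; in particular one must check that the overlaps — the regions where the tube around $\alpha_\indiceaffine$ meets $B_{\radiustriple}(p_i)$ — have vanishing area contribution, which uses again that everything is built on pieces of area controlled by the relevant $1$- or $2$-dimensional measures and that the disks and tubes shrink.

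\textbf{Main obstacle.} The hard part will be the lower bound near the junction points: showing that the concentrated limit current over a neighbourhood of $p_i$ genuinely has boundary the polygonal curve $\widetilde\gamma^i$ and hence mass $\ge P(\widetilde\gamma^i)$, with \emph{no double counting} with the contributions of the arcs $\alpha_\indiceaffine$ emanating from $p_i$. This requires a precise simultaneous blow-up at $p_i$ in which the rescaled traces on the different sectors $\concom^i_k$ converge to the constants $\beta^i_k$ and the rescaled arcs converge to half-lines, so that the limit current is a cone-type filling of $\widetilde\gamma^i$; controlling the interaction terms and the reparametrization of the boundary curve (to apply Proposition \ref{independence_of_P}) is the technically most demanding step. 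The analogous matching issue — ensuring strict, not merely $L^1$, convergence of the recovery sequence while keeping the area asymptotically sharp — is the main obstacle on the upper-bound side.
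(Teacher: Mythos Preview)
Your high-level decomposition (bulk term, curve terms, junction terms) is correct, but both halves of your argument diverge from the paper's route and have gaps at the technical level.

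\textbf{Lower bound.} You propose current compactness and blow-up. The paper does neither. Its key device is the \emph{inheritance of strict convergence to slices}: Lemma~\ref{lem:inheritance} for straight segments and Lemma~\ref{lem:inheritance circ} for circles $\partial B_\rho(p_i)$. At a junction one simply writes $\mathcal A(v_k,\Om)\ge \mathcal A(v_k,\Om\setminus\cup_i\overline B_\rho(p_i))+\sum_i\int_{B_\rho(p_i)}|Jv_k|$; the first piece is handled by Corollary~\ref{cor_multicurves}, and for the second one uses that $v_k\mres\partial B_\rho(p_i)\to u\mres\partial B_\rho(p_i)$ strictly, hence $\int_{B_\rho(p_i)}|Jv_k|\ge P(v_k\mres\partial B_\rho(p_i))\to\rilP(\gamma^i_\rho)$ by Corollary~\ref{cor_continuity_P}, and finally $\rilP(\gamma^i_\rho)\to\rilP(\gamma^i)=P(\widetilde\gamma^i)$ as $\rho\to 0^+$ by Corollary~\ref{cor_continuity_P2} and Lemma~\ref{lemma_palline}. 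Your statement that ``the localized limit current has boundary $\widetilde\gamma^i$, so its mass is $\ge P(\widetilde\gamma^i)$'' conflates the mass of a limiting current with the infimum in \eqref{Plateau}, which is a statement about $\int|Jv|$ for Lipschitz maps with prescribed trace; without the slice-inheritance lemma you have no handle on the boundary datum on $\partial B_\rho(p_i)$, and the inequality does not follow from current compactness alone. Likewise, for the arc contribution the paper does not reduce to ``one-dimensional Plateau problems'': Proposition~\ref{prop:lower_bound} uses a delicate decomposition of $\currvk^\eps$ into thin strips, an isoperimetric argument to get flat convergence to the mass-minimizing current $\globalminimalmasscurrent_n^\eps$, and then $\globalminimalmasscurrent_n^\eps\rightharpoonup S_\eps$. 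The ``ruled surface is the cheapest filling'' heuristic is exactly what this machinery is built to justify, and it genuinely needs the strict-convergence input on vertical slices.

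\textbf{Upper bound.} You outline a direct recovery sequence with tubes and junction disks. The paper instead bootstraps: it modifies $u$ to a piecewise Lipschitz map $u_r$ that agrees with $u$ outside $\cup_i B_r(p_i)$, equals an $n$-uple point map on $B_{r/2}(p_i)$, and (crucially) has jump set still consisting of $C^2$ curves meeting $\partial B_{r/2}(p_i)$ with vanishing second derivative; it then applies Corollary~\ref{cor_multicurves} and Theorem~\ref{area n-ple point} to $u_r$, glues the two recovery sequences, and uses lower semicontinuity of $\overline{\mathcal A}_{BV}$ together with $u_r\to u$ strictly. The $C^2$ matching of the modified curves (achieved via the radial map $\Phi_r$ with $\psi_r(\rho)=(\rho-r/2)^3$ near $r/2$) is the real work here and is absent from your sketch; without it you cannot invoke the single-curve result in the annular region, and your direct gluing would have to re-derive all of Proposition~\ref{upper bound curve} while simultaneously matching the junction piece.
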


One of the 
main features of expression \eqref{eq:ril_area_gen}
is the presence of {\it two} singular contributions:
 a $0$-dimensional term due to the concentration of the Jacobian determinants of a 
recovery sequence $(v_k)$ for $\overline{\mathcal{A}}_{BV}(u, 
\Om)$ around the junction points (namely, 
the term involving the minimum of the Plateau problems $P(\widetilde\gamma^i)$), and a $1$-dimensional term, which essentially takes into account the concentration of the gradients and of the Jacobian determinants of $v_k$ 
along the jump
 set $\jumpset$. So, we can interpret $\eqref{eq:ril_area_gen}$ as a non-trivial generalization of 
\cite[Theorem 1.3]{BCS}, valid 
for the triple point map $u_T$ (see also Theorem \ref{area n-ple point}), 
and of 
\cite[Theorem 1.1]{BCS}, valid for $0$-homogeneous maps of 
the form $\varphi\left({x}/{|x|}\right)$ with $\varphi:\Suno\rightarrow\Suno$ Lipschitz. 
Indeed, in the first case the $1$-dimensional term was simply 
the total variation of $u_T$ (consisting of the area
of three vertical walls over $\Sigma$) 
and the $0$-dimensional one was the area of the 
target triangle, which is a 
trivial minimum of \eqref{Plateau}, while in the second 
case we had no $1$-dimensional contribution and the $0$-dimensional 
one was the solution of \eqref{Plateau} with this 
special $\varphi$, that reduces to $P(\varphi)=\pi|\mathrm{deg}(\varphi)|$. 
In other words, the relaxed area of a more general map $u$ 
as in \eqref{teo:Lip_npoint} is still a measure (if we regard it as a 
function of $\Omega$), which has the same dimensional structure, but with a 
more involved and rich expression. 

We observe that for this special kind 
of maps it always holds $\overline {\mathcal A}_{BV}(u,\Om)<+\infty$, 
because the contributions of the Plateau problem 
$P(\widetilde\gamma^i)$ is always 
finite, since one can construct a Lipschitz competitor for \eqref{Plateau}.  
On the other hand, the presence of a 
finite number of junction points is crucial, because, as Example \ref{tripunto infinito} shows, we can build a 
piecewise constant map whose $BV$-relaxed area is infinite. It is here remarkable that the same map can be seen to have 
finite $L^1$-relaxed area (compare with \eqref{eq:L1_ril_fin}). This 
in particular shows the proper inclusion
 $${\rm Dom}(\overline{\mathcal A}_{BV}(\cdot, \Om))
\subsetneq {\rm Dom}(\overline{\mathcal A}_{L^1}(\cdot, \Om)).$$

We divide the proof of Theorem \ref{teo:Lip_npoint} in 
several steps, and in particular we first focus on the 
relaxation on piecewise Lipschitz 
maps $u$ without junction points. In this 
case we show in Corollary \ref{cor_multicurves}  (consequence of Theorems \ref{teo:main_thm_curve} and \ref{teo_main_curve2}) that the 
relaxation provides as singular contribution the integral over the 
jump set $\jump_u$ of $u$ of the area spanned by 
the affine map $\affinesurface$.  The main issue is the proof of Proposition \ref{prop:lower_bound}, 
the lower bound for the relaxed area 
of maps jumping on the central orizontal segment of the rectangle $R=[a,b]\times[-1,1]$.
 Here, we need to use 
some tools from the theory of integer multiplicity currents, in particular
slicing arguments and the isoperimetric inequality, in order to show that over 
the jump segment the graph of the elements of an 
approximating smooth sequence 
$(v_k)$ have area bounded below by the area of 
$\affinesurface$.
 The properties of the strict convergence 
(Lemmas \ref{lem:inheritance} and 
\ref{strict implies uniform 2}) enter at the level of vertical slices of the graph of $v_k$ 
in a neighbourhood of the jump segment, but these results only are not enough to pass to the limit in the area of the graph of $v_k$.
 For this purpose, the idea is to make a decomposition of the graph of $v_k$ and of the surface $\affinesurface$ in several tiny strips, and notice that, when the number of these strips is very high, the boundaries of these two little pieces of surfaces are uniformly 
close together, as a consequence of the strict convergence and, at the same time, the strips which decompose $\affinesurface$ are very close to a minimal mass current having the same boundary.

In \cite{BePaTe:16}, the authors 
compute the relaxed area 
$\overline{\mathcal{A}}_{L^\infty}(u, \Om)$ with respect to the
local uniform convergence out of the jump, 
for $u$ as in Proposition \ref{prop:lower_bound}. They obtain, as singular 
contribution, the area of the minimal 
semicartesian\footnote{A map having the identity
as the first component.} surface 
spanning the graphs of the two traces. 
In particular, since $\affinesurface$ is semicartesian and spans graph$(u^\pm)$ as well (see \cite[Definition 2.4]{BePaTe:16}), we have
$
\overline{\mathcal{A}}_{L^\infty}(u, R)\leq\overline{\mathcal{A}}_{BV}(u, R).
$
In general, this inequality holds strictly, 
even if graph$(u^\pm)$ are coplanar. We can find an example in \cite[Remark 8.5]{BePaTe:16}, where one can notice that in order to minimize the area of the spanning surface, the 
approximating sequence needs
not keep the 
total variation of the limit map, which instead is forced to be preserved under strict convergence.
Moreover, it is important to notice that $\overline{\mathcal{A}}_{L^\infty}(u, \,\cdot)$ is not subadditive (see \cite[Thm. 8.1]{BePaTe:16}), while $\overline{\mathcal{A}}_{BV}(u, \cdot)$ is clearly a measure. 

In a second step we instead consider the case of maps $u$ which are 
piecewise constant but whose jump might have junction points. 
Specifically, in Theorem \ref{area n-ple point} we see that 
the relaxation on  a $n$-uple point map (i.e., whose
jump  consists of $n$ radii of the same ball $B_r(0)$) 
provides as singular contribution, besides the total variation of $u$, 
the number  $P(\widetilde\gamma)$, where $\widetilde\gamma$ is
the piecewise affine curve which parametrizes the perimeter of the polygon whose vertices are the values of $u$ around $0$.

Finally, in Section \ref{sec:Lip_npoint}, we 
use Corollary \ref{cor_multicurves} and Theorem \ref{area n-ple point} to complete the proof of Theorem \ref{teo:Lip_npoint}. 

We point out that, to our best knowledge, 
 it is not yet known, in general, whether
the $BV$-relaxed area is subadditive if 
considered as a set function and, further, 
if it gives rise to a measure. We expect such a 
subadditivity
for $BV$-maps $u$ from the plane to the plane, being motivated by relevant examples with explicit computations, and also because of the presence of a unique cartesian current with minimal completely vertical lifting associated to $u$ (as recently shown in \cite{Mc2}). Unfortunately, this uniqueness result fails in higher codimension, 
where in addition 
we have less explicit examples.

\section{Preliminaries}\label{sec:preliminaries}
We start by collecting some tools
needed in the proof of  the main theorems. 
For an integer $M \geq 2$,
set $\mathbb S^{M-1} := \{x \in \R^M : \vert x\vert =1\}$.
In what follows, $\Om \subset \R^2$ is a bounded open set.

\subsection{Some consequences of the strict convergence}

\begin{Theorem}[\textbf{Reshetnyak}]\label{Reshetnyak}
	Let $\mu_h,\mu$ 
be (finite) Radon measures in $\Omega$, taking
values in $\R^M$. Suppose that $\mu_h\stackrel{*}{\rightharpoonup}\mu$ and $|\mu_h|(\Omega)\rightarrow|\mu|(\Omega)$. Then 
	$$
	\lim_{h\rightarrow+\infty}\int_{\Omega}f\left(x,\frac{\mu_h}{|\mu_h|}(x)\right)d|\mu_h|(x)=\int_{\Omega}f\left(x,\frac{\mu}{|\mu|}(x)\right)d|\mu|(x)
	$$
	for any continuous bounded function 
$f:\Omega\times\mathbb S^{M-1}\rightarrow\R$.
\end{Theorem}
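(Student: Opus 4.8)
The plan is to run the classical Young-measure argument on the product space and exploit the fact that strict (i.e.\ mass-preserving) weak-$*$ convergence rigidifies the limiting Young measure into a Dirac mass. Set $K:=\overline\Omega\times\mathbb S^{M-1}$ and let $w_h:=\mu_h/|\mu_h|$, $w:=\mu/|\mu|$ be the polar densities, Borel maps into $\mathbb S^{M-1}$ defined $|\mu_h|$-a.e.\ and $|\mu|$-a.e.\ respectively, so that $\mu_h=w_h\,|\mu_h|$ and $\mu=w\,|\mu|$. Introduce the positive measures $\sigma_h\in\mathcal M(K)$, push-forwards of $|\mu_h|$ under $x\mapsto(x,w_h(x))$; thus $\int_K\phi\,d\sigma_h=\int_\Omega\phi(x,w_h(x))\,d|\mu_h|(x)$ for $\phi\in C(K)$, and $\sigma_h(K)=|\mu_h|(\Omega)$. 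Since the latter converges, $(\sigma_h)$ is bounded in mass, so by weak-$*$ compactness I pass to a subsequence (not relabelled) with $\sigma_h\stackrel{*}{\rightharpoonup}\sigma$ in $\mathcal M(K)$, $\sigma\ge0$, $\sigma(K)=|\mu|(\Omega)$. The goal is to show that $\sigma$ is the push-forward of $|\mu|$ under $x\mapsto(x,w(x))$: being independent of the subsequence, this upgrades to convergence of the whole sequence, and testing against (a truncation of) $f$ then closes the argument.

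First I would identify the marginal of $\sigma$ over $\overline\Omega$. Lower semicontinuity of the total variation under weak-$*$ convergence gives $|\mu|(A)\le\liminf_h|\mu_h|(A)$ on every open set $A$; passing to complements and using $|\mu_h|(\Omega)\to|\mu|(\Omega)$ yields $\limsup_h|\mu_h|(C)\le|\mu|(C)$ on every closed set $C$, hence $|\mu_h|\stackrel{*}{\rightharpoonup}|\mu|$. Since the projection $\pi:K\to\overline\Omega$ is weak-$*$ continuous and $\pi_\#\sigma_h=|\mu_h|$, this forces $\pi_\#\sigma=|\mu|$; in particular $\sigma(\partial\Omega\times\mathbb S^{M-1})=|\mu|(\partial\Omega)=0$. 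I would then disintegrate $\sigma=|\mu|\otimes\{\sigma_x\}_x$, with $\sigma_x$ a probability measure on $\mathbb S^{M-1}$ for $|\mu|$-a.e.\ $x$.

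The core step is to prove $\sigma_x=\delta_{w(x)}$ for $|\mu|$-a.e.\ $x$. Testing $\sigma_h\stackrel{*}{\rightharpoonup}\sigma$ against functions $\phi(x,p)=\psi(x)\cdot p$ with $\psi\in C_c(\Omega;\R^M)$ and $\cdot$ the Euclidean inner product of $\R^M$, the left-hand side reads $\int_K\phi\,d\sigma_h=\int_\Omega\psi(x)\cdot w_h(x)\,d|\mu_h|=\int_\Omega\psi\cdot d\mu_h\to\int_\Omega\psi\cdot d\mu=\int_\Omega\psi(x)\cdot w(x)\,d|\mu|$, while the right-hand side reads $\int_K\phi\,d\sigma=\int_\Omega\psi(x)\cdot b(x)\,d|\mu|(x)$, where $b(x):=\int_{\mathbb S^{M-1}}p\,d\sigma_x(p)$ is the barycentre of $\sigma_x$. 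Since $\psi$ is arbitrary, $b(x)=w(x)$ for $|\mu|$-a.e.\ $x$; but $|w(x)|=1$ a.e., so the barycentre of the probability measure $\sigma_x$ lies on the unit sphere, and strict convexity of the closed unit ball forces $\sigma_x=\delta_{b(x)}=\delta_{w(x)}$. I expect this rigidity step to be the main obstacle, and it is precisely here that the hypothesis $|\mu_h|(\Omega)\to|\mu|(\Omega)$ is used in an essential way: it is what makes the marginal of $\sigma$ equal to $|\mu|$ rather than some larger measure $\lambda$, hence $|b|\equiv1$ and $\sigma_x$ a Dirac; without it one recovers only the one-sided Reshetnyak lower-semicontinuity inequality.

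It follows that $\sigma$ is the claimed push-forward, so $\int_K\phi\,d\sigma=\int_\Omega\phi(x,w(x))\,d|\mu|(x)$ for all $\phi\in C(K)$, the full sequence $(\sigma_h)$ converges to this $\sigma$, and $\sigma_h(K)\to\sigma(K)$. To conclude for a general $f\in C_b(\Omega\times\mathbb S^{M-1})$, which need not extend continuously to $K$, I would truncate near the boundary: choosing $\theta\in C_c(\Omega)$ with $0\le\theta\le1$, the product $\theta f$ extends to an element of $C(K)$, so $\int\theta f\,d\sigma_h\to\int\theta f\,d\sigma$, whereas the two remainders $\int(1-\theta)|f|\,d\sigma_h$ and $\int(1-\theta)|f|\,d\sigma$ are bounded by $\|f\|_\infty$ times the $\sigma_h$- and $\sigma$-masses of a neighbourhood of $\partial\Omega\times\mathbb S^{M-1}$; these are small uniformly in $h$, because $\sigma(\partial\Omega\times\mathbb S^{M-1})=0$ together with $\sigma_h(K)\to\sigma(K)$ prevents mass from escaping to the boundary (again a consequence of the mass hypothesis). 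Letting $\mathrm{supp}\,\theta$ exhaust $\Omega$ gives $\int_\Omega f(x,\mu_h/|\mu_h|)\,d|\mu_h|\to\int_\Omega f(x,\mu/|\mu|)\,d|\mu|$, as desired.
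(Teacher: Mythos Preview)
Your argument is correct and is essentially the standard proof (the one given in \cite[Theorem~2.39]{AFP}, which is precisely what the paper cites in lieu of a proof): lift the measures to $\overline\Omega\times\mathbb S^{M-1}$, identify the marginal of any weak-$*$ limit as $|\mu|$ using the mass hypothesis, disintegrate, and use that a probability measure on $\mathbb S^{M-1}$ whose barycentre lies on $\mathbb S^{M-1}$ must be a Dirac. The paper does not supply its own proof, so there is nothing further to compare; your write-up simply unpacks the cited reference.
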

\begin{proof}
See for instance
\cite[Theorem 2.39]{AFP}.
\end{proof}
For any $u\in BV(\Omega;\R^2)$, we recall that
the distributional derivative $Du$ is 
a Radon measure valued in $\R^{2\times2}$. 
$SBV(\Omega)$ stands for the space of special functions of 
bounded variation on $\Om$ \cite{AFP}. 
The symbol $|Du|(\Omega)$ stands for the total variation of $Du$ 
(see \cite[Definition 3.4, pag. 119]{AFP})  
with $\vert \cdot \vert$ the Frobenius norm. We denote by $\jump_u$ 
the jump set of $u$.
 
\begin{Definition}[\textbf{Strict convergence}]
	Let $\mappa\in BV(\Omega;\R^2)$ and $(\mappa_k)
\subset BV(\Omega;\R^2)$. 
We say that $(\mappa_k)$ converges
 to $\mappa$ strictly $BV$, 
if
	$$
	\mappa_k\xrightarrow{L^1}\mappa\qquad\mbox{and}\qquad|D\mappa_k|
(\Omega)\rightarrow|D\mappa|(\Omega).
	$$
\end{Definition}
Let $R = \ab \times [-1,1]$. For $(t,
	\secondavariabile)
	\in R$, set 
	$$
\rettangolo_t^{x_1}:=\{(x_1,x_2)\in\rettangolo:x_1=t\},\qquad \rettangolo_\secondavariabile^{x_2}
	:=\{(x_1,x_2)\in\rettangolo:x_2=\secondavariabile\}.
$$ 
If $u\in BV(R;\R^2)$, by Lebesgue differentiation theorem and Fubini theorem, for almost every $t\in \ab$, the 
restriction $u\mres\rettangolo_t^{x_1}$ 
of $u$ on the vertical segment $\rettangolo_t^{x_1}$ coincides with the trace of $u$ on $\mathcal{H}^1$-almost every point of $\rettangolo_t^{x_1}$. 
So, for almost every $t\in \ab$, the map 
$u\mres\rettangolo_t^{x_1}$ 
is 
well defined because it is independent of 
the representative of $u$. The same argument 
holds in $\rettangolo_\secondavariabile^{x_2}$ for almost every $\sigma\in[-1,1]$.
\begin{Lemma}[\textbf{Inheritance of strict convergence
to slices}]\label{lem:inheritance}
	Let $u\in BV(\rettangolo;\R^2)$. 
	Suppose that $(v_k)\subset C^1(\rettangolo;\R^2)$ is a sequence 
	converging to $u$ strictly $BV(\rettangolo;\R^2)$. 
Then for almost every $(t,\secondavariabile)\in\rettangolo$, there exists a subsequence $(v_{k_h})
	\subset (v_k)$, depending on $t$ and $\secondavariabile$, such that
	\begin{align}\label{thesis}
		&v_{k_h}\mres \rettangolo_t^{x_1}\rightarrow u\mres \rettangolo_t^{x_1}\quad\mbox{strictly }BV(\rettangolo_t^{x_1};\R^2),\\
		&v_{k_h}\mres \rettangolo_\secondavariabile^{x_2}\rightarrow u\mres \rettangolo_\secondavariabile^{x_2}\quad\mbox{strictly }BV(\rettangolo_\secondavariabile^{x_2};\R^2).
	\end{align}
\end{Lemma}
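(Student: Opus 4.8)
The plan is to combine Reshetnyak's continuity theorem with a Fatou-type argument at the level of the one–dimensional slice-wise total variations. By symmetry it suffices to treat the vertical slices $R^{x_1}_t$ (the argument for the horizontal slices $R^{x_2}_\sigma$ is identical after exchanging the two coordinates), and at the very end to intersect the two resulting full-measure sets of ``good'' parameters.

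First I would separate the two directional parts of the derivative: for $w\in BV(R;\R^2)$ write $Dw=(D_1w\mid D_2w)$ with $D_1w,D_2w$ the $\R^2$-valued partial derivative measures, so that $|Dw|^2=|D_1w|^2+|D_2w|^2$ as densities with respect to $|Dw|$. Since $v_k\to u$ in $L^1(R;\R^2)$ we have $Dv_k\stackrel{*}{\rightharpoonup}Du$, and by hypothesis $|Dv_k|(R)\to|Du|(R)$, so Theorem \ref{Reshetnyak} applies to the $\R^{2\times2}$-valued measures $Dv_k,Du$. Testing it with $f(x,\xi):=|\xi''|$, where $\xi=(\xi'\mid\xi'')\in\mathbb S^{3}$ is split into its two columns $\xi',\xi''\in\R^2$ (a bounded continuous function on $\overline R\times\mathbb S^3$), and using the polar decompositions of $Dv_k$ and $Du$, one obtains $|D_2v_k|(R)=\int_R f\big(x,\tfrac{Dv_k}{|Dv_k|}\big)\,d|Dv_k|\to\int_R f\big(x,\tfrac{Du}{|Du|}\big)\,d|Du|=|D_2u|(R)$, and symmetrically $|D_1v_k|(R)\to|D_1u|(R)$. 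This is the crucial point: strict convergence controls only the full (Frobenius) total variation, and Reshetnyak is what upgrades it to convergence of the two partial total variations, which cannot be deduced from lower semicontinuity alone.

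Next I would set $f_k(t):=|D(v_k\mres R^{x_1}_t)|(R^{x_1}_t)=\int_{-1}^{1}|\partial_{x_2}v_k(t,x_2)|\,dx_2$ and $f(t):=|D(u\mres R^{x_1}_t)|(R^{x_1}_t)$. By Fubini for $C^1$ maps and by the one–dimensional slicing of $BV$ functions, $\int_a^b f_k\,dt=|D_2v_k|(R)$ and $\int_a^b f\,dt=|D_2u|(R)<+\infty$, so the previous step gives $\int_a^b f_k\,dt\to\int_a^b f\,dt$. On the other hand, since $t\mapsto\|v_k(t,\cdot)-u(t,\cdot)\|_{L^1((-1,1))}$ has integral $\|v_k-u\|_{L^1(R)}\to0$, up to a subsequence (not depending on $t$) we have $v_k\mres R^{x_1}_t\to u\mres R^{x_1}_t$ in $L^1(R^{x_1}_t;\R^2)$ for a.e.\ $t$, whence $\liminf_k f_k(t)\ge f(t)$ for a.e.\ $t$ by lower semicontinuity of the one–dimensional total variation. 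Now I would invoke the elementary fact that $f_k\ge0$, $\liminf_k f_k\ge f$ a.e.\ and $\int_a^b f_k\to\int_a^b f<+\infty$ force $f_k\to f$ in $L^1((a,b))$: apply dominated convergence to $(f_k-f)^-\le f$ to get $\int(f_k-f)^-\to0$, then $\int(f_k-f)^+=\int(f_k-f)+\int(f_k-f)^-\to0$. Passing to a further subsequence, $f_k(t)\to f(t)$ for a.e.\ $t$; together with the $L^1$-convergence of the slices this gives $v_k\mres R^{x_1}_t\to u\mres R^{x_1}_t$ strictly $BV$ for a.e.\ $t$.

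Finally I would run the same argument in the $x_1$-direction to extract, along a further subsequence, the strict convergence $v_k\mres R^{x_2}_\sigma\to u\mres R^{x_2}_\sigma$ for a.e.\ $\sigma$; this last subsequence works simultaneously for a.e.\ $t$ and a.e.\ $\sigma$, hence for a.e.\ $(t,\sigma)\in R$ by Fubini, which is slightly more than stated. The main obstacle is the passage from the global convergence $|D_iv_k|(R)\to|D_iu|(R)$ to the a.e.-pointwise convergence of the slice-wise total variations; once Reshetnyak is used to secure the global statement, the remainder is a routine combination of $BV$-slicing, lower semicontinuity and subsequence extraction.
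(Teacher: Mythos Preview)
Your argument is correct and follows essentially the same route as the paper: both use Reshetnyak's theorem (with the test function picking out the second column of the gradient) to obtain $|D_2v_k|(R)\to|D_2u|(R)$, then combine the $BV$ slicing identity $|D_2u|(R)=\int_a^b|D(u\mres R^{x_1}_t)|\,dt$ with Fatou and slice-wise lower semicontinuity. The only difference is in the final step: the paper deduces $\liminf_k f_k(t)=f(t)$ for a.e.\ $t$ and then extracts a $t$-dependent subsequence achieving the liminf, whereas your Scheff\'e-type argument ($\int(f_k-f)^-\to0$ by dominated convergence, hence $\int(f_k-f)^+\to0$) yields $f_k\to f$ in $L^1$ and thus a \emph{single} subsequence working simultaneously for a.e.\ $t$ and a.e.\ $\sigma$, which is slightly stronger than what is stated and in fact closer to what is actually used later in Proposition~\ref{prop:lower_bound}.
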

\begin{proof}
	For almost every $t\in[a,b]$, in view of the definition of $R_t^{x_1}$, we can define the total variation of $u\mres\rettangolo^{x_1}_t$ as
	\begin{equation}\label{eq:derivata_restrizione}
		|D(u\mres \rettangolo^{x_1}_t)|(\rettangolo^{x_1}_t)=
\sup\left\{-\int_{-1}^1u(t,x_2)\cdot g'(x_2)dx_2;\, g\in C^1_c((-1,1);\overline B_1(0))\right\},
	\end{equation}
where $\overline B_1(0) = \{(\xi,\eta) \in \R^2 : \xi^2 + \eta^2\leq 1\}$.
	Let us show that
	\begin{align}\label{claim}
		|D_2u|(R)=\int_a^b|D(u\mres \rettangolo^{x_1}_t)|(\rettangolo^{x_1}_t)dt,
	\end{align}
       where $D_2u:=Du\, e_2$ is a Radon measure on $R$ valued in $\R^2$ with finite total variation.
	Since, for almost every $t\in[a,b]$, $v_k\mres
	\rettangolo_t^{x_1}\rightarrow u\mres \rettangolo_t^{x_1}$ in $L^1(\rettangolo^{x_1}_t;\R^2)$, we have,
	using \eqref{eq:derivata_restrizione},
	\begin{align}\label{lsc}
		|D(u\mres \rettangolo^{x_1}_t)|
(\rettangolo^{x_1}_t)\leq\liminf_{k\rightarrow +\infty}\int_{\rettangolo^{x_1}_t}|\partial_{2}v_k(t,x_2)|dx_2.
	\end{align}
	Then, using Fatou lemma and Fubini theorem,
	\begin{align}
\label{ineq}
		\int_a^b|D(u\mres \rettangolo^{x_1}_t)|(\rettangolo^{x_1}_t)dt&\leq\int_a^b\liminf_{k\rightarrow +\infty}\int_{\rettangolo^{x_1}_t}|\partial_{2}v_k(t,x_2)|dx_2dt
\\
&\leq\liminf_{k\rightarrow +\infty}\int_R|\partial_{2}v_k(t,x_2)|dtdx_2\nonumber
=|D_2u|(R),\nonumber
	\end{align}
	where in the last equality we used Theorem \ref{Reshetnyak} with $f(x,\nu)=\sqrt{\nu_3^2+\nu_4^2}$, for every $x\in\rettangolo,\;\nu\in\mathbb S^3\subset \R^4=\R^2\times \R^2$, with
	$$\nu=\left(\begin{matrix}
		\nu_1&\nu_3
\\
		\nu_2&\nu_4
	\end{matrix}\right).$$	
	The converse inequality in \eqref{claim} is 
standard\footnote{We recall that
		\begin{equation*}
|D_2u|
(R)=\sup\left\{-\int_{R}u\cdot\partial_{x_2}g ~dx:\, 
			g\in C^1_c(R;\overline B_1(0))\right\}.
		\end{equation*}
Now, for $g\in C^1_c(R;\overline B_1(0))$,
		$\int_{R}u\cdot\partial_yg ~dx
=\int_a^b\left(\int_{-1}^1 u(t,x_2) \cdot\partial_{x_2}g(t,x_2)dx_2\right)dt\leq
\int_a^b|D(u\mres \rettangolo^{x_1}_t)|
(\rettangolo^{x_1}_t)dt$, so
		$|D_2u|(R)\leq\int_a^b|D(u\mres \rettangolo^{x_1}_t)|
(\rettangolo^{x_1}_t)dt$.
	}.
	So, \eqref{claim} is proved and \eqref{ineq} holds
as an equality, 
	which implies that also \eqref{lsc} holds as an equality, namely
	$$
	|D(u\mres \rettangolo^{x_1}_t)|
(\rettangolo^{x_1}_t)=\liminf_{k\rightarrow +\infty}\int_{\rettangolo^{x_1}_t}|\partial_{2}v_k(t,x_2)|dx_2.
	$$
	Extracting a subsequence $(v_{k_h})\subset (v_k)$ 
	depending on $t$, we get 
	$$
	v_{k_h}\mres R_t^{x_1}\rightarrow u\mres R_t^{x_1}\quad\mbox{strictly }BV(R_t^{x_1};\R^2).
	$$
Finally, repeating the same argument for $v_{k_h}$ on the 
horizontal slices $\{R^{x_2}_\secondavariabile\}$, we get \eqref{thesis} for a 
(not relabeled) sub-subsequence.
\end{proof}

Now, let $B_l$ be the disk of $\R^2$ centered at the origin of 
radius $l>0$. We want to prove the analogue
of Lemma \ref{lem:inheritance} in $B_l$, 
by slicing with concentric circumferences. If $u\in BV(B_l;\R^2)$, 
as in the previous case, for almost every $r \in (0,l)$ 
the restriction $u\mres\partial B_r$ is well-defined 
and independent of the representative of $u$.
In particular,  for almost every $r \in (0,l)$, 
we can define the total variation of $u\mres\partial B_r$ as 
\begin{equation}\label{var on circ}
|D(u\,\mres\partial B_r)|(\partial B_r):=
\sup\left\{ \!-\!\! \int_0^{2\pi}\!\!\bar{u}(r,\theta)\cdot f'(\theta)d\theta; f\in C^1([0,2\pi];\overline B_1(0)),f(0)\!=\!f(2\pi),f'(0)\!=\!f'(2\pi)\!\right\}
\end{equation}
which turns out to be finite (see Lemma \ref{lem:inheritance circ}), 
giving that $u\mres\partial B_r\in BV(\partial B_r;\R^2)$, for almost 
every $r \in (0,l)$. 
Here 
$$
\bar u(r,\theta):=u(r\cos\theta,r\sin\theta), \qquad 
r\in(0,l], \ 
\theta\in[0,2\pi).
$$

We want to relate this quantity with the notion of tangential total variation.
\begin{Definition}\label{def:annulus}
For $x=(x_1,x_2)\in\R^2\setminus\{(0,0)\}$, set 
$\tau(x)=\frac{1}{|x|}(-x_2,x_1)$. Let $0< l < L$ and 
$A_{L,l}:=B_L(0)\setminus \overline{B_l(0)}$ be an 
annulus around $0$. We define the tangential total variation of $u\in BV(A_{L,l};\R^2)$ 
as the total variation of the Radon measure $D_\tau u:=Du\tau$, namely
\begin{equation}\label{tangential var}
|D_\tau u|(A_{L,l})=|Du\tau|(A_{L,l})=\sup\Big\{-\int_{A_{L,l}}u\cdot(\nabla g\tau) ~dx: g\in C^1_c(A_{L,l};\overline B_1(0))
\Big\}.
\end{equation}
\end{Definition}
\noindent
The last equality in \eqref{tangential var} is justified since $\tau\in C^\infty(A_{L,l};\R^2)$ satisfies $\text{\rm div}\tau=0$ everywhere, so for any $g=(g^1,g^2)\in C^1_c(A_{L,l};\R^2)$ we have
\begin{equation}\nonumber
\begin{aligned}
& -\int_{A_{L,l}}u\cdot(\nabla g\tau) ~dx
=-\int_{A_{L,l}}u^1\nabla g^1\cdot\tau ~dx-\int_{A_{L,l}}u^2\nabla g^2\cdot\tau ~dx
\\
&=-\int_{A_{L,l}}u^1\mathrm{div}( g^1\tau) ~dx-\int_{A_{L,l}}u^2\mathrm{div}( g^2\tau )~dx
\\
&=\int_{A_{L,l}}g^1\tau\cdot dDu^1+\int_{A_{L,l}}g^2\tau\cdot dDu^2
=\int_{A_{L,l}}g\cdot (dDu)\tau=\langle Du\tau,g\rangle.
\end{aligned}
\end{equation}
This computation shows that $|D_\tau u|(A_{L,l})\leq|Du|(A_{L,l})$, since $|\tau|\leq1$, and also that \eqref{tangential var} is compatible with the case $u\in W^{1,1}(A_{L,l};\R^2)$, 
where simply $|D_\tau u|(A_{L,l})=\int_{A_{L,l}}|\nabla u\tau|~dx$. 
Moreover, $Du=\frac{Du}{|Du|}|Du|$ by polar decomposition, 
so that
$$
\langle Du\tau,g\rangle=\int_{A_{L,l}}g\cdot (dDu)\tau=\int_{A_{L,l}}g\cdot \left(\frac{Du}{|Du|}d|Du|\right)\tau=\int_{A_{L,l}}g\cdot \left(\frac{Du}{|Du|}\tau\right)d|Du|\qquad\forall g\in C_c^1(B_l;\R^2),
$$
giving that 
\begin{align}\label{polar dec}
D_\tau u=Du\tau=\frac{Du}{|Du|}\tau|Du|.
\end{align}
\begin{Lemma}[\textbf{Inheritance of strict convergence to circumferences}]\label{lem:inheritance circ}
Let $u\in BV(B_R;\R^2)$ and
$(v_k)\subset C^1(B_R;\R^2)$ be a sequence 
converging to $u$ strictly $BV(B_R;\R^2)$. 
Then, for almost every 
$r\in(0,R)$, 
there exists a subsequence $(v_{k_h})
\subset (v_k)$, depending on $r$, such that
\begin{equation}
v_{k_h}\mres \partial B_r\rightarrow u\mres \partial B_r\quad\mbox{strictly }BV(\partial B_r;\R^2)
\qquad {\rm as}~ h \to +\infty.
\end{equation}
\end{Lemma}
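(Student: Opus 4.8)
The plan is to mimic the proof of Lemma \ref{lem:inheritance}, replacing the decomposition of $R$ into vertical and horizontal slices by the decomposition of $B_R$ into concentric circumferences, using polar coordinates. The key point is the coarea-type identity
\begin{equation}\label{eq:coarea_polar}
|D_\tau u|(B_R\setminus\{0\})=\int_0^R|D(u\mres\partial B_r)|(\partial B_r)\,dr,
\end{equation}
which is the circular analogue of \eqref{claim}. First I would establish the inequality ``$\leq$'' in \eqref{eq:coarea_polar}: since for a.e.\ $r$ one has $v_k\mres\partial B_r\to u\mres\partial B_r$ in $L^1(\partial B_r;\R^2)$, lower semicontinuity of the one-dimensional total variation (using the defining supremum \eqref{var on circ}) gives $|D(u\mres\partial B_r)|(\partial B_r)\leq\liminf_k\int_0^{2\pi}|\partial_\theta\bar v_k(r,\theta)|\,d\theta$; then Fatou and Fubini in the $(r,\theta)$-variables, together with the elementary identity $\partial_\theta\bar v_k(r,\theta)=r\,\nabla v_k(r\cos\theta,r\sin\theta)\,\tau$, yield
\begin{equation}\nonumber
\int_0^R|D(u\mres\partial B_r)|(\partial B_r)\,dr\leq\liminf_{k\to+\infty}\int_0^R\!\!\int_0^{2\pi}|\nabla v_k\,\tau|\,r\,d\theta\,dr=\liminf_{k\to+\infty}\int_{B_R}|\nabla v_k\,\tau|\,dx=\liminf_{k\to+\infty}|D_\tau v_k|(B_R).
\end{equation}

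The decisive step is then to show that this last $\liminf$ equals $|D_\tau u|(B_R)$, so that \eqref{eq:coarea_polar} holds together with all intermediate inequalities as equalities. This is where strict convergence enters: by \eqref{polar dec}, $D_\tau v_k=\frac{Dv_k}{|Dv_k|}\tau\,|Dv_k|$, and writing $f(x,\nu)=|\nu\,\tau(x)|$ as a bounded continuous function on $\overline{B_R}\setminus\{0\}$ times $\mathbb S^3$ (identifying $\R^{2\times2}\cong\R^4$ as in Lemma \ref{lem:inheritance}), Reshetnyak's continuity theorem (Theorem \ref{Reshetnyak}) applied to the measures $Dv_k\stackrel{*}{\rightharpoonup}Du$ with $|Dv_k|(B_R)\to|Du|(B_R)$ gives $\int_{B_R}|\nabla v_k\,\tau|\,dx\to|D_\tau u|(B_R\setminus\{0\})=|D_\tau u|(B_R)$, the last equality because the origin is Lebesgue-negligible for $|Du|$. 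For the converse inequality ``$\geq$'' in \eqref{eq:coarea_polar}, which is standard, I would test $|D_\tau u|$ against $g\in C^1_c(B_R\setminus\{0\};\overline B_1(0))$ written in polar form, integrate by parts in $\theta$ first, and recognize the inner integral as bounded by $|D(u\mres\partial B_r)|(\partial B_r)$; a small care is needed near the origin, handled because $g$ has compact support away from $0$ and $|D_\tau u|(\{0\})=0$.

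Once \eqref{eq:coarea_polar} is an equality with all the intermediate passages tight, the chain forces
\begin{equation}\nonumber
|D(u\mres\partial B_r)|(\partial B_r)=\liminf_{k\to+\infty}\int_0^{2\pi}|\partial_\theta\bar v_k(r,\theta)|\,d\theta\qquad\text{for a.e.}~r\in(0,R),
\end{equation}
since if strict inequality held on a positive-measure set of radii, integrating would contradict \eqref{eq:coarea_polar}. Fixing such an $r$ and extracting a subsequence $(v_{k_h})$ realizing the $\liminf$ as a limit, we have both $v_{k_h}\mres\partial B_r\to u\mres\partial B_r$ in $L^1(\partial B_r;\R^2)$ and convergence of the one-dimensional total variations, i.e.\ strict $BV(\partial B_r;\R^2)$ convergence, which is the claim. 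I expect the main obstacle to be the careful treatment of the puncture at the origin: one must make sure that the coarea identity \eqref{eq:coarea_polar} and the Reshetnyak limit are unaffected by removing $\{0\}$, which is clear for the Lebesgue-absolutely-continuous smooth $v_k$ and for $|Du|$ since a single point is $|Du|$-null, but should be stated explicitly; beyond that, the argument is a direct transcription of the proof of Lemma \ref{lem:inheritance}.
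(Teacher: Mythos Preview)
Your proposal is correct and follows essentially the same strategy as the paper: establish the coarea-type identity for the tangential variation via the two inequalities (lower semicontinuity on slices plus Fatou for one direction, duality for the other), use Reshetnyak's continuity theorem to turn the $\liminf$ into a limit, and conclude that the slice inequality is an equality for a.e.\ $r$, whence a subsequence realizes strict convergence on $\partial B_r$.

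The one genuine difference is how you handle the singularity of $\tau$ at the origin. The paper avoids it by localizing to annuli $A_{L,l}=B_L\setminus\overline{B_l}$ (on which $\tau$ is smooth), applies Reshetnyak there for a.e.\ choice of $l,L$ where strict convergence is inherited, and only at the very end lets $l,L$ be arbitrary. You instead work directly on the punctured ball $B_R\setminus\{0\}$, noting that $|Du|(\{0\})=0$ (true for any $BV$ function on a planar domain, since $Du$ does not charge $\mathcal H^1$-null sets) and that $|Dv_k|(\{0\})=0$ trivially, so that strict convergence on $B_R$ passes to $B_R\setminus\{0\}$ without loss. Your route is slightly more direct, as it sidesteps the extra localization step; the paper's route has the small advantage that $\tau\in C^\infty$ on the compact $\overline{A_{L,l}}$, so no care about the boundary of the domain is needed. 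One minor remark: your claim ``for a.e.\ $r$ one has $v_k\mres\partial B_r\to u\mres\partial B_r$ in $L^1$'' holds for the full sequence only after first passing to a (not relabeled) subsequence, extracted from $\int_0^R\|v_k-u\|_{L^1(\partial B_r)}\,dr\to 0$; this is harmless since the lemma's conclusion is itself about subsequences, and the paper makes the analogous reduction.
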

\begin{proof}
 For almost every $r\in(0,R)$, by Fatou lemma and Fubini theorem, the restriction $v_k\mres\partial B_r$ has equi-bounded variation w.r.t. $k$. Moreover, we may assume that $(v_k)$ converges to $u$ almost everywhere in $B_R$, so that, 
for almost every $r\in(0,R)$,
\begin{align}\label{a.e. convergence}
v_k\mres\partial B_r\rightarrow u\mres\partial B_r\quad\mathscr{H}^1\mbox{-a.e. in }\partial B_r.
\end{align}
Now, let $r\in(0,R)$ be such that $v_k\mres\partial B_r$ has equi-bounded variation and \eqref{a.e. convergence} holds. Then, there exists a subsequence $(v_{k_h})\subset(v_k)$ depending on $r$ such that
$$
v_{k_h}\mres\partial B_r\stackrel{*}{\rightharpoonup}u\mres\partial B_r\quad\mbox{w*}-BV(\partial B_r;\R^2).
$$
By lower semicontinuity of the variation, we infer that for 
almost every $r\in(0,R)$, $u\,\mres\partial B_r\in BV(\partial B_r;\R^2)$ and
\begin{align}\label{lsc on circ}
|D(u\mres\partial B_r)|(\partial B_r)\leq\liminf_{h\rightarrow+\infty}\int_{\partial B_r}\left|\nabla v_{k_h}\tau\right|d\mathcal{H}^1.
\end{align}
Let $0<l<L\leq R$ be such that $v_k\rightarrow u$ strictly $BV(A_{L,l},\R^2)$ where,
as in Definition \ref{def:annulus},
$A_{L,l}:=B_L(0)\setminus \overline{B_l(0)}$ 
(notice that this holds for a.e. $l$ and $L$); by integration, we get
\begin{equation}\label{chain ineq}
\begin{aligned}
& \int_l^L|D(u\mres\partial B_r)|(\partial B_r)~dr
\leq\int_l^L\left(\liminf_{h\rightarrow+\infty}\int_{\partial B_r}\left|\nabla v_{k_h}\tau\right|d\mathcal{H}^1\right)~dr
\\
\leq& \liminf_{h\rightarrow+\infty}\int_l^L\int_{\partial B_r}\left|\nabla v_{k_h}\tau\right|d\mathcal{H}^1dr
=\liminf_{h\rightarrow+\infty}\int_{A_{L,l}}\left|\nabla v_{k_h}\tau\right|~dx.
\end{aligned}
\end{equation}
Thanks to Theorem \ref{Reshetnyak}, with the choices $M=4$,
	$\mathbb S^3\subset\R^4=\R^{2\times2}$, 
$f\in C_b(A_{L,l}\times \mathbb S^3)$, 
$$f(x,\nu):=\sqrt{|\nu_{{\rm hor}}\cdot \tau(x)|^2+|\nu_{{\rm vert}}\cdot  \tau(x)|^2},$$ 
where $\nu\in\mathbb S^3$ and $\nu_{{\rm hor}}:=
(\nu_1,\nu_3), \nu_{{\rm vert}}:=(\nu_2,\nu_4)$,
 we obtain
	\begin{equation}\label{Reshetnyak application}
	\lim_{k\rightarrow+\infty}\int_{A_{L,l}}|\grad v_k\tau|~dx
=\int_{A_{L,l}}\left|\frac{Du}{|Du|}\tau\right|d|Du|=|D_\tau u|(A_{L,l}),
	\end{equation}
where in the last equality we have used \eqref{polar dec}.
So we get 
$$
|D_\tau u|(B_l)\geq\int_l^L
|D(u\mres\partial B_r)|(\partial B_r)~dr.
$$
In order to prove the converse inequality, let $g\in C^1_c(A_{L,l};\overline B_1(0))$. Then, in polar coordinates, by definition \eqref{var on circ},
$$
\int_{A_{L,l}}u\cdot\nabla g\tau ~dx=\int_l^L\int_0^{2\pi}\bar{u}(\rho,\theta)\cdot\partial_\theta\bar{g}(\rho,\theta)~d\rho d\theta\leq\int_l^L
|D(u\mres\partial B_\rho)|(\partial B_\rho)~d\rho.
$$
So, we have proved that 
$$
|D_\tau u|(A_{L,l})=\int_l^L|D(u\mres\partial B_r)|(\partial B_r)~dr.
$$
In particular, we deduce that \eqref{chain ineq} is a chain of equalities.
Then, \eqref{lsc on circ} holds as an equality and there exists a subsequence $(v_{k_h})
\subset (v_k)$, depending on $r$, which achieves the full limit. Since $l$ and $L$ are arbitrary, we get the thesis.
\end{proof}
\subsection{Further properties in dimension 1}
In \cite[Proposition 2.4]{BCS} the following is proved:

\begin{Lemma}\label{lem:strict_implies_uniform_1}
Let $(\gamma_k) \subset W^{1,1}((a,b);\R^2)$ be a sequence
converging strictly $BV((a,b);\R^2)$ to $\gamma \in 
W^{1,1}((a,b);\R^2)$. Then $\gamma_k\rightarrow \gamma$ uniformly in $(a,b)$. 
\end{Lemma}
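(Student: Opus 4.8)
The plan is to reduce the statement to the uniform convergence of the scalar pointwise total variation functions and then to invoke Arzel\`a--Ascoli. Set
\begin{equation*}
V_k(t):=\int_a^t|\gamma_k'|\,d\tau,\qquad V(t):=\int_a^t|\gamma'|\,d\tau,\qquad t\in[a,b].
\end{equation*}
Since $\gamma_k,\gamma\in W^{1,1}((a,b);\R^2)$, these are nondecreasing, absolutely continuous, vanish at $a$, and $V_k(b)=|D\gamma_k|((a,b))\to|D\gamma|((a,b))=V(b)$ by strict convergence. Moreover $\gamma_k\to\gamma$ in $L^1$ gives, for every fixed $t\in(a,b)$, that $\gamma_k\mres(a,t)\to\gamma\mres(a,t)$ and $\gamma_k\mres(t,b)\to\gamma\mres(t,b)$ in $L^1$, so by $L^1$-lower semicontinuity of the total variation
\begin{equation*}
\liminf_{k\to+\infty}V_k(t)\ge V(t),\qquad \liminf_{k\to+\infty}\big(V_k(b)-V_k(t)\big)\ge V(b)-V(t).
\end{equation*}

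Adding these two inequalities and using $V_k(b)\to V(b)$ forces both $\liminf$'s to be genuine limits that saturate the inequalities; hence $V_k(t)\to V(t)$ for every $t\in[a,b]$. Since the limit $V$ is continuous, P\'olya's theorem on the convergence of monotone functions to a continuous limit upgrades this pointwise convergence to uniform convergence $V_k\to V$ on $[a,b]$.

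From here equicontinuity of $(\gamma_k)$ follows: for $s,t\in[a,b]$ one has $|\gamma_k(t)-\gamma_k(s)|\le|V_k(t)-V_k(s)|$, and the right-hand side can be made arbitrarily small, uniformly for all large $k$, once $|t-s|$ is small, by the uniform continuity of $V$ together with $V_k\to V$ uniformly. Combined with the uniform bound $\|\gamma_k\|_\infty\le \|\gamma_k\|_{L^1}/(b-a)+|D\gamma_k|((a,b))$, which is finite because $(\gamma_k)$ is strictly, hence $L^1$- and variation-bounded, Arzel\`a--Ascoli yields precompactness of $(\gamma_k)$ in $C([a,b];\R^2)$. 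Any uniformly convergent subsequence must converge to the $L^1$-limit $\gamma$ (in its continuous representative), so the whole sequence converges uniformly to $\gamma$, which is the thesis.

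I expect the only genuinely delicate point to be the pointwise convergence $V_k(t)\to V(t)$ at an arbitrary interior $t$: it is precisely here that one must split the total variation at a fixed point and exploit that the \emph{global} total variations converge, so this is the single step where strict convergence (as opposed to mere $L^1$- or weak-$*$-convergence) is essential; everything else is soft.
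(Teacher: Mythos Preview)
Your argument is correct. The paper does not actually prove this lemma; it only cites \cite[Proposition~2.4]{BCS}. What can be compared is the technique the authors deploy for the closely related Lemma~\ref{strict implies uniform 2}, which extends the statement to a limit $\gamma$ with a jump. There the method is arc-length reparametrization: one composes each $\gamma_k$ with the inverse of a (slightly regularized) arc-length function $s_k$, obtaining curves with uniformly bounded Lipschitz constants, and then applies Arzel\`a--Ascoli to the reparametrized family. Your route is different: rather than reparametrize, you prove directly that the scalar variation functions $V_k(t)=\int_a^t|\gamma_k'|$ converge uniformly to $V$ (via lower semicontinuity on the two complementary subintervals together with $V_k(b)\to V(b)$, followed by P\'olya's theorem), and this transfers to equicontinuity of $(\gamma_k)$ in the original parametrization. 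Your approach is arguably more elementary for the present case where $\gamma\in W^{1,1}$, since it avoids introducing auxiliary reparametrizations; the arc-length method, on the other hand, is what allows the authors to handle the discontinuous limit in Lemma~\ref{strict implies uniform 2}, where a segment must be inserted over the jump and the reparametrized limit curve $\widetilde\gamma$ has to be identified explicitly.
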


For our purposes, we need an 
improvement of Lemma \ref{lem:strict_implies_uniform_1}, 
where 
discontinuous functions $\gamma$ at a single point, or at
a finite number of points,  
are allowed; we start with one point discontinuity. 

\begin{Lemma}\label{strict implies uniform 2}
	Let 
	$I^-:=[-1,0), I^+:=(0,1]$. Suppose that $(\gamma_k)
	\subset W^{1,1}([-1,1] ;\R^2)$ is a sequence 
	converging strictly $BV([-1,1] ;\R^2)$ to $\gamma\in BV([-1,1] ;\R^2)
\cap W^{1,1}(I^-;\R^2)
\cap W^{1,1}(I^+;\R^2)$,
	with $\gamma^+(0) \neq \gamma^-(0)$.
	Let $S:
	[-1/3,1/3]
	\rightarrow\R^2$ be
	defined by
	$$
	S(\tau):=\frac{3}{2}\left(\left(1/3+\tau\right)\gamma^+(0)+\left(
	1/3-\tau\right)\gamma^-(0)\right), \quad \tau\in [-1/3,1/3].
	$$
	Let $\widetilde\gamma^-$ 
 (resp. 
	$\widetilde\gamma^+$)
	be the reparametrization of $\gamma_{|I^-}$ (resp. $\gamma_{|I^+}$) 
	on $[-1,-\frac{1}{3})$ (resp. 
	$(\frac{1}{3},1]$)
	defined by the composition with 
the increasing linear function taking 
$[-1,-1/3]$ 
onto $[-1,0]$ (resp. $[1/3,1]$ 
onto $[0,1]$).
Define 
	\begin{equation}\label{concatenation}
\widetilde\gamma:
	[-1,1] 
	\rightarrow\R^2, 
\qquad
		\widetilde\gamma:=
		\begin{cases}
			\widetilde\gamma^-&\quad\mbox{in }[-1,-1/3)
			\\
			S&\quad\mbox{in } [-1/3,1/3]
			\\
			\widetilde\gamma^+&\quad\mbox{in } (1/3,1].
		\end{cases}
	\end{equation}
	Then there exist:
	\begin{itemize}
		\item[(a)] a 
Lipschitz strictly increasing surjective function $\repa:[-1,1]\rightarrow[-1,1]$, 
		\item[(b)]
		a subsequence $(k_j)$ and Lipschitz strictly increasing 
surjective functions $\repa_{k_j}:[-1,1]\rightarrow[-1,1]$ for any $j \in \mathbb N$, 
with $\sup_j \Vert \dot \repa_{k_j}\Vert_\infty < +\infty$,
	\end{itemize}
	such that
	\begin{align}\label{uniform_rescalings}
	\lim_{\indice \to +\infty} 
\gamma_{k_j}\circ \repa_{k_j} = \widetilde\gamma\circ h\quad\mbox{ uniformly in }[-1,1].
	\end{align}
\end{Lemma}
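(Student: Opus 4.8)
The idea is to reparametrize each $\gamma_k$ so that roughly a fixed fraction of the parameter interval is spent near the jump point. Fix a small $\eps>0$. On the interval $[-1,-\eps]$ (resp. $[\eps,1]$), the restrictions $\gamma_k$ converge strictly $BV$ to $\gamma\in W^{1,1}$ (one needs here that $|D\gamma_k|([-1,-\eps])\to|D\gamma|([-1,-\eps])$, which can be arranged for a.e.\ $\eps$ by the usual slicing/monotonicity argument, since the limit $|D\gamma|$ has no atom at $\pm\eps$), hence by Lemma \ref{lem:strict_implies_uniform_1} we get uniform convergence of $\gamma_k$ to $\gamma$ there. In particular $\gamma_k(-\eps)\to\gamma^-(0)$ as $\eps\to0$ uniformly (after first sending $k\to\infty$), and similarly $\gamma_k(\eps)\to\gamma^+(0)$. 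The remaining arc $\gamma_{k}|_{[-\eps,\eps]}$ has length $|D\gamma_k|([-\eps,\eps])\to|D\gamma|([-\eps,\eps])=|\gamma^+(0)-\gamma^-(0)|+o(1)$ as $\eps\to0$, i.e.\ it is an almost-length-minimizing path joining two points that are close to $\gamma^-(0)$ and $\gamma^+(0)$.

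\medskip

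The plan is then to build the reparametrizations in three pieces. On $[-1,-1/3]$ we let $\repa_{k}$ be the affine map onto $[-1,-\eps]$; composing with the linear identification of $[-1,-1/3]$ with $[-1,0]$ (which is how $\widetilde\gamma^-$ is defined) and using the uniform convergence above gives $\gamma_k\circ\repa_k\to\widetilde\gamma^-\circ h^-$ uniformly on $[-1,-1/3]$, where $h^-$ is the corresponding affine reparametrization; symmetrically on $[1/3,1]$. The delicate piece is the middle interval $[-1/3,1/3]$, which must be sent by $\repa_{k}$ onto $[-\eps,\eps]$ in such a way that $\gamma_k\circ\repa_k$ converges uniformly to the affine segment $S$ traversed according to some fixed homeomorphism $h$ of $[-1/3,1/3]$. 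For this I would use an \emph{arc-length} (constant-speed) reparametrization of $\gamma_k|_{[-\eps,\eps]}$: let $L_k:=|D\gamma_k|([-\eps,\eps])$ and reparametrize $\gamma_k|_{[-\eps,\eps]}$ proportionally to arc length on $[-1/3,1/3]$, so the reparametrized curves $\eta_k$ are $3L_k/2$-Lipschitz with $\sup_k L_k<+\infty$. By Ascoli–Arzel\`a a subsequence converges uniformly to a Lipschitz curve $\eta_\infty$ joining $\gamma^-(0)$ to $\gamma^+(0)$; lower semicontinuity of length forces $\mathrm{length}(\eta_\infty)\le\liminf L_k=|\gamma^+(0)-\gamma^-(0)|+o_\eps(1)$, and since the straight segment is the unique minimizer, $\eta_\infty$ must be (close to, and in the limit $\eps\to0$ exactly) a constant-speed parametrization of the segment $[\gamma^-(0),\gamma^+(0)]$, i.e.\ $S\circ h$ for the appropriate affine $h$. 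A diagonal argument over a sequence $\eps\to0$ then removes the $\eps$-dependence.

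\medskip

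To finish, one checks that the three affine-type pieces of $\repa_{k}$ can be glued into a single strictly increasing Lipschitz surjection $\repa_{k}:[-1,1]\to[-1,1]$ (matching values at $\pm1/3$, and with a mild smoothing of the corners if one insists on Lipschitz rather than piecewise-linear; strict monotonicity is automatic since each piece is) with $\sup_j\|\dot\repa_{k_j}\|_\infty<+\infty$, the bound coming from $\sup_k L_k<+\infty$ on the middle piece and from the fixed affine slopes on the outer pieces; similarly the three pieces of $h$ glue into the required Lipschitz increasing surjection, independent of $k$. Then \eqref{uniform_rescalings} holds piecewise and hence on all of $[-1,1]$.

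\medskip

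The main obstacle is the middle interval: strict $BV$-convergence only controls the \emph{total} variation $|D\gamma_k|([-\eps,\eps])$, not the oscillation of $\gamma_k$ there, so without reparametrization there is no uniform control near $0$ (indeed $\gamma_k$ may wiggle). The resolution is exactly the arc-length reparametrization, which converts the (converging) total variation into a uniform Lipschitz bound and thus into compactness; the price is that the limiting speed is only pinned down \emph{up to} the length deficit $|D\gamma|([-\eps,\eps])-|\gamma^+(0)-\gamma^-(0)|$, which is why the statement is formulated with the auxiliary reparametrization $h$ and why one must let $\eps\to0$ along a diagonal to identify the limit with $\widetilde\gamma\circ h$.
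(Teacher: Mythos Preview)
Your strategy---arc-length reparametrization for equi-Lipschitz control, Ascoli--Arzel\`a, and identification of the middle limit as the segment via length minimality---is the same mechanism the paper uses, and the piecewise construction with a diagonal over $\eps\to 0$ can be made to work. There is, however, a concrete gap in part (b). You assert $\sup_j\|\dot h_{k_j}\|_\infty<\infty$, ``the bound coming from $\sup_k L_k<+\infty$ on the middle piece,'' but $L_k$ bounds the Lipschitz constant of the \emph{reparametrized curve} $\eta_k=\gamma_k\circ h_k$, not of $h_k$ itself. On $[-1/3,1/3]$ the arc-length inverse satisfies $|\dot h_k(\tau)|=\tfrac{3L_k/2}{|\dot\gamma_k(h_k(\tau))|}$, which blows up wherever $|\dot\gamma_k|$ is small; and if $\dot\gamma_k=0$ on a set of positive measure (which $\gamma_k\in W^{1,1}$ does not exclude), $h_k$ cannot be chosen simultaneously continuous, strictly increasing, surjective onto $[-\eps,\eps]$, and such that $\gamma_k\circ h_k$ has constant speed. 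So the middle block as written does not produce the Lipschitz $h_{k_j}$ the statement demands.

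The paper repairs exactly this by perturbing the length: for a fixed $\eta>0$ it uses the \emph{global} reparametrization $s_k(t)=\frac{L+\eta}{L_k+\eta}\int_{-1}^t\big(|\dot\gamma_k|+\tfrac\eta2\big)$, which is bi-Lipschitz, so $\alpha_k:=s_k^{-1}$ satisfies $|\dot\alpha_k|\le C/\eta$ uniformly in $k$. This single global map also replaces your three-piece gluing and your outer diagonal over $\eps$: one extracts a uniformly convergent subsequence of $\gamma_k\circ\alpha_k$ once, and the diagonal $\delta_{k_j}\to0$ is used afterwards only to \emph{identify} the limit with $\widetilde\gamma\circ h$ (and to show $\alpha$ is strictly increasing). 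Your modular version is salvageable by the same $\eta$-perturbation on the middle block; the limit there then becomes $S\circ h^{\mathrm{mid}}$ for some increasing $h^{\mathrm{mid}}$ rather than $S$ itself, which is harmless since the statement allows the auxiliary $h$.
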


\begin{proof}
	The lengths $L_k$ of $\gamma_k$ and $L$ of $\gamma$ are given by 
	\begin{align*}
		&L_k=\int_{-1}^1|\dot{\gamma}_k|~d\tau,
\\
		&L=|\dot{\gamma}|([-1,1])=\int_{-1}^0|\dot\gamma|~d\tau
		+|\gamma^+(0)-\gamma^-(0)|+\int_0^1|\dot\gamma|~d\tau.
	\end{align*}
Since, by assumption, $\gamma_k\rightarrow\gamma$ strictly $BV([-1,1];\R^2)$, we have that $L_k\rightarrow L$ as $k\rightarrow+\infty$.
	Fix $\eta>0$ and for all $k \in \mathbb N$ 
	define the function\footnote{We need $\eta$, since in 
principle $\dot \gamma_k$ could vanish
		somewhere.}
	\begin{equation}\label{eq:s_k}
s_k:[-1,1]\rightarrow[0,L+\eta], \qquad 
		\begin{aligned}
			s_k(t):=
			\frac{L+\eta}{L_k+\eta}\int_{-1}^t\Big(|\dot\gamma_k(\tau)|+\frac{\eta}{2}\Big)~d\tau,
		\end{aligned}
	\end{equation}
	with Lipschitz inverse $\newpar_k:=s_k^{-1}:[0,L+\eta]\rightarrow[-1,1]$. 
Define 
	\begin{align}
\widehat\gamma_k:[0,L+\eta]\rightarrow\R^2, \qquad 
		\widehat\gamma_k(s):=\gamma_k(\newpar_k(s))
		\qquad\forall s\in [0,L+\eta].
	\end{align} 
	Since from \eqref{eq:s_k}
	$$\left|\frac{d\widehat\gamma_k}{ds}(s)\right|\leq 
	\frac{|\dot\gamma_k(\newpar_k(s))|}{|\dot s_k(\newpar_k(s))|}\leq\frac{L_k+\eta}{L+\eta}\leq C \qquad {\rm for~a.e.}~
	s\in [0, L+\eta],
	$$
	for some constant $C>0$ independent of $k$, 
	the sequence $(\widehat\gamma_k)$ is 
	bounded in $W^{1,\infty}([0,L+\eta];\R^2)$. Thus, up to a (not
	relabeled) subsequence, we may assume that there exists $\widehat\gamma\in W^{1,\infty}([0,L+\eta];\R^2)$ such that 
	\begin{align}\label{thesis1}
		\widehat \gamma_k\rightharpoonup\widehat\gamma\text{ weakly* in }W^{1,\infty}([0,L+\eta];\R^2)\text{ and uniformly in }[0,L+\eta].
	\end{align}
	We observe that for any open interval $J\subseteq [0,L+\eta]$,
	$$\int_{J}|\dot{\widehat\gamma}|ds\leq
	\liminf_{k\rightarrow+\infty}\int_{J}|\dot{\widehat\gamma}_{k}|ds \leq |J| \liminf_{k\rightarrow +\infty}\frac{L_{k}+\eta}{L+\eta}=|J|,
	$$
and thus
	\begin{equation}\label{eq:subunitary}
		|\dot{\widehat\gamma}|\leq 1\text{ a.e. in }[0,L+\eta].
	\end{equation}
	Now, in order to conclude the proof, we 
	need to show that $\widehat \gamma$ is a 
	reparametrization of 
	$\widetilde\gamma$. Then the thesis of the lemma will follow by reparametrizing both $\widehat\gamma_{k}$ and $\widehat \gamma$ on $[-1,1]$.
	
	Using that $(\gamma_k)$ strictly converges
$BV([-1,1]; \R^2)$ to $\gamma \in 
W^{1,1}(I^-; \R^2)\cap 
W^{1,1}(I^+; \R^2)$, by Lemma 
\ref{lem:strict_implies_uniform_1}
and a diagonal process, we can find an 
	infinitesimal sequence $(\delta_{k_\indice}) \subset (0,1]$ 
	such that
	\begin{align}\label{uniform_0}
		&\|\gamma_{k_\indice}-\gamma\|_{L^\infty([-1,1] \setminus(-\delta_{k_\indice},\delta_{k_\indice}))}\rightarrow 0
	\end{align}
	and 
	\begin{align*}
		&\int_{-1}^{-\delta_{k_\indice}}|\dot\gamma_{k_\indice}(\tau)|~d\tau\rightarrow \int_{-1}^0|\dot\gamma(\tau)|~d\tau,\qquad\int_{\delta_{k_\indice}}^1|\dot\gamma_{k_\indice}(\tau)|~d\tau
		\rightarrow \int_0^1|\dot\gamma(\tau)|~d\tau
	\end{align*}
	as $\indice \to +\infty$.
	In particular,
	\begin{align}\label{limit delta}
\lim_{\indice \to +\infty}\gamma_{k_\indice}(\pm\delta_{k_\indice})= \gamma^\pm(0)
\end{align}
	and, setting
	\begin{align*}
		r_{k_\indice}^-
		:=&
		s_{k_\indice}(-\delta_{k_\indice})=\frac{L+\eta}{L_{k_j}+\eta}\int_{-1}^{-\delta_{k_\indice}}\big(|\dot\gamma_{k_\indice}|+\frac{\eta}{2}\big)~d\tau,
		\\
		r^+_{k_\indice}
		:=& 
		s_{k_\indice}(\delta_{k_\indice})=
\frac{L+\eta}{L_{k_j}+\eta}\left[
\int_{-1}^{1}\big(|\dot\gamma_{k_\indice}|+\frac{\eta}{2}\big)~d\tau-
\int_{\delta_{k_\indice}}^1\big(|\dot\gamma_{k_\indice}|+\frac{\eta}{2}\big)~d\tau\right],
	\end{align*}
	we have 
	\begin{equation}\label{eq:conv_rk}
		\begin{aligned}
			& 
			\lim_{\indice \to +\infty}
			r_{k_\indice}^- = \frac{\eta}{2}+\int_{-1}^0|\dot\gamma|~d\tau=:r^-,
			\\
			&\lim_{\indice \to +\infty}
			r^+_{k_\indice}
			=\frac{\eta}{2}+\int_{-1}^0|\dot\gamma|~d\tau +|\gamma^+(0)-\gamma^-(0)|=:r^+.
		\end{aligned}
	\end{equation}
	As a consequence of \eqref{thesis1}, \eqref{limit delta}, and \eqref{eq:conv_rk} 
	we get
	$$
	\gamma_{k_\indice}(\newpar_{k_\indice}(
	r_{k_\indice}^\pm
	))=
	\widehat\gamma_{k_\indice}(
	r_{k_\indice}^\pm
	)\rightarrow 
	\widehat\gamma(r^\pm)=
	\gamma^\pm(0).
	$$
	Therefore the curve $\widehat \gamma$ 
	maps the segment $[r^-,r^+]$ into a curve joining
$\gamma^-(0)$ 
	and $\gamma^+(0)$. Now, since $r^+-r^-=|\gamma^+(0)-\gamma^-(0)|$, from \eqref{eq:subunitary}
	we conclude that $\widehat \gamma$ 
	coincides with the unit-speed parametrization of the segment 
joining $\gamma^-(0)$ and $\gamma^+(0)$
	on $[r^-,r^+]$. Hence we have shown that 
	\begin{align}\label{ali}
		\gamma_{k_\indice}\circ \newpar_{k_\indice}\rightarrow  S\circ 
		\widetilde\newpar\text{ uniformly in }[r^-,r^+] {\rm ~as~} \indice \to +\infty,
	\end{align}
	for the affine increasing reparametrization $\widetilde\newpar
	:[r^-,r^+]\rightarrow[-1/3,1/3]$.
	
	We now check that $\widehat\gamma=\gamma\circ \newpar$ on $[0,r^-]$ for some increasing bijection 
	$\newpar:[0,r^-]\rightarrow [-1,0]$,
	and similarly
	$\widehat\gamma=\gamma\circ \beta$ on $[r^+, L+\eta]$ 
	for some increasing bijection 
	$\beta:[r^+, L+\eta]\rightarrow [0,1]$. 
	
	Indeed, the functions $\newpar_k:[0,L+\eta]\rightarrow[-1,1]$ are strictly increasing and satisfy
	$$|\dot\newpar_k(s_k(t))|=\frac{L_k+\eta}{(L+\eta)(|\dot\gamma_k(t)|+\frac\eta2)}\leq \frac{C}{\eta},$$
	so that we may
	assume (up to extracting a further not relabeled subsequence) 
	that 
	$$ 
	\newpar_{k_\indice}\rightharpoonup
	\newpar
	\text{ weakly* in }W^{1,\infty}([0,L+\eta])\text{ and uniformly in }[0,L+\eta],
	$$
	for some nondecreasing map $\newpar\in W^{1,\infty}([0,L+\eta])$.
	Hence, using \eqref{uniform_0}, we find out
	$$\widehat \gamma_{k_\indice}(s)=\gamma_{k_\indice}(\newpar_{k_\indice}(s))\rightarrow\gamma
	(\newpar(s))\text{ for all }s\in [0,r^-).$$
	This, together with \eqref{thesis1}, implies 
	$$\widehat\gamma(s)=\gamma\circ\newpar(s) \text{ for all }s\in [0,r^-).$$
	A similar argument shows that this also holds for all $s\in (r^+,L+\eta]$. 
	
	Finally, we observe that $\newpar$ is strictly 
increasing on $[0,r^-)\cup(r^+,L+\eta]$. For, 
	if $\newpar$ 
is constant on some interval $[s_1,s_2]\subset[0,r^-)$, we have $\lim_{\indice
\rightarrow +\infty}
	\newpar_{k_\indice}(s_1)=\lim_{h\rightarrow +\infty}
	\newpar_{k_\indice}(s_2)$ and hence
	\begin{equation}\label{eq:0}
		0=
		\lim_{\indice\rightarrow +\infty}
		\int_{s_1}^{s_2}\dot\newpar_{k_\indice}(s)ds
		=\lim_{\indice \rightarrow +\infty}
		\int_{t_{k_\indice,1}}^{t_{k_\indice,2}}~d\tau=
		\lim_{j\rightarrow +\infty}
		(t_{k_\indice,2}-t_{k_\indice,1}),
	\end{equation}
	where $t_{k_\indice,i}$ are defined by
	$s_{k_\indice}(
	t_{k_\indice,1}
	)=s_1$ and $s_{k_\indice}(t_{k_\indice,2})=s_2$. By definition 
	\eqref{eq:s_k}
	of $s_{k_\indice}$ we have
	\begin{equation}\label{eq:mag_0}
		0<s_2-s_1=\int_{t_{k_\indice,1}}^{t_{k_\indice,2}}\big(|\dot\gamma_{k_\indice}(\tau)|+\frac\eta2\big)~d\tau.
	\end{equation}
	Possibly passing to a 
(not relabeled) subsequence and using \eqref{eq:0},  
	let $\overline t\in [-1,0]$ be the limit 
	of $(t_{k_\indice,1})$ and $(t_{k_\indice,2})$.
	If $\overline t \neq 0$,
	for any open neighborhood $J \subset (-1,0)$ of $\overline t$, using \eqref{eq:mag_0}, we get
	$$\int_J|\dot \gamma|~d\tau=\lim_{h\rightarrow +\infty}\int_{J}|\dot\gamma_{k_\indice}|~d\tau\geq 
	s_2-s_1,$$
which contradicts the 
	inclusion 
	$\dot\gamma\in L^1((-1,0); \R^2)$.
	The same argument holds if $\overline t=0$, for $J$ a left
	neighbourhood of $0$ in $(-1,0)$. We conclude that $\newpar$ is 
	strictly increasing.
	
	Let 
	$\repa_{k_\indice}$ be a rescaling of $\newpar_{k_j}$ on $[-1,1]$;  rescaling also 
	$\newpar$ from $[0,r^-]$ to $[-1,-1/3]$, and then from $[r^+,L+\eta]$ to $[1/3,1]$, using also $\widetilde\newpar$ in \eqref{ali}, we construct a reparametrization $h:[-1,1]\rightarrow[-1,1]$ such that \eqref{uniform_rescalings} holds, and the lemma is proved.
\end{proof}

Lemma \ref{strict implies uniform 2}
can be readily extended to curves $\gamma$ with finitely many jump points:
\begin{Corollary}\label{corollario_conv}
Let 
$(\gamma_k)
\subset W^{1,1}([0,2\pi];\R^2)$ be a sequence 
converging strictly $BV([0,2\pi];\R^2)$ to a map $\gamma\in SBV([0,2\pi];\R^2)$
having  finitely many jump points $0<z_1<z_2<\dots < z_n<2\pi$. 
Let $\theta_0>0$ be such that 
the intervals $(z_i-\theta_0,z_i+\theta_0)\subset(0,2\pi)$ are 
disjoint, and for all $i=1,\dots,n$ let $S_i:[z_i-\theta_0,z_i+\theta_0]\rightarrow\R^2$ be defined by 	
$$
S_i(\tau):=\frac{1}{2\theta_0}\left(\left(\tau-z_i+\theta_0\right)\gamma^+(z_i)+\left(
z_i+\theta_0-\tau\right)\gamma^-(z_i)\right), \quad \tau\in [z_i-\theta_0,z_i+\theta_0].
$$
Setting $z_0:=0$
and $z_{n+1}:= 2\pi$, 
for all $i=0,\dots,n$ 
let $\widetilde \gamma_i:[z_i+\theta_0,z_{i+1}-\theta_0]\rightarrow\R^2$ be a rescaled reparametrization of $\gamma:[z_i,z_{i+1}]$.
Finally, let $\widetilde\gamma:[0,2\pi]\rightarrow\R^2$ 
be the Lipschitz curve defined as
\begin{align}\label{gamma_tilde}
\widetilde\gamma:=\widetilde \gamma_0\star S_1\star\widetilde \gamma_1\star S_2\star\widetilde \gamma_2\star\dots\star S_{n}\star \widetilde \gamma_n,
\end{align}
where $\star$ denotes the arc composition.
Then there exist a subsequence $(k_\indice)$ and Lipschitz increasing 
surjective functions $\repa, \repa_{k_\indice}:[0,2\pi]\rightarrow[0,2\pi]$ 
such that 
\begin{align}\label{uniform_rescalings2}
	\lim_{\indice \to +\infty}
	\gamma_{k_\indice}\circ \repa_{k_\indice} 
 = \widetilde\gamma\circ h
 \quad\mbox{ uniformly in }[0,2\pi].
\end{align}
	\end{Corollary}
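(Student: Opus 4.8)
The plan is to reduce Corollary~\ref{corollario_conv} to $n$ applications of Lemma~\ref{strict implies uniform 2}, one near each jump point $z_i$, together with a standard argument away from the jumps. First I would localize: pick $\theta_0$ as in the statement, so that the closed intervals $J_i := [z_i-\theta_0, z_i+\theta_0]$ are pairwise disjoint and contained in $(0,2\pi)$, and on each of them $\gamma$ has exactly one jump, namely at $z_i$, while $\gamma_{|J_i}\in BV(J_i;\R^2)\cap W^{1,1}(J_i^-;\R^2)\cap W^{1,1}(J_i^+;\R^2)$ with $\gamma^+(z_i)\neq\gamma^-(z_i)$. On the complementary closed intervals $K_i:=[z_i+\theta_0, z_{i+1}-\theta_0]$ (with $z_0=0$, $z_{n+1}=2\pi$) the limit $\gamma$ is in $W^{1,1}(K_i;\R^2)$.

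The key difficulty — and the reason this is a corollary rather than an immediate consequence — is that Lemma~\ref{strict implies uniform 2} is stated for strict convergence on the \emph{whole} interval $[-1,1]$, whereas here I only have strict convergence of $\gamma_k$ to $\gamma$ on $[0,2\pi]$, not automatically on each subinterval $J_i$. The standard fix: by Fatou/lower semicontinuity, $\liminf_k |\dot\gamma_k|(A) \ge |D\gamma|(A)$ for every interval $A$, and these local inequalities must be equalities for a.e.\ choice of endpoints, because summing them over a partition into finitely many intervals recovers the global identity $|\dot\gamma_k|([0,2\pi])\to|D\gamma|([0,2\pi])$ forced by strict convergence (this is exactly the slicing-type argument used in Lemma~\ref{lem:inheritance}). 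Hence, after slightly perturbing $\theta_0$ if needed (still keeping the $J_i$ disjoint), one obtains that $\gamma_k\to\gamma$ strictly $BV$ on each $J_i$ and on each $K_i$ separately — possibly after passing to a subsequence so that the $\liminf$'s become genuine limits simultaneously for all finitely many pieces.

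With this in hand I would, on each $J_i$, apply Lemma~\ref{strict implies uniform 2} (after an affine change of variables sending $J_i$ to $[-1,1]$ and $z_i$ to $0$): this yields a subsequence and Lipschitz strictly increasing surjective reparametrizations $h^{(i)}, h_k^{(i)}$ of $J_i$ with $\sup_k\|\dot h_k^{(i)}\|_\infty<\infty$ such that $\gamma_{k}\circ h_k^{(i)} \to \widetilde\gamma^{(i)}\circ h^{(i)}$ uniformly on $J_i$, where $\widetilde\gamma^{(i)}$ is the concatenation of a reparametrized $\gamma_{|J_i^-}$, the affine segment $S_i$, and a reparametrized $\gamma_{|J_i^+}$. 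On each $K_i$, since $\gamma_k\to\gamma$ strictly $BV$ with $\gamma\in W^{1,1}(K_i;\R^2)$, Lemma~\ref{lem:strict_implies_uniform_1} gives $\gamma_k\to\gamma$ uniformly on $K_i$; here I simply take $h_k^{(i)}=h^{(i)}=\mathrm{id}$ (or a harmless rescaling onto the target subinterval), and $\widetilde\gamma_i$ is the rescaled reparametrization of $\gamma_{|[z_i,z_{i+1}]}$ appearing in \eqref{gamma_tilde}. Since all these applications only pass to subsequences finitely many times, a single diagonal subsequence $(k_j)$ works for all pieces at once.

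Finally I would glue. The reparametrizations $h_k^{(i)}$ produced above on adjacent pieces agree at the shared endpoints with the corresponding limiting reparametrizations (they are all surjections of the respective closed subintervals onto themselves, up to the fixed affine rescalings), so their concatenation defines global Lipschitz strictly increasing surjective maps $h_{k_j}, h:[0,2\pi]\to[0,2\pi]$; the uniform bound $\sup_j\|\dot h_{k_j}\|_\infty<\infty$ is inherited piecewise. On each piece $\gamma_{k_j}\circ h_{k_j}$ converges uniformly to the corresponding piece of $\widetilde\gamma\circ h$, where $\widetilde\gamma$ is precisely the arc composition in \eqref{gamma_tilde}; since there are finitely many pieces and the limits match at the junction points, the convergence is uniform on all of $[0,2\pi]$, which is \eqref{uniform_rescalings2}. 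The main obstacle is the bookkeeping in the localization step — ensuring one can choose the partition endpoints so that strict convergence descends to every subinterval simultaneously — but this is handled by the now-routine ``slicing forces a.e.\ equality'' argument already exploited in Lemma~\ref{lem:inheritance}, combined with passing to a finite diagonal subsequence.
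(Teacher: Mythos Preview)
Your proposal is correct and follows essentially the same strategy as the paper: partition $[0,2\pi]$ into finitely many subintervals, apply Lemma~\ref{strict implies uniform 2} around each jump (after ensuring strict convergence localizes to the pieces via the slicing argument you describe), pass to a common subsequence, and concatenate the resulting reparametrizations. The paper's sketch differs only cosmetically, choosing intermediate points $w_i\in(z_i+\theta_0,z_{i+1}-\theta_0)$ so that each piece $[w_{i-1},w_i]$ contains exactly one jump (rather than splitting into separate jump and jump-free intervals as you do), and it leaves implicit the localization-of-strict-convergence step that you spell out.
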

\begin{proof}
	We skecth the proof which is a direct consequence of the arguments used to prove Lemma \ref{strict implies uniform 2}.
Choose points $w_i$, $i=1,\dots,n-1$ so that $z_i+\theta_0<w_i<z_{i+1}-\theta_0$, and let $w_0=0$ and $w_n=2\pi$. Then we can apply Lemma 
\ref{strict implies uniform 2}  to any interval $[w_i,w_{i+1}]$, and taking a suitable subsequence and concatenating the obtained maps  one can easily construct the desired parametrizations.
\end{proof}

\subsection{Planar Plateau-type problem}
\label{subsec:planar_Plateau_type_problem}
Let $\varphi:\unitcircle
\rightarrow\R^2$ be a possibly self-intersecting Lipschitz curve.
Let us consider, as in \cite{Pa} (see also \cite{FFM}), the
 planar Plateau-type problem 
\eqref{Plateau}
spanning $\varphi$. 
Notice that the class of competitors 
is non-empty, since it 
contains the map $v(x)=|x|
\homogeneousmap\left(\frac{x}{|x|}\right)$ for $x \neq 0$, and $v(0)=0$. 
We first observe that $P$ is independent of the radius
of the domain of integration. 
Specifically, for any $r>0$, let 
\begin{align}\label{notation_r}
\varphi_r(y):=\varphi\left(\frac{y}{r}\right) \text{ for all } y\in \partial B_r.
\end{align} 
Setting $y:=rx$,  $y\in \overline{B_r}$ and 
 $v_r(y):=v(\frac yr)$, we have
\begin{equation}\label{eq:int_Jac_radius}
\int_{B_1}|Jv|~dx=\int_{B_r}|Jv_r|dy\qquad\qquad \forall v\in \mathrm{Lip}(B_1;\R^2).
\end{equation}
In particular, for any $r>0$,
\begin{align}\label{Plateaurescaled}
P(\varphi)=\inf\left\{\int_{B_r}|Jv|~dx: \,v\in \mathrm{Lip}(B_r;\R^2), v_{|\partial B_r}=
\homogeneousmap_r
\right\}.
\end{align}

In 
the next
proposition we show that $P(\cdot)$ is invariant under
Lipschitz reparameterizations of $\varphi$.
\begin{Proposition}[\textbf{Invariance}]\label{independence_of_P}
	Let $\homogeneousmap \in {\rm Lip}(\Suno; \R^2)$ 
and $\repa$ be a Lipschitz homeomorphism of $\Suno$. Then 
$$
P(\homogeneousmap\circ \repa)=P(\homogeneousmap).
$$
\end{Proposition}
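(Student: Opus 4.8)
The plan is to show that the change of variables induced by $\repa$ sets up a bijection between the competitor classes for $P(\homogeneousmap\circ\repa)$ and for $P(\homogeneousmap)$ that preserves the value of the integral $\int_{B_1}|Jv|\,dx$. First I would note the two inequalities are symmetric: once $P(\homogeneousmap\circ\repa)\le P(\homogeneousmap)$ is proved for every Lipschitz homeomorphism $\repa$, applying it to $\repa^{-1}$ with boundary datum $\homogeneousmap\circ\repa$ gives $P(\homogeneousmap)=P((\homogeneousmap\circ\repa)\circ\repa^{-1})\le P(\homogeneousmap\circ\repa)$, closing the loop. So it suffices to prove one inequality.

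To prove $P(\homogeneousmap\circ\repa)\le P(\homogeneousmap)$, take any competitor $v\in{\rm Lip}(B_1;\R^2)$ with $v_{|\partial B_1}=\homogeneousmap$. I would extend $\repa:\Suno\to\Suno$ to a bi-Lipschitz homeomorphism $\Psi:\overline{B_1}\to\overline{B_1}$ of the disk — for instance, the radial extension $\Psi(x)=|x|\,\repa(x/|x|)$ for $x\ne 0$ and $\Psi(0)=0$, which is Lipschitz with Lipschitz inverse away from the origin and, being bounded, Lipschitz up to the origin as well (its inverse is the radial extension of $\repa^{-1}$). Then $w:=v\circ\Psi\in{\rm Lip}(B_1;\R^2)$ and $w_{|\partial B_1}=\homogeneousmap\circ\repa$, so $w$ is an admissible competitor for $P(\homogeneousmap\circ\repa)$. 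By the chain rule for Lipschitz maps and the multiplicativity of the determinant, $Jw(x)=Jv(\Psi(x))\,J\Psi(x)$ for a.e.\ $x$, hence by the area formula / change of variables for bi-Lipschitz maps,
\begin{equation*}
\int_{B_1}|Jw|\,dx=\int_{B_1}|Jv(\Psi(x))|\,|J\Psi(x)|\,dx=\int_{B_1}|Jv(y)|\,dy.
\end{equation*}
Taking the infimum over all such $v$ yields $P(\homogeneousmap\circ\repa)\le P(\homogeneousmap)$.

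The main technical point — and the only place requiring care — is the regularity of the radial extension $\Psi$ at the origin and the validity of the change-of-variables formula for it. One must check that $\Psi$ is Lipschitz on all of $B_1$ (not merely on annuli), that $J\Psi$ exists a.e.\ and is bounded, and that the area formula applies; these all follow from the bi-Lipschitz character of $\Psi$ and the fact that $\{0\}$ is Lebesgue-null, so it contributes nothing to the integrals. Alternatively, if one prefers to avoid the origin entirely, one can work with the annular formulation \eqref{Plateaurescaled}, or observe that any Lipschitz competitor can be modified near the center without changing the integral, reducing the extension problem to a neighborhood of $\partial B_1$ where bi-Lipschitzness of the radial extension is transparent. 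Either way the argument is routine once the bi-Lipschitz extension is in hand.
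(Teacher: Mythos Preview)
Your argument is essentially correct and gives a genuinely different proof from the paper's. The paper does not extend $h$ to the disk; instead it uses that $h$ and $\mathrm{id}$, having the same degree, are Lipschitz-homotopic in $\Suno$ (Hopf), and builds a competitor for $\varphi\circ h$ by rescaling a near-minimizer $v_k$ for $\varphi$ into a small disk $B_{1/k}$, filling the annulus $B_{2/k}\setminus B_{1/k}$ with the homotopy $H=\varphi\circ K$, and using the $0$-homogeneous extension of $\varphi\circ h$ outside. The key observation there is that both the homotopy and the $0$-homogeneous layer take values in the curve $\varphi(\Suno)$, hence their Jacobians vanish a.e.\ by the area formula, so only the inner disk contributes and the integral is preserved.

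Your change-of-variables route is more direct, but two points deserve care. First, the radial extension $\Psi$ is Lipschitz and bijective, but need not be \emph{bi}-Lipschitz: a Lipschitz homeomorphism $h$ of $\Suno$ may have derivative vanishing at a point (e.g.\ the lift $H(\theta)=\theta-\sin\theta$), so $h^{-1}$ and hence $\Psi^{-1}$ need not be Lipschitz. Fortunately bi-Lipschitzness is unnecessary: the area formula for an injective Lipschitz map already gives $\int_{B_1}|Jv(\Psi(x))||J\Psi(x)|\,dx=\int_{B_1}|Jv|\,dy$. Second, the pointwise chain rule $Jw(x)=Jv(\Psi(x))J\Psi(x)$ for compositions of Lipschitz maps requires justification at points $x$ where $\Psi(x)$ lies in the (null) non-differentiability set of $v$; the cleanest way to bypass this is to apply the area formula directly to $w=v\circ\Psi$ and to $v$, noting that since $\Psi:B_1\to B_1$ is a bijection the multiplicity functions satisfy $N(w,B_1,z)=N(v,B_1,z)$ for every $z$, whence $\int_{B_1}|Jw|=\int_{B_1}|Jv|$. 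With these adjustments your proof is complete and arguably more elementary than the paper's homotopy construction.
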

\begin{proof}

	Since $h$ and the identity map $\mathrm{id}:\Suno\rightarrow\Suno$ 
have the same degree, they are 
homotopic in $\Suno$ by Hopf Theorem (see \cite[pag. 51]{Mi}), namely there exists a Lipschitz map\footnote{The construction of a Lipschitz homotopy between $h$ and id can be done at the level of liftings, by considering the affine interpolation map (for more details, see for instance \cite[Proposition 3.4]{BCS}).} $K:[0,1]\times\Suno\rightarrow\Suno$
such that
	$$
	K(0,\cdot)=\mathrm{id},\quad K(1,\cdot)=h.
	$$
	Define $H:[0,1]\times\Suno\rightarrow\R^2$ as $H(t,\nu)=
	\homogeneousmap
	(K(t,\nu))$. Then, $H$ is Lipschitz and 
	$$
	H(0,\cdot)=\homogeneousmap,\quad H(1,\cdot)=\varphi \circ h.
	$$
	Now, suppose 
$v_k\in\mathrm{Lip}(B_1;\R^2)$ is such that $v_k=
	\homogeneousmap$ on $\partial B_1$ and 
	$$
	\lim_{k\rightarrow+\infty}\int_{B_1}|Jv_k|~dx\rightarrow P(\homogeneousmap).
	$$
Define the map $\widetilde{v}_k:B_1\rightarrow\R^2$ as
	\begin{equation}
		\widetilde{v}_k(x)=\begin{cases}
			v_k(kx)&  {\rm for ~} x \in B_\frac{1}{k},
\\
H\left(k|x|-1,\frac{x}{|x|}\right) & {\rm for~ } x 
\in B_\frac{2}{k}\setminus B_\frac{1}{k},
\\
\homogeneousmap\circ h
\left(\frac{x}{|x|}\right)& {\rm for ~}x 
\in  B_1\setminus B_\frac{2}{k}.
		\end{cases}
	\end{equation}
	Then $\widetilde{v}_k \in {\rm Lip}(B_1; \R^2)$ 
and $\widetilde{v}_k=\homogeneousmap\circ h$ on $\partial B_1$. 
Moreover, since $H$ and $\homogeneousmap\circ h$ take values  in $\varphi(\Suno)$ which is $1$-dimensional, by the 
area formula and \eqref{eq:int_Jac_radius} we have
	$$
	\int_{B_1}|J\widetilde{v}_k(x)|~dx=\int_{B_\frac{1}{k}}|J{v}_k(kx)|~dx=\int_{B_1}|Jv_k|~dx\rightarrow P(\homogeneousmap)
	$$
as $k \to +\infty$.
	In particular $P(\homogeneousmap\circ h)\leq P(\homogeneousmap)$.
	Exchanging the role of $\homogeneousmap$ and $\homogeneousmap\circ h$,
 we obtain the converse inequality.
\end{proof}

\begin{Lemma}\label{lem:P_is_Lip}
Let $\homogeneousmap_1,\varphi_2\in\mathrm{Lip}(\Suno;\R^2)$. Then
\begin{align}
	|P(\varphi_1)-P(\varphi_2)|\leq 
2\|\varphi_1-\varphi_2\|_{\infty}\big(
\Vert \dot\varphi_1\Vert_1
+\Vert \dot\varphi_2\Vert_1
\big).
\end{align} 
\end{Lemma}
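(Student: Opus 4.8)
The plan is to show that any Lipschitz competitor for the Plateau problem associated to $\varphi_1$ can be modified, at a controlled cost, into a competitor for the problem associated to $\varphi_2$, and vice versa; the lemma then follows by swapping the roles of $\varphi_1$ and $\varphi_2$. First I would exploit the scale-invariance \eqref{Plateaurescaled}, working on a ball of radius $2$ rather than $B_1$: fix $\eps>0$ and pick $v_1\in\mathrm{Lip}(B_2;\R^2)$ with $v_1|_{\partial B_2}=(\varphi_1)_2$ and $\int_{B_2}|Jv_1|\,dx\le P(\varphi_1)+\eps$. The idea is to interpolate, on the thin annulus $B_2\setminus B_1$, between $(\varphi_1)_2$ and $(\varphi_2)_2$ by the affine homotopy in the target: define
\begin{equation*}
\widetilde v(x):=
\begin{cases}
v_1(x) & x\in B_1,\\
(2-|x|)\,\varphi_1\!\left(\tfrac{x}{|x|}\right)+(|x|-1)\,\varphi_2\!\left(\tfrac{x}{|x|}\right) & x\in B_2\setminus B_1,
\end{cases}
\end{equation*}
which is Lipschitz, agrees with $v_1$ on $\partial B_1$, and satisfies $\widetilde v|_{\partial B_2}=(\varphi_2)_2$. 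Thus $\widetilde v$ is an admissible competitor for $P(\varphi_2)$ (after rescaling back to $B_1$ via \eqref{eq:int_Jac_radius}), so $P(\varphi_2)\le\int_{B_2}|J\widetilde v|\,dx=\int_{B_1}|Jv_1|\,dx+\int_{B_2\setminus B_1}|J\widetilde v|\,dx\le P(\varphi_1)+\eps+\int_{B_2\setminus B_1}|J\widetilde v|\,dx$.

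The key step is then to estimate the excess Jacobian mass on the annulus. In polar coordinates $x=\rho\,\omega$ with $\rho\in[1,2]$, $\omega\in\Suno$, the map $\widetilde v$ on the annulus is $\widetilde v(\rho,\omega)=H(\rho,\omega):=(2-\rho)\varphi_1(\omega)+(\rho-1)\varphi_2(\omega)$, so $\partial_\rho H=\varphi_2(\omega)-\varphi_1(\omega)$ and $\partial_\omega H=(2-\rho)\dot\varphi_1(\omega)+(\rho-1)\dot\varphi_2(\omega)$. The Jacobian determinant is $|J\widetilde v|=\tfrac1\rho\,|\partial_\rho H\wedge\partial_\omega H|\le|\varphi_2(\omega)-\varphi_1(\omega)|\,\big((2-\rho)|\dot\varphi_1(\omega)|+(\rho-1)|\dot\varphi_2(\omega)|\big)$, using $\rho\ge1$ and $|a\wedge b|\le|a||b|$. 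Integrating over $\rho\in[1,2]$ and $\omega\in\Suno$ and bounding $|\varphi_2-\varphi_1|\le\|\varphi_1-\varphi_2\|_\infty$, the $\rho$-integral of each weight is $\tfrac12$, so
\begin{equation*}
\int_{B_2\setminus B_1}|J\widetilde v|\,dx\le\tfrac12\|\varphi_1-\varphi_2\|_\infty\Big(\int_{\Suno}|\dot\varphi_1|\,d\mathcal H^1+\int_{\Suno}|\dot\varphi_2|\,d\mathcal H^1\Big)=\tfrac12\|\varphi_1-\varphi_2\|_\infty\big(\Vert\dot\varphi_1\Vert_1+\Vert\dot\varphi_2\Vert_1\big).
\end{equation*}
Letting $\eps\to0$ gives $P(\varphi_2)-P(\varphi_1)\le\tfrac12\|\varphi_1-\varphi_2\|_\infty(\Vert\dot\varphi_1\Vert_1+\Vert\dot\varphi_2\Vert_1)$, and by symmetry the same bound holds with the roles exchanged, yielding the claimed inequality (in fact with constant $\tfrac12$, hence a fortiori with $2$).

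The only genuinely delicate point is making sure the interpolation $\widetilde v$ is Lipschitz across $\partial B_1$ and that the polar-coordinate area computation is legitimate — i.e. that $\widetilde v$, being Lipschitz, satisfies the area formula and the change of variables to polar coordinates with the Jacobian factor $\tfrac1\rho$; this is routine since Lipschitz maps are differentiable a.e.\ and the coarea/area formula applies. A minor bookkeeping subtlety is the rescaling between $B_1$ and $B_2$: one should apply \eqref{Plateaurescaled} with $r=2$ at the start and \eqref{eq:int_Jac_radius} at the end so that all three Plateau values are compared on a common domain. Everything else is elementary.
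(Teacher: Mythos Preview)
Your overall strategy is the same as the paper's --- glue a near-optimal competitor to an affine interpolation collar and bound the extra Jacobian mass on the annulus --- but your construction has a genuine gap. You choose $v_1\in\mathrm{Lip}(B_2;\R^2)$ with $v_1\vert_{\partial B_2}=(\varphi_1)_2$, then set $\widetilde v=v_1$ on $B_1$ and let $\widetilde v$ be the affine interpolation on the annulus. At $|x|=1$ the interpolation gives $\varphi_1(x)$, but $v_1\vert_{\partial B_1}$ is \emph{not} $\varphi_1$: nothing in the choice of a near-minimizer on $B_2$ controls its trace on $\partial B_1$. Hence $\widetilde v$ is in general discontinuous across $\partial B_1$, so it is not Lipschitz and is not an admissible competitor for $P(\varphi_2)$.

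The fix is to reverse where you place the competitor: take $v_1\in\mathrm{Lip}(B_1;\R^2)$ with $v_1\vert_{\partial B_1}=\varphi_1$ and $\int_{B_1}|Jv_1|\le P(\varphi_1)+\eps$, then extend to $B_2$ by the same interpolation formula on $B_2\setminus B_1$; now the traces match on $\partial B_1$ and $\widetilde v\vert_{\partial B_2}=(\varphi_2)_2$, so \eqref{Plateaurescaled} applies. This is exactly the mechanism in the paper's proof, which instead rescales the competitor inward to $B_{1/2}$ and interpolates on $B_1\setminus B_{1/2}$. A minor additional point: in your annulus estimate you drop the factor $\tfrac1\rho$ using $\rho\ge1$ but then omit the polar area element $\rho\,d\rho\,d\mathcal H^1(\omega)$; done cleanly the $\rho$'s cancel and you indeed get the constant $\tfrac12$, which is sharper than the paper's constant $2$ (the paper uses a coarser bound on the convex weights).
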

\begin{proof}
Let $v\in\mathrm{Lip}(B_1;\R^2)$ be such that $v=\varphi_2$ on $\Suno$. We define 
\begin{equation}
	w(x)=
	\begin{cases}
		v_{\frac{1}{2}}(x)=v({2x}) &\mbox{if } |x|<\frac{1}{2},
\\
		2({1-|x|}){\varphi_2}\left(\frac{x}{|x|}\right)+2\left({|x|-\frac{1}{2}}\right)\homogeneousmap_1\left(\frac{x}{|x|}\right)&\mbox{if } \frac{1}{2}\leq|x|\leq1.
	\end{cases}
\end{equation}
Then
$w\in\mathrm{Lip}(B_1;\R^2)$,
$w(x)=\varphi_2(x/\vert x\vert)$ if $x\in \partial B_{\frac12}$  
and $w=\homogeneousmap_1$ on $\partial B_1$. Let us estimate
$$\int_{B_1\setminus \overline{B_{\frac{1}{2}}}}|Jw|~dx.$$
	Writing $w$ in polar coordinates in the annulus $B_1\setminus \overline{B_{\frac{1}{2}}}$, $\rho\in (\frac12,1)$, $\theta\in [0,2\pi)$,
$$
\bar{w}(\rho,\theta):=w(\rho\cos\theta,\rho\sin\theta)=2(1-\rho)\bar\varphi_2(\theta)+2\left(\rho-\frac{1}{2}\right)\bar\varphi_1(\theta),
$$
where $\bar\varphi_i(\theta):=\varphi_i(\cos\theta,\sin\theta)$, $i=1,2$.
Then
\begin{align*}
	|J\bar{w}(\rho,\theta)|&=4\left|(\bar\varphi_1(\theta)-\bar\varphi_2(\theta))\wedge\left((1-\rho)\dot{\bar\varphi}_2(\theta)+\left(\rho-\frac{1}{2}\right)\dot{\bar\varphi}_1(\theta)\right)\right|
\\
	&\leq 4\left|\bar\varphi_1(\theta)-\bar\varphi_2(\theta)\right|\left|(1-\rho)\dot{\bar\varphi}_2(\theta)+\left(\rho-\frac{1}{2}\right)\dot{\bar\varphi}_1(\theta)\right|
\\
	&\leq 4\|\varphi_1-\varphi_2\|_{\infty}\left(|\dot{\bar\varphi}_2(\theta)|+|\dot{\bar\varphi}_1(\theta)|\right).
\end{align*}
Thus, integrating on $B_1\setminus \overline{B_{\frac{1}{2}}}$,
\begin{align}\label{jacob var annulus}
	\int_{B_1\setminus \overline{B_{\frac{1}{2}}}}|J{w}(x)|~dx&=\int_{\frac{1}{2}}^1\int_0^{2\pi}|J\bar{w}(\rho,\theta)|~d\rho d\theta
\\
	&\leq 2\|\varphi_1-\varphi_2\|_\infty
\int_0^{2\pi}
\left(|\dot{\bar\varphi}_2(\theta)|+|\dot{\bar\varphi}_1(\theta)|\right)d\theta
\\
	&=2
\|\varphi_1-\varphi_2\|_\infty\left(\Vert \dot\varphi_1\Vert_1
+\Vert \dot\varphi_2\Vert_1
\right).
\end{align}
Hence
\begin{align}\label{P estim}
P(\varphi_1)\leq \int_{B_1}|Jw|~dx\leq \int_{B_{\frac12}}
|Jv_{\frac12}|~dx+
2\|\varphi_1-\varphi_2\|_\infty\left(
\Vert \dot\varphi_1\Vert_1
+\Vert \dot\varphi_2\Vert_1
\right).
\end{align}
Since $v$ is arbitrary and (with the notation in \eqref{notation_r}) $v_{\frac12}=(\varphi_2)_\frac12$ on $\partial B_{\frac12}$, using \eqref{Plateaurescaled} with $r=\frac{1}{2}$ we can take the infimum on these maps in \eqref{P estim} and get
\begin{align*}
	P(\varphi_1)-P(\varphi_2)
\leq2\|\varphi_1-\varphi_2\|_\infty
\left(\Vert \dot\varphi_1\Vert_1
+\Vert \dot\varphi_2\Vert_1
\right).
\end{align*}
 Exchanging the role of $\varphi_1$ and $\varphi_2$ we find that 
also $P(\varphi_2)-P(\varphi_1)$ is bounded by the right-hand side 
of the previous expression. This concludes the proof.
\end{proof}

\begin{Remark}\label{affineint_rem}
With a similar argument used in the proof of Lemma 
\ref{lem:P_is_Lip} it is 
immediate to obtain that if 
$[a,b]\subset\R$ is a bounded interval and 
$\gamma_1,\gamma_2:[a,b]\rightarrow\R^2$ are 
Lipschitz curves, then the following holds: Let $\Phi:[a,b]\times[0,1]\rightarrow\R^2$ be
the 
affine interpolation map $\Phi(t,s):=s
\gamma_1(t)+ 
(1-s)
\gamma_2(t)$.
Then, as in \eqref{jacob var annulus},
\begin{align}
	\int_{[a,b]\times [0,1]}|\Phi_t\wedge\Phi_s|~dtds
\leq \|\gamma_1-\gamma_2\|_{\infty}(\Vert \dot\gamma_1\Vert_1
+\Vert \dot\gamma_2\Vert_1).
\end{align}
\end{Remark}

Using Lemma \ref{lem:P_is_Lip} 
we readily obtain the following continuity property for the minimum of the Plateau-type problem \eqref{Plateau}.

\begin{Corollary}[\textbf{Continuity of $P$}]\label{cor:continuity_of_P}
	Let $\homogeneousmap\in\mathrm{Lip}(\Suno;\R^2)$ and suppose that  
	$(\varphi_\indiceP)_\indiceP
\subset\mathrm{Lip}(\Suno;\R^2)$ is 
such that 
$$
\varphi_\indiceP\rightarrow\homogeneousmap\quad\mbox{uniformly}\quad\mbox{and }
\quad \sup_{\indiceP \in \mathbb N} \Vert \dot\varphi_\indiceP\Vert_1<+\infty.
$$ Then $P(\varphi_\indiceP)\rightarrow P(\homogeneousmap)$ as $\indiceP \to +\infty$.
\end{Corollary}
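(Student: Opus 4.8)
The plan is to deduce Corollary \ref{cor:continuity_of_P} directly from the quantitative Lipschitz-type estimate in Lemma \ref{lem:P_is_Lip}. Applying that lemma with $\varphi_1 = \varphi_\indiceP$ and $\varphi_2 = \homogeneousmap$ gives
\begin{align*}
	|P(\varphi_\indiceP) - P(\homogeneousmap)| \leq 2 \|\varphi_\indiceP - \homogeneousmap\|_\infty \big( \Vert \dot\varphi_\indiceP\Vert_1 + \Vert \dot\homogeneousmap\Vert_1 \big).
\end{align*}
The second factor is bounded uniformly in $\indiceP$: by hypothesis $\sup_\indiceP \Vert \dot\varphi_\indiceP\Vert_1 < +\infty$, and $\Vert \dot\homogeneousmap\Vert_1$ is a fixed finite number since $\homogeneousmap \in \mathrm{Lip}(\Suno;\R^2)$. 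The first factor tends to $0$ as $\indiceP \to +\infty$ by the assumed uniform convergence $\varphi_\indiceP \to \homogeneousmap$. Hence the product tends to $0$, which yields $P(\varphi_\indiceP) \to P(\homogeneousmap)$.

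There is essentially no obstacle here: the statement is an immediate corollary of Lemma \ref{lem:P_is_Lip}, and the only thing to check is that the two hypotheses of the corollary supply exactly what is needed to control the two factors in the estimate. I would write this out in two or three lines, noting explicitly that the uniform bound on $\Vert \dot\varphi_\indiceP\Vert_1$ is what prevents the estimate from degenerating when $\varphi_\indiceP$ develops increasingly oscillatory behaviour while still converging uniformly.

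If one wanted to be slightly more careful about the role of the hypotheses, one could remark that uniform convergence alone (without the derivative bound) would not suffice, since $P$ genuinely depends on the length-type quantity $\Vert \dot\varphi\Vert_1$ and not only on the image curve; this is consistent with the fact that $P$ is \emph{not} continuous with respect to uniform convergence in general. But for the proof of the corollary as stated, the displayed inequality together with the two hypotheses is all that is required.
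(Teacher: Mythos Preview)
Your proof is correct and is exactly the argument the paper intends: the corollary is stated immediately after Lemma \ref{lem:P_is_Lip} with the remark that it is obtained ``readily'' from that lemma, and your application of the estimate with $\varphi_1=\varphi_\indiceP$, $\varphi_2=\homogeneousmap$ is precisely what is meant.
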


In what follows it is convenient to consider  
the relaxation 
\begin{align}\label{Plateau_rel}
	\rilP(\gamma)
:=\inf\left\{\liminf_{\indiceP
\rightarrow +\infty}P(\varphi_\indiceP): 
\varphi_\indiceP\in \mathrm{Lip}(\Suno;\R^2), \varphi_\indiceP\rightarrow\gamma\text{ strictly }BV(\Suno;\R^2)
\right\} \quad
 \forall \gamma\in BV(\Suno;\R^2)
\end{align}
of $P$ with respect to the 
strict convergence in $BV$ of the boundary datum. It is well known that the infimum in \eqref{Plateau_rel} is taken on a non-empty class of approximation maps. Moreover, by \eqref{eq:int_Jac_radius}, also $\rilP$ is invariant by rescaling, i.e. $\rilP(\gamma)=\rilP(\gamma_r)$.

\begin{Lemma}
	Let $\varphi\in \mathrm{Lip}(\Suno;\R^2)$. Then $\rilP(\varphi)=P(\varphi)$.
\end{Lemma}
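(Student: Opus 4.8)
The plan is to show the two inequalities $\rilP(\varphi)\le P(\varphi)$ and $\rilP(\varphi)\ge P(\varphi)$ separately. The first one is immediate: since $\varphi\in\mathrm{Lip}(\Suno;\R^2)$, the constant sequence $\varphi_k:=\varphi$ is admissible in the definition \eqref{Plateau_rel} of $\rilP(\varphi)$, as it trivially converges to $\varphi$ strictly $BV(\Suno;\R^2)$; hence $\rilP(\varphi)\le\liminf_k P(\varphi_k)=P(\varphi)$.

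For the reverse inequality, let $(\varphi_k)\subset\mathrm{Lip}(\Suno;\R^2)$ be any sequence converging to $\varphi$ strictly $BV(\Suno;\R^2)$. I must show $\liminf_k P(\varphi_k)\ge P(\varphi)$. The natural tool is the continuity statement in Corollary \ref{cor:continuity_of_P}, which gives $P(\varphi_k)\to P(\varphi)$ as soon as $\varphi_k\to\varphi$ uniformly and $\sup_k\Vert\dot\varphi_k\Vert_1<+\infty$. The boundedness of the lengths is automatic: strict $BV$-convergence forces $\vert D\varphi_k\vert(\Suno)=\Vert\dot\varphi_k\Vert_1\to\vert D\varphi\vert(\Suno)=\Vert\dot\varphi\Vert_1$, so in particular $\sup_k\Vert\dot\varphi_k\Vert_1<+\infty$. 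The uniform convergence is exactly the content of Lemma \ref{lem:strict_implies_uniform_1} (applied on $\Suno$, or equivalently on an interval after identifying $\Suno$ with $[0,2\pi]$ with matching endpoints), since $\varphi$ is Lipschitz, hence lies in $W^{1,1}$ and has no jumps. Therefore $P(\varphi_k)\to P(\varphi)$, and in particular $\liminf_k P(\varphi_k)=P(\varphi)$. Taking the infimum over all such sequences yields $\rilP(\varphi)\ge P(\varphi)$, and combining with the first inequality gives the claim.

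There is essentially no hard step here: the statement is a ``no relaxation occurs at Lipschitz data'' result, and it reduces entirely to the fact that strict convergence upgrades to uniform convergence when the limit has no jumps (Lemma \ref{lem:strict_implies_uniform_1}) together with the length-continuity built into strict convergence, both of which feed directly into Corollary \ref{cor:continuity_of_P}. The only minor point to be careful about is passing from the interval statement of Lemma \ref{lem:strict_implies_uniform_1} to the circle $\Suno$, which is routine since a $W^{1,1}$ map on $\Suno$ corresponds to a $W^{1,1}$ map on $[0,2\pi]$ taking the same value at the two endpoints, and strict convergence is preserved under this identification.
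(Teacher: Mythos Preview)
Your proof is correct and follows essentially the same approach as the paper: use Lemma \ref{lem:strict_implies_uniform_1} to upgrade strict convergence to uniform convergence, note that strict convergence gives equibounded total variations, and conclude via Corollary \ref{cor:continuity_of_P} that $P(\varphi_k)\to P(\varphi)$ for every admissible sequence. You are slightly more explicit than the paper in isolating the inequality $\rilP(\varphi)\le P(\varphi)$ via the constant sequence, but otherwise the arguments coincide.
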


\begin{proof}
	If $(\varphi_\indiceP)\subset 
\mathrm{Lip}(\Suno;\R^2)$ is a sequence converging to 
$\varphi$ strictly $BV(\Suno;\R^2)$, then by 
Lemma \ref{lem:strict_implies_uniform_1}  $\varphi_\indiceP
\rightarrow \varphi$ 
uniformly on $\Suno$ as $\indiceP\rightarrow +\infty$. 
Moreover, the strict convergence guarantees 
that the total variations of $\varphi_\indiceP$ are equibounded. So, 
thanks to Corollary \ref{cor:continuity_of_P}, \begin{align}\label{cont_subs}
	P(\varphi_\indiceP)\rightarrow P(\varphi)
	\end{align}
as $\indiceP \to +\infty$.	Since this holds
 for any sequence $(\varphi_\indiceP)$ as above,  the thesis 
follows.
\end{proof}

\begin{Lemma}\label{lem_plateaurel}
	Let $\gamma\in SBV(\Suno;\R^2)$ have a finite number of jump points $z_i\in \Suno$, $i=1,\dots,n$. Let $\widetilde\gamma:\Suno\rightarrow\R^2$ be the Lipschitz map 
in \eqref{gamma_tilde}
(with $\Suno$ identified with $[0,2\pi]$). Then 
	\begin{align}
	\rilP(\gamma)=P(\widetilde\gamma).
	\end{align}
\end{Lemma}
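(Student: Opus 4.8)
The plan is to prove the \emph{stronger} fact that for \emph{every} sequence $(\varphi_k)\subset\mathrm{Lip}(\Suno;\R^2)$ converging strictly $BV(\Suno;\R^2)$ to $\gamma$ one has the full convergence $P(\varphi_k)\to P(\widetilde\gamma)$. Since, as recalled after \eqref{Plateau_rel}, the class of such approximating sequences is non-empty, this immediately yields $\rilP(\gamma)=\liminf_k P(\varphi_k)=P(\widetilde\gamma)$ and hence the Lemma. To prove $P(\varphi_k)\to P(\widetilde\gamma)$ I would use the subsequence principle: it suffices to show that every subsequence of $(\varphi_k)$ admits a further subsequence along which $P\to P(\widetilde\gamma)$. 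So fix a subsequence; after a harmless rotation of $\Suno$ (which alters neither $\rilP(\gamma)$ nor $P(\widetilde\gamma)$) we may assume that $0$ is not a jump point of $\gamma$, and identify $\Suno$ with $[0,2\pi]$. We are then exactly in the hypotheses of Corollary \ref{corollario_conv}: it provides a further subsequence $(k_j)$, Lipschitz increasing surjective maps $h,h_{k_j}:[0,2\pi]\to[0,2\pi]$, and a Lipschitz curve of the form \eqref{gamma_tilde} (still denoted $\widetilde\gamma$, possibly built with different auxiliary parameters than the one in the statement) such that $\varphi_{k_j}\circ h_{k_j}\to\widetilde\gamma\circ h$ uniformly on $[0,2\pi]$.

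The next step is to transfer this uniform convergence into a statement about $P$ via Corollary \ref{cor:continuity_of_P}, applied with $\varphi_{k_j}\circ h_{k_j}\to\widetilde\gamma\circ h$. For this I need to control the $L^1$-norm of the derivatives of the reparametrized curves. Using the chain rule (valid a.e., both maps being Lipschitz) and the change of variables $s=h_{k_j}(\theta)$,
$$
\int_0^{2\pi}\Big|\tfrac{d}{d\theta}\big(\varphi_{k_j}\circ h_{k_j}\big)\Big|\,d\theta=\int_0^{2\pi}|\dot\varphi_{k_j}|\,ds=|D\varphi_{k_j}|(\Suno)\longrightarrow|D\gamma|(\Suno)<+\infty\quad\text{as }j\to+\infty,
$$
the convergence being precisely the strict $BV$-convergence (inherited by the subsequence). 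Hence $\sup_j\big\|\tfrac{d}{d\theta}(\varphi_{k_j}\circ h_{k_j})\big\|_1<+\infty$, and Corollary \ref{cor:continuity_of_P} gives $P(\varphi_{k_j}\circ h_{k_j})\to P(\widetilde\gamma\circ h)$. Finally I invoke reparametrization invariance: $h$ and each $h_{k_j}$ are degree-one Lipschitz self-maps of $\Suno$, hence Lipschitz-homotopic to the identity, so the proof of Proposition \ref{independence_of_P} applies to them verbatim, giving $P(\varphi_{k_j}\circ h_{k_j})=P(\varphi_{k_j})$ and $P(\widetilde\gamma\circ h)=P(\widetilde\gamma)$; and this last quantity equals $P$ of the curve \eqref{gamma_tilde} from the statement, since any two such curves are Lipschitz reparametrizations of one another. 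Therefore $P(\varphi_{k_j})\to P(\widetilde\gamma)$, which is exactly what the subsequence principle requires; hence $P(\varphi_k)\to P(\widetilde\gamma)$, and in particular $\liminf_k P(\varphi_k)=P(\widetilde\gamma)$ for every strictly convergent sequence, i.e. $\rilP(\gamma)=P(\widetilde\gamma)$.

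The genuinely substantive input is Corollary \ref{corollario_conv} (which is already available), together with the continuity of $P$ under uniform convergence with equibounded length, Corollary \ref{cor:continuity_of_P}. I expect the only mildly delicate points to be bookkeeping: verifying that the curve produced by Corollary \ref{corollario_conv} is a Lipschitz reparametrization of the $\widetilde\gamma$ in \eqref{gamma_tilde} (so that $P$ is unchanged), and noting that Proposition \ref{independence_of_P}, although stated for Lipschitz \emph{homeomorphisms}, remains valid for merely increasing surjective Lipschitz self-maps of $\Suno$ because its proof uses only the equality of degrees; the reduction ``$0$ not a jump point'' is purely notational. Everything else is a routine combination of the three cited results and the subsequence principle.
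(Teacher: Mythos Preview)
Your proof is correct and follows essentially the same route as the paper: take any strictly convergent sequence, pass to a subsequence, invoke Corollary \ref{corollario_conv} to obtain uniform convergence after reparametrization, apply Corollary \ref{cor:continuity_of_P}, and remove the reparametrizations via Proposition \ref{independence_of_P}, concluding by the subsequence principle. The one minor variation is that the paper bounds $\|\tfrac{d}{d\theta}(\varphi_{k_j}\circ h_{k_j})\|_1$ by appealing to the uniform Lipschitz bound on $h_{k_j}$ from Lemma \ref{strict implies uniform 2}(b), whereas you obtain it directly as $|D\varphi_{k_j}|(\Suno)$ via change of variables; your argument is slightly cleaner at this step, but the two are otherwise identical.
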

\begin{proof}
	Let $(\varphi_\indiceP)_\indiceP
\subset\mathrm{Lip}(\Suno;\R^2)$ be a 
sequence converging strictly to $\gamma$. Let us consider a not-relabeled 
subsequence of $(\varphi_\indiceP)_\indiceP$; by Corollary \ref{corollario_conv} there are a further subsequence $(\varphi_{\indiceP_j})_j$ and 
Lipschitz reparametrizations $\gamma_{\indiceP_j}=\varphi_{\indiceP_j}
\circ h_{\indiceP_j}\in\mathrm{Lip}(\Suno;\R^2)$ 
of $\varphi_{\indiceP_j}$ such that $\gamma_{\indiceP_j}\rightarrow
\widetilde\gamma\circ \repa$ 
uniformly as $j\rightarrow +\infty$, 
 for some Lipschitz 
homeomorphism $\repa:\Suno\rightarrow\Suno$.  Moreover, since by Lemma \ref{strict implies uniform 2}(b) the 
reparametrization maps $\repa_{\indiceP_j}$ can be chosen with uniformly bounded 
Lipschitz constants, it follows that $\gamma_{\indiceP_j}$ 
have uniformly 
bounded total variations. Hence it follows from Corollary 
\ref{cor:continuity_of_P} that 
$P(\gamma_{\indiceP_j})\rightarrow P(
\widetilde\gamma\circ \repa)$ as $j\rightarrow +\infty$. 
On the other hand, by Proposition \ref{independence_of_P} we also have
	$P(\varphi_{\indiceP_j})\rightarrow P(\widetilde\gamma)$ as $j\rightarrow +\infty$.	Finally, since this argument holds for any 
subsequence of $(\varphi_\indiceP)$, we conclude that the whole sequence satisfies
	$P(\varphi_\indiceP)\rightarrow P(\widetilde\gamma)$, and therefore $\rilP(\gamma)=P(\widetilde\gamma)$.
\end{proof}

As a consequence of the argument in the proof of Lemma \ref{lem_plateaurel}, we easily infer the following continuity property:

\begin{Corollary}\label{cor_continuity_P}
	Let $\gamma\in SBV(\Suno;\R^2)$ and $\widetilde\gamma$ be as in Corollary \ref{corollario_conv}, and assume that $(\varphi_\indiceP)_\indiceP\subset \mathrm{Lip}(\Suno;\R^2)$ is a sequence converging strictly to $\gamma$. Then
	$$\lim_{\indiceP
\rightarrow +\infty}P(\varphi_\indiceP)=\rilP(\gamma)=P(\widetilde\gamma).$$ 
\end{Corollary}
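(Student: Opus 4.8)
The plan is to read the statement as a routine strengthening of Lemma \ref{lem_plateaurel}: that Lemma already identifies $\rilP(\gamma)$ with $P(\widetilde\gamma)$, and its proof in fact shows that the value $P(\widetilde\gamma)$ is reached along the \emph{whole} strictly converging sequence, not merely along a subsequence. So I would first record that, $\widetilde\gamma$ being as in Corollary \ref{corollario_conv}, the map $\gamma$ has finitely many jump points, hence Lemma \ref{lem_plateaurel} applies and yields $\rilP(\gamma)=P(\widetilde\gamma)$. It then remains only to prove $\lim_{\indiceP\to+\infty}P(\varphi_\indiceP)=P(\widetilde\gamma)$ for the given sequence, and I would obtain this from the standard subsequence criterion for real sequences.

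Concretely, fix an arbitrary subsequence of $(\varphi_\indiceP)$. By Corollary \ref{corollario_conv} there exist a further subsequence $(\varphi_{\indiceP_j})_j$, a Lipschitz homeomorphism $h:\Suno\to\Suno$, and Lipschitz increasing surjections $h_{\indiceP_j}:\Suno\to\Suno$ with $\sup_j\|\dot h_{\indiceP_j}\|_\infty<+\infty$, such that the reparametrizations $\gamma_{\indiceP_j}:=\varphi_{\indiceP_j}\circ h_{\indiceP_j}$ converge uniformly to $\widetilde\gamma\circ h$. The strict convergence of $(\varphi_\indiceP)$ bounds the total variations of the $\varphi_{\indiceP_j}$, and the uniform bound on $\|\dot h_{\indiceP_j}\|_\infty$ then gives $\sup_j\|\dot\gamma_{\indiceP_j}\|_1<+\infty$; hence Corollary \ref{cor:continuity_of_P} applies and yields $P(\gamma_{\indiceP_j})\to P(\widetilde\gamma\circ h)$ as $j\to+\infty$. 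Combining this with Proposition \ref{independence_of_P}, which gives $P(\gamma_{\indiceP_j})=P(\varphi_{\indiceP_j})$ and $P(\widetilde\gamma\circ h)=P(\widetilde\gamma)$, we conclude $P(\varphi_{\indiceP_j})\to P(\widetilde\gamma)$.

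Thus every subsequence of the real sequence $(P(\varphi_\indiceP))_\indiceP$ has a further subsequence converging to $P(\widetilde\gamma)$, which forces $\lim_{\indiceP\to+\infty}P(\varphi_\indiceP)=P(\widetilde\gamma)=\rilP(\gamma)$, as claimed. I do not expect any genuine obstacle here: the one point that requires care — and it is already supplied by Corollary \ref{corollario_conv} and the definition of strict convergence — is ensuring that the reparametrized curves $\gamma_{\indiceP_j}$ have equibounded total variation, so that the continuity statement of Corollary \ref{cor:continuity_of_P} is applicable; the rest is the invariance of $P$ under Lipschitz reparametrizations and the elementary subsequence principle.
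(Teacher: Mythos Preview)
Your proposal is correct and follows essentially the same route as the paper: the paper states the corollary as an immediate consequence of the argument used to prove Lemma \ref{lem_plateaurel}, and what you have written is precisely a careful unfolding of that argument (subsequence, Corollary \ref{corollario_conv}, equibounded total variations via Lemma \ref{strict implies uniform 2}(b), Corollary \ref{cor:continuity_of_P}, Proposition \ref{independence_of_P}, then the subsequence principle). The only cosmetic difference is that you attribute the uniform bound $\sup_j\|\dot h_{k_j}\|_\infty<+\infty$ directly to Corollary \ref{corollario_conv}, whereas the paper traces it back to Lemma \ref{strict implies uniform 2}(b); since the former is proved by concatenating instances of the latter, this is harmless.
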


Furthermore, we can refine the previous corollary as follows:

\begin{Corollary}\label{cor_continuity_P2}
	Let $\gamma,\gamma_\indiceP\in SBV(\Suno;\R^2)$, $\indiceP
\geq1$, be maps as in Corollary \ref{corollario_conv}. Assume that $(\gamma_\indiceP)$
 converges to $\gamma$ strictly $BV(\Suno;\R^2)$. Then 
	$$\lim_{\indiceP
\rightarrow +\infty}\rilP(\gamma_\indiceP)=\rilP(\gamma).$$
\end{Corollary}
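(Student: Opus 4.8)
The plan is to reduce the statement for $\gamma_\indiceP$ converging strictly to $\gamma$ to the already-established continuity property for sequences of \emph{Lipschitz} maps, namely Corollary~\ref{cor:continuity_of_P}, by interposing suitable Lipschitz approximations. First I would use Lemma~\ref{lem_plateaurel} to replace the quantities $\rilP(\gamma_\indiceP)$ and $\rilP(\gamma)$ by $P(\widetilde\gamma_\indiceP)$ and $P(\widetilde\gamma)$, where $\widetilde\gamma_\indiceP$ and $\widetilde\gamma$ are the associated Lipschitz ``polygonalized'' curves obtained by the construction in Corollary~\ref{corollario_conv}: each $\widetilde\gamma_\indiceP$ is built by rescaling the $W^{1,1}$-pieces of $\gamma_\indiceP$ between consecutive jump points and inserting the affine segments $S_i^\indiceP$ joining the one-sided traces $\gamma_\indiceP^\pm(z_i^\indiceP)$. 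So the goal becomes $P(\widetilde\gamma_\indiceP)\to P(\widetilde\gamma)$.

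Next I would verify that $\widetilde\gamma_\indiceP\to\widetilde\gamma$ in a way that lets us apply Corollary~\ref{cor:continuity_of_P}, i.e. uniformly and with $\sup_\indiceP\Vert\dot{\widetilde\gamma}_\indiceP\Vert_1<+\infty$. The total-variation bound is the easy half: since $\gamma_\indiceP\to\gamma$ strictly $BV(\Suno;\R^2)$, we have $|D\gamma_\indiceP|(\Suno)\to|D\gamma|(\Suno)$, in particular the lengths are equibounded; the rescaled absolutely continuous parts of $\widetilde\gamma_\indiceP$ have $L^1$-norm of the derivative equal (up to the fixed rescaling factors determined by the intervals $[z_i+\theta_0,z_{i+1}-\theta_0]$) to that of $\gamma_\indiceP$ on the corresponding pieces, and the inserted segments $S_i^\indiceP$ contribute lengths $|\gamma_\indiceP^+(z_i)-\gamma_\indiceP^-(z_i)|$, which converge to $|\gamma^+(z_i)-\gamma^-(z_i)|$; hence they are bounded. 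For the uniform convergence, I would argue piecewise: on each interval between (slightly enlarged) jump points, $\gamma_\indiceP\to\gamma$ strictly $BV$ of an interval with $\gamma$ of class $W^{1,1}$ there, so Lemma~\ref{lem:strict_implies_uniform_1} gives uniform convergence of the rescaled pieces; on the segments $S_i^\indiceP$, uniform convergence follows because the endpoints $\gamma_\indiceP^\pm(z_i)$ converge to $\gamma^\pm(z_i)$ (a consequence of the strict convergence together with $\gamma\in SBV$ with a finite jump set, exactly as in \eqref{limit delta}). Concatenating, $\widetilde\gamma_\indiceP\to\widetilde\gamma$ uniformly on $\Suno$.

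With those two facts in hand, Corollary~\ref{cor:continuity_of_P} applies directly and yields $P(\widetilde\gamma_\indiceP)\to P(\widetilde\gamma)$, i.e. $\rilP(\gamma_\indiceP)\to\rilP(\gamma)$, which is the claim. I expect the main obstacle to be the bookkeeping around the jump points: the jump locations $z_i^\indiceP$ of $\gamma_\indiceP$ need not coincide with those $z_i$ of $\gamma$, and a priori $\gamma_\indiceP$ could have extra small jumps or the $z_i^\indiceP$ could drift. One clean way around this is to note that if $\gamma_\indiceP$ is only required to be in $SBV$ with finitely many jumps (as in Corollary~\ref{corollario_conv}), the strict convergence forces the jump part $D^j\gamma_\indiceP$ to converge to $D^j\gamma$ as measures together with convergence of total masses, so the jump points cluster near the $z_i$ and their sizes converge; choosing $\theta_0$ small and fixed, and $\indiceP$ large, the construction of $\widetilde\gamma_\indiceP$ can be carried out with the \emph{same} reference intervals $[z_i-\theta_0,z_i+\theta_0]$, the only change being that the affine datum uses $\gamma_\indiceP^\pm$ at nearby points. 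Once this is set up, the uniform-convergence and length-bound estimates go through as sketched, and the continuity of $P$ closes the argument. Alternatively, one can observe that $\rilP$ is itself lower semicontinuous with respect to strict convergence (being a relaxation), so only the upper bound $\limsup_\indiceP\rilP(\gamma_\indiceP)\le\rilP(\gamma)$ requires work, and that can be obtained by a diagonal argument feeding near-optimal Lipschitz recovery sequences for each $\gamma_\indiceP$ into the definition of $\rilP(\gamma)$ — but the direct route via Corollary~\ref{cor:continuity_of_P} is shorter.
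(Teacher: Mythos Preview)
Your main route---building the polygonalized curves $\widetilde\gamma_\indiceP$ from the jump set of each $\gamma_\indiceP$ and then showing $\widetilde\gamma_\indiceP\to\widetilde\gamma$ uniformly so as to invoke Corollary~\ref{cor:continuity_of_P}---runs into the obstacle you yourself flag, and you do not actually resolve it. The assertion that ``strict convergence forces the jump part $D^j\gamma_\indiceP$ to converge to $D^j\gamma$ with convergence of total masses'' is false: strict $BV$-convergence controls only $|D\gamma_\indiceP|$, not the splitting into absolutely continuous and jump parts. A sequence $\gamma_\indiceP$ may realize (part of) a limiting jump through a steep $W^{1,1}$ piece, or carry spurious small jumps at locations far from the $z_i$; in either case the combinatorics of $\widetilde\gamma_\indiceP$ (number and placement of inserted segments, choice of $\theta_0$) differ from those of $\widetilde\gamma$, so a direct uniform comparison of $\widetilde\gamma_\indiceP$ with $\widetilde\gamma$ is not available without further reparametrization arguments that you do not supply.

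The paper's proof is precisely the diagonal argument you mention at the end and then dismiss as longer---it is in fact shorter. For each $\indiceP$ one picks, using Corollary~\ref{cor_continuity_P} and the density of Lipschitz maps in the strict topology, a Lipschitz $\varphi_\indiceP$ with $\|\gamma_\indiceP-\varphi_\indiceP\|_1+\big||\dot\varphi_\indiceP|(\Suno)-|\dot\gamma_\indiceP|(\Suno)\big|+|P(\varphi_\indiceP)-\rilP(\gamma_\indiceP)|\le 1/\indiceP$. Then $\varphi_\indiceP\to\gamma$ strictly, so Corollary~\ref{cor_continuity_P} gives $P(\varphi_\indiceP)\to\rilP(\gamma)$, and the triangle inequality yields $\rilP(\gamma_\indiceP)\to\rilP(\gamma)$ directly---no separate lower/upper semicontinuity split is needed. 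Note that the relevant tool is Corollary~\ref{cor_continuity_P} (Lipschitz sequence converging strictly to an $SBV$ map), not Corollary~\ref{cor:continuity_of_P}; this is what lets the argument bypass any explicit comparison of $\widetilde\gamma_\indiceP$ with $\widetilde\gamma$.
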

\begin{proof}
By Corollary \ref{cor_continuity_P} and the density of $\text{Lip}(\Suno;\R^2)$ in $BV(\Suno;\R^2)$ with respect to the strict convergence, for 
all $\indiceP\geq1$ we can find $\varphi_\indiceP
\in \text{Lip}(\Suno;\R^2)$ such that 
$$\|\gamma_\indiceP-\varphi_\indiceP\|_{1}
+\left||\dot\varphi_\indiceP|(\Suno)-|\dot\gamma_\indiceP|(\Suno)\right|
+\left|P(\varphi_\indiceP)-\rilP(\gamma_\indiceP)\right|\leq 
\frac{1}{\indiceP}.$$
Hence the sequence $(\varphi_\indiceP)$ converges to $\gamma$ 
strictly $BV(\Suno;\R^2)$, and by the triangle inequality and  
Corollary \ref{cor_continuity_P} we conclude
$$\lim_{\indiceP\rightarrow +\infty}\rilP(\gamma_\indiceP)=\rilP(\gamma).$$ 
\end{proof}

\section{Relaxation on piecewise Lipschitz maps jumping on a curve}
\label{sec:relaxation_of_area_on_piecewise_Lipschitz_maps}

Recalling that $R=[a,b]\times[-1,1]$, 
consider 
$R^+=\{(x_1,x_2)\in R:x_2>0\}$
and  $R^-=\{(x_1,x_2)\in R:x_2<0\}$.
\begin{Definition}[\textbf{Piecewise Lipschitz map}]
We say that a map $u : \rettangolo \to \R^2$ 
is piecewise
Lipschitz if $u\in BV(R;\R^2)$ and
$u\in\mathrm{Lip}(R^-;\R^2) \cap \mathrm{Lip}(R^+; \R^2)$.
\end{Definition}

Thus $\jump_u \subseteq [a,b] \times \{0\}$;
 we define $u^\pm: [a,b] \times \{0\} \to \R^2$ the traces of $u_{|R^\pm}$,
which are Lipschitz maps. Set $\intervallounitario=[0,1]$ and define $\affinesurface:[a,b]\times \intervallounitario\rightarrow\R^3$ the 
affine interpolation surface spanning $\mathrm{graph}(u^\pm)=\{(t,u^\pm(t)): t\in[a,b]\}\subset\R\times\R^2=\R^3$, namely
\begin{equation}\label{eq:affine_interpolation_surface}
\affinesurface
(t,s)=(t,su^+(t)+(1-s)u^-(t)) =: (t, \affinesurfacelastcomp(t,s))
\qquad \forall (t,s) \in [a,b] \times I.
\end{equation}

\begin{Remark}\label{semicart integrand}
For a (semicartesian) 
map $\Phi:[a,b]\times[c,d]\rightarrow\R^3$ of the form $\Phi(t,\sigma)=(t,\phi(t,\sigma))=(t,\phi_1(t,\sigma),\phi_2(t,\sigma))$, the area integrand is given by
$$
|\partial_t\Phi\wedge\partial_\sigma\Phi|=\sqrt{|\partial_\sigma\phi_1|^2+|\partial_\sigma\phi_2|^2+(\partial_t\phi_1\partial_\sigma\phi_2-\partial_\sigma\phi_1\partial_t\phi_2)^2}=\sqrt{|\partial_\sigma\phi|^2+|J\phi|^2}.
$$
\end{Remark}

The main result of this section is the following:
\begin{Theorem}
[\textbf{Relaxed area of piecewise Lipschitz maps: straight
jump}]
\label{teo:main_thm}
Let $u : R \to\R^2$ be a piecewise Lipschitz map.
Then 
\begin{equation}\label{eq:ril_area}
\overline{\mathcal{A}}_{BV}(u, R)={\mathcal{A}}(u, R^+)+{\mathcal{A}}(u, R^-)+\int_{[a,b]\times
\intervallounitario}|\partial_t \affinesurface
\wedge\partial_s \affinesurface|~dtds.
\end{equation}
\end{Theorem}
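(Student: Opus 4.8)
\emph{Setup.} The statement is the equality $\overline{\mathcal A}_{BV}(u,R)=\mathrm{RHS}$, where $\mathrm{RHS}:=\mathcal A(u,R^+)+\mathcal A(u,R^-)+\int_{[a,b]\times I}|\partial_t\affinesurface\wedge\partial_s\affinesurface|\,dt\,ds$ (with $I=\intervallounitario$), and I would prove the two inequalities separately.

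\emph{Upper bound.} I produce a recovery sequence. Extend $u$ to a $BV$ map on a neighbourhood of $\overline R$ and either take $v_k:=u*\rho_{\eps_k}$, or mollify $u_{|R^\pm}$ to $C^1$ maps $w_k^\pm\to u$ with $|\nabla w_k^\pm|\le C$ and glue them across the thin strip $[a,b]\times(-\eps_k,\eps_k)$ by the affine interpolation $v_k(t,x_2):=\tfrac{x_2+\eps_k}{2\eps_k}w_k^+(t,\eps_k)+\tfrac{\eps_k-x_2}{2\eps_k}w_k^-(t,-\eps_k)$, regularised at a scale $\ll\eps_k$ so that $v_k\in C^1(R;\R^2)$. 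Then $v_k\to u$ in $L^1$; moreover $|Dv_k|(R)$ splits into the part outside the strip (tending to $|Du|(R^+)+|Du|(R^-)$ by dominated convergence) plus the part on the strip, which after the change of variable $s=\tfrac{x_2+\eps_k}{2\eps_k}$ equals $\int_a^b\int_0^1\sqrt{4\eps_k^2|\partial_t\affinesurfacelastcomp_k|^2+|\partial_s\affinesurfacelastcomp_k|^2}\,ds\,dt\to\int_a^b|u^+-u^-|\,dt=|Du|(\jump_u)$ (here $\affinesurfacelastcomp_k$ is the affine interpolation of $w_k^\pm(\cdot,\pm\eps_k)$, converging to $\affinesurfacelastcomp$); hence $v_k\to u$ strictly $BV(R;\R^2)$. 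The same change of variable turns $\mathcal A(v_k,[a,b]\times(-\eps_k,\eps_k))$ into $\int_a^b\int_0^1\sqrt{4\eps_k^2(1+|\partial_t\affinesurfacelastcomp_k|^2)+|\partial_s\affinesurfacelastcomp_k|^2+(J\affinesurfacelastcomp_k)^2}\,ds\,dt$, which by Remark \ref{semicart integrand} converges to $\int_{[a,b]\times I}|\partial_t\affinesurface\wedge\partial_s\affinesurface|\,dt\,ds$, while $\mathcal A(v_k,R^\pm\setminus\text{strip})\to\mathcal A(u,R^\pm)$ (dominated convergence, using $|\nabla w_k^\pm|\le C$). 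Adding the three pieces gives $\limsup_k\mathcal A(v_k,R)\le\mathrm{RHS}$, hence $\overline{\mathcal A}_{BV}(u,R)\le\mathrm{RHS}$.

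\emph{Lower bound.} Let $(v_k)\subset C^1(R;\R^2)$ converge to $u$ strictly $BV$ with $\liminf_k\mathcal A(v_k,R)<+\infty$, and pass to a subsequence attaining the liminf. Strict convergence gives $|Dv_k|\stackrel{*}{\rightharpoonup}|Du|$ as measures on $R$; fix $\delta>0$ with $|Du|(\{x_2=\pm\delta\})=0$, set $\Omega_\delta:=R\cap\{|x_2|>\delta\}$, $S_\delta:=[a,b]\times(-\delta,\delta)$. On $\Omega_\delta$ one still has strict $BV$-convergence and $u$ is Lipschitz, so $\liminf_k\mathcal A(v_k,\Omega_\delta)\ge\overline{\mathcal A}_{L^1}(u,\Omega_\delta)=\mathcal A(u,\Omega_\delta)\xrightarrow{\delta\to0}\mathcal A(u,R^+)+\mathcal A(u,R^-)$, and superadditivity of the $\liminf$ reduces everything to showing $\liminf_k\mathcal A(v_k,S_\delta)\ge\int_{[a,b]\times I}|\partial_t\affinesurface\wedge\partial_s\affinesurface|\,dt\,ds-\omega(\delta)$ with $\omega(\delta)\to0$. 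For this, view $G_{v_k}$ as integer-multiplicity $2$-currents in $\R^2\times\R^2$, with $\mathbf M\big(G_{v_k}\mres(E\times\R^2)\big)=\mathcal A(v_k,E)$; along the subsequence, $G_{v_k}\mres(S_\delta\times\R^2)$ has equibounded mass and boundary mass. Fix a fine partition of $[a,b]\times I$ into small rectangles $Q=[t',t'']\times[s',s'']$ whose $t$-nodes are "good" slices for Lemmas \ref{lem:inheritance}, \ref{lem:strict_implies_uniform_1}, \ref{strict implies uniform 2} and are adapted to the (a.e.) differentiability of $u^\pm$ (using Lusin's theorem on $\nabla u^\pm$), so that $\affinesurface_{|Q}$ is $C^0$-close to its affinisation; then $\mathbf M(\affinesurface_{|Q})\le\mathcal M_Q+\epsilon_Q$, where $\mathcal M_Q:=\min\{\mathbf M(T):\partial T=\partial(\affinesurface_{|Q})\}$ and $\sum_Q\epsilon_Q\to0$ as the mesh tends to $0$, whence $\sum_Q\mathcal M_Q\ge\mathbf M(\affinesurface)-o(1)$. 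On the $v_k$ side, for each good $t$ use Lemma \ref{strict implies uniform 2} to reparametrise $v_k(t,\cdot)$ so that (along a common subsequence over the finitely many nodes) it converges uniformly to the concatenation of the two trace curves---of length $O(\delta)$---with the segment $[u^-(t),u^+(t)]$; this reparametrisation cuts $G_{v_k}\mres(S_\delta\times\R^2)$ into pieces $T_k^Q$ indexed by the same $Q$'s. The boundary $\partial T_k^Q$ consists of arcs of $v_k(t',\cdot),v_k(t'',\cdot)$ (controlled by Lemma \ref{strict implies uniform 2}) and of curves of $v_k(\cdot,\pm\delta)$-type (controlled by Lemma \ref{lem:inheritance} on horizontal slices together with Lemma \ref{lem:strict_implies_uniform_1}), and a diagonal argument shows that $\partial T_k^Q$ converges in the flat sense, up to an error of size $O(\delta)$ and controlled length, to $\partial(\affinesurface_{|Q})$. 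By Federer--Fleming compactness and lower semicontinuity of mass, $\liminf_k\mathbf M(T_k^Q)\ge\mathcal M_Q-O(\delta\,|Q|)$; summing over the finitely many $Q$, $\liminf_k\mathcal A(v_k,S_\delta)\ge\sum_Q\mathcal M_Q-O(\delta)\ge\mathbf M(\affinesurface)-o(1)-O(\delta)$. Letting the mesh $\to0$ and then $\delta\to0$, and combining with the regular part, yields $\overline{\mathcal A}_{BV}(u,R)\ge\mathrm{RHS}$, so the two bounds give the claim.

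\emph{Main obstacle.} The whole difficulty lies in the singular term: the mass of the limiting vertical current is reparametrisation invariant, whereas $\affinesurface$ is in general \emph{not} mass minimising for its boundary (exactly the phenomenon that makes $\overline{\mathcal A}_{BV}$ larger than the $L^\infty$-relaxed area, cf.\ the Introduction), so comparing $\liminf_k\mathcal A(v_k,S_\delta)$ directly with $\mathbf M(\affinesurface)$ cannot work globally. The cure is the decomposition into tiny rectangles---on small scales the affine pieces \emph{are} nearly minimising---and the only role of strict $BV$-convergence is, via the one-dimensional Lemmas \ref{lem:inheritance}, \ref{lem:strict_implies_uniform_1}, \ref{strict implies uniform 2} on the slices of $G_{v_k}$, to force the boundaries of the tiny pieces of $G_{v_k}$ to be uniformly close to those of the tiny affine pieces. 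The two technically delicate points are thus: making the "cutting" of $G_{v_k}$ along reparametrised slices rigorous at the level of currents, and the continuity of $\mathcal M_Q$ under flat (length-controlled) perturbations of the prescribed boundary.
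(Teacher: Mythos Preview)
Your approach is essentially the paper's: the upper bound via affine interpolation across a shrinking strip, and the lower bound via a fine partition, slice-inheritance (Lemmas \ref{lem:inheritance} and \ref{strict implies uniform 2}) to match boundaries, comparison with minimal-mass currents, and summation, all coincide in structure and in the key insight you isolate at the end. The paper's execution differs only in two simplifying choices. First, rather than working with $G_{v_k}\subset\R^4$ it drops $1+|\partial_{x_1}v_k|^2$ from the integrand and passes to the semicartesian current carried by $(t,\sigma)\mapsto(t,v_k(t,\eps\sigma))\in\R^3$, which keeps the boundary geometry cleaner. Second, it partitions only in the $t$-direction into strips $[t_{i-1},t_i]\times[-1,1]$, avoiding your reparametrised $s$-cuts entirely; the ``near-minimality on small scales'' step is then replaced by showing that the sum $\mathcal M_n^\eps$ of the minimal fillings converges \emph{weakly} to $S_\eps$ via the isoperimetric bound $\|M_{\eps,i}-S_{\eps,i}\|_F\le C|S_{\eps,i}|^{3/2}=O(n^{-3/2})$ on the flat norm, so that lower semicontinuity of mass gives $\liminf_n|\mathcal M_n^\eps|\ge|S_\eps|$ directly. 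This cleanly sidesteps the two technical points you flag (cutting currents along reparametrised slices, and stability of $\mathcal M_Q$ under boundary perturbation).
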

\noindent

Notice that the Lipschitz
regularity of $u$ on $R^\pm$ ensures that the 
area functional has the classical expression 
$$
{\mathcal{A}}(u, R^\pm)=\int_{R^\pm}\sqrt{1+|\nabla u|^2+|\mathrm{det}\nabla u|^2}~dx;
$$
therefore,
the singular contribution produced by the relaxation in 
\eqref{eq:ril_area} is 
given by the area of $\affinesurface$.

We divide the proof of \eqref{eq:ril_area} in two parts:
the lower bound 
(Proposition \ref{prop:lower_bound}) and the 
upper bound (Proposition
\ref{prop:upper_bound}). 

\begin{Proposition}[\textbf{Lower bound for} \eqref{eq:ril_area}]
\label{prop:lower_bound}
Let $u:R\rightarrow\R^2$ be a
piecewise Lipschitz map, and
$(v_k)\subset C^1(\rettangolo;\R^2) \cap BV(\rettangolo;\R^2)$ be a sequence 
converging to $u$ strictly $BV(R;\R^2)$. Then
\begin{equation}\label{eq:lower_bound_piecewise_lip}
\liminf_{k\rightarrow+\infty}\mathcal{A}(v_k, R)\geq{\mathcal{A}}(u, R^+)+{\mathcal{A}}(u, R^-)+\int_{[a,b]\times
\intervallounitario}|\partial_t
\affinesurface
\wedge\partial_s\affinesurface|~dtds.
\end{equation}
\end{Proposition}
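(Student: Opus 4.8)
The plan is to localise the problem around the jump segment $[a,b]\times\{0\}$ and to let the width of a neighbourhood of it shrink only at the end. We may assume $\liminf_k\mathcal A(v_k,R)<+\infty$ and, after passing to a subsequence realising the liminf, that $\sup_k|Dv_k|(R)<+\infty$. Fix $\eps>0$ with $|Du|(\{x_2=\pm\eps\})=0$ (all but countably many $\eps$ work) and write, up to a null set, $R=R^+_\eps\cup R_\eps\cup R^-_\eps$ with $R^\pm_\eps:=R^\pm\cap\{|x_2|\ge\eps\}$ and $R_\eps:=[a,b]\times(-\eps,\eps)$. Since the area is absolutely continuous for $C^1$ maps, $\mathcal A(v_k,R)=\mathcal A(v_k,R^+_\eps)+\mathcal A(v_k,R_\eps)+\mathcal A(v_k,R^-_\eps)$, and from $|Dv_k|(R)\to|Du|(R)$ together with lower semicontinuity of the total variation on each open piece one deduces that $v_k\to u$ strictly $BV$ on each of $R^+_\eps,R^-_\eps,R_\eps$ separately. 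On $R^\pm_\eps$ the limit $u$ is Lipschitz, so the area functional is lower semicontinuous there along $(v_k)$: e.g., combining Lemma \ref{lem:inheritance} with Lemma \ref{lem:strict_implies_uniform_1} gives uniform convergence $v_k\to u$ on $\mathcal H^1$-a.e.\ vertical slice of $R^\pm_\eps$, which identifies the limit current of the graphs $G_{v_k}\mres R^\pm_\eps$ with $G_u\mres R^\pm_\eps$ and lets one conclude by lower semicontinuity of mass. Hence $\liminf_k\mathcal A(v_k,R^\pm_\eps)\ge\mathcal A(u,R^\pm_\eps)$, and $\mathcal A(u,R^\pm_\eps)\to\mathcal A(u,R^\pm)$ as $\eps\downarrow0$. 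It remains to bound $\liminf_k\mathcal A(v_k,R_\eps)$ from below.

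This is the core. Fix $n\in\mathbb N$ and an equispaced partition $a=t_0<\dots<t_n=b$. View $G_{v_k}$ as an integer multiplicity $2$-current in $\R^4=\R_{x_1}\times\R_{x_2}\times\R^2_y$, with $\mathbf M(G_{v_k}\mres R_\eps)=\mathcal A(v_k,R_\eps)$, and let $\pi\colon\R^4\to\R^3$, $\pi(x_1,x_2,y)=(x_1,y)$, be the $1$-Lipschitz projection that forgets $x_2$. Because $\pi$ keeps $x_1$, the currents $T^i_k:=\pi_\#\big(G_{v_k}\mres([t_i,t_{i+1}]\times(-\eps,\eps))\big)$ have essentially disjoint supports, so
\[
\mathcal A(v_k,R_\eps)=\mathbf M(G_{v_k}\mres R_\eps)\ \ge\ \mathbf M\big(\pi_\#(G_{v_k}\mres R_\eps)\big)=\sum_{i=0}^{n-1}\mathbf M(T^i_k).
\]
For a.e.\ choice of the $t_i$ and of $\eps$, the boundary $\partial T^i_k$ is the $\pi$-image of the four boundary slices of $G_{v_k}$ over the box $[t_i,t_{i+1}]\times(-\eps,\eps)$, i.e.\ of the curves $x_2\mapsto(t_i,v_k(t_i,x_2))$, $x_2\mapsto(t_{i+1},v_k(t_{i+1},x_2))$ on $[-\eps,\eps]$ and $x_1\mapsto(x_1,v_k(x_1,\pm\eps))$ on $[t_i,t_{i+1}]$. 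By Lemma \ref{lem:inheritance} (extracting a subsequence and diagonalising over the finitely many slices), these restrictions converge strictly $BV$ to the corresponding restrictions of $u$: uniformly on the horizontal slices by Lemma \ref{lem:strict_implies_uniform_1}, while on each vertical slice, where $u$ jumps from $u^-(t_i)$ to $u^+(t_i)$ at $x_2=0$, Lemma \ref{strict implies uniform 2} produces reparametrisations of $v_k(t_i,\cdot)$ converging uniformly to the concatenation of the trace arc $x_2\mapsto u(t_i,x_2)$ on $[-\eps,0)$, the segment from $u^-(t_i)$ to $u^+(t_i)$, and the trace arc on $(0,\eps]$. Hence, up to a subsequence, $T^i_k\to T^i$ as currents with $\partial T^i=C^i_\eps$, the closed curve in $\R^3$ obtained by concatenating $x_1\mapsto(x_1,u(x_1,\eps))$, the reversed completed slice at $t_{i+1}$, $x_1\mapsto(x_1,u(x_1,-\eps))$, and the completed slice at $t_i$. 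By lower semicontinuity of mass, $\liminf_k\mathbf M(T^i_k)\ge\mathbf M(T^i)\ge m(C^i_\eps)$, where $m(\Gamma):=\inf\{\mathbf M(T):\partial T=\Gamma\}$ is the minimal filling mass in $\R^3$; summing, $\liminf_k\mathcal A(v_k,R_\eps)\ge\sum_{i=0}^{n-1}m(C^i_\eps)$.

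Next I let the parameters run to their limits in the right order. With $n$ fixed, as $\eps\downarrow0$ the two trace arcs of each completed slice shrink to the points $u^\mp(t_i)$ (diameter $\le(\mathrm{Lip}\,u)\eps$), and $x_1\mapsto(x_1,u(x_1,\pm\eps))$ converges, uniformly and in the flat norm (here $u$ is Lipschitz up to the jump), to $x_1\mapsto(x_1,u^\pm(x_1))$; thus $C^i_\eps\to C^i_0:=\partial\big(\affinesurface\mres([t_i,t_{i+1}]\times\intervallounitario)\big)$ in the flat norm, and since $m(\cdot)$ is lower semicontinuous under flat convergence of cycles, $\liminf_{\eps\downarrow0}m(C^i_\eps)\ge m(C^i_0)$. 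Combined with the outer bound, $\liminf_k\mathcal A(v_k,R)\ge\mathcal A(u,R^+)+\mathcal A(u,R^-)+\sum_{i=0}^{n-1}m(C^i_0)$ for every $n$. Letting finally $n\to+\infty$, I claim $\sum_i m(C^i_0)\to\int_{[a,b]\times\intervallounitario}|\partial_t\affinesurface\wedge\partial_s\affinesurface|\,dt\,ds$, which closes the proof. The inequality ``$\le$'' is clear since $\affinesurface\mres([t_i,t_{i+1}]\times\intervallounitario)$ is admissible for $m(C^i_0)$; for ``$\ge$'' one uses that $u^\pm$ are Lipschitz, so a.e.\ $t_*$ is a Lebesgue point of $\dot u^\pm$: over $[t_*,t_*+h]$ the surface $\affinesurface$ is $C^0$-close, up to $o(h)$, to the bilinear surface with the same corner values, which is affine plus a twist of size $O(h)$, and projecting that surface's boundary onto the affine plane through three of its corners and invoking the constancy theorem for planar currents gives $m(C^i_0)\ge\int_{[t_i,t_{i+1}]\times\intervallounitario}|\partial_t\affinesurface\wedge\partial_s\affinesurface|\,dt\,ds-o(h)$, uniformly over the strips with $t_i$ in a set of measure $\ge b-a-\delta$ (by Egorov), the remaining strips contributing $\le(\mathrm{Lip}\,\affinesurface)^2\delta$; sending $n\to+\infty$ and then $\delta\downarrow0$ gives the claim.

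The main obstacle is exactly this strip analysis. As the introduction stresses, slicing $G_{v_k}$ by the segments $\{x_1=t\}$ alone recovers only $\int_a^b|u^+(t)-u^-(t)|\,dt\le\int_{[a,b]\times\intervallounitario}|\partial_t\affinesurface\wedge\partial_s\affinesurface|\,dt\,ds$ and misses the Jacobian term in the area of $\affinesurface$; the decomposition into many thin strips is what lets one use, simultaneously, the strict-convergence inheritance (to push the boundaries of the projected pieces of $G_{v_k}$ uniformly close to those of the matching pieces of $\affinesurface$) and the near-minimality of those thin, nearly flat pieces of $\affinesurface$, so that an isoperimetric/constancy lower bound applies strip by strip. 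The same error control dictates the order of limits: $\eps\downarrow0$ must be taken before $n\to+\infty$, since the $2n$ small arcs introduced by the $\eps$-neighbourhood of the jump contribute $O(n\eps)$ to the estimate.
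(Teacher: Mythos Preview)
The overall architecture of your proof matches the paper's: project the graphs to $\R^3$ via $(x_1,x_2,y)\mapsto(x_1,y)$, partition $[a,b]$ into thin strips, use the inheritance lemmas to control the boundaries of the projected strip-currents, and compare each strip to a mass-minimising filling. Your reduction to the claim $\sum_i m(C^i_0)\to\int_{[a,b]\times\intervallounitario}|\partial_t\affinesurface\wedge\partial_s\affinesurface|\,dt\,ds$ is correct, and that limit is in fact true.

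However, your justification of that limit is wrong. You argue that projecting $C^i_0$ onto the affine plane through three of its four corners, together with the constancy theorem, yields $m(C^i_0)\ge\int_{[t_i,t_{i+1}]\times\intervallounitario}|\partial_t\affinesurface\wedge\partial_s\affinesurface|\,dt\,ds-o(h)$. This fails already for $u^-\equiv 0$, $u^+(t)=(1,ct)$ with $c\neq 0$. On the strip $[t_*,t_*+h]$ the surface $\affinesurface(t,s)=(t,s,sct)$ has unit normal proportional to $(-sc,-ct,1)$, which varies with $s$ even as $h\to 0$. Projection onto \emph{any} fixed plane therefore has area $\int|\langle n,\hat N\rangle|\,dA$ strictly less than the $\affinesurface$-area by a $\Theta(h)$ amount: the best plane through three corners gives projected area $\le h\sqrt{1+c^2t_*^2}$, whereas the $\affinesurface$-strip area is $h\int_0^1\sqrt{1+c^2t_*^2+s^2c^2}\,ds$. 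Summing over the strips, your projection bound recovers at most $\int_a^b\sqrt{1+c^2t^2}\,dt$, strictly less than the target integral. The ``twist of size $O(h)$'' is indeed $O(h)$ in the sup norm, but the resulting slope is $O(1)$, and it is the slope that enters the area.

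The paper closes this gap differently, and this is really the heart of the proof: it does \emph{not} attempt to show that each $m(C^i_0)$ is individually close to the corresponding $\affinesurface$-strip area. Instead, keeping $\eps$ fixed, it takes mass-minimisers $M_{\eps,i}$ with $\partial M_{\eps,i}=\partial S_{\eps,i}$ and shows that $\mathcal M_n^\eps:=\sum_i M_{\eps,i}\rightharpoonup S_\eps$ as $n\to+\infty$. The key is the isoperimetric inequality: since $\partial(M_{\eps,i}-S_{\eps,i})=0$, one has $\|M_{\eps,i}-S_{\eps,i}\|_F\le C|S_{\eps,i}|^{3/2}\le Cn^{-3/2}$, whence $\|\mathcal M_n^\eps-S_\eps\|_F\le Cn^{-1/2}\to 0$. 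Lower semicontinuity of mass under weak convergence then gives $\liminf_n|\mathcal M_n^\eps|\ge|S_\eps|$, which, combined with $\liminf_k|\mathcal V_k^\eps|\ge|\mathcal M_n^\eps|$ for every $n$, yields \eqref{eq:semicontinuity_of_mass}. In your language: although each minimiser may have strictly less mass than the matching $\affinesurface$-strip, their \emph{sum} converges weakly to $\affinesurface_\sharp[\![[a,b]\times\intervallounitario]\!]$, and it is lower semicontinuity of mass under that weak convergence---not a pointwise comparison strip by strip---that produces the correct lower bound.
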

\begin{proof}
Fix $\eps\in(0,1)$. We have 
$$
\begin{aligned}
 \liminf_{k\rightarrow +\infty}\mathcal{A}(v_k, R)
&\geq 
\liminf_{k\rightarrow +\infty}\mathcal{A}(v_k, R\setminus 
(\segmentoingrassato)
)+\liminf_{k\rightarrow +\infty}\mathcal{A}(v_k, 
\segmentoingrassato
)
\\
&\geq \mathcal{A}(u, R\setminus 
(\segmentoingrassato))+\liminf_{k\rightarrow +\infty}\mathcal{A}(v_k, 
\segmentoingrassato),
\end{aligned}
$$
where in the last inequality we used \cite[Theorem 3.7]{AD}.
Sending $\eps$ to $0^+$, by dominated convergence 
it follows 
$\mathcal{A}(u, R\setminus (\segmentoingrassato))\rightarrow{\mathcal{A}}(u, R^+)+{\mathcal{A}}(u, R^-)$,  so 
\eqref{eq:lower_bound_piecewise_lip} will be proven provided
we show that
\begin{align}\label{eq:main_estimate}
\lim_{\eps\rightarrow 0^+}\liminf_{k\rightarrow +\infty}\mathcal{A}(v_k, 
\segmentoingrassato
)\geq
\int_{[a,b]\times \intervallounitario}|\partial_t
\affinesurface
\wedge\partial_s\affinesurface|~dtds.
\end{align}
Consider the maps 
$$
\graphofmapvk^\eps:R\rightarrow\R^3,
\quad
\graphofmapvk^\eps(t,\secondavariabileinR)=(t,v_k(t,\eps \secondavariabileinR)),
$$
and the associated
integer multiplicity 2-currents in $\R^3$
$$
\currvk^\eps
= 
{\graphofmapvk^\eps}_\sharp[\![R]\!].
$$
Notice that, neglecting the term $1 + \vert \partial_{x_1} v_k\vert^2$, we get
\begin{equation}\label{eq:neglecting}
\begin{aligned}
\mathcal{A}(v_k, \segmentoingrassato)
\geq& 
\int_{\segmentoingrassato}\sqrt{|\partial_{x_2}v_k|^2+|Jv_k|^2}~dx
\\
= & \int_{R}|\partial_t\graphofmapvk^\eps\wedge\partial_\sigma\graphofmapvk^\eps|~dt
d\secondavariabileinR
=|\currvk^\eps|,
\end{aligned}
\end{equation}
where we used Remark \ref{semicart integrand}, and 
$|\cdot|$ stands for the mass current. 
Consider also the maps
\begin{equation}\label{eq:def_U_eps}
U_{\pm}^\eps:R^\pm\rightarrow\R^3,
\quad
\graphofmapu_{\pm}^\eps(t,\secondavariabileinR)=(t,u(t,\eps 
\secondavariabileinR)),
\end{equation}
and the current 
\begin{align}\label{def S_eps}
S_\eps= 
\affinesurface_\sharp[\![[a,b]\times \intervallounitario]\!]
+
{\graphofmapu_+^\eps}_\sharp[\![R^+]\!]
+
{\graphofmapu_-^\eps}_\sharp[\![R^-]\!],
\end{align}
see Fig. \ref{fig:currents}. 
We want now prove the following crucial inequality:
\begin{align}\label{eq:semicontinuity_of_mass}
\liminf_{k\rightarrow +\infty}|\currvk^\eps|\geq|S_\eps|.
\end{align}
To show \eqref{eq:semicontinuity_of_mass} we prove that
$\currvk^\eps$ 
are close to suitable currents $\globalminimalmasscurrent_n^\eps$ independent of $k$
(see \eqref{eq:mathcal_T_eps}) which 
converge to $S_\eps$ as $n \to +\infty$.

For any $n\in\N$, $n \geq 1$, consider a 
partition $\{t_0=a,t_1,\ldots,t_{{n}+1}=b\}$
of $[a,b]$ in $(n+1)$ intervals $[t_{i-1}, t_i)$, with
\begin{align}\label{scelta t_i}
t_i - t_{i-1} \in \left(\frac{b-a}{2n}, 2 \frac{(b-a)}{n}\right).
\end{align}
Moreover, set
$$
R_i=[t_{i-1},t_i)\times[-1,1],
\quad
R_i^+=[t_{i-1},t_i)\times(0,1],
\quad
R_i^-=[t_{i-1},t_i)\times[-1,0),
$$
 and define the currents
\begin{equation}\label{eq:def_V_k_eps}
\currvkRi^\eps= {\graphofmapvk^\eps}_\sharp[\![R_i]\!],
\quad 
 S_{\eps,i}=
\affinesurface_\sharp[\![[t_{i-1},t_i)\times\intervallounitario]\!]
+
{\graphofmapu_+^\eps}_\sharp[\![R_i^+]\!]
+{\graphofmapu_-^\eps}_\sharp[\![R_i^-]\!],
\end{equation}
see Fig. \ref{fig:currents}. 
By definition, we have
\begin{equation}\label{quasi_disj spt}
\begin{aligned}
&\currvk^\eps
=\sum_{i=1}^{n+1} \currvkRi^\eps\qquad\mbox{and}\qquad
\mathcal H^2(\mathrm{spt}\currvkRi^\eps\cap\mathrm{spt}
\currvkRj^\eps)=0
\quad 
{\rm for}~ 
i\neq j,
\\
&S_\eps=\sum_{i=1}^{n+1}  S_{\eps,i}\qquad\mbox{and}\qquad
\mathcal H^2(\mathrm{spt} S_{\eps,i}\cap\mathrm{spt} S_{\eps,j})=
0\quad {\rm for}~ 
i\neq j.
\end{aligned}
\end{equation}
Furthermore,
\begin{equation}\label{bordoS_ieps}
\begin{aligned}
\partial  S_{\eps,i}=&-\Big({U^\varepsilon_-}_\sharp[\![\{t_{i-1}\}\times[-1,0)]\!]+\affinesurface_\sharp[\![\{t_{i-1}\}\times\intervallounitario]\!]+{U^\varepsilon_+}_\sharp[\![\{t_{i-1}\}\times(0,1]]\!]\Big)
\\
&-{U^\varepsilon_+}_\sharp[\![(t_{i-1},t_i)\times\{1\}]\!]
\\
&+\Big({U^\varepsilon_-}_\sharp[\![\{t_{i}\}\times[-1,0)]\!]+\affinesurface_\sharp[\![\{t_{i}\}\times\intervallounitario]\!]+{U^\varepsilon_+}_\sharp[\![\{t_{i}\}\times(0,1]]\!]\Big)
\\
&+{U^\varepsilon_-}_\sharp[\![(t_{i-1},t_i)\times\{-1\}]\!].
\end{aligned}
\end{equation}
Now, for fixed $i\in\{1,\ldots,n\}$ , set
\begin{align*}
	&\gamma_{-,i}^{u,\eps}(\secondavariabile)=u(t_i,\eps \secondavariabile)\quad&\forall \secondavariabile\in[-1,0),
\\
	&\gamma_{+,i}^{u,\eps}(\secondavariabile)=u(t_i,\eps \secondavariabile)\quad&\forall \secondavariabile\in(0,1],
\\
	&\gamma^0_i(s)=su^+(t_i)+(1-s)u^-(t_i)\quad&\forall s\in 
\intervallounitario,
\\
	&
\Lambda^{\pm,\eps}_{u,i}(t)=(t,u(t,\pm\varepsilon ))
\quad &\forall t\in [t_{i-1},t_{i}],
\end{align*}
and define $\gamma_i^{u,\eps}:[-1,1]\rightarrow \R^2$ as in \eqref{concatenation} where $\widetilde \gamma^-$, $S$, and $\widetilde \gamma^+$ 
are replaced  by
$\gamma_{-,i}^{u,\eps}$, 
$\gamma_i^0$ 
and $\gamma_{+,i}^{u,\eps}$ in the order, after a rescaling on $\left[-1,-\frac{1}{3}\right]$, $\left[-\frac{1}{3},\frac{1}{3}\right]$, and $\left[\frac{1}{3},1\right]$, respectively, as in the statement of Lemma 
\ref{strict implies uniform 2}. 
Also, define $\Gamma_i^{u,\eps}:[-1,1]\rightarrow (\{t_i\}\times\R^2)$ as 
\begin{align*}
	\Gamma_i^{u,\eps}(\sigma)
:=(t_i,\gamma_i^{u,\eps}(\sigma))\qquad \forall \sigma\in[-1,1].
\end{align*}
Using the definition of $U_\pm^\varepsilon$ 
and $\mappaaffine$, by \eqref{bordoS_ieps} we infer
\begin{align}\label{bordoS_ieps2}
\partial  S_{\eps,i}
=
-{\Gamma_{i-1}^{u,\eps}}_\sharp[\![[-1,1]]\!]
-
{\Lambda^{+,\eps}_{u,i}}_\sharp[\![(t_{i-1},t_i)]\!]
+
{\Gamma_i^{u,\eps}}_\sharp[\![[-1,1]]\!]
+{\Lambda^{-,\eps}_{u,i}}_\sharp[\![(t_{i-1},t_i)]\!].
\end{align}
Moreover, set 
\begin{align*}
	&\gamma_{k,i}^\eps(\secondavariabile)=
v_k(t_i,\eps \secondavariabile),\qquad 
\Gamma_{k,i}^\eps(\sigma)=(t_i,\gamma_{k,i}^\eps(\sigma))\qquad&\forall \secondavariabile
	\in[-1,1],
\\
	&
\Lambda^{\pm, \eps}_{k,i}(t)=(t,v_k(t,\pm\varepsilon ))
\quad &\forall t\in [t_{i-1},t_{i}].
\end{align*}
By definition of $\currvkRi^\varepsilon$ in 
\eqref{eq:def_V_k_eps}, we also have
\begin{align}\label{bordoS_kieps}
\partial \currvkRi^\varepsilon=-{\Gamma_{k,i-1}^\eps}_\sharp[\![[-1,1]]\!]
-{\Lambda^{+,\eps}_{k,i}}_\sharp[\![(t_{i-1},t_i)]\!]
+{\Gamma_{k,i}^\eps}_\sharp[\![[-1,1]]\!]
+{\Lambda^{-,\eps}_{k,i}}_\sharp[\![(t_{i-1},t_i)]\!].
\end{align}
We now define  $F_{k,i}^\eps\in \mathcal D_2(\R^3)$ 
as a suitable affine interpolation between 
$\partial \currvkRi^\eps$ and $\partial S_{\eps,i}$, see Fig. \ref{fig:currents}.
First observe that by Lemma \ref{lem:inheritance}, we can suppose that, for 
our choice of $\eps$ 
and 
$\{t_1,\ldots,t_{n}\}$, 
there exists a (not relabeled) subsequence of $(v_k)_k$, such that 
\begin{align}
	&v_k(t_i,\eps\cdot)\rightarrow u(t_i,\eps\cdot)\quad\mbox{strictly }BV([-1,1];\R^2)\quad\forall i=1,\ldots,{n}\label{strict on ti},
\\
	&v_k(\cdot,\pm\eps)\rightarrow u(\cdot,\pm\eps)\quad\mbox{strictly }BV([a,b];\R^2)\label{strict on eps}.
\end{align}
In particular, by Lemma \ref{strict implies uniform 2}, we know that 
there are increasing Lipschitz 
bijections $\repa^\eps_{k,i},\repa^\eps_i
:[-1,1]\rightarrow[-1,1]$ such that 
$\gamma_{k,i}^\eps\circ \repa^\eps_{k,i}\rightarrow
\gamma_i^{u,\eps}\circ \repa^\eps_i$ 
uniformly in $[-1,1]$ as $k \to +\infty$.

For $i=1,\dots,n$, we define 
$$
\begin{aligned}
\Phi^\varepsilon_{k,i} (\secondavariabileinR,s)
:=s&(\Gamma_{k,i}^\eps\circ \repa^\eps_{k,i}(\secondavariabileinR))
+(1-s)(\Gamma_i^{u,\eps}\circ \repa^\eps_{i}(\secondavariabileinR)),
\qquad (\secondavariabileinR,s)\in [-1,1]\times I, 
\\
\Psi^{\pm,\varepsilon}_{k,i}(t,s)
:=s&\Lambda^{\pm,\eps}_{k,i}(t)+(1-s)
\Lambda^{\pm,\eps}_{{u,i}}(t),
\qquad
(t,s)\in [t_{i-1},t_i]\times I.
\end{aligned}
$$ 
Therefore we set 
\begin{equation}\label{eq:def_F_eps_k_i}
\begin{aligned}
F_{k,i}^\eps=&-{\Phi^\varepsilon_{k,i-1}}_\sharp[\![[-1,1]\times \intervallounitario]\!]-{\Psi^{+,\varepsilon}_{k,i}}_\sharp[\![[t_{i-1},t_i]\times \intervallounitario]\!]
\\
&+{\Phi^\varepsilon_{k,i}}_\sharp[\![[-1,1]\times \intervallounitario]\!]+
{\Psi^{-,\varepsilon}_{k,i}}_\sharp[\![[t_{i-1},t_i]\times \intervallounitario]\!].
\end{aligned}
\end{equation}
 In particular, from \eqref{bordoS_ieps2} and \eqref{bordoS_kieps}, a 
direct check shows that 
\begin{align}\label{boundary of F}
\partial F_{k,i}^\eps=\partial \currvkRi^\eps-\partial  S_{\eps,i}. 
\end{align}
Eventually, 
we let $\minimalmasscurrent_{\eps,i}$ 
be an integer multiplicity $2$-current of $\R^3$ with minimal mass and boundary 
$\partial  S_{\eps,i}$ (the existence of 
$\minimalmasscurrent_{\eps,i}$ is guaranteed, for instance, by \cite[Theorem 8.3.3]{KP})
 and set 
\begin{equation}\label{eq:mathcal_T_eps}
\globalminimalmasscurrent_n^\eps := \sum_{i=2}^{n} \minimalmasscurrent_{\eps,i}. 
\end{equation}
Note carefully that we do not sum over $i$ from $1$ to $n+1$, but only from $2$ to $n$. 
In particular, setting $S^n_{\eps} =
S_\eps-S_{\eps,1}-S_{\eps,n+1}$, we have
\begin{equation}\label{314}
\partial \globalminimalmasscurrent_n^\eps=
\partial  S_{\eps}^n=-{\Gamma_1^{u,\eps}}_\sharp[\![[-1,1]]\!]
+{\Gamma_n^{u,\eps}}_\sharp[\![[-1,1]]\!]-
{\Lambda_{u}^{+,\eps}}_\sharp[\![[t_{1},t_n]]\!]
+{\Lambda_{u}^{-,\eps}}_\sharp[\![[t_{1},t_n]]\!],
\end{equation}
where
$$
\Lambda_{u}^{\pm,\eps}(t):=(t,u(t,\pm\eps)), \qquad t\in(t_1,t_n).
$$
Thus, we have
$$
|\currvkRi^\eps|\geq|\currvkRi^\eps-F_{k,i}^\eps|-|F_{k,i}^\eps|
\geq
|\minimalmasscurrent_{\eps,i}|-|F_{k,i}^\eps|\quad\rm{for }\; i=2,\ldots,n,
$$
where we used the minimality of $\minimalmasscurrent_{\eps,i}$ and \eqref{boundary of F}.
By summing up, 
using \eqref{quasi_disj spt}, we get\footnote{In \eqref{inequality sum} we had to remove the first and last term of the sum, because condition 
(i) can be false for $i=1$ and $i=n+1$, 
since the strict convergence is inherited only on almost every line, as stated in Lemma \ref{lem:inheritance}.}
\begin{align}\label{inequality sum}
|\currvk^\eps|=\sum_{i=1}^{n+1} |\currvkRi^\eps|
\geq\sum_{i=2}^{n} |\currvkRi^\eps|\geq \sum_{i=2}^{n} 
|\minimalmasscurrent_{\eps,i}|-\sum_{i=2}^{n} |F_{k,i}^\eps|\geq
|\globalminimalmasscurrent_n^\eps|-\sum_{i=2}^{n} |F_{k,i}^\eps|.
\end{align}
Therefore,
\begin{equation}\label{eq:Tau_eps_n}
\liminf_{k\rightarrow +\infty}|\currvk^\eps|\geq|\globalminimalmasscurrent_n^\eps|-\sum_{i=2}^{n}\limsup_{k\rightarrow +\infty} |F_{k,i}^\eps|.
\end{equation}
In order to obtain \eqref{eq:semicontinuity_of_mass}, 
we have to prove that:
\begin{enumerate}[(i)]
\item$|F_{k,i}^\eps| \rightarrow 0$ as $k\rightarrow+\infty$ for every $i=2,\ldots,n$;
\item$\globalminimalmasscurrent_n^\eps\rightharpoonup S_\eps$ as $n\rightarrow+\infty$,
\end{enumerate}
so that \eqref{eq:semicontinuity_of_mass} would follow by lower semicontinuity of the mass and \eqref{eq:Tau_eps_n}.

(i). Since $\gamma_{k,i}^\eps\circ \repa^\eps_{k,i}\rightarrow \gamma_i^{u,\eps}
\circ \repa^\eps_i$ uniformly in $[-1,1]$ as $k \to +\infty$, 
also $\Gamma_{k,i}^\eps\circ \repa^\eps_{k,i}\rightarrow \Gamma_i^{u,\eps}\circ\repa^\eps_{i}$ uniformly; 
moreover, by Lemma \ref{lem:strict_implies_uniform_1} and thanks to \eqref{strict on eps}, $v_k(\cdot,\pm\eps)\rightarrow u(\cdot,\pm\eps)$ uniformly on $[t_{i-1},t_i]$, and the 
same holds for $\Lambda_{k,i}^{\pm, \eps}$ and $\Lambda_{u,i}^{\pm,\eps}$.
 Finally, by \eqref{strict on ti} and \eqref{strict on eps}, and recalling also Lemma \ref{strict implies uniform 2} (b), the $L^1$-norm of the derivative of $\Gamma_{k,i}^\eps\circ \repa^\eps_{k,i}$ and of $\Lambda^{\pm,\eps}_{{k,i}}$ is uniformly bounded with respect to $k$.
Hence (i) readily follows from the definition of $F^\eps_{k,i}$ 
in \eqref{eq:def_F_eps_k_i}
and Remark \ref{affineint_rem}.

\begin{figure}[t]
    \centering
    \includegraphics[scale=0.45]{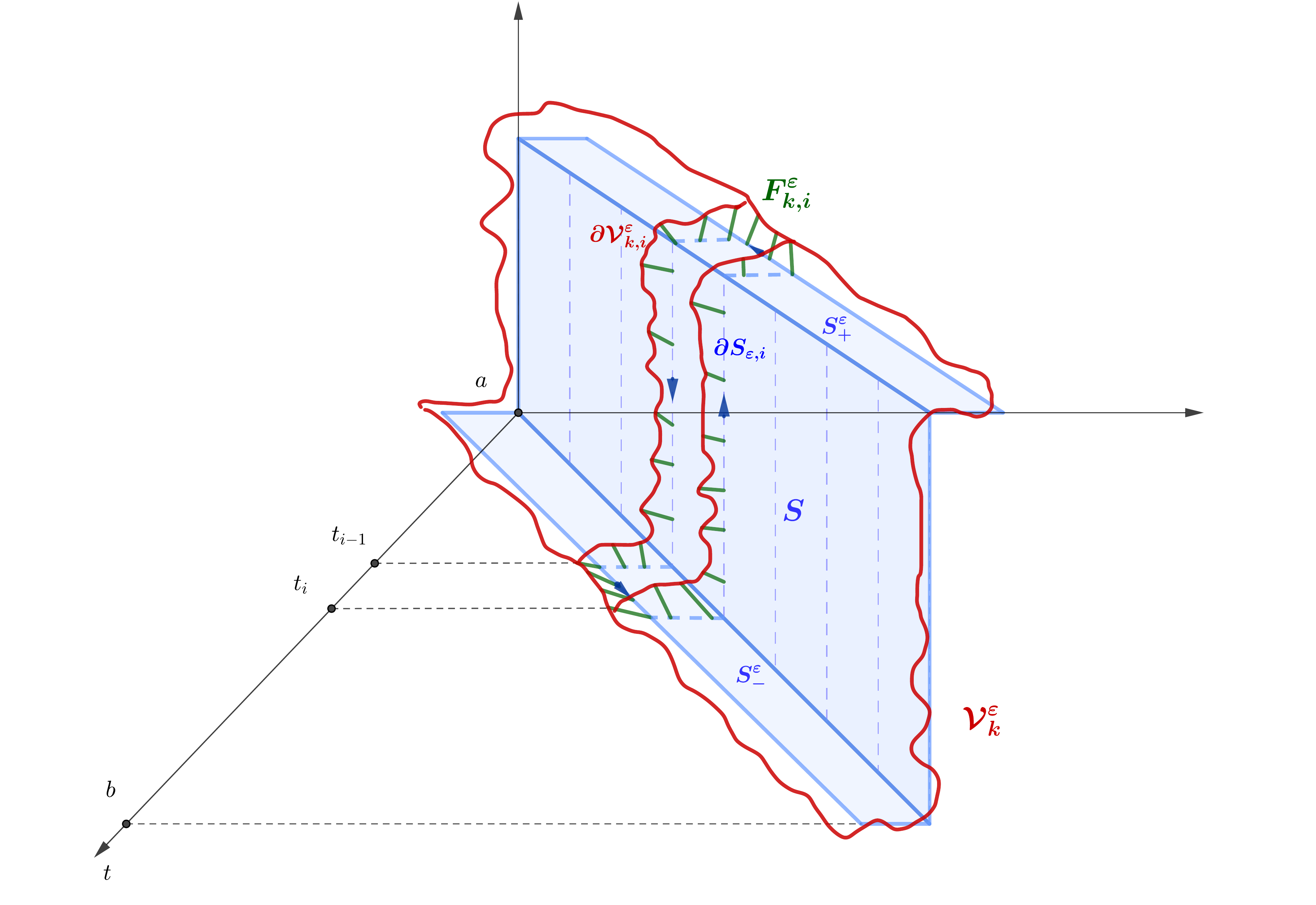}
    \caption{Here $S = \affinesurface_\sharp[\![[a,b]\times\intervallounitario]\!]$, 
$S^\eps_\pm = U^\eps_{\pm \sharp}[\![\rettangolo^\pm]\!]$. The horizontal and vertical axes span the target space $\R^2$. The approximating current $\currvk^\eps$ is depicted in bold, as well as the boundary of its restriction to $R_i$, i.e. the current $\partial\currvkRi^\eps$. The current $\partial S_{\eps,i}$ is depicted with the oriented dotted straight lines, while $F^\eps_{k,i}$ is the oriented surface obtained as the union of the short 
segments connecting $\partial\currvkRi^\eps$ and $\partial S_{\eps,i}$. Finally, 
for simplicity, we depict with straight 
segments the graph of $u^\pm$ and the (semi)graph of $u$ on $\{(t,\sigma):\sigma=\pm\eps\}$, but it is worth to remember that they are graph of Lipschitz maps. 
}
    \label{fig:currents}
\end{figure}

(ii). First observe that 
$\partial \globalminimalmasscurrent_n^\eps$  has mass uniformly bounded 
with respect to $n$. Indeed by \eqref{314} 
\begin{align*}
|\partial\globalminimalmasscurrent_n^\eps|&=|\partial {S}^n_\eps|\leq |\dot\gamma_1^{u,\eps}|([-1,1])+|\dot\gamma_n^{u,\eps}|([-1,1])+\int_a^b\!\sqrt{1+|\partial_tu(t,\eps)|^2}dt+\int_a^b\!\sqrt{1+|\partial_tu(t,-\eps)|^2}dt
\\
&\leq C(\eps,\|u\|_\infty,\mathrm{lip}(u_{|R^+}),\mathrm{lip}(u_{|R^-})).
\end{align*}
 Moreover, by minimality of $\globalminimalmasscurrent_n^\eps$ and \eqref{quasi_disj spt}, 
$|\globalminimalmasscurrent_n^\eps|\leq|{S}^n_\eps|\leq
|S_\eps|$, hence the sequence 
$\big(\globalminimalmasscurrent_n^\eps\big)_{n}$ is compactly supported in $\R^3$ 
and has bounded mass and bounded boundary mass. Then, by \cite[Theorem 8.2.1]{KP}, we have
$$
\globalminimalmasscurrent_n^\eps\rightharpoonup S_\eps
\Longleftrightarrow\|\globalminimalmasscurrent_n^\eps- S_\eps\|_F 
\rightarrow 0 \quad\mbox{as }n\rightarrow +\infty,
$$
where $\|\cdot \|_F$ stands for the flat norm. Then, we are reduced to show that $\|\globalminimalmasscurrent_n^\eps- S_\eps\|_F 
\rightarrow 0 $ as $n\rightarrow +\infty$.
Notice that
\begin{equation}\label{eq:T_meno_S}
\|\globalminimalmasscurrent_n^\eps- S_\eps\|_F\leq\sum_{i=2}^{n}
\|\minimalmasscurrent_{\eps,i}-  S_{\eps,i}\|_F+\| S_{\eps,1}\|_F+\| S_{\eps,n+1}\|_F,
\end{equation}
where, by definition of flat norm (see \cite[Sec. 5.1.3]{GMS1}),
$$
\|\minimalmasscurrent_{\eps,i}-  S_{\eps,i}\|_F\leq\inf\{|G_i^\eps|: 
G_i^\eps \mbox{ integer multiplicity 3-current s.t. } \partial G_i^\eps=
\minimalmasscurrent_{\eps,i}-  S_{\eps,i}\}.
$$
Observe that the class of competitors in the
above minimum problem 
is non empty, since it contains 
the affine interpolation current between $\minimalmasscurrent_{\eps,i}$ and $ S_{\eps,i}$.
So, pick a 3-current $G_i^\eps$ such that $\partial G_i^\eps
=\minimalmasscurrent_{\eps,i}-  S_{\eps,i}$;
then 
$$
|G_i^\eps|\leq C|\partial G_i^\eps|^{\frac{3}{2}}
$$
by the isoperimetric inequality \cite[Theorem 7.9.1]{KP}, for an absolute positive constant $C>0$.
For $i=2,\ldots,n$, we have
\begin{equation}\label{eq:appl_isoperimetric}
\|\minimalmasscurrent_{\eps,i}-  S_{\eps,i}\|_F\leq|G_i^\eps|\leq 
C|\partial G_i^\eps|^{\frac{3}{2}}=C|\minimalmasscurrent_{\eps,i}-  
S_{\eps,i}|^\frac{3}{2}\leq C\left(|\minimalmasscurrent_{\eps,i}|^\frac{3}{2}+| S_{\eps,i}|^\frac{3}{2}\right)\leq 2C| S_{\eps,i}|^\frac{3}{2},
\end{equation}
where in the last inequality we used the minimality of $\minimalmasscurrent_{\eps,i}$.
Now let us prove that $| S_{\eps,i}|\leq\frac{C}{n}$ for every $i=1,\ldots,n+1$, where $C$ is a constant independent of $n$. We start observing that 
\begin{align*}
& |\affinesurface_\sharp[\![[t_{i-1},t_i)\times\intervallounitario]\!]|
=\int_{[t_{i-1},t_i]\times \intervallounitario}|\partial_t
\affinesurface\wedge\partial_s
\affinesurface|~dtds
\\
=& \int_{t_{i-1}}^{t_i}\int_
\intervallounitario
|(1,s\dot{u}^++(1-s)\dot{u}^-)\wedge(0,u^+-u^-)|~dtds
\\
\leq& \int_{t_{i-1}}^{t_i}\int_\intervallounitario
\left(|u^+-u^-|+\left|(s\dot{u}_1^++(1-s)\dot{u}_1^-)(u_2^+-u_2^-)-(s\dot{u}_2^++(1-s)\dot{u}_2^-)(u_1^+-u_1^-)\right|\right)~dtds
\\
\leq\,& \frac{C_1}{n}\|u^+-u^-\|_{L^\infty(a,b)}+\frac{C_2}{n}\|u^+-u^-\|_{L^\infty(a,b)}\left(\|\dot{u}^+\|_{L^\infty(a,b)}+\|\dot{u}^-\|_{L^\infty(a,b)}\right)\\
=& \,\frac{C}{n},
\end{align*}
where we used \eqref{scelta t_i}.
Moreover, recalling \eqref{eq:def_U_eps},
we have
\begin{equation}\label{stima U}
\begin{aligned}
&|{\graphofmapu_\pm^\eps}_\sharp[\![R_i^\pm]\!]|
=\int_{R_i^\pm}|\partial_t\graphofmapu_{\pm}^\eps
\wedge\partial_\secondavariabileinR\graphofmapu_{\pm}^\eps|~dtd\secondavariabileinR
\\
=&\int_{R_i^\pm}|(1,\partial_tu(t,\eps \secondavariabileinR))
\wedge(0,\eps\partial_\secondavariabileinR u(t,\eps \secondavariabileinR))|~dtd\secondavariabileinR
\\
\leq&\,\eps\int_{R_i^\pm}|\partial_\secondavariabileinR u(t,\eps \secondavariabileinR)|~dtd\secondavariabileinR
+\eps 
\int_{R_i^\pm}|\partial_tu_1(t,\eps \secondavariabileinR)\partial_\secondavariabileinR u_2(t,\eps \secondavariabileinR)-
\partial_tu_2(t,\eps \secondavariabileinR)\partial_\secondavariabileinR u_1(t,\eps \secondavariabileinR)|~dtd\secondavariabileinR
\\
\leq&\,\eps\frac{C_3}{n}\left(\|\nabla u\|_{L^\infty(R^\pm)}
+\|\nabla u\|^2_{L^\infty(R^\pm)}\right)
\\
=&\,\frac{C\eps}{n}.
\end{aligned}
\end{equation}
Thus, 
$$| S_{\eps,i}|\leq|\affinesurface_\sharp[\![[t_{i-1},t_i)\times\intervallounitario]\!]|
+
|{\graphofmapu_+^\eps}_\sharp[\![R_i^+]\!]|
+|{\graphofmapu_-^\eps}_\sharp[\![R_i^-]\!]|
\leq \frac{C}{n},$$
as claimed.
Finally, by definition of flat norm and the isoperimetric inequality, $\| S_{\eps,i}\|_F\leq| S_{\eps,i}|^\frac{3}{2}$ for $i=1,\ldots,n+1$, so that, from \eqref{eq:appl_isoperimetric} and \eqref{eq:T_meno_S}, we obtain
$$
\|\globalminimalmasscurrent_n^\eps- S_\eps\|_F\leq C (n-1)\frac{1}{n^\frac{3}{2}}
+\frac{C}{n^\frac{3}{2}}\leq \frac{C}{n^\frac{1}{2}}+\frac{C}{n^\frac{3}{2}} \rightarrow 0.
$$
This concludes the proof of (ii) and hence of \eqref{eq:semicontinuity_of_mass}.

\medskip

We are now in a position to show \eqref{eq:main_estimate}.
{}From 
\eqref{eq:neglecting} and \eqref{eq:semicontinuity_of_mass},
\begin{align}\label{eps thesis}
\liminf_{k \to +\infty}
{\mathcal{A}(v_k, \segmentoingrassato)
\geq 
\liminf_{k\to +\infty}} 
|\currvk^\eps| \geq 
\vert S_\eps\vert.
\end{align}
As in \eqref{stima U}, we have
$$
|{\graphofmapu_\pm^\eps}_\sharp[\![R^\pm]\!]|\leq\eps\left(\|\nabla u\|_{L^\infty(R^\pm)}+\|\nabla u\|^2_{L^\infty(R^\pm)}\right)
\rightarrow 0\quad\mbox{as }\eps\rightarrow 0^+,
$$
so, from \eqref{eps thesis} and \eqref{def S_eps}, we conclude
$$
\lim_{\eps \to 0^+}
\liminf_{k \to +\infty}
\mathcal{A}(v_k, \segmentoingrassato)
\geq 
\lim_{\eps \to 0^+}
\vert S_\eps\vert
=
|\affinesurface_\sharp[\![[a,b]\times \intervallounitario]\!]|
=\int_{[a,b]\times \intervallounitario}|\partial_t
\affinesurface
\wedge\partial_s\affinesurface|~dtds.
$$
\end{proof}

\begin{Proposition}[\textbf{Upper bound for} \eqref{eq:ril_area}]\label{prop:upper_bound}
Let $u:R\rightarrow\R^2$ be a
piecewise Lipschitz map. Then there exists a sequence $(v_k)_k
\subset C^1(R;\R^2)$ converging to $u$ strictly $BV(R;\R^2)$ such that 
\begin{align}\label{limsup ineq}
\limsup_{k\rightarrow+\infty}\mathcal{A}(v_k, R)\leq{\mathcal{A}}(u, R^+)+
{\mathcal{A}}(u, R^-)+\int_{[a,b]\times
\intervallounitario}|\partial_t
\affinesurface
\wedge\partial_s \affinesurface|~dtds.
\end{align}
\end{Proposition}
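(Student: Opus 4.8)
The plan is to produce an explicit recovery sequence for $\overline{\mathcal A}_{BV}(u,R)$. Heuristically, the optimal approximating current is the union of the graphs of $u$ over $R^+$ and over $R^-$ together with the ``vertical wall'' $\affinesurface_\sharp[\![[a,b]\times\intervallounitario]\!]$ sitting over the jump segment; since the graph of a $C^1$ map cannot contain a vertical wall, the idea is to diffuse this wall into a thin horizontal strip $\{|x_2|<\delta_k\}$, $\delta_k\to0^+$, along which the graph of $v_k$ sweeps, in the $x_2$--direction, the affine segments $s\mapsto su^+(x_1)+(1-s)u^-(x_1)$ of \eqref{eq:affine_interpolation_surface}, while outside the strip $v_k$ smoothly approximates $u$ on $R^+$ and on $R^-$ separately. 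It is convenient to reduce first to the case in which $u$ is independent of $x_2$ in a collar $\{|x_2|<\delta_0\}$ of the jump: given a general piecewise Lipschitz $u$, put $u_{\delta_0}(x_1,x_2):=u\big(x_1,\mathrm{sign}(x_2)\,\psi_{\delta_0}(|x_2|)\big)$ with $\psi_{\delta_0}\colon[0,1]\to[0,1]$ Lipschitz, increasing, the identity for $r\ge\delta_0$, and with $\psi_{\delta_0}(0^+)=0$ (e.g.\ $\psi_{\delta_0}(r)=r^2/\delta_0$ on $[0,\delta_0)$); then $u_{\delta_0}$ is piecewise Lipschitz, $x_2$--independent on $\{|x_2|<\delta_0\}$, has the same traces $u^\pm$ hence the same surface $\affinesurface$, and, the modifications occurring on a set of measure $O(\delta_0)$ with bounded gradients, $u_{\delta_0}\to u$ strictly $BV(R;\R^2)$ with $\mathcal A(u_{\delta_0},R^\pm)\to\mathcal A(u,R^\pm)$ as $\delta_0\to0^+$. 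A diagonal argument then reduces the proof of \eqref{limsup ineq} to the case of cylindrical $u$, which we henceforth assume.

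For such $u$ I would fix three scales $\eta_k\ll\delta_k\ll\delta_0$ (with $\eta_k,\delta_k\to0^+$), a standard mollifier with $x_1$--marginal $\widetilde\rho_{\eta_k}$, and a smooth nondecreasing surjection $\lambda_k\colon[-\delta_k,\delta_k]\to[0,1]$ obtained by rescaling a fixed profile (so that $\lambda_k$ is flat to infinite order at $\pm\delta_k$ and $\lambda_k'\to+\infty$ on the central part of $[-\delta_k,\delta_k]$ as $\delta_k\to0^+$); then set $v_k:=u^+_{\eta_k}$ on $\{x_2\ge\delta_k\}$, $v_k:=u^-_{\eta_k}$ on $\{x_2\le-\delta_k\}$, and, on the strip,
\[
v_k(x_1,x_2):=\lambda_k(x_2)\,(u^+\!\ast\!\widetilde\rho_{\eta_k})(x_1)+\big(1-\lambda_k(x_2)\big)(u^-\!\ast\!\widetilde\rho_{\eta_k})(x_1)=:\affinesurfacelastcomp_k\big(x_1,\lambda_k(x_2)\big),
\]
where $u^\pm_{\eta_k}$ is the joint mollification at scale $\eta_k$ of a Lipschitz extension of $u_{|R^\pm}$ and $\affinesurfacelastcomp_k$ is the affine interpolant of the mollified traces. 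The point of the cylindrical reduction is that, since $\eta_k\ll\delta_k\ll\delta_0$, near $x_2=\pm\delta_k$ the mollification $u^\pm_{\eta_k}$ sees only values of $u$ inside the collar, where $u=u^\pm(x_1)$; thus there $u^\pm_{\eta_k}(x_1,x_2)=(u^\pm\ast\widetilde\rho_{\eta_k})(x_1)$ is $x_2$--independent, $v_k$ matches the strip part to all orders at $x_2=\pm\delta_k$, and $v_k\in C^1(R;\R^2)$ (indeed $C^\infty$) with no transition layers needed.

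It remains to estimate. On $\{|x_2|\ge\delta_k\}$ the usual dominated--convergence property of mollifications of Lipschitz maps gives $\mathcal A(v_k,\{x_2\ge\delta_k\})+\mathcal A(v_k,\{x_2\le-\delta_k\})\to\mathcal A(u,R^+)+\mathcal A(u,R^-)$ and $\int_{\{|x_2|\ge\delta_k\}}|\nabla v_k|\to\int_{R^+}|\nabla u|+\int_{R^-}|\nabla u|$. On the strip, by Remark \ref{semicart integrand} applied to the semicartesian map $\affinesurfacelastcomp_k$ and the change of variable $s=\lambda_k(x_2)$ (on the part where $\lambda_k'>0$; the flat parts contribute $O(\delta_k)$),
\[
\mathcal A(v_k,\{|x_2|<\delta_k\})=\int_a^b\!\!\int_0^1\sqrt{\frac{1+|\partial_t\affinesurfacelastcomp_k|^2}{\lambda_k'(\lambda_k^{-1}(s))^2}+|\partial_s\affinesurfacelastcomp_k|^2+|\partial_t\affinesurfacelastcomp_k\wedge\partial_s\affinesurfacelastcomp_k|^2}\;ds\,dt+O(\delta_k),
\]
and since $\affinesurfacelastcomp_k\to\affinesurfacelastcomp$ with uniformly bounded derivatives while $\lambda_k'\to+\infty$, dominated convergence (the integrand being bounded, cf.\ Remark \ref{affineint_rem}) yields the limit $\int_{[a,b]\times\intervallounitario}|\partial_t\affinesurface\wedge\partial_s\affinesurface|\,dtds$; the same change of variable gives $\int_{\{|x_2|<\delta_k\}}|\nabla v_k|\to\int_a^b\int_0^1|\partial_s\affinesurfacelastcomp|\,ds\,dt=\int_a^b|u^+(t)-u^-(t)|\,dt$, because $\partial_s\affinesurfacelastcomp=u^+-u^-$. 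Hence $v_k\in C^1(R;\R^2)$, $v_k\to u$ in $L^1$ (the strip has measure $O(\delta_k)$ and $v_k$ is uniformly bounded), $|Dv_k|(R)=\int_R|\nabla v_k|\to\int_{R^+}|\nabla u|+\int_{R^-}|\nabla u|+\int_a^b|u^+-u^-|\,dt=|Du|(R)$, so the convergence is strict, and $\limsup_k\mathcal A(v_k,R)\le$ the right-hand side of \eqref{limsup ineq}, as desired. There is no genuine obstacle here — the real content of Theorem \ref{teo:main_thm} is the lower bound, Proposition \ref{prop:lower_bound} — the only points requiring attention being the ordering of the three scales $\eta_k\ll\delta_k\ll\delta_0$ together with the final diagonalization, and the requirement that the strip contribution converge to \emph{exactly} $\int_{[a,b]\times\intervallounitario}|\partial_t\affinesurface\wedge\partial_s\affinesurface|\,dtds$ (not merely be bounded above by it), which is what guarantees that the approximation is strict rather than only $L^1$.
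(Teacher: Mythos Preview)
Your construction is essentially correct and yields the claimed bound, but it takes a longer route than the paper's. Two corrections are needed. First, your example $\psi_{\delta_0}(r)=r^2/\delta_0$ does \emph{not} make $u_{\delta_0}$ independent of $x_2$ on the collar; for the matching at $x_2=\pm\delta_k$ to work as you describe (``the mollification $u^\pm_{\eta_k}$ sees only values of $u$ inside the collar, where $u=u^\pm(x_1)$''), you need $\psi_{\delta_0}\equiv 0$ on, say, $[0,\delta_0/2]$, so that $u_{\delta_0}(x_1,x_2)=u^\pm(x_1)$ for $0<\pm x_2<\delta_0/2$. Second, after the change of variable $s=\lambda_k(x_2)$ the integrand is \emph{not} bounded uniformly in $(s,k)$ near $s=0,1$ (since $\lambda_k'(\lambda_k^{-1}(s))\to 0$ there); rather, split via $\sqrt{A+B}\le\sqrt A+\sqrt B$ and note that $\int_0^1\frac{ds}{\lambda_k'(\lambda_k^{-1}(s))}=2\delta_k\to0$, so the unbounded part vanishes in $L^1$ while dominated convergence applies to the bounded remainder.

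The paper's proof is shorter and avoids both the cylindrical reduction and the mollification. It first observes that it suffices to build $v_k$ merely Lipschitz (any Lipschitz map can be approximated in $W^{1,2}$ by $C^1$ maps, and $\mathcal A$ is continuous along such approximations), and then takes the direct interpolation
\[
v_\eps(t,\sigma)=\begin{cases} u(t,\sigma) & |\sigma|\ge\eps,\\[1mm] \dfrac{\eps+\sigma}{2\eps}\,u(t,\eps)+\dfrac{\eps-\sigma}{2\eps}\,u(t,-\eps) & |\sigma|<\eps,\end{cases}
\]
using the \emph{nearby values} $u(\cdot,\pm\eps)$ rather than the traces. This is automatically Lipschitz (it matches $u$ continuously at $\sigma=\pm\eps$), so no preliminary reduction and no smoothing are needed; the area on the strip is then computed by the rescaling $\tilde v_\eps(t,\sigma)=v_\eps(t,\eps\sigma)$ and dominated convergence. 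Your approach has the merit of producing a genuinely $C^\infty$ sequence at once, at the cost of the extra reduction step and the three-scale diagonalization; the paper's is more economical because the Lipschitz-to-$C^1$ passage is a one-line density argument.
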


\begin{proof}
Although $v_k$ needs to be of class $C^1$, we claim 
that it suffices to build $v_k$ just Lipschitz continuous. 
Indeed, assume that $(v_k)_k\subset 
W^{1,\infty}(R;\R^2) \cap C^1(R; \R^2)$ 
converges to $u$ strictly $BV(R;\R^2)$ and \eqref{limsup ineq} holds. Consider, for all $k \in \mathbb N$, a sequence $\big(v^k_h\big)_h
\subset C^1(R;\R^2)$ approaching $v_k$ in $W^{1,2}(R;\R^2)$ 
as $h\rightarrow +\infty$. In particular, we get the $L^1$-convergence of 
all  minors of $\nabla v^k_{h}$ to the corresponding 
ones of $\nabla v_k$. Then, by dominated convergence,
\begin{align}\label{densityenergy}
\lim_{h\rightarrow+\infty}\mathcal{A}(v^k_h;R)=\mathcal{A}(v_k, R).
\end{align}
Hence, by a diagonal argument, we find a sequence $\big(v^k_{h_k}\big)_k$ 
converging to $u$ strictly $BV(R;\R^2)$ such that \eqref{limsup ineq} holds for $v^k_{h_k}$ in place of $v_k$.

Set for simplicity $\eps=\eps_k=\frac{1}{k}$,
and define the sequence $(v_\eps) \subset \mathrm{Lip}(R;\R^2)$ as
\begin{equation}\label{eq:def_ve}
v_\eps(t,\secondavariabileinR) :=\begin{cases}
u(t,\secondavariabileinR)& (t,\secondavariabileinR) \in R\setminus (\segmentoingrassato),
\\
\frac{\eps+\secondavariabileinR}{2\eps}u(t,\eps)+
\frac{\eps-\secondavariabileinR}{2\eps}u(t,-\eps)& (t,\secondavariabileinR) \in [a,b] \times (-\eps, \eps).
\end{cases}
\end{equation}
First, let us check that $v_\eps\rightarrow u$ strictly $BV(R;\R^2)$ 
as $\eps \to 0^+$. Clearly, $v_\eps\rightarrow u$ in $L^1(R;\R^2)$. Hence, by lower semicontinuity of the total variation, it is 
enough to show that $$\limsup_{\eps\rightarrow 0^+}\int_R|\nabla v_\eps|dtd\sigma\leq|Du|(R),$$
which in turn reduces to prove 
$$\limsup_{\eps\rightarrow 0^+}\int_{\segmentoingrassato}|\nabla v_\eps|dtd\sigma
\leq|Du|([a,b]\times \{0\}),$$
since $$\int_{R\setminus (\segmentoingrassato)}|\nabla v_\eps|dtd\sigma=\int_{R\setminus (\segmentoingrassato)}|\nabla u|dtd\sigma\rightarrow \int_{R^+}|\nabla u|dtd\sigma+\int_{R^-}|\nabla u|dtd\sigma\quad \mbox{as } \eps\rightarrow 0^+.$$
For almost every $t\in[a,b]$ and every $\sigma\in[-\eps,\eps]$, one has
$$
\partial_t v_\eps(t,\secondavariabileinR)=\frac{\eps+\secondavariabileinR}{2\eps}\partial_tu(t,\eps)
+\frac{\eps-\secondavariabileinR}{2\eps}\partial_tu(t,-\eps),\quad
\partial_\secondavariabileinR v_\eps(t,\sigma)=\frac{1}{2\eps}(u(t,\eps)-u(t,-\eps)).
$$
Thus, 
setting $M:=\max\{\mathrm{lip}(u_{|{R^-}}),\mathrm{lip}(u_{|{R^+}})\}$, we get
\begin{align*}
\int_{\segmentoingrassato}|\nabla v_\eps|~dtd\secondavariabileinR
\leq& \int_{\segmentoingrassato}{|\partial_t v_\eps(t,\secondavariabileinR)|~dtd\secondavariabileinR
+\int_{\segmentoingrassato}|\partial_\secondavariabileinR v_\eps(t,\secondavariabileinR)|}~dtd\secondavariabileinR
\\
\leq& M\int_{\segmentoingrassato}~dtd\secondavariabileinR+\int_{\segmentoingrassato}\frac{1}{2\eps}|u(t,\eps)-u(t,-\eps)|~dtd\secondavariabileinR
\\
=&M(b-a)2\eps+\int_a^b|u(t,\eps)-u(t,-\eps)|~dt
\\
&\stackrel{\eps \to 0^+}{\longrightarrow}\int_a^b|u^+(t)-u^-(t)|~dt
=|Du|([a,b]\times \{0\}).
\end{align*} 
Furthermore, since $u$ is piecewise Lipschitz, we have 
$$
\mathcal{A}(v_\eps;R\setminus 
\segmentoingrassato
)=\mathcal{A}(u,R\setminus \segmentoingrassato)\rightarrow\mathcal{A}(u,R^+)+\mathcal{A}(u,R^-)\quad\mbox{ as }  \eps\rightarrow 0^+.
$$
So it remains to prove that
\begin{align}\label{liminf}
\limsup_{\eps\rightarrow 0^+}\mathcal{A}(v_\eps;
\segmentoingrassato)\leq\int_{[a,b]\times
\intervallounitario}|\partial_t\affinesurface\wedge\partial_s
\affinesurface|~dtds.
\end{align}
Let us linearly 
reparametrize $\affinesurface$ on $R=[a,b]\times[-1,1]$, 
namely consider $Y$, having the same image as $\affinesurface$, given by
$$
Y(t,\secondavariabileinR)=(t,\widehat{Y}(t,\secondavariabileinR))
=\left(t,\frac{1+\secondavariabileinR}{2}u^+(t)+\frac{1-\secondavariabileinR}{2}u^-(t)\right),
\qquad (t, \secondavariabileinR) \in R.
$$
Now, using the trivial inequality $\sqrt{1 + a^2 + b^2 + c^2}
\leq 1 + \vert a\vert + \sqrt{b^2+c^2}$, we find
\begin{align}\label{estim}
\mathcal{A}(v_\eps;\segmentoingrassato)&\leq\int_{\segmentoingrassato}~dtd\secondavariabileinR
+\int_{
\segmentoingrassato
}|\partial_tv_\eps|~dtd\secondavariabileinR
+\int_{\segmentoingrassato}\!\!\sqrt{|\partial_\secondavariabileinR v_\eps|^2+|Jv_\eps|^2}dtd\secondavariabileinR\nonumber
\\
&=\eps(b-a)+\eps\int_{R}|\partial_t\tilde{v}_\eps|~dtd
\secondavariabileinR
+\int_{R}\sqrt{|\partial_\secondavariabileinR\tilde{v}_\eps|^2+|J\tilde{v}_\eps|^2}~dtd\secondavariabileinR,
\end{align}
where $\tilde{v}_\eps:R\rightarrow\R^2$ is defined as $\tilde{v}_\eps(t,\secondavariabileinR)=v_\eps(t,\eps\secondavariabileinR)$. A direct computation 
based in \eqref{eq:def_ve}
gives
\begin{align*}
&\partial_t\tilde{v}_\eps(t,\secondavariabileinR)=\frac{1+\secondavariabileinR}{2}\partial_tu(t,\eps)+\frac{1-\secondavariabileinR}{2}\partial_tu(t,-\eps)\quad\mbox{for a.e. }t\in[a,b]\quad\forall\sigma\in[-1,1]
\\
&\partial_\secondavariabileinR\tilde{v}_\eps(t,\secondavariabileinR)=\eps\partial_\secondavariabileinR v_\eps(t,\eps\secondavariabileinR)=\frac{u(t,\eps)-u(t,-\eps)}{2}\quad\mbox{for a.e. }t\in[a,b]\quad\forall\sigma\in[-1,1].
\end{align*}
Then we have
\begin{align*}
&\partial_t\tilde{v}_\eps(t,\secondavariabileinR)
\rightarrow\frac{1+\secondavariabileinR}{2}\dot{u}^+(t)+\frac{1-\secondavariabileinR}{2}\dot{u}^-(t)=\partial_t\widehat{Y}(t,\secondavariabileinR)\quad\mbox{a.e. in } \rettangolo,
\\
&\partial_\secondavariabileinR\tilde{v}_\eps(t,\secondavariabileinR)\rightarrow\frac{u^+(t)-u^-(t)}{2}=\partial_\secondavariabileinR\widehat{Y}(t,\sigma)\quad\mbox{a.e. in }R.
\end{align*}
Since $\partial_\secondavariabileinR\widehat{Y}$ and $\partial_t\widehat{Y}$ are in $L^\infty(R;\R^2)$, by dominated convergence 
we can pass to the limit in \eqref{estim}  as $\eps \to 0^+$,
so that, using Remark \ref{semicart integrand}, we obtain \eqref{liminf}.
\end{proof}

\begin{Remark}\label{rem:link_Mucci}
After having proved the upper bound inequality 
in Proposition \ref{prop:upper_bound}, we readily infer that $\overline {\mathcal A}_{BV}(u,R)<+\infty$. Hence 
Proposition \ref{prop:lower_bound} can be deduced from an
argument independently developed in \cite{Mc2}, 
based on the theory of Cartesian currents \cite{GMS2}. Indeed, 
consider $T_u:=G_u+S$, where $G_u$ is the $2$-current on $R\times\R^2$ carried by the graph of $u$ and $S$ is the $2$-current on $R\times\R^2$ given by $S:=\tilde{X}_\sharp[\![[a,b]\times
\intervallounitario]\!]$, 
where 
$$
\tilde{X}(t,s):=(t,0,\affinesurfacelastcomp(t,s))=(t,0,su^+(t)+(1-s)u^-(t)), \qquad
t\in[a,b],  s\in 
\intervallounitario.
$$
Clearly, the mass ot $T_u$ is given by
\begin{align*}
|T_u|=|G_u|+|S|&={\mathcal{A}}(u, R^+)+{\mathcal{A}}(u, R^-)+\int_{[a,b]\times
\intervallounitario
}|\partial_t\tilde{X}\wedge\partial_s\tilde{X}|~dtds
\\
&={\mathcal{A}}(u, R^+)+{\mathcal{A}}(u, R^-)+\int_{[a,b]\times
\intervallounitario
}|\partial_t{\affinesurface}\wedge\partial_s{\affinesurface}|~dtds.
\end{align*}
Now 
we claim that $T_u$ is the unique Cartesian current on $R\times\R^2$ with minimal completely vertical lifting associated to $u$, 
according to \cite[Definition 3.1]{Mc2}. Borrowing the notation from \cite{Mc2}, this definition is given by 
imposing that the mixed components of
$T_u$ are the minimal lifting measures $\mu_i^j[u]$ associated to $u$ in the sense of Jerrard and Jung 
\cite{JJ}. 
Once the claim is proven, 
by the lower semicontinuity of the mass and the continuity of the lifting measures with respect to the strict convergence (see \cite[Theorem 1.1]{JJ}), we deduce
$$
|T_u|\leq\overline{\mathcal{A}}_{BV}(u;R),
$$
i.e., inequality \eqref{eq:lower_bound_piecewise_lip}.

In order to show the claim, we start to prove
that $T_u\in\mathrm{cart}(\rettangolo,\R^2)$. For this, it is 
enough to see that $(\partial T_u)\mres ( R\times\R^2)=0$: We get  
$$
(\partial G_u)\mres (R\times\R^2)
=\affinesurfacelastcomp^-_\sharp[\![[a,b]]\!]-\affinesurfacelastcomp^+_\sharp[\![[a,b]]\!]=-\partial \tilde{X}_\sharp[\![[a,b]\times
\intervallounitario
]\!]
=-(\partial S)\mres (R\times\R^2),
$$
where $\affinesurfacelastcomp^\pm(t):=(t,0,u^\pm(t))$, $t\in[a,b]$.
Next, what remains to prove is that the vertical component of $T_u$ 
is the minimal completely vertical lifting associated to $u$.
To this purpose, denote by $x=(x^1,x^2)$ the 
(horizontal) variable of $R$, $y=(y^1,y^2)$ the vertical 
variable of $\R^2$ and $u=(u^1,u^2)$ the components of $u$. We have to check that 
\begin{equation}\label{eq:JJ_to_check}
\mu^j_i[T_u]=\mu_i^j[u]\quad\forall i,j=1,2,
\end{equation}
 where $\mu_i^j[T_u]:=T_u\mres ((-1)^i dx^{\bar{i}}\wedge dy^j)$. By \cite[Theorem 2.2]{JJ}, 
for every $f\in C^\infty_c(R\times\R^2)$, 
$$
\int_{R\times\R^2}f(x,y)d\mu^i_j[u]=\int_{R^+\cup R^-}f(x,u(x))\partial_iu^j dx+\int_a^b\left(\int_0^1f(t,0,\affinesurfacelastcomp(t,s))ds\right)({u^j}^+-{u^j}^-)\delta_{i2}~dt.
$$
On the other hand, setting $\omega(x,y):=(-1)^if(x,y)dx^{\bar{i}}\wedge dy^j$, we have 
\begin{align*}
\int_{R\times\R^2}f(x,y)d\mu^i_j[T_u]&=\int_{R^+\cup R^-}f(x,u(x))\partial_iu^jdx+\int_{\tilde{X}([a,b]\times \intervallounitario)}\omega
\\
&=\int_{R^+\cup R^-}f(x,u(x))\partial_iu^jdx+\int_{[a,b]\times \intervallounitario}\omega(\tilde{X}(t,s))d\tilde{X}^{\bar{i}j},
\end{align*}
where, if $\tilde{X}=(\tilde{X}^1_1,\tilde{X}^1_2,\tilde{X}^2_1,\tilde{X}^2_2)$, then $d\tilde{X}^{\bar{i}j}=d\tilde{X}^{\bar{i}}_1\wedge d\tilde{X}^j_2$. 
Notice that $d\tilde{X}^{\bar{i}j}=0$ if $\bar{i}=2$ and $d\tilde{X}^{1j}=({u^j}^+-{u^j}^-)~dt\wedge ds$, so we get
\begin{align*}
\int_{[a,b]\times \intervallounitario}\omega(\tilde{X}(t,s))d\tilde{X}^{\bar{i}j}&=\int_{[a,b]\times
\intervallounitario
}(-1)^if(\tilde{X}(t,s))({u^j}^+-{u^j}^-)\delta_{i2}~dt\wedge ds
\\
&=\int_a^b\left(\int_0^1f(t,0,\affinesurfacelastcomp(t,s))ds\right)({u^j}^+-{u^j}^-)\delta_{i2}~dt,
\end{align*}
and \eqref{eq:JJ_to_check} follows.
\end{Remark}

\medskip

\subsection{Extension of Theorem \ref{teo:main_thm}}

The validity of Theorem \ref{teo:main_thm} 
is guaranteed also when the two traces $u^\pm$ of $u$ on 
$[a,b]\times\{0\}$ coincide on some subset of $[a,b]\times\{0\}$. In particular, \eqref{eq:ril_area} extends
to maps $u$ whose jump set $\jump_u$ is a subset of $[a,b]\times\{0\}$. 
However, the situation is different when the jump set is curvilineous. Specifically,
assume $\Omega \subset \R^2$ 
is a bounded open and connected set,
and:
\begin{itemize}
\item[(H1)]
 $\jumpset = \alpha([a,b])
\subset\Om$ is a simple 
curve, arc-length parametrized by 
$\alpha:[a,b]\rightarrow \Om$ of class $C^2$ 
 injective in $[a,b)$;
\item[(H2)] If $\alpha(a)=\alpha(b)$, then 
$\dot\alpha(a^+)=\dot \alpha(b^-)$ and $\ddot \alpha(a^+)=\ddot\alpha(b^-)$;
\item[(H3)]
$u\in W^{1,\infty}(\Omega\setminus\jumpset; \R^2)$; 
as usual, we denote by $u^{\pm}$ the traces of $u$ on $\jumpset$, satisfying
 $u^\pm\in\mathrm{Lip}(\jumpset;\R^2)$.  
\end{itemize}


Again, we introduce the 
affine interpolation surface 
$\affinesurface:[a,b]\times \intervallounitario\rightarrow\R^3$ 
spanning $\mathrm{graph}(u^\pm)=
\{(t,u^\pm(\alpha(t))): t\in[a,b]\}\subset\R\times\R^2=\R^3$, namely
\begin{equation}\label{eq:affine_curve}
\affinesurface(t,s)=(t,su^+(\alpha(t))+(1-s)u^-(\alpha(t)))
\qquad \forall (t,s) \in [a,b] \times I.
\end{equation}
\begin{Theorem}[\textbf{Relaxed area of piecewise Lipschitz maps: curved
jump}] 
\label{teo:main_thm_curve} 
 Suppose\\
(H1)-(H3). Then
\begin{equation}
\label{eq:relaxed_area_of_piecewise_Lipschitz_maps:curved_jump}
\overline{\mathcal{A}}_{BV}(u, \Omega)=\int_{\Omega\setminus\jumpset}|\mathcal{M}(\nabla u)|~dx+\int_{[a,b]\times
\intervallounitario
}|\partial_t
\mappaaffine
\wedge\partial_s\mappaaffine|~dtds.
\end{equation}
\end{Theorem}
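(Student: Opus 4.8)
The plan is to prove both inequalities in \eqref{eq:relaxed_area_of_piecewise_Lipschitz_maps:curved_jump} by localising near the jump $\jumpset$ and straightening it with Fermi (tubular) coordinates, so as to reduce to the straight-jump case of Theorem \ref{teo:main_thm}. Since $\alpha\in C^2$ is arc-length parametrised and $\jumpset$ is compact, there is $\eps_0\in(0,1)$ --- which we additionally take small enough that $\eps_0\|\ddot\alpha\|_\infty<1$ and that $|Du|$ charges neither of the two curves bounding the resulting tube --- such that $\Psi(t,\rho):=\alpha(t)+\rho\,\nu(t)$ ($\nu$ the unit normal to $\jumpset$; when $\alpha(a)=\alpha(b)$ one uses (H2) to get a periodic $C^1$ frame) is a $C^1$-diffeomorphism of $[a,b]\times[-\eps_0,\eps_0]$ onto a closed tubular neighbourhood $\overline{\mathcal N}$ of $\jumpset$, with $D\Psi(t,\rho)=R(t)\,\mathrm{diag}(1-\rho\,\kappa(t),1)$, $R(t)$ orthogonal, $\kappa=\ddot\alpha\cdot\nu$, hence $|\det D\Psi|=1-\rho\kappa=1+O(\eps_0)$. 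For $v\in C^1$, writing $w:=v\circ\Psi$ and using the chain rule together with the change of variables $y=\Psi(t,\rho)$, one obtains the identities
\[
\mathcal{A}(v,\mathcal N)=\int_{[a,b]\times[-\eps_0,\eps_0]}\!\!\sqrt{(1-\rho\kappa)^2+|\partial_t w|^2+(1-\rho\kappa)^2|\partial_\rho w|^2+|Jw|^2}\;dt\,d\rho
\]
and $\int_{\mathcal N}|\nabla v|\,dy=\int\sqrt{|\partial_t w|^2+(1-\rho\kappa)^2|\partial_\rho w|^2}\,dt\,d\rho$. Note that for $w=u\circ\Psi$ the affine interpolation surface associated with the traces of $w$ on $[a,b]\times\{0\}$ is exactly the map $\affinesurface$ of \eqref{eq:affine_curve}, since $(u\circ\Psi)^\pm(t,0)=u^\pm(\alpha(t))$.

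\emph{Lower bound.} Let $(v_k)\subset C^1(\Omega;\R^2)\cap BV(\Omega;\R^2)$ converge to $u$ strictly $BV$. Split $\mathcal{A}(v_k,\Omega)=\mathcal{A}(v_k,\Omega\setminus\overline{\mathcal N})+\mathcal{A}(v_k,\mathcal N)$ (the common boundary is Lebesgue-null). Lower semicontinuity of $\mathcal{A}$ along $L^1$-convergence on the fixed open set $\Omega\setminus\overline{\mathcal N}$ (as in the proof of Proposition \ref{prop:lower_bound}) gives $\liminf_k\mathcal{A}(v_k,\Omega\setminus\overline{\mathcal N})\ge\int_{\Omega\setminus\overline{\mathcal N}}|\mathcal{M}(\nabla u)|\,dx$. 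For the tube, set $\tilde v_k:=v_k\circ\Psi$, $\tilde u:=u\circ\Psi$; our choice of $\eps_0$ makes $v_k\to u$ strictly $BV(\mathcal N;\R^2)$, and then Theorem \ref{Reshetnyak} (with $f(y,\nu)=|\nu\,D\Psi(\Psi^{-1}(y))|\,|\det D\Psi^{-1}(y)|$, bounded and continuous on $\overline{\mathcal N}\times\mathbb S^3$) shows $\tilde v_k\to\tilde u$ strictly $BV([a,b]\times[-\eps_0,\eps_0];\R^2)$, with $\tilde u$ piecewise Lipschitz and jump $[a,b]\times\{0\}$. Discarding the nonnegative terms $(1-\rho\kappa)^2+|\partial_t\tilde v_k|^2$ in the first identity and bounding $(1-\rho\kappa)^2\ge(1-\eps_0\|\ddot\alpha\|_\infty)^2$, we get $\mathcal{A}(v_k,\mathcal N)\ge(1-\eps_0\|\ddot\alpha\|_\infty)\int\sqrt{|\partial_\rho\tilde v_k|^2+|J\tilde v_k|^2}\,dt\,d\rho$, and the last integral dominates, for every $\eps\in(0,\eps_0)$, the mass $|\widetilde{\mathcal V}_k^\eps|$ of the current built from $\tilde v_k$ exactly as in \eqref{eq:neglecting}. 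By \eqref{eq:semicontinuity_of_mass} applied to $\tilde v_k\to\tilde u$, $\liminf_k|\widetilde{\mathcal V}_k^\eps|\ge|\widetilde S_\eps|$ with $\widetilde S_\eps$ as in \eqref{def S_eps}, and $|\widetilde S_\eps|\to\int_{[a,b]\times\intervallounitario}|\partial_t\affinesurface\wedge\partial_s\affinesurface|\,dtds$ as $\eps\to0$. Combining, $\liminf_k\mathcal{A}(v_k,\Omega)\ge\int_{\Omega\setminus\overline{\mathcal N}}|\mathcal{M}(\nabla u)|\,dx+(1-\eps_0\|\ddot\alpha\|_\infty)\int_{[a,b]\times\intervallounitario}|\partial_t\affinesurface\wedge\partial_s\affinesurface|\,dtds$; since the left side is independent of $\eps_0$, letting $\eps_0\to0^+$ yields the lower bound.

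\emph{Upper bound.} Mimic Proposition \ref{prop:upper_bound} along the normal direction: for $\eps=1/k$ set $\mathcal N_\eps:=\Psi([a,b]\times(-\eps,\eps))$, $v_\eps:=u$ on $\Omega\setminus\mathcal N_\eps$, and $v_\eps(\Psi(t,\rho)):=\tfrac{\eps+\rho}{2\eps}u(\Psi(t,\eps))+\tfrac{\eps-\rho}{2\eps}u(\Psi(t,-\eps))$ for $|\rho|\le\eps$. Then $v_\eps\in\mathrm{Lip}(\Omega;\R^2)$ and $v_\eps\to u$ in $L^1$; using the two identities above and $u(\Psi(t,\pm\eps))\to u^\pm(\alpha(t))$, the same computation as in Proposition \ref{prop:upper_bound} gives $\int_{\mathcal N_\eps}|\nabla v_\eps|\to\int_a^b|u^+(\alpha(t))-u^-(\alpha(t))|\,dt=|Du|(\jumpset)$ (whence $v_\eps\to u$ strictly $BV$, by $L^1$-convergence and lower semicontinuity of the total variation) and, by dominated convergence (the factors $1-\rho\kappa\to1$ and the $\eps$-small terms vanish), $\mathcal{A}(v_\eps,\mathcal N_\eps)\to\int_{[a,b]\times\intervallounitario}|\partial_t\affinesurface\wedge\partial_s\affinesurface|\,dtds$; since also $\mathcal{A}(v_\eps,\Omega\setminus\overline{\mathcal N_\eps})=\mathcal{A}(u,\Omega\setminus\overline{\mathcal N_\eps})\to\int_{\Omega\setminus\jumpset}|\mathcal{M}(\nabla u)|\,dx$, we obtain $\limsup_k\mathcal{A}(v_\eps,\Omega)\le$ the right side of \eqref{eq:relaxed_area_of_piecewise_Lipschitz_maps:curved_jump}. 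The $C^1$ regularity of the recovery sequence is then restored by the diagonal mollification argument at the start of the proof of Proposition \ref{prop:upper_bound}.

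\emph{Special cases and the main obstacle.} When $\alpha(a)=\alpha(b)$ one works with periodic Fermi coordinates; the loss of the two extremal strips in Proposition \ref{prop:lower_bound} is $O(1/n)$ there and disappears in the limit. When $\alpha(a),\alpha(b)$ are free endpoints in $\Omega$, one has $u^+(\alpha(a))=u^-(\alpha(a))$ and $u^+(\alpha(b))=u^-(\alpha(b))$ (the trace slit closes up at the tip), so $\affinesurface$ degenerates to a segment at $t=a,b$; one then runs the above arguments on $\alpha([a+\delta,b-\delta])$, with small balls around $\alpha(a),\alpha(b)$ absorbed into $\mathcal N$, and lets $\delta\to0$. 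The genuinely hard estimate --- the lower bound of Theorem \ref{teo:main_thm}, i.e. Proposition \ref{prop:lower_bound} --- is already available, so the only real work here is the bookkeeping needed to (i) transfer strict $BV$-convergence through the non-isometric change of variables, which is where Reshetnyak's theorem enters, and (ii) control the distortion of the area integrand, which costs only a factor $1-O(\eps_0)$ that vanishes as the tube shrinks onto $\jumpset$.
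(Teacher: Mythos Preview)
Your overall strategy --- straightening the jump via Fermi coordinates $\Psi(t,\rho)=\alpha(t)+\rho\,\nu(t)$ and reducing to Theorem \ref{teo:main_thm} --- is exactly the paper's approach (the paper calls the map $\Lambda$). The identity you derive for $\mathcal{A}(v,\mathcal N)$ is correct, and your use of Reshetnyak to transfer strict $BV$-convergence through the $C^1$-diffeomorphism is sound. Your lower bound is fine; compared to the paper, which rewrites $\mathcal{A}(v_k,\Lambda(\segmentoingrassato))$ \emph{exactly} as the mass of a $2$-current in $\R^4$ and reruns the argument of Proposition \ref{prop:lower_bound} there, you instead pull back to the rectangle, absorb the curvature distortion in a multiplicative factor $(1-\eps_0\|\ddot\alpha\|_\infty)$, invoke \eqref{eq:semicontinuity_of_mass} verbatim, and let $\eps_0\to0$. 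Both routes work; yours is a bit more elementary at the cost of an extra limit.

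There is, however, a genuine gap in your \emph{upper bound} when $\alpha(a)\neq\alpha(b)$ (free endpoints in $\Omega$). Your recovery map $v_\eps$ equals $u$ on $\Omega\setminus\mathcal N_\eps$ and equals the affine interpolation inside $\mathcal N_\eps$, but on the flat end $\Psi(\{a\}\times[-\eps,\eps])$ these two definitions do not match: from outside $v_\eps=u$ (a Lipschitz function on that segment), whereas from inside $v_\eps(\Psi(a,\rho))=\tfrac{\eps+\rho}{2\eps}u(\Psi(a,\eps))+\tfrac{\eps-\rho}{2\eps}u(\Psi(a,-\eps))$ is affine in $\rho$. Hence $v_\eps$ is not Lipschitz (not even continuous) across the end of the tube. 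Your ``Special cases'' paragraph notices that $u^+(\alpha(a))=u^-(\alpha(a))$ and proposes to work on $\alpha([a+\delta,b-\delta])$ and let $\delta\to0$; this is fine for the \emph{lower} bound, but it does not repair the recovery sequence. The paper's fix (Proposition \ref{upper bound curve}) is to extend $\alpha$ in a $C^2$ way to $[a-\eta,b+\eta]$ (where $u^+=u^-$ on the extension), taper the tube with two triangular transition regions $\Delta^a_\eps,\Delta^b_\eps$ near the tips, and fill these triangles by a Lipschitz extension whose contribution is $O(\eps^2)$ and hence negligible. Once you add this endpoint treatment, your upper bound goes through.
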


\begin{Remark}
The image of the map $\affinesurface$ sits in $\R^3$ and it 
is not exactly the interpolation surface 
which closes the holes in the graph of $u$,
which is instead given by
\begin{align}\label{true interpolation map}
\Psi(t,s)=(\alpha(t),su^+(\alpha(t))+(1-s)u^-(\alpha(t)))
\in \R^4 
\qquad \forall t \in [a,b] \times I.
\end{align}
 However, since $\vert \dot
\alpha\vert=1$,
\begin{equation}\label{eq:two_areas_equal}
\int_{[a,b]\times \intervallounitario}
|\partial_t\Psi\wedge\partial_s\Psi|~dtds
=\int_{[a,b]\times
\intervallounitario
}|\partial_t
\mappaaffine
\wedge\partial_s
\mappaaffine
|~dtds.
\end{equation}
\end{Remark}

To prove Theorem \ref{teo:main_thm_curve}, we borrow from \cite{BePaTe:15} some notation. We denote by $x=(x_1,x_2)$ 
coordinates in $\Omega$ and by $(t,
\secondavariabileinR)$ 
coordinates in $R=[a,b]\times[-1,1]$. 
 Since $\jumpset$ 
is simple and of class $C^2$, we can find $\delta>0$ and a $C^1$-diffeomorphism $\Lambda:R_\delta\rightarrow
\Lambda(R_\delta)$, where $R_\delta=[a,b]\times[-\delta,\delta]$ and 
$\Lambda(R_\delta)\subset\Omega$ is a curvilineous strip containing $\jumpset$ of width $2\delta$. 
Explicitely we have
\begin{align}\label{def_Lambda}
\Lambda(t,\secondavariabileinR)
=\alpha(t)+\secondavariabileinR\dot\alpha(t)^\perp\quad\forall (t,\secondavariabileinR)\in R_\delta,
\end{align}
with $\dot\alpha(t)^\perp$ the 
counter-clockwise $\frac{\pi}{2}$-rotation of $\dot\alpha(t)$.
For $(x_1,x_2)\in \Lambda(R_\delta)$, we can write the 
inverse 
$\Lambda^{-1}(x_1,x_2)=(t(x_1,x_2),\secondavariabileinR(x_1,x_2))$, where:
\begin{itemize}
	\item $\secondavariabileinR(x_1,x_2)=d_{\jumpset}(x_1,x_2)$ is the signed distance\footnote{The sign of $d_\jumpset$ is determined by the orientation induced on $\jumpset$ by $\alpha$, so that $d_\jumpset>0$ in the part of $\Lambda(R_\delta)$ which is pointed by $\dot\alpha^\perp$.} of $(x_1,x_2)$ from $\jumpset$;
	\item $t(x_1,x_2)$ is the 
unique number in $[a,b]$ such that $\alpha(t(x_1,x_2))=\pi_\jumpset(x_1,x_2)$, 
where $\pi_\jumpset(x_1,x_2)=(x_1,x_2)-d_\jumpset(x_1,x_2)\nabla d_\jumpset(x_1,x_2)$ is the orthogonal projection on $\jumpset$.
\end{itemize}
Since $\alpha$ is of class $C^2$, we have that $\sigma$ is of class $C^2$ as well and $t$ is of class $C^1$ on $\overline{\Lambda(R_\delta)}$. Moreover, for $(x_1,x_2)\in\overline{\Lambda(R_\delta)}$, we have
\begin{align}
	&|\nabla \secondavariabileinR(x_1,x_2)|
=|\nabla d_\jumpset(x_1,x_2)|=1\label{grad dist},
\\
&|\nabla t(x_1,x_2)|
=1+\delta\|\nabla d_\jumpset\|_\infty\leq1+C\delta\label{grad t}.
\end{align}
We divide the proof of Theorem \ref{teo:main_thm} 
in two parts, the lower and the upper bound inequalities.

\begin{Proposition}[\textbf{Lower bound for} \eqref{eq:relaxed_area_of_piecewise_Lipschitz_maps:curved_jump}]
\label{prop:lower_bound_curve}
	Let $u:\Omega\rightarrow\R^2$ as in Theorem \ref{teo:main_thm_curve} and $(v_k)\subset C^1(\Omega;\R^2)$ be a sequence converging to $u$ 
	strictly $BV(\Omega;\R^2)$. Then
\eqref{eq:lower_bound_piecewise_lip}
holds with $\affinesurface$ in \eqref{eq:affine_curve}.
\end{Proposition}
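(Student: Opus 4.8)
The plan is to reduce the curved case to the already established flat case (Proposition~\ref{prop:lower_bound}) by flattening a thin curvilineous strip around $\jumpset$ with the diffeomorphism $\Lambda$ of \eqref{def_Lambda}, keeping quantitative track of the (small) metric distortion that this flattening introduces. First I would fix $\delta>0$ small enough that $\Lambda:R_\delta\to\Lambda(R_\delta)\subset\Omega$ is a $C^1$-diffeomorphism, and split, using additivity of the area integral over disjoint sets,
\[
\mathcal{A}(v_k,\Omega)=\mathcal{A}\big(v_k,\Omega\setminus\overline{\Lambda(R_\delta)}\big)+\mathcal{A}\big(v_k,\Lambda(R_\delta)\big).
\]
On $\Omega\setminus\overline{\Lambda(R_\delta)}$ the map $u$ is Lipschitz (the jump set $\jumpset$ lies inside $\Lambda(R_\delta)$), so \cite[Theorem~3.7]{AD}, used exactly as in the proof of Proposition~\ref{prop:lower_bound}, gives $\liminf_k\mathcal{A}(v_k,\Omega\setminus\overline{\Lambda(R_\delta)})\geq\mathcal{A}(u,\Omega\setminus\overline{\Lambda(R_\delta)})$, and since $\jumpset$ is Lebesgue-negligible, $\mathcal{A}(u,\Omega\setminus\overline{\Lambda(R_\delta)})\to\int_{\Omega\setminus\jumpset}|\mathcal{M}(\nabla u)|\,dx$ as $\delta\to0^+$ by dominated convergence.

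Next I would handle the strip by pulling everything back through $\Lambda$. Set $\tilde v_k:=v_k\circ\Lambda$ and $\tilde u:=u\circ\Lambda$ on $R_\delta$. Since $\alpha$ is arc-length parametrized, $\nabla t$ and $\nabla\sigma$ are orthogonal (the level curves $\{\sigma=c\}$ are the curves parallel to $\jumpset$, along which $t$ varies, while $\nabla\sigma$ points in the normal direction, along which $t$ is constant); by \eqref{grad dist}--\eqref{grad t} their lengths are $|\nabla\sigma|=1$ and $|\nabla t|\le1+C\delta$, so the singular values of $\nabla\Lambda^{-1}$ are $\le1+C\delta$, and hence the $C^1$-diffeomorphism $\Xi^{-1}:=(\Lambda^{-1},\mathrm{id}):\Lambda(R_\delta)\times\R^2\to R_\delta\times\R^2$ is $(1+C\delta)$-Lipschitz. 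Since $\Xi^{-1}$ maps $\mathrm{graph}(v_k)$ onto $\mathrm{graph}(\tilde v_k)$ and the area equals the $\mathcal H^2$-measure of the graph, this gives $\mathcal{A}(\tilde v_k,R_\delta)\le(1+C\delta)^2\mathcal{A}(v_k,\Lambda(R_\delta))$, i.e.
\[
\mathcal{A}(v_k,\Lambda(R_\delta))\ge(1+C\delta)^{-2}\mathcal{A}(\tilde v_k,R_\delta).
\]
I then must check $\tilde v_k\to\tilde u$ strictly in $BV(R_\delta;\R^2)$: the $L^1$-convergence is immediate since $\det\nabla\Lambda^{-1}$ is bounded; for the total variations, writing $|D\tilde v_k|(R_\delta)=\int_{\Lambda(R_\delta)}g(x,\nabla v_k)\,dx$ with $g(x,\xi):=|\xi\,\nabla\Lambda(\Lambda^{-1}(x))|\,|\det\nabla\Lambda^{-1}(x)|$, which is continuous in $x$ and convex, positively $1$-homogeneous and of linear growth in $\xi$, Theorem~\ref{Reshetnyak} lets one pass to the limit, the limit being $|D\tilde u|(R_\delta)$ by the chain rule for $BV$-functions composed with a $C^1$-diffeomorphism.

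Finally, by (H3) the map $\tilde u$ is piecewise Lipschitz on $R_\delta$ (Lipschitz on each of $R_\delta^\pm$ and $BV$ on $R_\delta$), with jump contained in $[a,b]\times\{0\}$ and traces $u^\pm\circ\alpha$ there; consequently its affine interpolation surface is precisely the surface $\affinesurface$ of \eqref{eq:affine_curve}. Applying Proposition~\ref{prop:lower_bound} to the sequence $\tilde v_k\to\tilde u$ on $R_\delta=[a,b]\times[-\delta,\delta]$ (its proof is unaffected by the height of the rectangle, one just takes $\eps\in(0,\delta)$) yields
\[
\liminf_{k\to+\infty}\mathcal{A}(\tilde v_k,R_\delta)\ge\mathcal{A}(\tilde u,R_\delta^+)+\mathcal{A}(\tilde u,R_\delta^-)+\int_{[a,b]\times\intervallounitario}|\partial_t\affinesurface\wedge\partial_s\affinesurface|\,dtds.
\]
Combining the three displays and using superadditivity of $\liminf$ together with $\mathcal{A}(\tilde u,R_\delta^\pm)\ge0$,
\[
\liminf_{k\to+\infty}\mathcal{A}(v_k,\Omega)\ge\mathcal{A}\big(u,\Omega\setminus\overline{\Lambda(R_\delta)}\big)+(1+C\delta)^{-2}\int_{[a,b]\times\intervallounitario}|\partial_t\affinesurface\wedge\partial_s\affinesurface|\,dtds,
\]
and letting $\delta\to0^+$ gives $\liminf_k\mathcal{A}(v_k,\Omega)\ge\int_{\Omega\setminus\jumpset}|\mathcal{M}(\nabla u)|\,dx+\int_{[a,b]\times\intervallounitario}|\partial_t\affinesurface\wedge\partial_s\affinesurface|\,dtds$, which is \eqref{eq:lower_bound_piecewise_lip} with $\affinesurface$ as in \eqref{eq:affine_curve}. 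I expect the main obstacle to be not conceptual but the careful bookkeeping of the flattening: proving that the distortion factor is $1+C\delta$ (which genuinely uses the arc-length and $C^2$ hypotheses (H1)--(H2)) and checking that composition with the $C^1$-diffeomorphism $\Lambda$ preserves strict $BV$-convergence, for which Reshetnyak's continuity theorem is the right tool; everything substantive is then inherited from Proposition~\ref{prop:lower_bound}.
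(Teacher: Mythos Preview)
Your argument is correct, and it takes a genuinely different route from the paper's. The paper does not flatten and then quote Proposition~\ref{prop:lower_bound} as a black box; instead it embeds the graph of $v_k$ over the curvilinear strip directly in $\R^4$ via $\Psi_k^\eps(t,\sigma)=(\Lambda(t,\eps\sigma),v_k(\Lambda(t,\eps\sigma)))$, observes that the area formula gives the \emph{exact} identity $\mathcal{A}(v_k,\Lambda(\segmentoingrassato))=|\mathcal V_k^\eps|$ with no distortion factor, and then re-runs the current-theoretic argument of Proposition~\ref{prop:lower_bound} (slicing, mass-minimal currents, isoperimetric inequality) in $\R^4$, comparing $\mathcal V_k^\eps$ with the limit current $S^\eps$ built from $\Psi$ of \eqref{true interpolation map} and using \eqref{eq:two_areas_equal} at the end. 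Both proofs need the same final check, that $v_k\circ\Lambda\to u\circ\Lambda$ strictly in $BV$, which the paper also dismisses as a straightforward computation.

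What each approach buys: your flattening is more modular---once the $(1+C\delta)$-Lipschitz bound on $(\Lambda^{-1},\mathrm{id})$ and the preservation of strict convergence under composition with $\Lambda$ are checked, Proposition~\ref{prop:lower_bound} is invoked verbatim and the distortion factor $(1+C\delta)^{-2}$ is killed by letting $\delta\to0^+$. The paper's approach avoids the distortion bookkeeping entirely by working in the ambient $\R^4$ from the start, at the price of repeating the slicing/current machinery in that higher-dimensional setting (which it only sketches by saying ``proceeding as in the proof of Proposition~\ref{prop:lower_bound}''). Your discarding of $\mathcal A(\tilde u,R_\delta^\pm)$ before sending $\delta\to0^+$ is harmless since those terms vanish in the limit anyway.
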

\begin{proof}
	It is enough to show that
	\begin{align}\label{eq:main_estimate_curve}
		\lim_{\eps\rightarrow 0^+}\liminf_{k\rightarrow +\infty}\mathcal{A}(v_k, \Lambda(
		\segmentoingrassato))\geq\int_{[a,b]\times
			\intervallounitario
		}|\partial_t
\mappaaffine
\wedge\partial_s\mappaaffine
		|~dtds.
	\end{align}
	We start by defining the maps $\Psi_k^\eps:R\rightarrow\R^4$ and $\Psi_{\pm}^\eps:R^\pm\rightarrow\R^4$ given by
	$$
	\Psi_k^\eps(t,\secondavariabileinR)
=(\Lambda(t,\eps \secondavariabileinR),v_k(\Lambda(t,\eps \secondavariabileinR))),\qquad\Psi_{\pm}^\eps(t,\secondavariabileinR)
=(\Lambda(t,\eps \secondavariabileinR),u(\Lambda(t,\eps \secondavariabileinR))).
	$$
	Introduce the following integer multiplicity 2-currents in $\R^4$:
	$$
	\currvk^\eps= {\Psi_k^\eps}_\sharp[\![R]\!],\quad S^\eps=\Psi_\sharp[\![[a,b]\times
	\intervallounitario
	]\!]+{\Psi_-^\eps}_\sharp[\![R^-]\!]+{\Psi_+^\eps}_\sharp[\![R^+]\!],
	$$
	where $\Psi$ is defined 
in \eqref{true interpolation map}.
	Using that $Av\wedge Aw=\mathrm{det}A\,v\wedge w$ for any $A\in\R^{2\times2}$ and $v,w\in\R^2$, by direct computation, 
we have 
$$
|\partial_t\Psi_k^\eps\wedge\partial_\secondavariabileinR \Psi_k^\eps|^2=\eps^2|\partial_t\Lambda(t,\eps \secondavariabileinR)\wedge\partial_\secondavariabileinR\Lambda(t,\eps \secondavariabileinR)|^2
\Big[1+|\nabla v_k(\Lambda(t,\eps \secondavariabileinR))|^2+|Jv_k(\Lambda(t,\eps \secondavariabileinR))|^2\Big].
$$
	Hence, making the change of variable 
$x=\Lambda(t,\eps \secondavariabileinR)$, we obtain
	$$
	\mathcal{A}(v_k, \Lambda(\segmentoingrassato))=\int_{\Lambda(
		\segmentoingrassato)}{|\mathcal{M}(\nabla v_k)}|~dx=\int_{R}|\partial_t\Psi_k^\eps\wedge\partial_\secondavariabileinR\Psi_k^\eps|~dtd\secondavariabileinR=|\mathcal V_k^\eps|.
	$$
	We notice that $|{\Psi_\pm^\eps}_\sharp[\![R^\pm]\!]|\rightarrow 0$ as $\eps \to 0^+$, as in \eqref{stima U}, where $\|\nabla u\|_{L^\infty(R^\pm)}$ is replaced with $\|u\|_{W^{1,\infty}(\Omega)}$ and it is used that $|\ddot\alpha|\leq C$. 
Therefore, recalling also \eqref{eq:two_areas_equal},
	$$
	\lim_{\eps\rightarrow 0^+}|S^\eps|=|\Psi_\sharp[\![[a,b]\times
	\intervallounitario
	]\!]|=\int_{[a,b]\times
		\intervallounitario
	}|\partial_t\Psi\wedge\partial_s\Psi|~dtds=\int_{[a,b]\times
		\intervallounitario
	}|\partial_t
	\affinesurface
	\wedge\partial_s
	\affinesurface
	|~dtds.
	$$
	So it is enough to show
$\liminf_{k\rightarrow +\infty}|\currvk^\eps|\geq|S^\eps|$,
which can be proved proceeding 
as in the proof of Proposition \ref{prop:lower_bound}, 
once we have checked that $v_k\circ \Lambda(\cdot,\varepsilon\cdot)\rightarrow u\circ \Lambda(\cdot,\varepsilon\cdot)$ strictly $BV(R;\R^2)$. This is a straightforward computation, and we omit the details.
\end{proof}

\begin{Proposition}[\textbf{Upper bound for}
\eqref{eq:relaxed_area_of_piecewise_Lipschitz_maps:curved_jump}]\label{upper bound curve}
Let $u:\Omega\rightarrow\R^2$ be 
as in Theorem \ref{teo:main_thm_curve}. Then, there exists a sequence
$(v_k)\subset C^1(\Omega;\R^2)$ 
converging to $u$ strictly $BV(\Omega;\R^2)$ and
such that \eqref{limsup ineq}
holds with $\affinesurface$ in \eqref{eq:affine_curve}.
\end{Proposition}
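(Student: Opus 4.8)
The plan is to adapt the explicit interpolation construction of Proposition~\ref{prop:upper_bound} to the curvilinear setting, using the $C^1$-diffeomorphism $\Lambda$ of \eqref{def_Lambda} to straighten a tubular neighbourhood of $\jumpset$. As a preliminary reduction, exactly as in the opening lines of the proof of Proposition~\ref{prop:upper_bound}, it suffices to exhibit a sequence of \emph{Lipschitz} maps $(v_\eps)\subset W^{1,\infty}(\Omega;\R^2)$ converging to $u$ strictly $BV(\Omega;\R^2)$ and satisfying \eqref{limsup ineq}: mollifying each $v_\eps$ in $W^{1,2}(\Omega;\R^2)$, exploiting the $L^1$-convergence of all the minors together with dominated convergence, and then diagonalising, upgrades the sequence to one of class $C^1$.

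Next I would set $\eps=\eps_k=1/k$ and, recalling that $\Lambda(\segmentoingrassato)\subset\Omega$ is the curvilinear strip of half-width $\eps$ around $\jumpset$, define
\begin{equation*}
v_\eps(x):=\begin{cases}
u(x), & x\in\Omega\setminus\Lambda(\segmentoingrassato),\\[1mm]
\dfrac{\eps+\secondavariabileinR}{2\eps}\,u(\Lambda(t,\eps))+\dfrac{\eps-\secondavariabileinR}{2\eps}\,u(\Lambda(t,-\eps)), & x=\Lambda(t,\secondavariabileinR),\ (t,\secondavariabileinR)\in\segmentoingrassato.
\end{cases}
\end{equation*}
Since $u$ is Lipschitz on $\Omega\setminus\jumpset$ and $\Lambda$ is a $C^1$-diffeomorphism, $v_\eps\in W^{1,\infty}(\Omega;\R^2)$ and it matches $u$ continuously across $\partial\Lambda(\segmentoingrassato)$. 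The $L^1$-convergence $v_\eps\to u$ is immediate since $|\Lambda(\segmentoingrassato)|\to0$, so by lower semicontinuity of the total variation it is enough to prove $\limsup_{\eps\to0^+}|Dv_\eps|(\Omega)\le|Du|(\Omega)$. Outside the strip $\nabla v_\eps=\nabla u$; inside, performing the change of variables $x=\Lambda(t,\secondavariabileinR)$ — whose Jacobian determinant, as well as $\|(D\Lambda)^{-1}\|$, tends to $1$ uniformly as $\eps\to0^+$ because $\jumpset$ is $C^2$ and $|\dot\alpha|\equiv1$ — and using the Lipschitz bound on $u_{|\Omega\setminus\jumpset}$, one estimates exactly as in Proposition~\ref{prop:upper_bound} that $\int_{\Lambda(\segmentoingrassato)}|\nabla v_\eps|\,dx\to\int_a^b|u^+(\alpha(t))-u^-(\alpha(t))|\,dt=|Du|(\jumpset)$; combined with $\int_{\Omega\setminus\Lambda(\segmentoingrassato)}|\nabla u|\,dx\to|Du|(\Omega\setminus\jumpset)$ this gives $|Dv_\eps|(\Omega)\to|Du|(\Omega)$ and hence the strict convergence.

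It then remains to prove the area estimate. I would write $\mathcal{A}(v_\eps,\Omega)=\mathcal{A}(u,\Omega\setminus\Lambda(\segmentoingrassato))+\mathcal{A}(v_\eps,\Lambda(\segmentoingrassato))$; the first summand tends to $\int_{\Omega\setminus\jumpset}|\mathcal{M}(\nabla u)|\,dx$ by dominated convergence. For the second, parametrise the graph of $v_\eps$ over the strip by $\Xi_\eps(t,s):=(\Lambda(t,\eps s),\,v_\eps(\Lambda(t,\eps s)))\in\R^4$, $(t,s)\in R$; since $\Lambda$ is a diffeomorphism, $\mathcal{A}(v_\eps,\Lambda(\segmentoingrassato))=\int_R|\partial_t\Xi_\eps\wedge\partial_s\Xi_\eps|\,dtds$. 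Using $\Lambda(t,\secondavariabileinR)=\alpha(t)+\secondavariabileinR\dot\alpha(t)^\perp$ and the explicit form of $v_\eps$, a direct computation shows that $\partial_t\Xi_\eps$ and $\partial_s\Xi_\eps$ converge pointwise a.e.\ in $R$, as $\eps\to0^+$, to $\big(\dot\alpha(t),\,\frac{1+s}{2}\frac{d}{dt}u^+(\alpha(t))+\frac{1-s}{2}\frac{d}{dt}u^-(\alpha(t))\big)$ and $\big(0,\,\frac12(u^+(\alpha(t))-u^-(\alpha(t)))\big)$ respectively, and that they are uniformly bounded in $\eps$ (because $\Lambda$ is $C^1$ on $R_\delta$ and $u_{|\Omega\setminus\jumpset}$ is Lipschitz). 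Hence dominated convergence gives $\mathcal{A}(v_\eps,\Lambda(\segmentoingrassato))\to\int_R|\partial_t\widetilde\Psi\wedge\partial_s\widetilde\Psi|\,dtds$, where $\widetilde\Psi$ is the linear reparametrisation on $R$ of the interpolation surface $\Psi$ of \eqref{true interpolation map}; undoing this reparametrisation and invoking \eqref{eq:two_areas_equal}, the right-hand side equals $\int_{[a,b]\times\intervallounitario}|\partial_t\affinesurface\wedge\partial_s\affinesurface|\,dtds$, which is precisely \eqref{limsup ineq}.

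The one genuinely delicate point — everything else being the routine bookkeeping already carried out in the straight-jump case — is to make sure that the geometric distortion introduced by $\Lambda$ leaves no residual curvature term in the limit. It does not: the Jacobian of $\Lambda$ restricted to $\jumpset$ is identically $1$ because of the arc-length parametrisation, the strip $\Lambda(\segmentoingrassato)$ collapses onto $\jumpset$ as $\eps\to0^+$ so that every distortion factor is $1+O(\eps)$, and identity \eqref{eq:two_areas_equal} converts the resulting limit integrand exactly into the semicartesian area element of $\affinesurface$.
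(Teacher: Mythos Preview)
Your overall strategy --- affine interpolation in the tubular strip $\Lambda(\segmentoingrassato)$ via the straightening diffeomorphism --- is the same as the paper's, and your treatment of the strict convergence and of the area estimate (parametrising the graph by $\Xi_\eps$ and passing to the limit by dominated convergence) is fine. But there is a genuine gap.

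The claim ``$v_\eps\in W^{1,\infty}(\Omega;\R^2)$ and it matches $u$ continuously across $\partial\Lambda(\segmentoingrassato)$'' is false when the curve is not closed, i.e.\ when $\alpha(a)\neq\alpha(b)$ (which (H1) allows, with both endpoints interior to $\Omega$). In that case $\partial\Lambda(\segmentoingrassato)$ has, in addition to the two long sides $\Lambda([a,b]\times\{\pm\eps\})$ where the matching is trivial, two \emph{short ends} $\Lambda(\{a\}\times[-\eps,\eps])$ and $\Lambda(\{b\}\times[-\eps,\eps])$. On $\Lambda(\{a\}\times[-\eps,\eps])$ your formula gives from inside the affine interpolation
\[
v_\eps(\Lambda(a,\sigma))=\tfrac{\eps+\sigma}{2\eps}\,u(\Lambda(a,\eps))+\tfrac{\eps-\sigma}{2\eps}\,u(\Lambda(a,-\eps)),
\]
whereas from outside $v_\eps=u$ gives $u(\Lambda(a,\sigma))$. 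These coincide only if $\sigma\mapsto u(\Lambda(a,\sigma))$ happens to be affine, which there is no reason to expect. So your $v_\eps$ is discontinuous across the short ends and is not Lipschitz on $\Omega$; the reduction to Lipschitz competitors then cannot be invoked.

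The paper handles exactly this difficulty: it first extends $\alpha$ in a $C^2$ way to $[a-\eta,b+\eta]$ (on the extension the two traces of $u$ coincide since $u$ is Lipschitz off $\jumpset$), keeps the interpolation formula only on $\Lambda(\segmentoingrassato)$, and then caps the two corners by triangular regions $\Delta^a_\eps$, $\Delta^b_\eps$ on which $v_\eps$ is defined as a Lipschitz extension with uniformly bounded Lipschitz constant; since $|\Delta^a_\eps\cup\Delta^b_\eps|=O(\eps^2)$, the contribution of these caps both to the total variation and to the area vanishes as $\eps\to0^+$. Your argument is correct as written only in the closed-curve case $\alpha(a)=\alpha(b)$, where the strip is an annulus and there are no short ends; for the open-arc case you must add this endpoint repair.
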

\begin{proof}
For simplicity, we assume that  $\alpha(a)\neq\alpha(b)$ (the case of closed curves is simpler and the following proof can be straightforwardly adapted). 
We start by fixing $\eta>0$ small enough 
and we extend the curve $\alpha$ to $[a-\eta,b+\eta]$ in a $C^2$-way, 
so that $\jumpset^\eta:=\alpha([a-\eta,b+\eta])\subset\Om$, 
keeping the validity of (H1) on $\jumpset^\eta$ .
 With this extension, we can assume (by choosing a different $\delta$ if necessary) that $\Lambda$ in \eqref{def_Lambda} is defined on $R^\eta:=[a-\eta,b+\eta]\times[-\delta,\delta]$.
We observe that 
\begin{equation}\label{boundaryJ}
u^+(\alpha(t))=u^-(\alpha(t))\qquad \text{ for all }t\in [a-\eta,a]\cup[b,b+\eta].
\end{equation}
Now, set $\eps=\frac{1}{k}$ and, for $k$ large enough, 
\begin{align*}
&\Delta^a_\eps:=\{x\in \Lambda([a-\eps,a]\times[-\eps,\eps]):|\secondavariabileinR(x)|\leq t(x)-a+\eps\},
\\
&\Delta^b_\eps:=\{x\in \Lambda([b,b+\eps]\times[-\eps,\eps]):|\secondavariabileinR(x)|\leq b+\eps-t(x)\}.
\end{align*}
We define the 
recovery sequence $(v_\eps)\subset\mathrm{Lip}(\Omega;\R^2)$ as
\begin{equation}
v_\eps(x)=
\begin{cases}
\frac{\eps+\secondavariabileinR(x)}{2\eps}u\big(\Lambda(t(x),\eps)\big)+\frac{\eps-\secondavariabileinR(x)}{2\eps}
u\big(\Lambda(t(x),-\eps)\big)&\mbox{in }\Lambda(\segmentoingrassato),
\\
u(x)&\mbox{in  }\Omega\setminus \Big(\Lambda(\segmentoingrassato))
\cup \overline\Delta^a_\eps \cup \overline \Delta^b_\eps \Big).
\label{v_k}
\end{cases}
\end{equation}
In order to define $v_\eps$ in $\Delta^a_\eps\cup\Delta^b_\eps$ it is sufficient to observe that, by \eqref{boundaryJ}, the restriction of $v_\eps$ on $\partial \Delta_\eps^a$ and $\partial \Delta_\eps^b$ is Lipschitz continuous with Lipschitz constant 
bounded by 
$\|u\|_{W^{1,\infty}}$.
 Hence, we can take 
a Lipschitz extension of 
$v_\eps$ in  $\Delta^a_\eps\cup\Delta^b_\eps$
keeping the Lipschitz constant (up to a dimensional factor independent
of $\eps$).
Thus
\begin{align}\label{cont_Delta}
	\int_{\Delta^a_\eps\cup\Delta^b_\eps}|\mathcal M(\nabla v_\eps)|~dx
\rightarrow 0\qquad
\text{ as }\eps\rightarrow 0^+.
	\end{align}
Let us check that $v_\eps\rightarrow u$ strictly $BV(\Omega;\R^2)$ as $\eps
\to 0^+$. 
Clearly, $v_\eps\rightarrow u$ in $L^1(\Omega;\R^2)$, since $|\Lambda(
\segmentoingrassato)|\rightarrow 0$ and $|\Delta^a_\eps\cup\Delta^b_\eps|
\rightarrow0$. So, by \eqref{cont_Delta}, as in the proof of Proposition \ref{prop:upper_bound}, it is enough to show that 
$$
\limsup_{\eps\rightarrow 0^+}\int_{\Lambda(
\segmentoingrassato
)}|\nabla v_\eps|~dx\leq|Du|(\jumpset)=\int_a^b|u^+(\alpha(t))-u^-(\alpha(t))|~dt.
$$
Almost everywhere in $\Lambda(\segmentoingrassato)$, we have
\begin{align*}
\nabla v_\eps 
=&\frac{\eps+\secondavariabileinR}{2\eps}\nabla u(\Lambda(t,\eps))\partial_t\Lambda(t,\eps)\otimes\nabla t+\frac{\eps-\secondavariabileinR}{2\eps}\nabla u(\Lambda(t,-\eps))\partial_t\Lambda(t,-\eps)\otimes\nabla t
\\
&+\frac{1}{2\eps}{\nabla \secondavariabileinR}\otimes(u(\Lambda(t,\eps))-u(\Lambda(t,-\eps))).
\end{align*}
Therefore, 
\begin{align*}
|\nabla v_\eps|
\leq& \frac{1}{2\eps}\Big[(\eps+\secondavariabileinR)\|\partial_t\Lambda\|_\infty|\nabla u(\Lambda(t,-\eps))||\nabla t|+(\eps-\secondavariabileinR)\|\partial_t\Lambda\|_\infty|\nabla u(\Lambda(t,\eps))||\nabla t|
\\
&+|\nabla \secondavariabileinR||u(\Lambda(t,\eps))-u(\Lambda(t,-\eps))|\Big]
\\
\leq& \frac{1}{2\eps}\Big[2\eps \|u\|_{W^{1,\infty}}
\|\partial_t\Lambda\|_\infty(1+C\eps)+|u(\Lambda(t,\eps))-u(\Lambda(t,-\eps))|\Big],
\end{align*}
where we used \eqref{grad dist} and\eqref{grad t} with $\eps$ in place of $\delta$.
Thus, we get
\begin{align*}
\int_{\Lambda(\segmentoingrassato)}|\nabla v_\eps|~dx\leq& C(\delta)(1+C\eps)|\Lambda(
\segmentoingrassato)|\\
&+\frac{1}{2\eps}\int_{\Lambda(
\segmentoingrassato)}|u(\Lambda(t,\eps))-u(\Lambda(t,-\eps))|~dx
\\
=&o_\eps(1)+\frac{1}{2\eps}\int_{\Lambda(
\segmentoingrassato)}|u(\Lambda(t,\eps))-u(\Lambda(t,-\eps))|dx.
\end{align*}
Consider the last integral and perform the change of variable $x=(x_1,x_2)=\Lambda(t,\secondavariabileinR)$, with 
$$
|\mathrm{det}\nabla\Lambda(t,\secondavariabileinR)|=|\partial_t\Lambda\wedge\partial_\secondavariabileinR\Lambda|=|1+\secondavariabileinR\ddot\alpha\wedge\dot\alpha|=|1-\kappa_\jumpset\secondavariabileinR|,
$$
where $\kappa_\jumpset$ is the curvature of $\jumpset$. We get
\begin{align*}
& \frac{1}{2\eps}\int_{\Lambda(\segmentoingrassato)}|u(\Lambda(t,\eps))-u(\Lambda(t,-\eps))|dx 
=\frac{1}{2\eps}\int_{
\segmentoingrassato}|u(\Lambda(t,\eps))-u(\Lambda(t,-\eps))||1-\kappa_\jumpset\secondavariabileinR|dtd\secondavariabileinR
\\
\leq&\frac{1}{2\eps}\int_a^b\int_{-\eps}^\eps|u(\Lambda(t,\eps))-u(\Lambda(t,-\eps))|~dtd\secondavariabileinR+o_\eps(1)
=\int_a^b|u(\Lambda(t,\eps))-u(\Lambda(t,-\eps))|~dt+o_\eps(1)
\\
&\longrightarrow\int_a^b|u^+(\alpha(t))-u^-(\alpha(t))|~dt\quad\mbox{as }
\eps\rightarrow 0^+.
\end{align*}
It remains to prove \eqref{limsup ineq} with $\affinesurface$ in \eqref{eq:affine_curve}.
 To this 
purpose it is enough to show that
$$
\liminf_{\eps\rightarrow 0^+}\mathcal{A}(v_\eps;\Lambda(
\segmentoingrassato))\leq\int_{[a,b]\times
\intervallounitario
}|\partial_t
\affinesurface
\wedge\partial_s
\affinesurface
|~dtds.
$$
Let us define $\varphi_\eps:R\rightarrow\R^2$ as
$$
\varphi_\eps(t,\secondavariabileinR):=\frac{1+\secondavariabileinR}{2}u(\Lambda(t,\eps))+\frac{1-\secondavariabileinR}{2}u(\Lambda((t,-\eps))).
$$
Thus, for $x\in\Lambda(\segmentoingrassato)$
$$
v_\eps(x)=\varphi_\eps\left(t(x),\frac{\secondavariabileinR(x)}{\eps}\right)
$$
and, almost everywhere in $\Lambda(\segmentoingrassato)$,
\begin{align*}
\nabla v_\eps=\partial_t\varphi_\eps\nabla t+\frac{1}{\eps}\partial_\secondavariabileinR\varphi_\eps\nabla \secondavariabileinR,\qquad
Jv_\eps=\frac{1}{\eps}|\partial_t\varphi_\eps\wedge\partial_\secondavariabileinR\varphi_\eps||\nabla t\wedge\nabla \secondavariabileinR|,
\end{align*}
where from now on, $\nabla t$ and $\nabla \secondavariabileinR$ are evaluated at $x$, while $\partial_t\varphi_\eps$ and $\partial_\secondavariabileinR\varphi_\eps$ are evaluated at $\left(t(x),\frac{\secondavariabileinR(x)}{\eps}\right)$. Then, we get
\begin{align*}
|\mathcal{M}(\nabla v_\eps)|^2&=1+|\partial_t\varphi_\eps|^2|\nabla t|^2+\frac{2}{\eps}\partial_t\varphi_\eps\cdot\partial_\secondavariabileinR\varphi_\eps\nabla t\cdot\nabla \secondavariabileinR+\frac{1}{\eps^2}\Big[|\partial_\secondavariabileinR\varphi_\eps|^2|\nabla \secondavariabileinR|^2+|\partial_t\varphi_\eps\wedge\partial_\secondavariabileinR\varphi_\eps|^2|\nabla t\wedge\nabla \secondavariabileinR|^2\Big]
\\
&\leq1+|\partial_t\varphi_\eps|^2(1+o_\eps(1))+\frac{2}{\eps}|\partial_t\varphi_\eps\cdot\partial_\secondavariabileinR\varphi_\eps|(1+o_\eps(1))
\\
&\quad+\frac{1}{\eps^2}\Big[|\partial_\secondavariabileinR\varphi_\eps|^2+|\partial_t\varphi_\eps\wedge\partial_\secondavariabileinR\varphi_\eps|^2(1+o_\eps(1))\Big],
\end{align*}
where we used \eqref{grad dist} and\eqref{grad t} with $\eps$ in place of $\delta$. Now, since $o_\eps(1)\sim\eps$ and $\varphi_\eps$ is Lipschitz with Lipschitz constant independent of $\eps$, we obtain
\begin{align*}
&\mathcal{A}(v_\eps;\Lambda(\segmentoingrassato))
\\
\leq&\int_{\Lambda(\segmentoingrassato)}\sqrt{1+|\partial_t\varphi_\eps|^2+\frac{2}{\eps}|\partial_t\varphi_\eps\cdot\partial_\secondavariabileinR\varphi_\eps|+\frac{1}{\eps^2}\Big[|\partial_\secondavariabileinR\varphi_\eps|^2+|\partial_t\varphi_\eps\wedge\partial_\secondavariabileinR\varphi_\eps|^2(1+o_\eps(1))\Big]}~dx+o_\eps(1)
\\
\leq&\int_{\segmentoingrassato}\sqrt{1+|\partial_t\varphi_\eps|^2+\frac{2}{\eps}|\partial_t\varphi_\eps\cdot\partial_\secondavariabileinR\varphi_\eps|+\frac{1}{\eps^2}\Big[|\partial_\secondavariabileinR\varphi_\eps|^2+|\partial_t\varphi_\eps\wedge\partial_\secondavariabileinR\varphi_\eps|^2(1+o_\eps(1))\Big]}|1-\kappa_\jumpset\secondavariabileinR|~dtd\secondavariabileinR
\\
&+o_\eps(1),
\end{align*}
where we made the change of variable $x=\Lambda(t,\secondavariabileinR)$, and so $\partial_t\varphi_\eps$ and $\partial_\secondavariabileinR\varphi_\eps$ are 
computed at $\left(t,\frac{\secondavariabileinR}{\eps}\right)$.
Finally, by the change of variable $\frac{\secondavariabileinR}{\eps}\rightarrow \secondavariabileinR$, we get
\begin{align*}
&\mathcal{A}(v_\eps;\Lambda(\segmentoingrassato))
\\
\leq&\int_{R}\sqrt{o_\eps(1)+|\partial_\secondavariabileinR\varphi_\eps(t,\secondavariabileinR)|^2+|\partial_t\varphi_\eps(t,\secondavariabileinR)\wedge\partial_\secondavariabileinR\varphi_\eps(t,\secondavariabileinR)|^2(1+o_\eps(1))}|1-\kappa_\jumpset\eps \secondavariabileinR|~dtd\secondavariabileinR
\\
&+o_\eps(1) \longrightarrow\int_{[a,b]\times
\intervallounitario
}|\partial_t
\affinesurface
\wedge\partial_s
\affinesurface
|~dtds,
\end{align*}
where, to pass to the limit as $\eps \to 0^+$, 
we apply the dominated convergence theorem (as in the proof of Proposition \ref{prop:upper_bound}).
\end{proof}
We observe that Theorem \ref{teo:main_thm_curve} can be 
easily extended to the case of 
curves with one endpoint or both endpoints on $\partial\Om$.
Write:
\begin{itemize}
\item[(H4)] $\Omega$ is of class $C^1$, 
$\alpha:[a,b]\rightarrow \overline\Om$ is injective, arc-length parametrized,
of class $C^2$, $\alpha((a,b))\subset\Om$, and $\alpha$ hits $\partial \Om$ transversally at $\alpha(a),\alpha(b)$.
\end{itemize} 

\begin{Theorem}\label{teo_main_curve2}
Suppose (H3) and (H4).
Then \eqref{eq:relaxed_area_of_piecewise_Lipschitz_maps:curved_jump} holds
with $\affinesurface$ in \eqref{eq:affine_curve}.
\end{Theorem}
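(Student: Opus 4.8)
We argue by establishing separately the lower and upper bound inequalities for $\overline{\mathcal{A}}_{BV}(u,\Om)$, adapting Propositions \ref{prop:lower_bound_curve} and \ref{upper bound curve} to the presence of the endpoints $\alpha(a),\alpha(b)\in\partial\Om$. Throughout, the transversality of $\alpha$ to $\partial\Om$ from (H4) and the $C^1$-regularity of $\Om$ are what keep the tubular-neighbourhood map \eqref{def_Lambda} under control near those endpoints; recall that $\mappaaffine$ is given by \eqref{eq:affine_curve}.

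\smallskip\noindent\textbf{Lower bound.} Let $(v_k)\subset C^1(\Om;\R^2)$ converge to $u$ strictly $BV(\Om;\R^2)$ and fix $\eta>0$ small. For $\delta=\delta(\eta)>0$ small, $\Lambda$ of \eqref{def_Lambda} is a $C^1$-diffeomorphism of $R_\delta=[a,b]\times[-\delta,\delta]$ onto its image (not necessarily contained in $\Om$ near $t=a,b$), one has $\jumpset\subset\Lambda([a,b]\times[-\eps,\eps])$ for $\eps\le\delta$, and $\Lambda([a+\eta,b-\eta]\times[-\eps,\eps])\subset\subset\Om$ for $\eps<\delta$. Since the open sets $\Om\setminus\Lambda([a,b]\times[-\eps,\eps])$ and $\Lambda([a+\eta,b-\eta]\times[-\eps,\eps])$ are disjoint,
$$\liminf_{k\to+\infty}\mathcal{A}(v_k,\Om)\ge\liminf_{k\to+\infty}\mathcal{A}\big(v_k,\Om\setminus\Lambda([a,b]\times[-\eps,\eps])\big)+\liminf_{k\to+\infty}\mathcal{A}\big(v_k,\Lambda([a+\eta,b-\eta]\times[-\eps,\eps])\big).$$
On the first region $u$ is $W^{1,\infty}$, so by \cite[Theorem 3.7]{AD} the first $\liminf$ is $\ge\int_{\Om\setminus\Lambda([a,b]\times[-\eps,\eps])}|\mathcal{M}(\nabla u)|~dx$, which tends to $\int_{\Om\setminus\jumpset}|\mathcal{M}(\nabla u)|~dx$ as $\eps\to0^+$. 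On the second region the current argument in the proof of Proposition \ref{prop:lower_bound_curve} applies with $[a,b]$ replaced by $[a+\eta,b-\eta]$ — it uses only that $v_k\circ\Lambda(\cdot,\eps\cdot)\to u\circ\Lambda(\cdot,\eps\cdot)$ strictly on $[a+\eta,b-\eta]\times[-1,1]$, which holds since this chart is compactly contained in $\Om$ — and yields, as in \eqref{eq:main_estimate_curve}, $\liminf_{\eps\to0^+}\liminf_{k\to+\infty}\mathcal{A}(v_k,\Lambda([a+\eta,b-\eta]\times[-\eps,\eps]))\ge\int_{[a+\eta,b-\eta]\times\intervallounitario}|\partial_t\mappaaffine\wedge\partial_s\mappaaffine|~dtds$. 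Combining, then letting $\eps\to0^+$ and finally $\eta\to0^+$ (using that $|\partial_t\mappaaffine\wedge\partial_s\mappaaffine|$ is bounded since $u^\pm\in\mathrm{Lip}(\jumpset;\R^2)$, together with monotone convergence), we obtain the lower bound in \eqref{eq:relaxed_area_of_piecewise_Lipschitz_maps:curved_jump}.

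\smallskip\noindent\textbf{Upper bound.} I would reduce to the interior situation of Theorem \ref{teo:main_thm_curve} by an extension argument. By transversality and $C^1$-regularity, for each $\eta>0$ one can choose a bounded open connected $C^1$ set $\widetilde\Om_\eta\supset\overline\Om$ with $\mathrm{dist}(\cdot,\Om)<\eta$ on $\widetilde\Om_\eta$, a $C^2$-extension $\alpha_\eta:[a-c\eta,b+c\eta]\to\widetilde\Om_\eta$ of $\alpha$ still satisfying (H1)-(H2), with $\alpha_\eta([a-c\eta,a))\cup\alpha_\eta((b,b+c\eta])\subset\widetilde\Om_\eta\setminus\overline\Om$, and an extension $\widetilde u_\eta\in W^{1,\infty}(\widetilde\Om_\eta\setminus\jumpset_\eta;\R^2)$ of $u$, where $\jumpset_\eta:=\alpha_\eta([a-c\eta,b+c\eta])$, built from Lipschitz extensions of the two one-sided traces of $u$ and chosen continuous across $\partial\Om$, so that $|D\widetilde u_\eta|(\widetilde\Om_\eta)=|Du|(\Om)+o_\eta(1)$, $\int_{\widetilde\Om_\eta\setminus\jumpset_\eta}|\mathcal{M}(\nabla\widetilde u_\eta)|~dx=\int_{\Om\setminus\jumpset}|\mathcal{M}(\nabla u)|~dx+o_\eta(1)$, and the singular integral attached to $\alpha_\eta$ over $[a-c\eta,b+c\eta]\times\intervallounitario$ equals $\int_{[a,b]\times\intervallounitario}|\partial_t\mappaaffine\wedge\partial_s\mappaaffine|~dtds+o_\eta(1)$. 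Applying Proposition \ref{upper bound curve}, followed by the $C^1$-smoothing device at the start of the proof of Proposition \ref{prop:upper_bound}, to the interior datum $(\widetilde\Om_\eta,\alpha_\eta,\widetilde u_\eta)$, and restricting the resulting recovery sequence to $\Om$, we obtain for each $\eta$ maps $v^\eta_\eps\in C^1(\Om;\R^2)$ with $v^\eta_\eps\to u$ in $L^1(\Om)$, $\limsup_{\eps\to0^+}|Dv^\eta_\eps|(\Om)\le|Du|(\Om)+o_\eta(1)$, and $\limsup_{\eps\to0^+}\mathcal{A}(v^\eta_\eps,\Om)\le\int_{\Om\setminus\jumpset}|\mathcal{M}(\nabla u)|~dx+\int_{[a,b]\times\intervallounitario}|\partial_t\mappaaffine\wedge\partial_s\mappaaffine|~dtds+o_\eta(1)$. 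A diagonal choice $\eps=\eps(\eta)$, together with the lower semicontinuity of the total variation (which forces $|Dv^\eta_{\eps(\eta)}|(\Om)\to|Du|(\Om)$), then produces a sequence converging to $u$ strictly $BV(\Om;\R^2)$ and attaining the upper bound.

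\smallskip\noindent\textbf{Main difficulty.} The delicate part is the geometry near the boundary endpoints: one must verify that transversality and $C^1$-regularity genuinely allow the extensions $\widetilde\Om_\eta,\alpha_\eta,\widetilde u_\eta$ with the stated asymptotics — in particular that the jump of $\widetilde u_\eta$ across $\partial\Om$ can be annihilated, so that no spurious perimeter is created there — and that the diagonal extraction preserves strict convergence on $\Om$. A more hands-on alternative is to mimic Proposition \ref{upper bound curve} directly, building the affine-interpolation competitor inside $\Lambda(\segmentoingrassato)\cap\Om$ and noting that near $\alpha(a),\alpha(b)$ the correction regions $\Delta^a_\eps,\Delta^b_\eps$ are replaced by portions of $\partial\Om$, across which no matching condition is imposed on the competitor; the extra area contributed there is again $o_\eps(1)$ by the estimates \eqref{grad dist}-\eqref{grad t}.
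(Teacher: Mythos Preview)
Your proposal is correct and follows essentially the same route as the paper. For the lower bound you restrict to $[a+\eta,b-\eta]$, invoke the interior estimate of Proposition~\ref{prop:lower_bound_curve}, and let $\eta\to0^+$; this is exactly what the paper does (with parameter called $\rho$). For the upper bound both you and the paper extend the domain, the curve, and the map slightly beyond $\partial\Om$, apply the interior recovery construction of Proposition~\ref{upper bound curve}, and restrict to $\Om$.

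The only noteworthy difference is in the bookkeeping of strict convergence after restriction. You control $\limsup_{\eps\to0^+}|Dv^\eta_\eps|(\Om)\le|Du|(\Om)+o_\eta(1)$ and then run a diagonal in $\eta$. The paper instead makes the extension concrete---$\Om^\eta=\Om\cup B_{2\eta}(\alpha(a))\cup B_{2\eta}(\alpha(b))$ with an explicit linear decay of the extended traces on $[a-\eta,a]\cup[b,b+\eta]$---and the extended arc lies in $\Om^\eta\setminus\overline\Om$; by transversality the portion of the $\eps$-tube over $[a-\eta,a)\cup(b,b+\eta]$ that meets $\Om$ has measure $O(\eps^2)$ while $|\nabla v_\eps|=O(1/\eps)$ there, so this contributes $o_\eps(1)$ and strict convergence on $\Om$ already holds for fixed $\eta$. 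Your diagonal is a legitimate substitute for this finer geometric observation; your ``hands-on alternative'' at the end is precisely the paper's mechanism.
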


\begin{proof}
Lower bound: let $(v_k)\subset C^1(\Omega;\R^2)$ be a sequence converging to $u$ 
	strictly $BV(\Omega;\R^2)$. Fix $0<\rho<\frac{b-a}{2}$ and notice that $ \Lambda([a+\rho,b-\rho]\times [-\eps,\eps])\subset\Om$, for $\eps>0$ small enough. 
Then it is sufficient to show that 
		\begin{align}\label{eq:main_estimate_curve2}
		\lim_{\eps\rightarrow 0^+}
\liminf_{k\rightarrow +\infty}\mathcal{A}\Big(v_k, \Lambda(
		[a,b]\times [-\eps,\eps])\cap \Om\Big)
\geq\int_{[a+\rho,b-\rho]\times
			\intervallounitario
		}|\partial_t
\mappaaffine
\wedge\partial_s\mappaaffine
		|~dtds;
	\end{align}
since the lower bound will follow by the arbitrariness of 
$\rho>0$. After writing $\mathcal{A}(v_k, \Lambda(
	\segmentoingrassato)\cap \Om)\geq \mathcal{A}(v_k, \Lambda(
	[a+\rho,b-\rho]\times[-\eps,\eps]))$, the proof of \eqref{eq:main_estimate_curve2} is identical to that of \eqref{eq:main_estimate_curve}. 
	
Upper bound:  let us fix $\eta>0$ small enough
so that 
$B_{2\eta}(\alpha(a))$ and $B_{2\eta}(\alpha(b))$ are disjoint,
and consider 
$\Om^\eta:=\Om\cup B_{2\eta}(\alpha(a))\cup B_{2\eta}(\alpha(b))$. 
We extend the curve $\alpha$ 
(still calling $\alpha$ the extension)
in $\Om^\eta\setminus \Om$ in such a way that it satisfies
 (H4) in $\Om^\eta$, and so that
 it reaches the boundary of 
$B_{2\eta}(\alpha(a))\setminus \overline\Om$ 
and of $B_{2\eta}(\alpha(b))\setminus \overline\Om$
splitting both $B_{2\eta}(\alpha(a))\setminus \overline\Om$ and $B_{2\eta}(\alpha(b))\setminus \overline\Om$ in two connected components.
 If $\alpha$ 
is now defined on an interval of the form
$[a-\delta,b+\delta]$ with $\delta = \delta(\eta) >\eta$,
 and if we set
 $\Sigma^\delta
=\alpha([a-\delta,b+\delta])$,  we prescribe the traces $u^+$ and $u^-$ 
on $\Sigma^\delta$ in such a way that they are Lipschitz 
continuous and $u^+\circ\alpha=u^-\circ\alpha$ 
on $[a-\delta,a-\eta]\cup[b+\eta,b+\delta]$. 
 Finally we take a Lipschitz extension $u^\eta$ of $u$ 
on the four connected components of 
$B_{2\eta}(\alpha(a))\setminus \overline\Om\setminus \Sigma^\delta$ and of $B_{2\eta}(\alpha(b))\setminus \overline\Om\setminus \Sigma^\delta$. 
It turns out that 
$u^\eta\in W^{1,\infty}\big((B_{2\eta}(\alpha(a))
\cup B_{2\eta}(\alpha(b)))\setminus \jumpset^\eta; \R^2\big)$, where $\Sigma^\eta
=\alpha([a-\eta,b+\eta])\subset\Om^\eta$. 
Since the definition of $(u^\eta)^\pm$ 
is arbitrary, we can assume 
that  
\begin{align*}
	(u^\eta)^\pm(\alpha(t))=u^\pm(\alpha(a))\Big(
1-\frac{a-t}{\eta}\Big)\qquad\text{ for }t\in[a-\eta,a],
\\
	(u^\eta)^\pm(\alpha(t))=u^\pm(\alpha(b))\Big(1-\frac{t-b}{\eta}\Big)
\qquad\text{ for }t\in[b,b+\eta].
\end{align*}
For $\eps>0$ small enough, we see that $\Lambda_\eps:=\Lambda([a-\eta,b+\eta]\times[-\eps,\eps])\subset\Om^\eta$. Hence we define $v_k$ 
as in the proof of Proposition \ref{upper bound curve} 
with $\Om$ replaced by $\Om^\eta$ and $u$ replaced by $u^\eta$ 
(in particular, $v_\eps=u$ on 
$\Om\setminus\Lambda_\eps$). Finally, let us fix 
$\rho\in (0,\eta)$. We can write
\begin{equation*}
\begin{aligned}
\overline{\mathcal A}_{BV}(u, \Om)&
\leq \liminf_{\eps\rightarrow 0^+}
\mathcal A(v_\eps, \Om)
\\
&\leq \lim_{\eps\rightarrow 0^+} \int_{\Om\setminus  \Lambda_\eps}
|\mathcal M(\nabla u)|~dx+\liminf_{\eps\rightarrow 0^+}\int_{\Lambda([a-\rho,b+\rho]\times [-\eps,\eps])}|\mathcal M(\nabla v_\eps)|~dx
\\
&=\int_{\Omega}|\mathcal{M}(\nabla u)|~dx+\int_{[a-\rho,b+\rho]\times
	\intervallounitario
}|\partial_t
\mappaaffine
\wedge\partial_s\mappaaffine|~dtds,
\end{aligned}
\end{equation*}
where we use that
$\Om\subset \big(
(\Om\setminus  \Lambda_\eps)\cup \Lambda([a-\rho,b+\rho]\times [-\eps,\eps])\big)$
for $\eps>0$ small enough. The upper bound then follows by 
the arbitrariness of $\rho$.
\end{proof}

With straightforward modifications of the previous arguments one can show the following:

\begin{Corollary}\label{cor_multicurves}
Let $\Omega$ have $C^1$-boundary, 
let $n \in \mathbb N$ and 
$\alpha_i:[a_i,b_i]\rightarrow \overline\Om$, $i=1,\dots,n$, 
be curves satisfying 
either (H1)-(H2), 
or (H4).
Assume that $\jumpset_i:=\alpha_i([a_i,b_i])\subset\overline\Om$ are mutually disjoint, and 
let $u\in  W^{1,\infty}(\Omega\setminus 
\jumpset;\R^2)$ satisfy (H3), where $\jumpset:=\cup_{i=1}^n \jumpset_i$. Then 
 $$
 \overline{\mathcal{A}}_{BV}(u, \Omega)=\int_{\Omega}|\mathcal{M}(\nabla u)|~dx+\sum_{i=1}^n\int_{[a_i,b_i]\times
 	\intervallounitario
 }|\partial_t
\mappaaffine_{(i)}
\wedge\partial_s\mappaaffine_{(i)}~dtds.
 $$
 where $\mappaaffine_{(i)}:[a_i,b_i]\times \intervallounitario\rightarrow\R^3$ is the map
 $
 \mappaaffine_{(i)}(t,s)=(t,su^+(\alpha_i(t))+(1-s)u^-(\alpha_i(t))).
 $
\end{Corollary}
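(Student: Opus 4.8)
The plan is to use the mutual disjointness of the curves $\Sigma_i$ to \emph{localize} the problem near each of them, so that the singular contributions decouple and Theorem \ref{teo:main_thm_curve} (for curves satisfying (H1)--(H2)) or Theorem \ref{teo_main_curve2} (for curves satisfying (H4)) can be applied, essentially verbatim, in a tubular neighbourhood of each $\Sigma_i$. Since the $\Sigma_i$ are compact and pairwise disjoint there is $d_0>0$ with $\mathrm{dist}(\Sigma_i,\Sigma_j)\geq d_0$ for $i\neq j$; for curves hitting $\partial\Om$ we first perform, near each such endpoint, the domain-and-datum extension of Theorem \ref{teo_main_curve2}, choosing the extension parameter small enough that the enlarged pieces stay pairwise disjoint. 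For each $i$ let $\Lambda_i$ be the diffeomorphism \eqref{def_Lambda} associated with $\alpha_i$, defined on a strip $R_\delta^{(i)}$ of half-width $\delta$; taking $\delta<d_0/3$ small we may assume the closed tubular neighbourhoods $N_i:=\Lambda_i(R_\delta^{(i)})\cap\overline\Om$ of $\Sigma_i$ are pairwise disjoint, and, since $|Du|$ is a finite measure, we may further choose $\delta$ so that $|Du|(\partial N_i)=0$ for every $i$. On $N_i$ the jump set of $u$ is exactly $\Sigma_i$, so $u_{|N_i}$ falls within the scope of Theorem \ref{teo:main_thm_curve} or \ref{teo_main_curve2}. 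For $0<\eps<\delta$ put $N_i^\eps:=\Lambda_i([a_i,b_i]\times[-\eps,\eps])\cap\Om\subset N_i$; these are pairwise disjoint.

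\emph{Lower bound.} Let $(v_k)\subset C^1(\Om;\R^2)$ converge to $u$ strictly $BV(\Om;\R^2)$. Applying lower semicontinuity of the total variation on the open sets $\mathrm{int}\,N_i$ and $\Om\setminus\bigcup_j N_j$, and using $|Dv_k|(\Om)\to|Du|(\Om)$ together with $|Du|(\partial N_i)=0$, we get $|Dv_k|(N_i)\to|Du|(N_i)$; hence $v_k\to u$ strictly $BV$ on each $N_i$. Since the $N_i^\eps$ are disjoint subsets of $\Om$,
\[
\mathcal A(v_k,\Om)\ \geq\ \mathcal A\big(v_k,\Om\setminus{\textstyle\bigcup_{i=1}^n} N_i^\eps\big)+\sum_{i=1}^n\mathcal A(v_k,N_i^\eps).
\]
By \cite[Theorem 3.7]{AD} the $\liminf_k$ of the first term is at least $\mathcal A(u,\Om\setminus\bigcup_i N_i^\eps)$, which tends to $\int_\Om|\mathcal M(\nabla u)|\,dx$ as $\eps\to0^+$ by dominated convergence; for each $i$, running the argument of Proposition \ref{prop:lower_bound_curve} (if $\alpha_i$ satisfies (H1)--(H2)) or the lower bound part of Theorem \ref{teo_main_curve2} (if it satisfies (H4)) on $N_i$ gives
\[
\lim_{\eps\to0^+}\liminf_{k\to+\infty}\mathcal A(v_k,N_i^\eps)\ \geq\ \int_{[a_i,b_i]\times I}|\partial_t\affinesurface_{(i)}\wedge\partial_s\affinesurface_{(i)}|\,dtds.
\]
Taking $\liminf_k$, using superadditivity of $\liminf$ over the finitely many summands, and then letting $\eps\to0^+$ yields the lower bound.

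\emph{Upper bound.} For each $i$ build, exactly as in Proposition \ref{upper bound curve} (or the upper bound part of Theorem \ref{teo_main_curve2}), a map $v_\eps^{(i)}\in\mathrm{Lip}(\Om;\R^2)$ coinciding with $u$ outside $N_i^\eps$ and satisfying $\limsup_{\eps\to0^+}|Dv_\eps^{(i)}|(N_i^\eps)\leq|Du|(\Sigma_i)$ and $\limsup_{\eps\to0^+}\mathcal A(v_\eps^{(i)},N_i^\eps)\leq\int_{[a_i,b_i]\times I}|\partial_t\affinesurface_{(i)}\wedge\partial_s\affinesurface_{(i)}|\,dtds$. Since for $\eps$ small the $N_i^\eps$ are pairwise disjoint, setting $v_\eps:=v_\eps^{(i)}$ on $N_i^\eps$ and $v_\eps:=u$ elsewhere defines $v_\eps\in\mathrm{Lip}(\Om;\R^2)$; adding the contributions over the disjoint pieces and using $\int_\Om|\nabla u|\,dx+\sum_i|Du|(\Sigma_i)=|Du|(\Om)$ one checks $v_\eps\to u$ strictly $BV(\Om;\R^2)$, while
\[
\limsup_{\eps\to0^+}\mathcal A(v_\eps,\Om)\ \leq\ \int_\Om|\mathcal M(\nabla u)|\,dx+\sum_{i=1}^n\int_{[a_i,b_i]\times I}|\partial_t\affinesurface_{(i)}\wedge\partial_s\affinesurface_{(i)}|\,dtds.
\]
Mollifying the $v_\eps$ to make them $C^1$ and passing to a diagonal sequence, as in Proposition \ref{prop:upper_bound}, one gets a competitor showing that $\overline{\mathcal A}_{BV}(u,\Om)$ is bounded above by the right-hand side; together with the lower bound this proves the Corollary.

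\emph{Main obstacle.} There is no genuinely new difficulty: once the neighbourhoods are separated, everything reduces to the already established Theorems \ref{teo:main_thm_curve} and \ref{teo_main_curve2}. The only point requiring care is the bookkeeping of the localization — selecting the tubular neighbourhoods $N_i$ pairwise disjoint and with $|Du|$-negligible boundaries (so that strict convergence is inherited on each of them, which is what the lower bound exploits), and arranging the boundary extensions around the distinct endpoints $\alpha_i(a_i),\alpha_i(b_i)$ so that they do not overlap in the upper bound construction.
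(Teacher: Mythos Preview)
Your proposal is correct and matches what the paper intends: the paper does not give a proof but simply states that the result follows ``with straightforward modifications of the previous arguments'', and your localization argument (separating the tubular neighbourhoods, inheriting strict convergence on each via $|Du|(\partial N_i)=0$, and gluing the single-curve recovery sequences on disjoint strips) is exactly the kind of bookkeeping this phrase stands for. Your treatment is in fact more detailed than the paper's; the only cosmetic remark is that the domain-and-datum extension from Theorem~\ref{teo_main_curve2} is needed only for the upper bound, not the lower bound, so it would be slightly cleaner to introduce it there rather than in the common setup.
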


\section{Piecewise constant maps}\label{sec:piecewise_constant_maps}
In this section 
 we study the relaxed area \eqref{area_functional_rel2}
and the relaxed 
total variation \eqref{TVJ_functional_rel},
on certain piecewise constant maps.
 We start by exhibiting a $BV$ map
taking three values 
having infinite relaxed total variation of the Jacobian (and hence infinite $BV$-relaxed area), but finite
$L^1$-relaxed area.

\begin{figure}[t]
    \centering
    \includegraphics[scale=0.45]{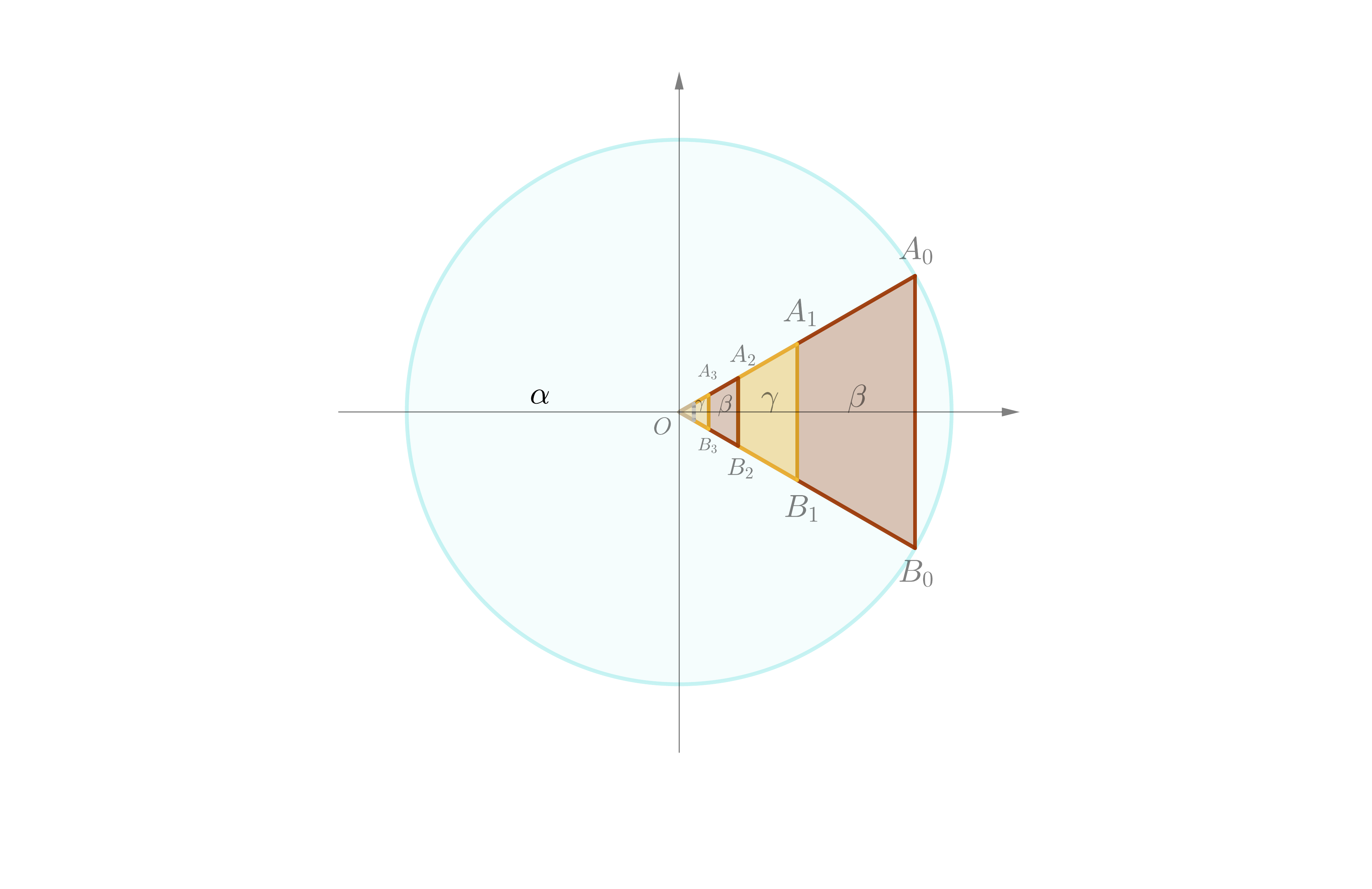}
    \caption{The source disc $B_1(0)$ and the values
$\{\alpha,\beta,\gamma\}$ of $u$, 
with infinitely many triple points.}
    \label{infinite triple}
\end{figure}

\begin{Example}({\bf $BV$-relaxed area and $L^1$-relaxed area})\label{tripunto infinito}
Let $\alpha,\beta,\gamma\in\R^2$ be three non-collinear vectors. 
Consider the map $u:B_1(0)
\subset \R^2\rightarrow\{\alpha,\beta,\gamma\}$ in Fig. \ref{infinite triple}, obtained by the following procedure: divide the 
source equilateral triangle $T_{A_0OB_0}$ in two regions with a vertical 
segment connecting $A_1$ and $B_1$, 
the middle points of the oblique sides of the triangle; assign the value $\beta$ and $\gamma$ on 
the right and on the left as in the figure,
 and repeat this construction on the 
equilateral triangle $T_{A_1OB_1}$, and then repeat the argument 
iteratively
on all smaller triangles; finally set $u=\alpha$ in $B_1(0)
\setminus  T_{A_0OB_0}$. 
In this way we get an infinite collection of triple points 
located at $\{A_i,B_i\}_{i\geq1}$. Then, $u\in BV(B_1(0);\{\alpha, \beta, \gamma\})$, 
since 
\begin{align*}
|Du|(B_1(0))
&=\left(1+2 (1- \sum_{i=1}^{+\infty} 2^{-2i})\right)
|\beta-\alpha|
+
2\sum_{i=1}^{+\infty}2^{-2i}|\alpha-\gamma|
+
\sum_{i=1}^{+\infty} 
2^{-i}|\beta-\gamma|
\\
&=\frac{7}{3}|\beta-\alpha|+\frac{2}{3}|\alpha-\gamma|+|\beta-\gamma|.
\end{align*}
On the other hand, consider an 
infinitesimal sequence $(r_i)_{i \geq 1}$ 
of radii 
with $0<r_i<2^{-(i+1)}$. With an argument similar to 
\cite[Theorem 1.3]{BCS}, we have
$$
\overline{TVJ}_{BV}(u, B_{r_i}(A_i))=
|T_{\alpha\beta\gamma}|,
$$
$|T_{\alpha\beta\gamma}|$ denoting the Lebesgue measure of the target triangle
with vertices $\alpha, \beta, \gamma$,
and thus,
for every $N\in\N$, 
$$
\overline{TVJ}_{BV}(u, B_1(0))\geq\overline{TVJ}_{BV}(u,
\cup_{i=1}^N B_{r_i}(A_i))\geq\sum_{i=1}^N|T_{\alpha\beta\gamma}|=N|T_{\alpha\beta\gamma}|.
$$
 Whence 
\begin{align}\label{TV infinity}
\overline{\mathcal{A}}_{BV}(u, B_1(0))\geq\overline{TVJ}_{BV}(u, B_1(0))=+\infty.
\end{align}
On the other hand, we claim that
\begin{equation}\label{eq:L1_ril_fin}
\overline{\mathcal{A}}_{L^1}(u, B_1(0))<+\infty.
\end{equation}
 Indeed, we can 
construct a sequence $(v_\eps)$ of 
piecewise constant maps on $B_1(0)$, taking values in $\{\alpha,\beta,\gamma\}$, with uniformly bounded $L^1$-relaxed area 
and converging to $u$ in $L^1(B_1(0); \R^2)$:
 Let $\eps\in(0,1)$ and consider the intersection with $T_{A_0 0 B_0}$ of a tubular neighbourhood 
of the segment $\overline{A_iB_i}$ of diameter $\eps{2^{-(i+1)}}$, for every $i\in\N$. Then, the map $v_\eps$ is obtained by modifying $u$ on these strips 
in the triangle, by assigning the value $\alpha$. 
Now, $v_\eps$ is a piecewise constant map valued in $\{\alpha,\beta,\gamma\}$ without triple points, hence, by \cite[Theorem 3.14]{AD}, 
\begin{align*}
& \overline{\mathcal{A}}_{L^1}(v_\eps, B_1(0))
=|B_1(0)|+|Dv_\eps|(B_1(0)) 
\\
\leq&
\pi+\frac{7}{3}|\beta-\alpha|+\frac{2}{3}|\alpha-\gamma|
+\left(1+\frac{\eps}{2}\right)\sum_{i=1}^{+\infty}2^{-i}(|\beta-\alpha|+|\alpha-\gamma|)\\
\leq&\pi+\frac{23}{6}|\beta-\alpha|+\frac{13}{6}|\alpha-\gamma|.
\end{align*}
 Clearly, $v_\eps\rightarrow u$ in $L^1(B_1(0);\R^2)$ as $\eps\rightarrow 0^+$, so by lower semicontinuity 
$$
\overline{\mathcal{A}}_{L^1}(u, B_1(0))\leq\pi+\frac{23}{6}|\beta-\alpha|+\frac{13}{6}|\alpha-\gamma|<+\infty.
$$
In particular
$$
\dom\Big(\overline{\mathcal{A}}^{BV}(\cdot, B_1(0))\Big)\subsetneq
\dom\Big(\overline{\mathcal{A}}^{L^1}(\cdot, B_1(0))\Big).
$$
\end{Example}

\begin{Remark}
Following the notation of \cite{Mc2}, one can show \eqref{TV infinity} 
also by considering the measure $\mu^J_w$ defined for every $w\in BV(B_1(0);\R^2)$ as
$$
\langle\mu^J_w,g\rangle=\frac{1}{2}\int_{\jump_w}({w^1}^-{w^2}^+-{w^1}^+{w^2}^-)\partial_\tau g d\mathcal{H}^1\quad\forall g\in C^\infty_c(B_1(0)),
$$
where $\tau=\nu^\perp$ and $\nu$ is the unit normal to 
$\jump_w$, so that $Dw\,\mres \jump_w=(w^+-w^-)\otimes\nu\mathcal{H}^1\,\mres \jump_w$.

If $\overline{\mathcal A}_{BV}(w,B_1(0))$ is finite, we can consider the unique cartesian current $T_w\in$ cart$(B_1(0);\R^2)$ associated to $w$ defined in \cite[Theorem 3.5]{Mc2}, whose vertical part is by definition equal to the minimal completely vertical lifting $\mu_v[w]$ associated to $w$, according to \cite[Definition 3.1]{Mc2}. Then, since $|\mu_v[w]|$ is lower semicontinuous with respect to the weak convergence of measures and $\mu_v[v]=TVJ(v, \cdot)$ for $v$ smooth (see \cite[(3.6)]{Mc2}), we get
$$
|\mu_v[w]|(B_1(0)\times\R^2)\leq\overline{TVJ}_{BV}(w, B_1(0)).
$$
 In particular, if $w\in BV(B_1(0);\R^2)$ is piecewise constant, we have
\begin{equation}\label{eq:TVJbar_larger}
|\mu^J_w|(B_1(0))\leq|\mu_v[w]|(B_1(0)\times\R^2)\leq
\overline{TVJ}_{BV}(w, B_1(0)),
\end{equation}
where the first inequality is a consequence of \cite[Corollary 4.3]{Mc2}.
 
 Now, if
by contradiction $\overline{\mathcal A}_{BV}(u, B_1(0))$ 
is finite for  the map $u$ 
in Example \ref{tripunto infinito} we have 
$$
\mu^J_u=\sum_{i=1}^{+\infty}
|T_{\alpha\beta\gamma}|(\delta_{A_i}-\delta_{B_i}).$$
In particular $|\mu^J_u|(B_1(0))=+\infty$, and \eqref{TV infinity} follows
from \eqref{eq:TVJbar_larger}.
In Example \ref{esempio alla Mucci}, 
we construct 
a piecewise constant map $u\in BV(B_1(0);\R^2)$ taking only five values in $\R^2$ with $\overline{TVJ}_{BV}(u, B_1(0))=+\infty$ and $\mu^J_u=0$. In that case, one can see even that $\mu_v[u]=0$, whence a maximal gap phenomenon occurs between the mass of the current $T_u$ (which is finite and without a vertical contribution) and $\overline{\mathcal{A}}_{BV}(u,B_1(0)) $ (which is infinite as well).
\end{Remark}

\subsection{Piecewise constant homogeneous maps}
\label{subsec:piecewise_constant_homogeneous_maps}
We need some tools that allow us to characterize (and compute in some cases) the relaxed functionals for $n-$uple point maps with $n\geq3$. Thus, for $r>0$, we consider maps $u:B_r:=B_r(0)\rightarrow\R^2$ of the form 
\begin{align}\label{def of u}
	u(x)=\gamma\left(\frac{x}{|x|}\right)\quad\mbox{for a.e. }x\in B_r,
\end{align}
where 
$\gamma:\Suno\rightarrow\{\alpha_1,\ldots,\alpha_n\}$ is piecewise 
constant and takes
the (not necessarily distinct) values $\alpha_1,\ldots,\alpha_n \in \R^2$ 
on the arcs $C_1,\ldots,C_n$ in 
the order (see Fig. \ref{curve_gamma} for $n=5$). So, $u$ is an $n-$uple point map
 with one $n-$uple junction at the origin. 
Now, we can consider the broken line curve $\widetilde\gamma\subset\R^2$ 
(an example of which is in Fig. \ref{curve_gamma}) 
made of the segments connecting $\alpha_1$ to $\alpha_2$, $\alpha_2$ 
to $\alpha_3$ and so on, closing up by connecting $\alpha_n$ to $\alpha_1$. 
The curve $\widetilde\gamma$ can be parametrized as in \eqref{gamma_tilde},
and 
the curves $\widetilde\gamma_i$ 
are constant.
Denoting by $L(\gamma)$ the length of $\widetilde\gamma$, we have
\begin{equation}\label{eq:def_L}
L(\gamma)=\sum_{i=1}^{n}|\alpha_{i+1}-\alpha_i|
=|\dot\gamma|(\Suno)=
\sup\left\{\sum_{i=1}^{m-1}|\gamma(\nu_{i+1})-\gamma(\nu_i)|: m \in \mathbb N, \{\nu_1,\ldots,\nu_m\}\subset\Suno\right\},
\end{equation}
with the convention $\alpha_{n+1}:=\alpha_1$,
\begin{figure}[t]
    \centering
    \includegraphics[scale=0.49]{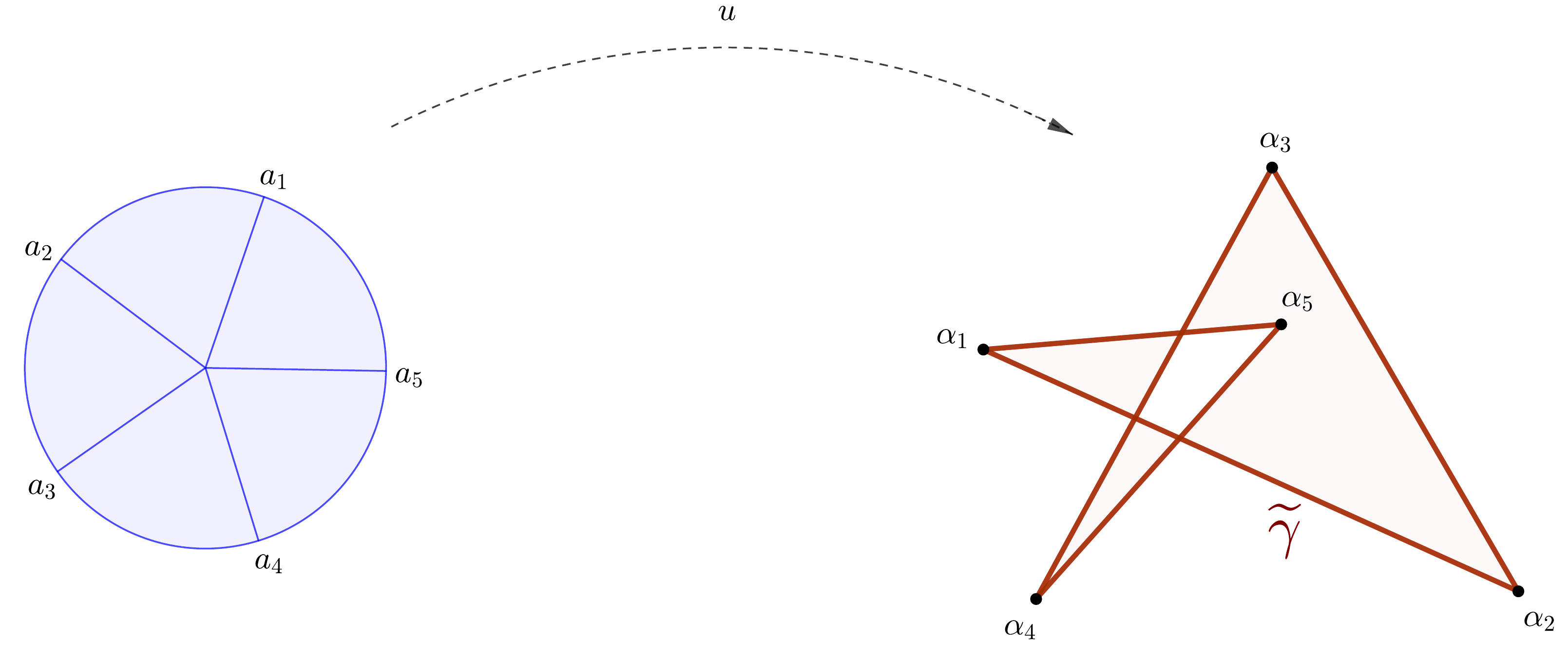}
    \caption{An $n$-uple point map and the corresponding curve $\gamma$, for $n=5$. 
}
    \label{curve_gamma}
\end{figure}
Clearly, by definition of $u$, we have
$$
|Du|(B_r)=r|\dot\gamma|(\Suno)=r L(\gamma).
$$
Thanks to Lemma \ref{lem_plateaurel}, for
 $\rilP(\gamma)$ as in \eqref{Plateau_rel}
we know that 
\begin{align}
	\rilP(\gamma)=P(\widetilde\gamma).
\end{align}
For a general $\gamma$ 
the computation of $\rilP(\gamma)$ seems not immediate.
For the configuration in Fig. \ref{curve_gamma}, we expect it to be the area of the 
region enclosed by $\widetilde\gamma$, with the small 
internal quadrilateral counted twice.

\begin{Theorem}[\textbf{Relaxation of TVJ on piecewise constant maps}]
\label{teo:relaxation_of_TV_on_piecewise_constant_maps}
Let $\{\alpha_1,\ldots,\alpha_n\}\subset\R^2$, 
$\gamma\in BV(\Suno;\{\alpha_1,\ldots,\alpha_n\})$ be a function 
with a finite number of jump points,
and let $u$ be as in \eqref{def of u}.
Then
$$
\overline{TVJ}_{BV}(u, B_r)=\rilP(\gamma).
$$
\end{Theorem}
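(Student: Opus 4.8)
The plan is to prove the two inequalities $\overline{TVJ}_{BV}(u,B_r)\le \rilP(\gamma)$ and $\overline{TVJ}_{BV}(u,B_r)\ge \rilP(\gamma)$ separately, exploiting the radial structure of $u$ to reduce everything to the behaviour of the boundary datum on circles $\partial B_\rho$.

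\textbf{Upper bound.} Since $\rilP(\gamma)=P(\widetilde\gamma)$ by Lemma \ref{lem_plateaurel}, and $\rilP$ is the relaxation of $P$ under strict $BV$-convergence of the datum, it suffices to produce, for an arbitrary sequence $(\varphi_k)\subset\mathrm{Lip}(\Suno;\R^2)$ converging strictly $BV$ to $\gamma$, a recovery sequence $(v_k)\subset C^1(B_r;\R^2)$ converging strictly $BV$ to $u$ with $\limsup_k TVJ(v_k,B_r)\le \liminf_k P(\varphi_k)$. First I would replace the radial map $u$ by the $0$-homogeneous map built from $\widetilde\gamma$ (or directly from $\varphi_k$), as in the proof of Proposition \ref{independence_of_P}: inside a small ball $B_{\delta_k}$ put a near-optimal Lipschitz competitor $w_k$ for $P(\varphi_k)$ rescaled to radius $\delta_k$; in the annulus $B_{2\delta_k}\setminus B_{\delta_k}$ interpolate between $w_k|_{\partial B_{\delta_k}}$ and $\varphi_k$; and in $B_r\setminus B_{2\delta_k}$ set $v_k(x)=\varphi_k(x/|x|)$. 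The Jacobian of the last (homogeneous) part vanishes a.e.\ since $\varphi_k(\Suno)$ is $1$-dimensional, so by the area formula only the contributions from $B_{\delta_k}$ and the interpolation annulus survive; the first equals $\int_{B_1}|Jw_k|\to P(\varphi_k)$, and the second is controlled by Remark \ref{affineint_rem}/Lemma \ref{lem:P_is_Lip} and tends to $0$ if $\delta_k\to0$ slowly enough relative to $\|\dot\varphi_k\|_1$ and $\|\varphi_k-\gamma\|_\infty$. Strict $BV$-convergence of $v_k$ to $u$ follows because $|Dv_k|(B_r)$ is dominated by $r|\dot\varphi_k|(\Suno)+o(1)\to r|\dot\gamma|(\Suno)=|Du|(B_r)$, using lower semicontinuity for the reverse inequality. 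A final mollification (as in the claim at the start of Proposition \ref{prop:upper_bound}) upgrades Lipschitz to $C^1$.

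\textbf{Lower bound.} Let $(v_k)\subset C^1(B_r;\R^2)$ converge strictly $BV$ to $u$. By Lemma \ref{lem:inheritance circ}, for a.e.\ $\rho\in(0,r)$ there is a subsequence (depending on $\rho$) with $v_{k}\mres\partial B_\rho\to u\mres\partial B_\rho$ strictly $BV(\partial B_\rho;\R^2)$; but $u\mres\partial B_\rho$ is exactly the rescaled copy $\gamma_\rho$ of $\gamma$, so by Corollary \ref{cor_continuity_P} (applied on $\partial B_\rho\cong\Suno$, using that $\rilP$ is rescaling-invariant) we get $\liminf_k P(v_k\mres\partial B_\rho)\ge \rilP(\gamma)$ for a.e.\ $\rho$. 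On the other hand, for each fixed $k$ the restriction of $v_k$ to $B_\rho$ is a Lipschitz competitor for the Plateau problem with datum $v_k\mres\partial B_\rho$, hence $TVJ(v_k,B_\rho)=\int_{B_\rho}|Jv_k|\ge P(v_k\mres\partial B_\rho)$, and a fortiori $TVJ(v_k,B_r)\ge P(v_k\mres\partial B_\rho)$ for every such $\rho$. Combining, $\liminf_k TVJ(v_k,B_r)\ge \liminf_k P(v_k\mres\partial B_\rho)\ge \rilP(\gamma)$ for a.e.\ $\rho$, which gives the lower bound. A subtlety to handle carefully is that the subsequence in Lemma \ref{lem:inheritance circ} depends on $\rho$: one fixes a single good radius $\rho$ first, extracts the corresponding subsequence, and argues along it; since the left-hand side $\liminf_k TVJ(v_k,B_r)$ only decreases along subsequences, the inequality for the full sequence follows.

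\textbf{Main obstacle.} The delicate point is the interface between the $1$-dimensional theory of $\rilP$ (Corollary \ref{cor_continuity_P}, Lemma \ref{lem_plateaurel}) and the $2$-dimensional maps: in the lower bound I must make sure that the slice $v_k\mres\partial B_\rho$ is genuinely an admissible Lipschitz datum with equibounded total variation so that Corollary \ref{cor_continuity_P} applies — this is where Lemma \ref{lem:inheritance circ} and Fubini (equiboundedness of $|D(v_k\mres\partial B_\rho)|$ for a.e.\ $\rho$) are essential — and in the upper bound I must calibrate the shrinking scale $\delta_k$ so that the interpolation-annulus Jacobian error genuinely vanishes while strict convergence is preserved; this is the analogue of the rate bookkeeping in Proposition \ref{independence_of_P} and Lemma \ref{lem:P_is_Lip}, and is the only place where a nontrivial estimate, rather than a soft compactness argument, is needed.
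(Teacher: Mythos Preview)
Your approach is essentially the paper's: the lower bound via circular slicing (Lemma \ref{lem:inheritance circ}) combined with $\int_{B_\rho}|Jv_k|\ge P(v_k\mres\partial B_\rho)$ and Corollary \ref{cor_continuity_P}, and the upper bound by gluing a near-optimal Plateau competitor in a shrinking core ball to the $0$-homogeneous extension of a Lipschitz $\varphi_k\to\gamma$. Two small points: (i) the interpolation annulus in your upper bound is unnecessary, since a competitor $w_k$ for $P(\varphi_k)$ already satisfies $w_k\mres\partial B_1=\varphi_k$, so after rescaling it matches the homogeneous part on $\partial B_{\delta_k}$; the paper glues directly and controls the core-ball gradient by choosing $\rho_k$ with $c_k\rho_k\to 0$ (where $c_k=\mathrm{lip}(w_k)$). (ii) Your closing sentence on the lower bound has the inequality backwards: $\liminf$ can only \emph{increase} along subsequences, so showing $\liminf_{k_h}TVJ(v_{k_h},B_r)\ge\rilP(\gamma)$ does not by itself give the bound for the full sequence; the fix (implicit in the paper, which works with a sequence realising $\overline{TVJ}_{BV}$ as a limit) is to first pass to a subsequence along which $TVJ(v_k,B_r)$ converges to its $\liminf$, and only then extract the $\rho$-dependent slice subsequence.
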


\begin{proof} 
Lower bound: 
Assume that $(v_k)\subset C^1(B_r;\R^2)$ 
converges to $u$ strictly $BV(B_r;\R^2)$ and
$$
\lim_{k\rightarrow+\infty}\int_{B_r}|Jv_k|~dx=\overline{TVJ}_{BV}(u, B_r).
$$
 By Lemma \ref{lem:inheritance circ}, 
we can fix $\eps\in (0,r)$ and a not-relabeled 
subsequence depending on $\eps$,
such that $v_k\mres\partial B_\eps\rightarrow u\mres\partial B_\eps$ strictly $BV(\partial B_\eps;\R^2)$.
 Thus, using Corollary \ref{cor_continuity_P} and the rescaling invariance of \eqref{Plateau_rel}, we 
can estimate
\begin{align}\label{5.6}
\overline{TVJ}_{BV}(u, B_r)& \geq\liminf_{k\rightarrow +\infty}\int_{B_\eps}|Jv_{k}|~dx \geq\liminf_{k\rightarrow +\infty}P(v_k\mres\partial B_\eps)=\rilP(u\mres\partial B_\eps)=
\rilP(\gamma).
\end{align}

Upper bound: By an 
argument similar to the one at the beginning of the proof of Proposition 
\ref{prop:upper_bound}, it will be enough to construct a recovery 
sequence $(u_k)
\subset$ Lip$(B_r;\R^2)$. 
Let $\widetilde\gamma$ be as above.
We start by building a sequence $(\gamma_k)_k$ of 
Lipschitz reparameterizations of $\widetilde\gamma$ which converges 
strictly $BV(\Suno;\R^2)$ to $\gamma$. 
Let us denote by $a_1,\ldots,a_n\in[0,2\pi)$ the angular coordinates of the extremal 
points of $C_1,\ldots,C_n$, 
and assume without loss of generality $0=a_1<a_2<\dots<a_n$. 
Then 
 $$
\bigcup_{i=1}^n[a_i,a_{i+1}]=[0,2\pi],
$$
with the convention $a_{n+1}=2\pi$. 
Let $(\delta_k)_k$ be an infinitesimal sequence with $0<\delta_k<\max\{|a_{i+1}-a_{i}|,i=1,\ldots,n\}$, for instance $\delta_k=\frac{2}{k}$, $k$ large enough. We define the piecewise affine map $\gamma_k:[0,2\pi]\rightarrow\R^2$ as
\begin{equation}\label{gamma_k}
\gamma_k(t)=
\begin{cases}
\alpha_{i} &\text{if }t \in [a_i+\delta_k/2,a_{i+1}-\delta_{k}/2],
\\
\displaystyle \frac{a_{i+1}+\delta_{k}/2-t}{\delta_{k}}\alpha_{i}+\frac{t-a_{i+1}+\delta_{k}/2}{\delta_{k}}\alpha_{i+1}&\text{if }t\in[a_{i+1}-\delta_{k}/2,a_{i+1}+\delta_k/2],
\end{cases}
\quad i=1,\ldots,n.
\end{equation}
Then $\gamma_k\rightarrow\gamma$ strictly $BV(\Suno;\R^2)$
(a direct computation shows that $|\dot\gamma_k|(\Suno)=|\dot\gamma|(\Suno)$), 
$\gamma_k$ are uniformly bounded in $L^\infty$, and converge 
almost everywhere to $\gamma$.
As a consequence, from Corollary \ref{cor_continuity_P},
\begin{align}\label{conv_P}
P(\gamma_k)\rightarrow \rilP(\gamma)\qquad\text{ as }k\rightarrow +\infty.
\end{align}
Therefore, by \eqref{Plateau} we choose, for all $k>1$ large enough, a map $v_k\in {\rm Lip}(B_{1};\R^2)$ such that 
\begin{align}\label{conv_P2}
v_k\mres\Suno=\gamma_k, 
\qquad 
\qquad\left|P(\gamma_k)-\int_{B_{1}}|Jv_k|~dx\right|\leq \frac1k.
\end{align}
Let $c_k>0$ be the Lipschitz constant of $v_k$. 
Defining $v_{k,\rho}
\in \text{Lip}(B_\rho;\R^2)$ as  $v_{k,_\rho}(y):=
v_k(\frac{y}{\rho})$ for any $\rho>0$, 
it is straightforward that the Lipschitz constant of $v_{k,\rho}$ 
is $c_k/\rho$.

We now choose an infinitesimal sequence $(\rho_k)\subset (0,r)$ 
in such a way that 
$\lim_{k \to +\infty} c_k\rho_k =0$.
As a consequence we get
\begin{align}\label{int_0}
\int_{B_{\rho_k}}|\nabla v_{k,\rho_k}
|~dx\leq \pi c_k\rho_k\rightarrow 0\qquad \text{ as }k\rightarrow +\infty.
\end{align}
We are now in a position to introduce our recovery sequence: 
We define $u_k\in \text{Lip}(B_r;\R^2)$ as
\begin{align}\label{def_rec_npoint}
u_k(x):=\begin{cases}
	\gamma_k\left(\frac{x}{|x|}\right) &\forall x\in B_r\setminus B_{\rho_k},\\
	v_{k,{\rho_k}}(x)&\forall x\in B_{\rho_k}.
\end{cases}
	\end{align}
Using that 
$\gamma_k\rightarrow \gamma$ strictly $BV(\Suno;\R^2)$ 
and \eqref{int_0} we 
see that $u_k\rightarrow u$ strictly $BV(B_r;\R^2)$. 
Finally, since in $B_r\setminus B_{\rho_k}$ the map
$u_k$ depends only on the angular coordinate, 
its Jacobian determinant vanishes in  $B_r\setminus B_{\rho_k}$. Hence 
\begin{align}
\liminf_{k\rightarrow +\infty}\int_{B_r}|Ju_k|~dx=\liminf_{k\rightarrow +\infty}\int_{B_{\rho_k}}|Jv_{k,{\rho_k}}|~dx
= \rilP(\gamma),
\end{align}
the convergence being a consequence of \eqref{eq:int_Jac_radius}, \eqref{conv_P2}, and \eqref{conv_P}. 
\end{proof}

As a consequence of Theorem 
\ref{teo:relaxation_of_TV_on_piecewise_constant_maps}
 we deduce:
\begin{Theorem}[\textbf{Relaxation of $\mathcal A$ on 
piecewise constant maps}]
\label{area n-ple point}
Let $\gamma$ and $u$ as in Theorem \ref{teo:relaxation_of_TV_on_piecewise_constant_maps}. 
Then, for any $r>0$, we have 
\begin{align}\label{eq:area_npoint}
\overline{\mathcal{A}}_{BV}(u, B_r)
=\pi r^2+r L(\gamma) +\rilP(\gamma).
\end{align}
\end{Theorem}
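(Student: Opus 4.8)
The plan is to read off \eqref{eq:area_npoint} from Theorem \ref{teo:relaxation_of_TV_on_piecewise_constant_maps} by decoupling the three contributions appearing under the area integrand: the constant term $\pi r^2=|B_r|$, the gradient term $rL(\gamma)=|Du|(B_r)$, and the Jacobian concentration $\rilP(\gamma)=\overline{TVJ}_{BV}(u, B_r)$. Throughout I use $|\mathcal M(\nabla v)|=\sqrt{1+|\nabla v|^2+|Jv|^2}$.

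\textbf{Lower bound.} Let $(v_k)\subset C^1(B_r;\R^2)$ converge to $u$ strictly $BV(B_r;\R^2)$; we may assume $\liminf_k\mathcal A(v_k,B_r)<+\infty$ (otherwise there is nothing to prove), extract a subsequence (not relabeled) along which $\mathcal A(v_k,B_r)$ converges to this $\liminf$, and then, for a.e. $\eps\in(0,r)$, extract a further subsequence (not relabeled) which by Lemma \ref{lem:inheritance circ} satisfies $v_k\mres\partial B_\eps\to u\mres\partial B_\eps$ strictly $BV(\partial B_\eps;\R^2)$. Splitting $\mathcal A(v_k,B_r)=\mathcal A(v_k,B_\eps)+\mathcal A(v_k,B_r\setminus\overline{B_\eps})$ and using $\liminf(a_k+b_k)\geq\liminf a_k+\liminf b_k$, I estimate the two pieces separately. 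On $B_\eps$, dropping $1+|\nabla v_k|^2$ under the square root gives $\mathcal A(v_k,B_\eps)\geq\int_{B_\eps}|Jv_k|\,dx$, and exactly as in \eqref{5.6} (Corollary \ref{cor_continuity_P} together with the rescaling invariance of $\rilP$),
\[
\liminf_{k\to+\infty}\int_{B_\eps}|Jv_k|\,dx\ \geq\ \liminf_{k\to+\infty}P(v_k\mres\partial B_\eps)\ =\ \rilP(u\mres\partial B_\eps)\ =\ \rilP(\gamma).
\]
On the annulus, dropping instead $|Jv_k|^2$ gives $\mathcal A(v_k,B_r\setminus\overline{B_\eps})\geq\int_{B_r\setminus\overline{B_\eps}}\sqrt{1+|\nabla v_k|^2}\,dx$; since $v\mapsto\int\sqrt{1+|\nabla v|^2}\,dx$ is a convex functional with linear growth, its $L^1$-lower semicontinuous envelope evaluated at $u$ equals $\int_{B_r\setminus\overline{B_\eps}}\sqrt{1+|\nabla u|^2}\,dx+|D^su|(B_r\setminus\overline{B_\eps})$, and since $u(x)=\gamma(x/|x|)$ has $\nabla u=0$ a.e. and $|D^su|(B_r\setminus\overline{B_\eps})=|Du|(B_r\setminus\overline{B_\eps})=(r-\eps)L(\gamma)$, we get $\liminf_k\mathcal A(v_k,B_r\setminus\overline{B_\eps})\geq\pi(r^2-\eps^2)+(r-\eps)L(\gamma)$. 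Adding the two estimates and letting $\eps\to0^+$ along good radii yields $\liminf_k\mathcal A(v_k,B_r)\geq\pi r^2+rL(\gamma)+\rilP(\gamma)$.

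\textbf{Upper bound.} As in the proof of Proposition \ref{prop:upper_bound} (approximation in $W^{1,2}$ and a diagonal argument, which preserves strict $BV$-convergence and the value of $\mathcal A$), it suffices to produce a Lipschitz recovery sequence. I take the sequence $(u_k)\subset\mathrm{Lip}(B_r;\R^2)$ already built in \eqref{def_rec_npoint}: it converges to $u$ strictly $BV(B_r;\R^2)$, hence $\int_{B_r}|\nabla u_k|\,dx=|Du_k|(B_r)\to|Du|(B_r)=rL(\gamma)$, and, as shown in the proof of Theorem \ref{teo:relaxation_of_TV_on_piecewise_constant_maps}, $\int_{B_r}|Ju_k|\,dx=\int_{B_{\rho_k}}|Jv_{k,\rho_k}|\,dx\to\rilP(\gamma)$ by \eqref{eq:int_Jac_radius}, \eqref{conv_P2} and \eqref{conv_P}. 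From the elementary inequality $\sqrt{1+|\nabla u_k|^2+|Ju_k|^2}\leq 1+|\nabla u_k|+|Ju_k|$ we obtain
\[
\limsup_{k\to+\infty}\mathcal A(u_k,B_r)\ \leq\ |B_r|+\lim_{k\to+\infty}\int_{B_r}|\nabla u_k|\,dx+\lim_{k\to+\infty}\int_{B_r}|Ju_k|\,dx\ =\ \pi r^2+rL(\gamma)+\rilP(\gamma),
\]
so $\overline{\mathcal A}_{BV}(u,B_r)\leq\pi r^2+rL(\gamma)+\rilP(\gamma)$, and combined with the lower bound this proves \eqref{eq:area_npoint}.

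\textbf{Main obstacle.} The delicate point is purely in the lower bound: one must perform the circle‑slicing extraction of Lemma \ref{lem:inheritance circ} \emph{after} having extracted a subsequence realizing $\liminf_k\mathcal A(v_k,B_r)$, so that the two lower estimates — the $0$‑dimensional concentration of the Jacobian on $B_\eps$ and the lower semicontinuity of the gradient part on the annulus — refer to the same sequence and may legitimately be added; all the remaining steps are routine manipulations of the area integrand.
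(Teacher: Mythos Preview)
Your proof is correct and follows essentially the same route as the paper: split $B_r$ into a small disk $B_\eps$ and an annulus, bound the Jacobian contribution on $B_\eps$ exactly as in \eqref{5.6}, invoke the $L^1$--lower semicontinuity of the convex integrand $\sqrt{1+|\nabla v|^2}$ on the annulus (the paper cites \cite[Theorem 3.7]{AD} for this), and for the upper bound reuse the recovery sequence \eqref{def_rec_npoint} together with the elementary inequality $\sqrt{1+a^2+b^2}\leq 1+a+b$. Your remark about extracting the circle--slicing subsequence \emph{after} passing to a subsequence realizing the $\liminf$ of $\mathcal A(v_k,B_r)$ is spot on, and your $(r-\eps)L(\gamma)$ is in fact the correct expression (the paper's $r(1-\eps)L(\gamma)$ is a typo).
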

\begin{proof}
Lower bound: Suppose that $v_k\in C^1(B_r;\R^2)$ is such that
 $$v_k\rightarrow u\quad\mbox{strictly } BV(B_r;\R^2)\quad \mbox{and }\quad \lim_{k\rightarrow+\infty}
{\mathcal{A}(v_k, B_r)}=\liminf_{k\rightarrow+\infty}
{\mathcal{A}(v_k, B_r)}.$$
 Now, let $\eps\in(0,r)$  and  write
${\mathcal A}(v_k, B_r) = {\mathcal A}(v_k, B_r \setminus B_{\eps})
+
{\mathcal A}(v_k, B_{\eps}) \geq 
{\mathcal A}(v_k, B_r \setminus B_{\eps})
+ \int_{B_{\eps}} \vert J v_{k}\vert ~dx$,
so that, by \cite[Theorem 3.7]{AD},
\begin{equation}\nonumber
\begin{aligned}
\lim_{k\rightarrow+\infty}
{\mathcal{A}(v_k, B_r)}&\geq\liminf_{k\rightarrow+\infty}\mathcal{A}
(v_k, B_r\setminus B_{\varepsilon})+\liminf_{k\rightarrow+\infty}\int_{B_{\epsilon}}|Jv_{k}|~dx
\\
&\geq|B_r\setminus B_\eps|+r(1-\eps) L(\gamma) +\liminf_{k\rightarrow+\infty}\int_{B_{\epsilon}}|Jv_{k}|~dx\\
&\geq |B_r\setminus B_\eps|+r(1-\eps) L(\gamma) +\rilP(\gamma),
\end{aligned}
\end{equation}
where in the last line we have applied Theorem \ref{teo:relaxation_of_TV_on_piecewise_constant_maps}
 with $r$ replaced by $\eps$.
We now pass to the limit as 
$\eps\rightarrow 0^+$ to get the lower bound
$\overline{\mathcal{A}}_{BV}(u, B_r)
\geq \pi r^2+r L(\gamma) +\rilP(\gamma)$
 in \eqref{eq:area_npoint}.

Upper bound: It is sufficient to consider the sequence $(u_k)_k$ 
defined in \eqref{def_rec_npoint}, for which
\begin{equation}\nonumber
\begin{aligned}
\overline{\mathcal{A}}_{BV}(u, B_r)&\leq\limsup_{k\rightarrow + \infty}\mathcal{A}(u_k, B_1)\leq|B_r|+\lim_{k\rightarrow + \infty}\int_{B_r}|\grad u_k|~dx+\lim_{k\rightarrow
+\infty}\int_{B_r}|Ju_k|~dx\\
&=\pi r^2+ r L(\gamma) +\rilP(\gamma).
\end{aligned}
\end{equation}
\end{proof}

Now, we are in the position to show
 an example of a piecewise constant map $u\in BV(B_1;\R^2)$ with infinite relaxed Jacobian total variation but vanishing associated minimal vertical lifting measure $\mu_v[u]$. This map 
is constructed in Example \ref{esempio alla Mucci}, while the Example \ref{double butterfly} 
is preparatory.  
\begin{Example}\label{double butterfly}
We want to show here how singular topological phenomena related to the double-eight map 
\cite{Ma}, \cite{GMS1}, \cite{Mc1},\cite{Pa},\cite{DP} arise also among piecewise constant maps. In particular, as pointed out in \cite{Mc2}, for the homogeneous extension of the double-eight map, a gap phenomenon occurs between the minimal vertical lifting measure and the relaxed Jacobian total variation.
 We show now that we find such a gap also among piecewise constant maps, by exhibiting a piecewise constant map with vanishing minimal vertical lifting measure but with finite non-zero $\overline{TVJ}$. Namely, we are going to define a map $u:B_1\rightarrow \R^2$ assuming five distinct values, for which the resulting closed curve $\widetilde\gamma$ has 
zero degree, but is homotopically trivial, since it is, in fact, homeomorphic to the double-eight curve. Let $\{\alpha_1,\alpha_2,\alpha_3,\alpha_4,\alpha_5\}\subset\R^2$ be the vertices of two equilateral triangles with a common vertex, 
say $\alpha_1$ (see Figure \ref{butterfly}).  Fix 
a partition of $\Suno$ in twelve disjoint non-empty arcs $C_1,\ldots,C_{12}$ (not necessarily of the same length), with extremal points $a_1,\ldots,a_{12}$ in counter-clockwise order. Then, define $\gamma:\Suno
\rightarrow\{\alpha_1,\alpha_2,\alpha_3,\alpha_4,\alpha_5\}$ to be constant on the arcs $C_1,\ldots,C_{12}$, precisely equal to, in the order,  $\alpha_1,\alpha_2,\alpha_3,\alpha_1,\alpha_4,\alpha_5,\alpha_1,\alpha_3,\alpha_2,\alpha_1,\alpha_5,\alpha_4$. Then, 
the broken line curve $\widetilde\gamma$ runs consecutively the triangles $T_{123}:=T_{\alpha_1\alpha_2\alpha_3}$ and $T_{145}:=T_{\alpha_1\alpha_4\alpha_5}$ twice, and every time with different orientation.
Define $u$ as in \eqref{def of u}, obtaining a $12$-point map. Now, by
applying Theorem \ref{teo:relaxation_of_TV_on_piecewise_constant_maps} and computing the minimum of the Plateau problem \eqref{Plateau} for $\widetilde\gamma$ as in \cite[Theorem 5]{Pa}, 
we obtain

\begin{align}\label{butterfly TV}
\overline{TVJ}_{BV}(u, B_1)=\rilP(\gamma)=P(\widetilde\gamma)=2\min\{|T_{123}|,|T_{145}|\}.
\end{align}
 Moreover, it is not difficult to see that
$$
\mu^J_u=(|T_{123}|+|T_{145}|-|T_{123}|-|T_{145}|)\delta_{0}=0.
$$
 In this case, we 
have also $\mu_v[u]=0$, indeed we 
can prove that the unique current $T_u$ with minimal completely vertical lifting associated to $u$ is given by
\begin{align}\label{Tu}
T_u=G_u+S=\sum_{l=1}^{12}[\![\widehat C_l]\!]\times[\![c_l]\!]+\sum_{l=1}^{12}[\![0,a_{l}]\!]\times[\![c_{l-1},c_{l}]\!],
\end{align}
where $\widehat C_l$ is the circular sector corresponding to $C_l$ and $c_l$ is the assigned value of $\gamma$ on $C_l$ for $l=1,\ldots,12$  (we used the convention $c_{0}=c_{12}$).
Let us show \eqref{Tu}. One checks 
that $\mu^j_i[T_u]=\mu_i^j[u]$ for $ i,j=1,2$ by proceeding as in Remark \ref{rem:link_Mucci}. So, it remains to prove that $T_u\in\mathrm{cart}(B_1;\R^2)$: it is enough to 
check that $(\partial T_u) \mres B_1\times\R^2=0$. Compute
$$
\partial S=\sum_{l=1}^{12}\partial\left([\![0,a_{l}]\!]\times[\![c_{l-1},c_{l}]\!]\right)=\sum_{l=1}^{12}\left(-[\![0]\!]\times[\![c_{l-1},c_{l}]\!]+[\![0,a_{l}]\!]\times[\![c_{l}]\!]-[\![0,a_{l}]\!]\times[\![c_{l-1}]\!]\right).
$$
Now, since by convention $a_{13}=a_1$,
$$
\partial G_u=\sum_{l=1}^{12}\left([\![0,a_{l+1}]\!]\times[\![c_{l}]\!]-[\![0,a_{l}]\!]\times[\![c_{l}]\!]\right)=-\sum_{l=1}^{12}\left([\![0,a_{l}]\!]\times[\![c_{l}]\!]-[\![0,a_{l}]\!]\times[\![c_{l-1}]\!]\right).
$$
Moreover, by the choice of $\{c_l\}$,
$$
\sum_{l=1}^{12}[\![0]\!]\times[\![c_{l-1},c_{l}]\!]=[\![0]\!]\times[\![\alpha_1,\alpha_2]\!]+[\![0]\!]\times[\![\alpha_2,\alpha_3]\!]+\ldots+[\![0]\!]\times[\![\alpha_4,\alpha_1]\!]=0.
$$
Therefore, $\partial G_u=-\partial S$.\\
Notice that the action of $T_u$ against 2-forms with only vertical differentials is 0, which means that $T_u$ does not have completely vertical part and so $\mu_v[u]=0$. Roughly, 
due to cancellations in the part of the boundary of $T_u$ in corrispondence to the origin, the current $T_u$ is not able to detect the hole upon the origin in the graph of $u$, generated by the presence of the 
multiple junction.
\end{Example}
\begin{figure}[t]
    \centering
    \includegraphics[scale=0.44]{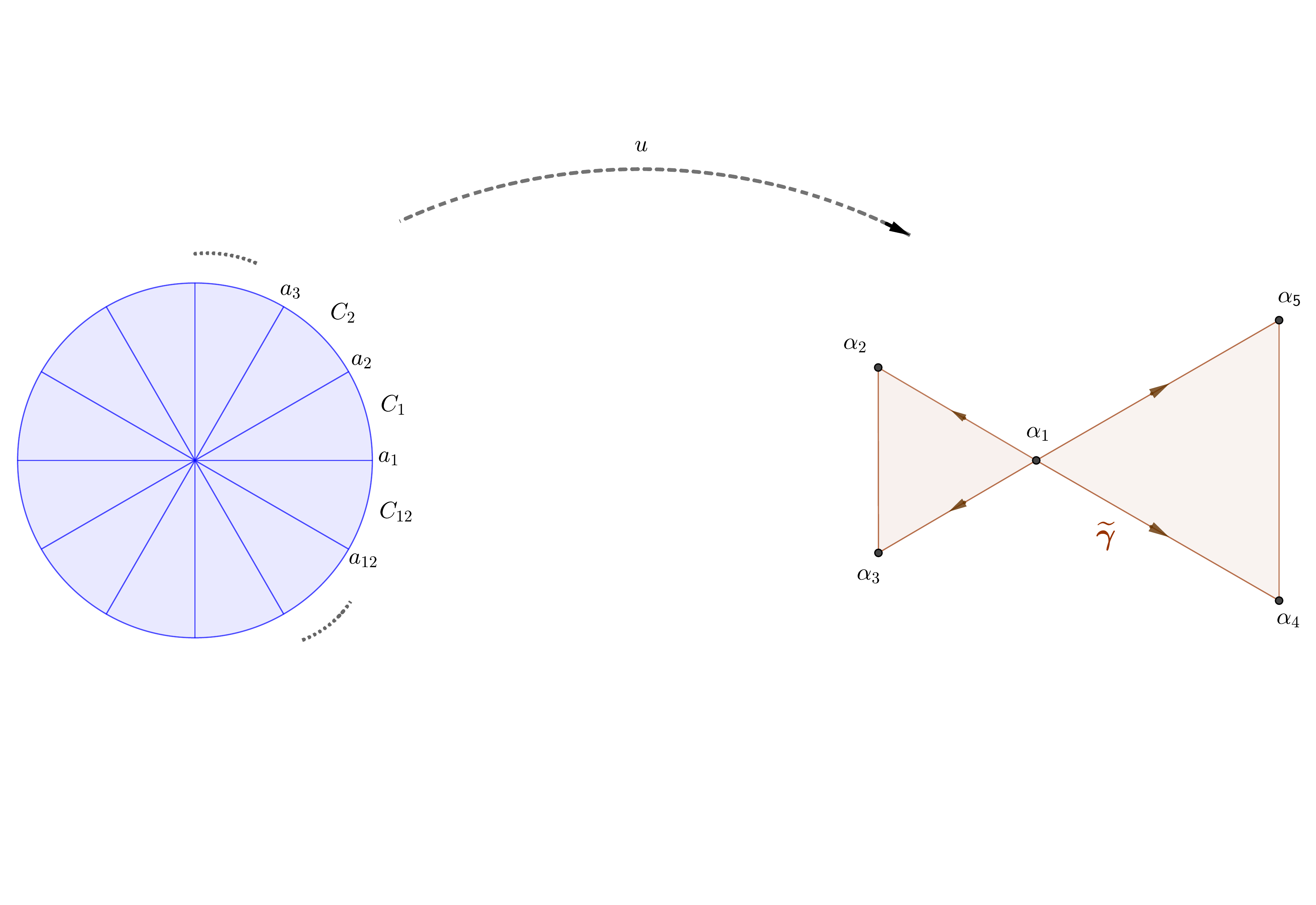}
    \caption{The map $u$ and the broken line curve $\widetilde\gamma$ of Example \ref{double butterfly}. 
}
    \label{butterfly}
\end{figure}

\begin{figure}[t]
    \centering
    \includegraphics[scale=0.49]{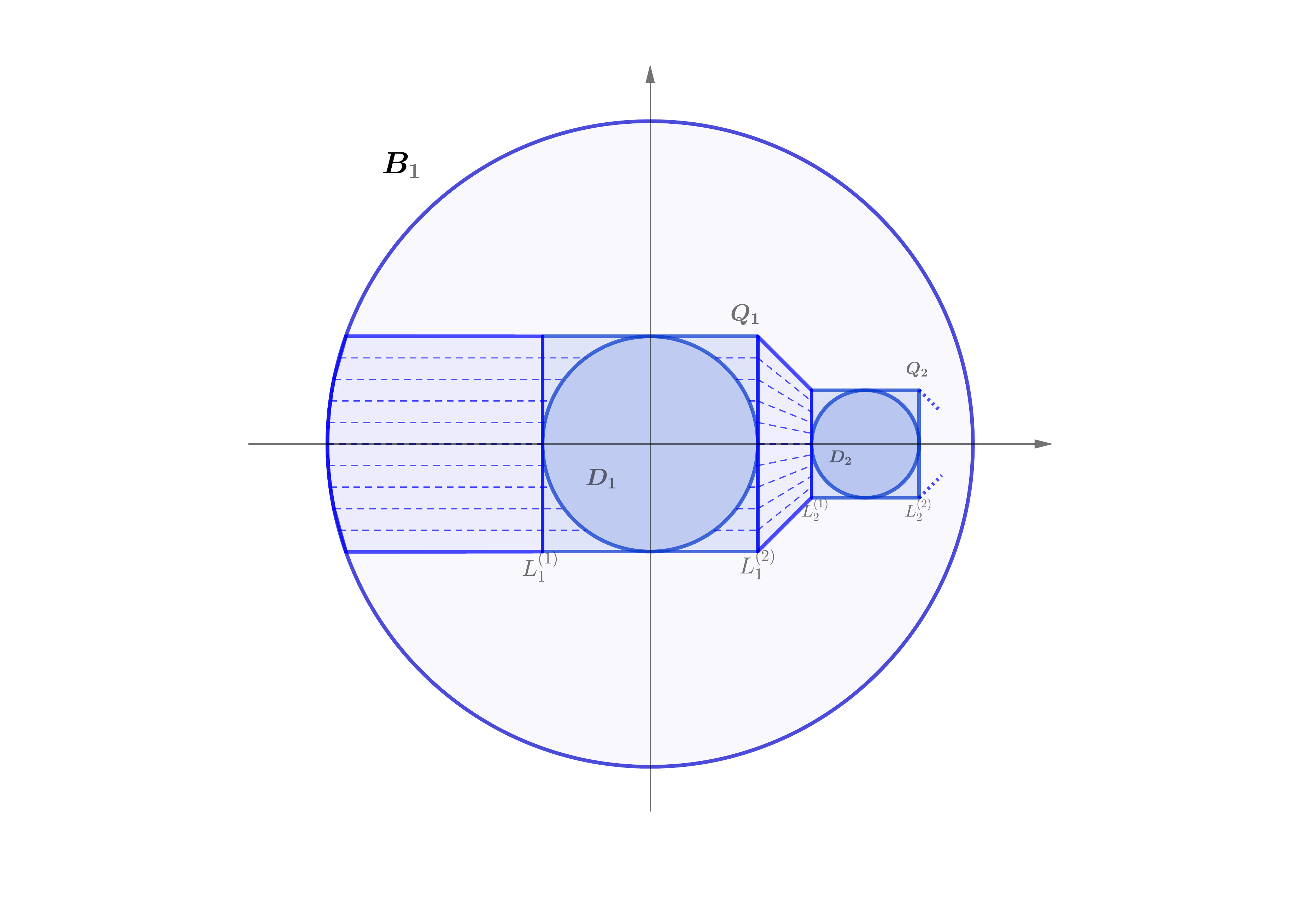}
    \caption{The sequence $\{D_i\}\subset B_1$ of disks 
of Example \ref{esempio alla Mucci}. }
    \label{fig:mucci_domain}
\end{figure}
\begin{Example}\label{esempio alla Mucci}
This example is an adaptation of 
\cite[Theorem 1.3]{Mc1} to the case of piecewise constant maps. Indeed, we construct a piecewise constant map $u$, 
taking only five values of $\R^2$, such that 
$$\mu_v[u]=0\qquad\mbox{ and }\qquad\overline{TVJ}_{BV}(u, B_1)=+\infty.$$ The idea is to replicate the map of Example \ref{double butterfly} infinitely many times on a sequence $\{D_i\}_{i\in\N}\subset B_1$
of disjoint 
balls, 
whose measures form  an infinitesimal sequence (see Figure \ref{fig:mucci_domain}). So, for $i\in\N$, set $$ D_i:=B_{r_i}(x_i), \quad \mbox{with }x_i:=\left(-1+\sum_{j=0}^{i-1}2^{-j},0\right),\quad r_i:=2^{-i-1}.$$
Let $\{\alpha_1,\alpha_2,\alpha_3,\alpha_4,\alpha_5\}\subset\R^2$ and $\gamma:\Suno\rightarrow\{\alpha_1,\alpha_2,\alpha_3,\alpha_4,\alpha_5\}$ 
be as in Example \ref{double butterfly}. 
Now, define the map $\widehat \gamma:\Suno\rightarrow\{\alpha_1,\alpha_2,\alpha_3,\alpha_4,\alpha_5\}$ 
in the 
same way as $\gamma$, 
but with different order of the values, in a symmetric way with respect to the vertical axis through $\alpha_1$, namely, in the same arcs $C_1,\ldots,C_{12}$, $\widehat \gamma$ is equal to $\alpha_1,\alpha_5,\alpha_4,\alpha_1,\alpha_3,\alpha_2,\alpha_1,\alpha_4,\alpha_5,\alpha_1,\alpha_2,\alpha_3$. Then, for $i\in\N$, define $u_{|D_i}:=u^{(i)}$ as
 \begin{equation*}
u^{(i)}(x)=\left\{
\begin{aligned}
&\gamma\left(\frac{x-x_i}{|x-x_i|}\right)\quad\mbox{if $i$ is odd},\\
&\widehat\gamma\left(\frac{x-x_i}{|x-x_i|}\right)\quad\mbox{if $i$ is even}.
\end{aligned}
\right.
\end{equation*}
It remains to define $u$ in $B_1\setminus\cup_{i\in\N}D_i$.  Start by considering, for every $i\in\N$, the square $Q_i$ that circumscribes $D_i$ and extend $u^{(i)}$ to $Q_i$ to be constant along horizontal lines. Now, denote by $L_i^{(1)}$ and $L_i^{(2)}$ the vertical left and right sides of $\partial Q_i$, then extend $u$ to the 
convex hull of $L_i^{(2)}$ and $L_{i+1}^{(1)}$ to be constant along straight lines which interpolate pointwise the two sides. Finally, extend $u$ in the strip that connects $L_1^{(1)}$ to $\partial B_1$ to be constant along horizontal lines and set $u=\alpha_1$ in the rest of $B_1$. 
(see Figure \ref{fig:mucci_domain}).
It is 
not difficult to see that 
$u \in BV(B_1;\R^2)$, by the choice of the infinitesimal sequence $(r_i)$.
Thus, assuming by contradiction that $\overline{\mathcal A}_{BV}(u, B_1)$ be finite, one can define the current $T_u=G_u+S$ in a 
similar way as in  Example \ref{double butterfly}, that is to say, by setting $S$ to be the trivial affine interpolation surface on the 
jump segments of $u$. One can prove in the same way that $T_u$ is the current with minimal completely vertical lifting associated to $u$ and $\mu_v[u]=0$. In particular, $T_u\in$ cart$(B_1;\R^2)$ and has finite mass.
On the other hand,
$$
\overline{TVJ}_{BV}(u, B_1)\geq\sum_{i=1}^{+\infty}\overline{TVJ}_{BV}(u, D_i)=\sum_{i=1}^{+\infty}2\min\{|T_{\alpha_1\alpha_2\alpha_3}|,|T_{\alpha_1\alpha_4\alpha_5}|\}=+\infty.
$$
In particular $\overline{\mathcal A}_{BV}(u, B_1)=+\infty$ as well.
\end{Example}

\section{Piecewise Lipschitz maps}\label{sec:Lip_npoint}
In this last section we combine the results 
of the previous sections and compute the $BV$-relaxed area for an interesting class of maps 
that we call \textit{piecewise Lipschitz maps}, quickly mentioned in the Introduction. 
As stated in our main result
(Theorem \ref{teo:Lip_npoint}), 
 the relaxed area turns out to be composed by a regular term and a singular one, that 
interestingly 
further splits into two non-trivial pieces, respectively related to the $1$-dimensional and $0$-dimensional singularities. \\
Let $\Omega \subset \R^2$ be a connected bounded open set 
with boundary of class $C^1$.
We say that a collection $\{\Om_1,\ldots,\Om_N\}$ 
of disjoint nonempty open sets
is a Lipschitz partition of $\Om$ if $\overline \Om=\cup_{k=1}^N\overline\Om_k$ and 
for each $k=1,\ldots,N$, $\Om_k$ is connected and Lipschitz.

For a given Lipschitz partition of $\Om$
we can consider its interface $\jumpset:=
\cup_{k=1}^N\partial\Om_k$. Also, we can define the (possibly empty) set of interior junction points $\{p_i\}_{i=1}^m$, i.e. points $p_i\in\Om$ such that 
there exist $r>0$ and an integer $N_i$ with $3\leq N_i\leq N$, 
such that $B_r(p_i)\subset\Om$ and $B_s(p_i)$ has nonempty intersection 
with exactly $N_i$  connected components of $\Om$, for every $s \in (0,r]$.

We shall consider Lipschitz partitions 
whose interface is a \textit{network} in the following sense:

\begin{Definition}[\textbf{Network}]
\label{def:simple_network}
The interface $\jumpset$ 
of a Lipschitz partition of $\Om$
is a network if
\begin{equation}\label{J}
\jumpset:=\bigcup_{\indiceaffine=1}^n \overline J_\indiceaffine,
\qquad
J_\indiceaffine = \alpha_\indiceaffine(I_\indiceaffine),
\ \ \intervalloparametri_\indiceaffine =(a_\indiceaffine, b_\indiceaffine),
\end{equation} where the curves $\alpha_\indiceaffine
:\overline \intervalloparametri_\indiceaffine
:=[a_\indiceaffine,b_\indiceaffine]\rightarrow\overline\Om$, 
$\indiceaffine=1,\dots,n$, 
 satisfy the following properties:
\begin{itemize}
\item[-]
$\alpha_\indiceaffine$ is of class $C^2$,
injective with $|\dot\alpha_\indiceaffine|\equiv1$ 
on $\intervalloparametri_\indiceaffine$, 
and 
 $J_\indiceaffine\subset\Om$;
\item[-] 
$\indiceaffine_1\neq \indiceaffine_2 \Rightarrow
J_{\indiceaffine_1}
\cap J_{\indiceaffine_2}
=\varnothing$; 
\item[-]
$\alpha_\indiceaffine
(\{a_\indiceaffine,b_\indiceaffine\})\subset\{p_1,\dots,p_m\}
\cup\partial\Om$ 
for all $\indiceaffine=1,\dots,n$ such 
that $\alpha_\indiceaffine(a_\indiceaffine)\neq\alpha_\indiceaffine(b_\indiceaffine)$;
\item[-]
if $x\in\overline J_\indiceaffine\cap\partial\Om$, $\alpha_\indiceaffine$ is transversal to $\partial\Om$ at $x$;
\item[-]
$\indiceaffine_1\neq \indiceaffine_2 \Rightarrow
\overline J_{\indiceaffine_1}
\cap\overline  J_{\indiceaffine_2} \subset\{p_1,\dots,p_m\}$.
\end{itemize}
\end{Definition}
From the last condition it follows
that 
if two curves have endpoints on 
$\partial\Om$, then these points are distinct.

\begin{Definition}[\textbf{Piecewise Lipschitz map}]\label{def_MLM}
Let $\{\Om_k\}_{k=1}^N$ be a Lipschitz partition of $\Om$ whose 
interface $\jumpset$ is a network. We say that
$u\in BV(\Om;\R^2$) is a \textit{piecewise Lipschitz  
map} if its jump set $\jump_u$
coincides with $\jumpset$
and $u\mres \concom_k\in\mathrm{Lip}(\concom_k;\R^2)$ for any 
$k = 1,\dots, N$.
\end{Definition}

Since $u\mres \concom_k\in\mathrm{Lip}(\concom_k;\R^2)$,  the trace of $u$ 
on $\partial \concom_k$ is also Lipschitz. 
In particular, for any $i\in\{1,\dots,m\}$  such that $p_i\in \partial \concom_k$, 
$$
\exists 
\lim_{\substack{x\rightarrow p_i\\x\in \concom_k}}u(x)=:\beta_{i}^k \in \R^2.
$$
Let $\rho>0$ be sufficiently small so that
$B_\rho(p_i)\subset\Om$
for $i\in\{1,\dots,m\}$. 
Let $\indiceaffine
\in\{1,\dots,n\}$ be such that $p_i$ is an endpoint of 
$\overline J_\indiceaffine$;
since  $\alpha_\indiceaffine$ is of class $C^2$,  for $\rho$ small enough the 
intersection $\overline J_\indiceaffine
\cap \partial B_\rho(p_i)$ consists either of a single point,
or of two points if $\alpha_\indiceaffine(a_\indiceaffine)
=\alpha_\indiceaffine(b_\indiceaffine)=p_i$. 
Hence, the map $u\mres\partial B_\rho(p_i)$ is piecewise Lipschitz 
and jumps at any point of $\jumpset \cap \partial B_\rho(p_i)$. In particular, the number of these jump points is, by definition of junction point,
$$
N_i=\sharp\big(\jumpset\cap \partial B_\rho(p_i)\big) \geq 3,
\qquad i=1,\dots,m.
$$
For $i=1,\dots,m$, we denote by 
$\concom^i_{1},\dots, \concom^i_{{N_i}}$ the connected components of 
$\Om\setminus \jumpset$ 
whose closure contains $p_i$, chosen in counterclockwise order around $p_i$.
Since $\Omega_k$ is Lipschitz for every $k=1,\ldots,N$,
any $\concom^i_k$ has a corner at $p_i$ 
whose aperture is a positive angle $\theta^k_i\in (0,2\pi)$. 

\begin{Lemma}[\textbf{Circular slices}]\label{lemma_palline}
Let $i\in\{1,\dots,m\}$ be fixed and let $\rho>0$ be as above. Then the maps $\gamma^i_\rho\in BV(\Suno;\R^2)$ defined by
$\gamma^i_\rho(\nu):=u(p_i+\rho \nu)$  
converge strictly $BV(\Suno;\R^2)$, as $\rho\rightarrow 0^+$, 
to a piecewise constant map $\gamma^i: \Suno \to \R^2$
taking, in counterclockwise order, the values $\beta_i^1,\beta_i^2,\dots,\beta_i^{N_i}$ on arcs of size $\theta_i^1,\theta_i^2,\dots,\theta_i^{N_i}$, respectively.
\end{Lemma}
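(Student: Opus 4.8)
The plan is to prove the strict $BV(\Suno;\R^2)$-convergence $\gamma^i_\rho \to \gamma^i$ by establishing separately the $L^1$-convergence and the convergence of total variations (lengths), and then identifying the limit as the piecewise constant map described in the statement. Fix $i$ and drop it from the notation, writing $p=p_i$, $N=N_i$, $\gamma_\rho(\nu)=u(p+\rho\nu)$. Since $u$ is piecewise Lipschitz, near $p$ the set $\Om\setminus\jumpset$ consists of the $N$ sectors $\concom_1,\dots,\concom_N$, separated by the $C^2$-arcs of $\jumpset$ emanating from $p$; by the $C^2$-regularity of the $\alpha_\indiceaffine$, for $\rho$ small enough each such arc meets $\partial B_\rho(p)$ transversally in exactly one point, and $\partial B_\rho(p)$ is partitioned into $N$ arcs $A_1^\rho,\dots,A_N^\rho$, with $A_k^\rho\subset\concom_k$, whose angular apertures $\theta_k^\rho$ converge to $\theta_k$ as $\rho\to 0^+$ (because the tangent directions of the bounding arcs at $p$ are fixed, and $\sum_k\theta_k^\rho=2\pi$).

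First I would prove the $L^1$-convergence. On each arc $A_k^\rho$, $u$ coincides with the Lipschitz extension of $u\mres\concom_k$, so $\gamma_\rho(\nu)\to\beta_k$ uniformly for $\nu$ in the corresponding limiting arc of $\Suno$ of aperture $\theta_k$, away from the finitely many separating angles; combined with the uniform bound $\|\gamma_\rho\|_\infty\le\|u\|_\infty$ this yields $\gamma_\rho\to\gamma$ in $L^1(\Suno;\R^2)$, where $\gamma$ is the piecewise constant map taking value $\beta_k$ on the arc of size $\theta_k$, in counterclockwise order. Next, and this is the main point, I would compute $\lim_{\rho\to0^+}|\dot\gamma_\rho|(\Suno)$ and show it equals $|\dot\gamma|(\Suno)=\sum_{k=1}^N|\beta_{k+1}-\beta_k|$ (indices mod $N$). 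The total variation splits as a sum of contributions from the interiors of the arcs $A_k^\rho$ and from the $N$ jump points of $u$ on $\partial B_\rho(p)$. On the interior of $A_k^\rho$, using that $u\mres\concom_k$ is Lipschitz with some constant $L_k$ and that the arc-length element along $\partial B_\rho(p)$ is $\rho\,d\theta$, we get $\int_{A_k^\rho}|\partial_\tau(u\mres\partial B_\rho)|\,d\mathcal H^1\le L_k\,\rho\,\theta_k^\rho\to 0$, so the interior contributions vanish. The jump contributions are $\sum_{k=1}^N|u^+(q_k^\rho)-u^-(q_k^\rho)|$, where $q_k^\rho$ are the $N$ intersection points of $\jumpset$ with $\partial B_\rho(p)$ and $u^\pm$ the two traces there; as $\rho\to0^+$ each $q_k^\rho\to p$ along the corresponding arc of $\jumpset$, and by continuity of the traces up to $p$ the two one-sided limits are precisely two consecutive values $\beta_{k}$, $\beta_{k+1}$ (in the counterclockwise order), so this sum converges to $\sum_{k=1}^N|\beta_{k+1}-\beta_k|$.

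Hence $|\dot\gamma_\rho|(\Suno)\to\sum_{k=1}^N|\beta_{k+1}-\beta_k|$. On the other hand, by lower semicontinuity of the total variation under $L^1$-convergence, $|\dot\gamma|(\Suno)\le\liminf_{\rho\to0^+}|\dot\gamma_\rho|(\Suno)$; and the limit map $\gamma$, being piecewise constant with jumps exactly at the $N$ angles separating the arcs with values $\beta_1,\dots,\beta_N$ in order, has $|\dot\gamma|(\Suno)=\sum_{k=1}^N|\beta_{k+1}-\beta_k|$ exactly. Therefore $|\dot\gamma_\rho|(\Suno)\to|\dot\gamma|(\Suno)$, which together with the $L^1$-convergence gives the strict $BV(\Suno;\R^2)$-convergence $\gamma^i_\rho\to\gamma^i$, and the identification of $\gamma^i$ as the asserted piecewise constant map. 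The main obstacle is the careful bookkeeping near $p$: one must use the $C^2$-regularity of the network curves to guarantee that, for $\rho$ small, $\partial B_\rho(p)$ crosses $\jumpset$ transversally in exactly $N$ points and in the correct cyclic order, so that the traces $u^\pm$ at those crossing points genuinely converge to consecutive values $\beta_k,\beta_{k+1}$ and no extra variation is created; once this is set up, the remaining estimates are the routine ones indicated above.
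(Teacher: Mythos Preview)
Your proposal is correct and follows essentially the same approach as the paper: both arguments establish a.e.\ (hence $L^1$) convergence, then split $|\dot\gamma_\rho|(\Suno)$ into the absolutely continuous contribution on the arcs (bounded by $L\rho$ times arc length, hence vanishing) and the jump contribution at the $N_i$ crossing points (converging to $\sum_k|\beta_{k+1}-\beta_k|$ by continuity of the traces). Your write-up is simply more detailed about the transversality bookkeeping, and the appeal to lower semicontinuity is superfluous since you compute $|\dot\gamma|(\Suno)$ directly.
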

The map $\gamma^i$ 
has $N_i$ jumps on $\Suno$ whose angular coordinates are denoted by  $a_i^1,a_i^2,\dots,a_i^{N_i}$  (where\footnote{With the convention $N_i+1=1$.} $a_i^j-a_i^{j-1}=\theta_i^j$, for $j=1,\dots,N_i+1$).

\begin{proof}
It is easy to see that $(\gamma^i_\rho)$ 
converges to $\gamma^i$ almost everywhere on $\Suno$ as $\rho 
\to 0^+$. Moreover, $\gamma^i_\rho$, for $\rho$ small enough, has exactly $N_i$ jumps at points $a_{i,\rho}^j$ of amplitude $|u^+(p_i+\rho a_{i,\rho}^j)-u^-(p_i+\rho a_{i,\rho}^j)|$ 
which tend, by 
continuity of $u$ in $B_\rho(p_i)\setminus \jumpset$, to $|\beta_i^j-\beta_i^{j+1}|$. Also, on the arcs between $a^j_{i,\rho}$ and $a^{j+1}_{i,\rho}$,
 $|\dot\gamma^i_\rho|\leq L\rho$, where $L$ 
is the maximum of the Lipschitz constants of $u$ on the sectors $\concom^i_k$. Hence $|\dot\gamma_\rho^i|(\Suno)\rightarrow|\dot\gamma^i|(\Suno)$ and the thesis follows straightforwardly.
\end{proof}
For $\indiceaffine=1,\ldots,n$, we denote by $u_{(\indiceaffine)}^\pm$ 
the two traces  of $u$ on $J_\indiceaffine$, 
and consider the affine interpolation surface $\affinesurfaceindice:
[a_\indiceaffine,b_\indiceaffine]\times \intervallounitario\rightarrow\R^3$ 
spanning the graphs of $u_{(\indiceaffine)}^-$ and $u_{(\indiceaffine^+)}$, given by
\eqref{eq:affine_surface_l}:
\begin{equation}\label{eq:affine_interpolation_surface_i}
\affinesurfaceindice
(t,s)=(t,su_{(\indiceaffine)}^+(t)+(1-s)u_{(\indiceaffine)}^-(t)),\qquad (t,s)\in[a_\indiceaffine,
b_\indiceaffine]\times \intervallounitario.
\end{equation}

We are now ready to prove 
Theorem \ref{teo:Lip_npoint}.
\begin{proof}[Proof of Theorem \ref{teo:Lip_npoint}]
Lower bound: 
Consider a sequence  $(v_k)\subset C^1(\Om;\R^2)$ converging to $u$ strictly $BV(\Om;\R^2)$. 
For any $\rho>0$ small enough, we take a family of 
mutually disjoint balls $B_\rho(p_i) \subset \Om$, $i=1,\dots,m$. 
By Lemma \ref{lem:inheritance circ}, there exists a subsequence $(v_{k_h})\subset(v_k)$ depending on $\rho$ such that ${\rm for}~ i=1,\ldots,m$
\begin{align}\label{converg on circ}
v_{k_h}\mres\partial B_\rho(p_i)\rightarrow u\mres\partial B_\rho(p_i)\quad\mbox{strictly }BV(\partial B_\rho(p_i);\R^2).
\end{align}
We may also assume that ${\rm for}~ i=1,\ldots,m$
 $$
\liminf_{k\rightarrow+\infty}\int_{B_{\rho}(p_i)} \vert J v_k\vert ~dx=\lim_{h\rightarrow+\infty}\int_{B_{\rho}(p_i)} \vert J v_{k_h}\vert ~dx.
$$
Then 
$$
{\mathcal A}(v_{k_h}, \Om) = 
{\mathcal A}(v_{k_h}, \Om \setminus \cup_{i=1}^m 
\overline B_\rho(p_i))
+\sum_{i=1}^m
{\mathcal A}(v_{k_h};\overline B_{\rho}(p_i)) \geq 
{\mathcal A}(v_{k_h}, \Om \setminus \cup_{i=1}^m \overline 
B_\rho(p_i))
+ \sum_{i=1}^m\int_{B_{\rho}(p_i)} \vert J v_{k_h}\vert dx.
$$
By Corollary \ref{cor_multicurves}, we get
\begin{align*}
\liminf_{h\rightarrow+\infty}{\mathcal A}(v_{k_h}, \Om \setminus \cup_{i=1}^m 
\overline B_\rho(p_i))
\geq&
\overline{\mathcal{A}}_{BV}(u, \Om \setminus \cup_{i=1}^m \overline B_\rho(p_i))\\
=&\int_{\Om \setminus \cup_{i=1}^m B_\rho(p_i)}|\mathcal{M}(\nabla u)|dx+
\sum_{\indiceaffine=1}^n\int_{[a^\rho_\indiceaffine,b^\rho_\indiceaffine]
\times \intervallounitario}|\partial_t \affinesurfaceindice
\wedge\partial_s \affinesurfaceindice|~dtds
\\
&{\longrightarrow}\int_{\Om}|\mathcal{M}(\nabla u)|dx+\sum_{\indiceaffine=1}^n\int_{[a_\indiceaffine,b_\indiceaffine]\times \intervallounitario}|\partial_t \affinesurfaceindice
\wedge\partial_s \affinesurfaceindice|dtds\quad\mbox{as }\rho\rightarrow 0^+,
\end {align*}
where 
$(a^\rho_\indiceaffine),(b^\rho_\indiceaffine)\subset[a_\indiceaffine,b_\indiceaffine]$ are 
respectively a decreasing and increasing sequence of numbers
satisfying $a^\rho_\indiceaffine\rightarrow a_\indiceaffine$ and 
$b^\rho_\indiceaffine\rightarrow b_\indiceaffine$ as $\rho\rightarrow 0^+$ and $\alpha_\indiceaffine([a^\rho_\indiceaffine,b^\rho_\indiceaffine])=\alpha_\indiceaffine([a_\indiceaffine,b_\indiceaffine])\setminus\cup_{i=1}^m B_\rho(p_i)$.

Let us recall that, by Lemma \ref{lem_plateaurel}, $P(\tilde\gamma^i)=\rilP(\gamma^i)$, 
with $\gamma^i$ as in Lemma \ref{lemma_palline}. So, it 
remains to show that
\begin{equation}\label{eq:liminf_liminf}
\liminf_{\rho\rightarrow 0^+}\lim_{h\rightarrow+\infty}\int_{B_{\rho}(p_i)} \vert 
J v_{k_h}\vert ~dx\geq \rilP(\gamma^i)
\qquad \forall i=1,\dots,m. 
\end{equation}
By definition \eqref{Plateau_rel}, using \eqref{Plateaurescaled} and \eqref{converg on circ}, we readily conclude that
$$\lim_{h\rightarrow+\infty}\int_{B_{\rho}(p_i)} 
\vert J v_{k_h}\vert ~dx\geq \rilP(\gamma_\rho^i),$$
where $\gamma_\rho^i$ is defined 
in Lemma \ref{lemma_palline}. Then, 
since $\gamma_\rho^i$ converge to $\gamma^i$ strictly $BV(\Suno;\R^2)$ as 
$\rho \to 0^+$,
\eqref{eq:liminf_liminf} follows, thanks to Lemma \ref{lemma_palline} and Corollary \ref{cor_continuity_P2}.

\medskip

Upper bound: Fix $r>0$ small enough and consider 
mutually disjoint balls $B_{r}(p_i) \subset \Om$, $i=1,\dots,m$, such that, for every $\indiceaffine\in\{1,\ldots,n\}$, $J_\indiceaffine\cap\partial B_s(p_i)$, if nonempty, consists either of a single point, or of two points if $\alpha_\indiceaffine(a_\indiceaffine)=\alpha_\indiceaffine(b_\indiceaffine)=p_i$, for every $s \in (0,r]$.
 
Clearly, the difficulty of the proof is concentrated around the 
junction points $p_i$. The idea is to modify $u$ on $\cup_{i=1}^mB_{r}(p_i)$ by constructing a new map $u_r$ (see \eqref{def ur 1} and \eqref{def ur 2}), which coincides with $u$ out of $\cup_{i=1}^mB_{r}(p_i)$ and converges to $u$ strictly $BV(\Om;\R^2)$ as $r$ 
tends to $0^+$.
 The map $u_r$ will be again a piecewise Lipschitz map 
with the same set $\{p_i\}$ 
of junction points,
but different jump set $\jumpset_r$, 
with $\jumpset_r\cap B_{r/2}(p_i)$ made of segments, i.e. $u_r$ is 
of the form  \eqref{def of u} in $B_{r/2}(p_i)$. 
The difficult point will be to provide that $\jumpset_r$ is still a union of (pairwise disjoint up to the endpoints) $C^2$-curves $\widehat\alpha_\indiceaffine$, in particular that each on e
hits $\partial B_{r/2}(p_i)$ with vanishing second derivative. 
At the end, we will apply Theorem \ref{area n-ple point} to $u_r$ in $\cup_{i=1}^mB_{r/2}(p_i)$ and Corollary \ref{cor_multicurves} to $u_r$ in $\Om\setminus(\cup_{i=1}^mB_{r/2}(p_i))$, and
conclude by lower semicontinuity of $\overline{\mathcal{A}}_{BV}(\cdot, \Omega)$. \\
We start by considering a smooth strictly increasing 
 surjective function $\psi_r:[\frac{r}{2},+\infty)\rightarrow [0,+\infty)$ with \footnote{The exponent 
must be chosen greater than $2$ in order to ensure \eqref{vanishing acceleration}.}
\begin{align}\label{psi}
\psi_r(\rho)=\rho\quad \forall\rho\geq r,\quad \psi_r(\rho)=\left(\rho-\frac r2\right)^3 \mbox{in a right neighborhood of } \frac r2,\quad|\psi_r'|
\leq C\quad {\rm in}~ \left(\frac r2,r\right)
\end{align}
 with $C>0$ independent of $r$. 
We define the radial map $\Phi_r: \R^2\setminus B_{\frac r2}(0)\rightarrow\R^2\setminus\{0\}$ as $$\Phi_r(x)=\psi_r(|x|)
\frac{x}{|x|},$$ 
whose inverse is $\Phi_r^{-1}(y)=f_r(|y|)\frac{y}{|y|}$, where
 $f_r:=\psi_r^{-1}$,
and set
\begin{align}
\widehat u_r(x):=u(p_i+\Phi_r(x-p_i))
\qquad\text{ for }x\in B_r(p_i)\setminus \overline B_{\frac r2}(p_i),\;i=1,\dots,m.
\end{align}
The jump set of $\widehat u_r$ in $B_{r}(p_i)\setminus B_{r/2}(p_i)$ is 
parametrized by the curves 
\begin{align}\label{def alfa capp}
\widehat\alpha_\indiceaffine
:=p_i+\Phi_r^{-1}(\alpha_\indiceaffine-p_i)\qquad\forall\indiceaffine=1,\dots,n.
\end{align}
 Notice carefully that $\widehat\alpha_\indiceaffine$ is parametrized on the same parameter interval of $\alpha_\indiceaffine$, but this is 
not an arc length parametrization
for $\widehat\alpha_\indiceaffine$.
Moreover, thanks to the regularity of $\Phi_r$, 
the map
\begin{equation}\label{def ur 1}
u_r:=\begin{cases}
		u&\text{ in }\Om\setminus (\cup_{i=1}^m B_r(p_i))\\
           \widehat u_r&\text{ in }B_r(p_i)\setminus B_{\frac r2}(p_i)

\end{cases}
\end{equation}
has jump set $\jumpset_r$ which is parametrized by the curves $\widehat \alpha_\indiceaffine$, 
whose supports $\widehat J_\indiceaffine$ are pairwise disjoint and in turn coincide with the ones of $\alpha_\indiceaffine$ in $\Om\setminus (\cup_{i=1}^m B_r(p_i))$. \\

\smallskip
\textit{Step 1}: Let us first check that the length of $\widehat \alpha_\indiceaffine$ in $\cup_{i=1}^m( B_r(p_i)\setminus B_{r/2}(p_i))$ is controlled, more precisely, we will show that for each $i$
and $\indiceaffine$, the length of $\widehat\alpha_\indiceaffine$ in $B_r(p_i)\setminus B_{r/2}(p_i)$ 
goes to $0$ as $r \to 0^+$. We 
suppose that $J_\indiceaffine\cap\partial B_s(p_i)$, for every $s\leq r$, consists  of a single point, 
because the 
argument adapts also if $\alpha_\indiceaffine$ has two arcs exiting from $p_i$, simply by considering them separately.
To this aim, fix $i$ and $\indiceaffine$ and denote $ \alpha_\indiceaffine=\alpha$, $J_\indiceaffine=J$.
 Without loss of generality, assume $p_i=0$, $B_r(0)=B_r$, and suppose that $J\cap B_r$ is parametrized by arc length on $[0,R]$, with $\alpha(0)=0$ and $\alpha(R)\in\partial B_r$, 
where $R(r) = R =\mathcal{H}^1(J\cap B_r)$.
We can express the gradient of $\Phi_r^{-1}$ 
as follows:
\begin{align}\label{grad Phi}
\nabla \Phi_r^{-1}(y)=f'_r(|y|)\frac{y}{|y|}\otimes\frac{y}{|y|}+f_r(|y|)\nabla\left(\frac{y}{|y|}\right)=f'_r(|y|)\frac{y}{|y|}\otimes\frac{y}{|y|}+\frac{f_r(|y|)}{|y|}\Pi(y),
\end{align}
where 
$$
\Pi(y):=\mathrm{Id}-\frac{y\otimes y}{|y|^2},
$$
and we used that 
\begin{align}\label{nabla vortex}
\nabla\left(\frac{y}{|y|}\right)=\frac{1}{|y|}\Pi(y).
\end{align}
From \eqref{def alfa capp}, we have $\dot{\widehat \alpha}=\nabla \Phi_r^{-1}(\alpha)\dot\alpha$, and using \eqref{grad Phi} and 
$|\dot\alpha|=1$,
\begin{align}\label{derivata alfa cap}
|\dot{\widehat \alpha}|\leq f'_r(|\alpha|)+\frac{f_r(|\alpha|)}{|\alpha|}\left|\Pi(\alpha)\dot\alpha\right|.
\end{align}
Notice that if $r$ is small, the function $t\mapsto|\alpha(t)|=:\sigma(t)$ is $C^1$ and invertible from $[0,R]$ to $[0,r]$. 
Moreover, $\sigma'(t)=\frac{\alpha(t)}{|\alpha(t)|}\cdot\dot\alpha(t)\rightarrow\frac{\dot\alpha(0)}{|\dot\alpha(0)|}\cdot\dot\alpha(0)=|\dot\alpha(0)|=1$ as $t \to 0^+$. 
Let us integrate on $[0,R]$ the term $f'_r(|\alpha|)$: performing the change of variable $\sigma(t)=\rho$, we get
$$
\int_0^R f'_r(|\alpha(t)|)dt=\int_0^R f_r'\left(\sigma(t)\right)dt=\int_0^rf_r'(\rho)\frac{d\rho}{\sigma'(\sigma^{-1}(\rho))}\leq2\int_0^rf'_r(\rho)d\rho,
$$
where in the last inequality we used that, for small $r$, $\sigma'(\sigma^{-1}(\rho))\geq\frac12$ for every $\rho\in[0,r]$. 
Sending $r$ to $0^+$, 
we have that $\int_0^R f'_r(|\alpha(t)|)dt\rightarrow 0$ by integrability of $f'$ near to the origin.

In order to estimate the second term on the right hand side of \eqref{derivata alfa cap}, we can use a Taylor expansion of $\alpha$ around $0$, writing $\alpha(t)=vt+w{t^2}+o(t^2)$, with $v=\dot\alpha(0)$, $w=\frac{\ddot\alpha(0)}{2}$, and $\lim_{t\rightarrow 0^+}o(t^p)/t^p=0$. We have 
\begin{align*}
\Pi(\alpha)\dot\alpha=\Pi(vt+w{t^2}+o(t^2))(v+2wt+o_2(t))=\Pi(v+w{t}+o_1(t))(v+2wt+o_2(t)),
\end{align*}
where $o_1(t)=o(t^2)/t$ and $o_2(t)=o(t)$. Writing
$v+2wt+o_2(t)=v+wt+o_1(t)+wt+o_2(t)-o_1(t)$,
we get
\begin{align*}
\Pi(\alpha)\dot\alpha&=\Pi(v+w{t}+o_1(t))(v+wt+o_1(t))+\Pi(v+w{t}+o_1(t))(w{t}+o_2(t)-o_1(t)).
\end{align*}
The first term on the right hand side is $0$ and the norm of the second term can be estimated from above by $|w|t+o(t)$. Now, by definition of arc length parameter, $R=\mathcal{H}^1(\mbox{spt}\alpha\cap B_r(0))
\rightarrow 0$ as $r\rightarrow 0^+$.
Moreover, by Taylor expansion,
$
|\alpha(t)|>\frac{t}{2}\mbox{ for }t \mbox{ small enough}.
$
Therefore, since $f_r(0)=\frac{r}{2}$,  for $r$ small enough we have $\frac{f_r(|\alpha(t)|)}{|\alpha(t)|}\leq\frac{2r}{t}$ on $[0,R].$ So, integrating on $[0,R]$ the second term on
 the right hand side of \eqref{derivata alfa cap}, 
$$
\int_0^R\frac{f_r(|\alpha(t)|)}{|\alpha(t)|}\left|\Pi(\alpha(t))\dot\alpha(t)\right|dt\leq\int_0^R\frac{2r}{t}(|w|t+o(t))dt\rightarrow0 \qquad{\mbox{as }}r\rightarrow0^+.
$$
\smallskip
\textit{Step 2}: Let $\widehat J=\widehat J_l$ be the support of $\widehat \alpha$; let us show that there is a parametrization of $\widehat J\cap (B_r\setminus B_{r/2})$ on an interval $[0,L]$, which is of class $C^2$ up to $0$ and with vanishing second derivative at $0$. Indeed, set $L:=\mathcal{H}^1(\widehat J \cap (B_r\setminus B_{r/2}))$ and consider the arc-length parameter $s\in [0,L]$ given by 
$$s(t)=\int_0^t|V_r(\alpha(\tau))|d\tau,$$
where
$$
V_r(\alpha):=\nabla \Phi_r^{-1}(\alpha)\dot\alpha.
$$
We compute 
\begin{align}\label{curvatura}
	&\frac{d^2}{ds^2}\widehat \alpha(t)=\frac{d}{ds}\left(\frac{V_r(\alpha)}{|V_r(\alpha)|}\right)=\Pi(V_r(\alpha))
\left(\frac{\nabla^2 \Phi_r^{-1}(\alpha):(\dot\alpha\otimes\dot\alpha)+\nabla\Phi_r^{-1}(\alpha)\ddot\alpha}{|V_r(\alpha)|^{2}}\right).
\end{align}
Here and in what follows, $\alpha$ is evaluated at $t=t(s)$ and $\dot\alpha$ and $\ddot\alpha$ denote the first and second derivative of $\alpha$ with respect to $t$. The operation $:$ between a tensor $T=(T_{ijk})\in\R^{2\times2\times2}$ and a matrix $M=(M_{ij})\in\R^{2\times2}$ is defined as the vector $T:M\in\R^2$ with components $(T:M)_k=T_{ijk}M_{ij}$
for $k=1,2$.\\
We get
\begin{align}\label{second derivative}
	\left|\frac{d^2}{ds^2}\widehat \alpha(t)\right|
&\leq \left|\Pi(V_r(\alpha))\left(
\frac{\nabla^2 \Phi_r^{-1}(\alpha):(\dot\alpha\otimes\dot\alpha)}{|V_r(\alpha)|^{2}}\right)\right|
+\frac{|\nabla\Phi_r^{-1}(\alpha)\ddot\alpha|}{|V_r(\alpha)|^2}\nonumber\\
&\leq \left|\Pi(V_r(\alpha))\left(
\frac{\nabla^2 \Phi_r^{-1}(\alpha):(\dot\alpha\otimes\dot\alpha)}{|V_r(\alpha)|^{2}}\right)\right|
+C\frac{{f'_r(|\alpha|)}+\frac{f_r(|\alpha|)}{|\alpha|}}{|V_r(\alpha)|^2}.
\end{align}
where we have used \eqref{grad Phi} and that $\ddot\alpha$ is bounded.\\
The Hessian of $\Phi_r^{-1}$ can be computed as
\begin{align*}
\nabla^2\Phi_r^{-1}(y)=&f''_r(|y|)\frac{y}{|y|}\otimes\frac{y}{|y|}\otimes\frac{y}{|y|}+f'_r(|y|)\nabla\left(\frac{y}{|y|}\otimes\frac{y}{|y|}\right)+\\
&+f'_r(|y|)\frac{y}{|y|}\otimes\nabla\left(\frac{y}{|y|}\right)+f_r(|y|)\nabla^2\left(\frac{y}{|y|}\right)\\
=&f''_r(|y|)\frac{y}{|y|}\otimes\frac{y}{|y|}\otimes\frac{y}{|y|}+f'_r(|y|)\nabla\left(\frac{y}{|y|}\right)\otimes\frac{y}{|y|}+\\
&+2f'_r(|y|)\frac{y}{|y|}\otimes\nabla\left(\frac{y}{|y|}\right)+f_r(|y|)\nabla\left(\nabla\left(\frac{y}{|y|}\right)\right).
\end{align*}
Then, by \eqref{nabla vortex}, we have
\begin{align*}
\nabla^2\Phi_r^{-1}(\alpha)=&f''_r(|\alpha|)\frac{\alpha}{|\alpha|}\!\otimes\frac{\alpha}{|\alpha|}\otimes\!\frac{\alpha}{|\alpha|}
+\left(\frac{f'_r(|\alpha|)}{|\alpha|}-2\frac{f_r(|\alpha|)}{|\alpha|^2}\right)\Pi(\alpha)\otimes\frac{\alpha}{|\alpha|}\\
&+\left(2\frac{f'_r(|\alpha|)}{|\alpha|}-\frac{f_r(|\alpha|)}{|\alpha|^2}\right)\frac{\alpha}{|\alpha|}\otimes \Pi(\alpha).
\end{align*}
So, for $k=1,2$, we have
\begin{align}
&\left(\nabla^2\Phi_r^{-1}(\alpha):(\dot\alpha\otimes\dot\alpha)\right)_k\nonumber\\
=& f''_r(|\alpha|)\left(\left(\frac{\alpha}{|\alpha|}\otimes\frac{\alpha}{|\alpha|}\otimes\frac{\alpha}{|\alpha|}\right):(\dot\alpha\otimes\dot\alpha)\right)_k\nonumber\\
&+\left(\frac{f'_r(|\alpha|)}{|\alpha|}-2\frac{f_r(|\alpha|)}{|\alpha|^2}\right)\left(\left(\Pi(\alpha)\otimes\frac{\alpha}{|\alpha|}\right):(\dot\alpha\otimes\dot\alpha)\right)_k\label{term1}\\
&+\left(2\frac{f'_r(|\alpha|)}{|\alpha|}-\frac{f_r(|\alpha|)}{|\alpha|^2}\right)\left(\left(\frac{\alpha}{|\alpha|}\otimes \Pi(\alpha)\right):(\dot\alpha\otimes\dot\alpha)\right)_k.\label{term2}
\end{align}
Notice that, since $\Pi(\alpha)$ is symmetric,
\begin{align}\label{property of A}
\Pi(\alpha)_{ij}\alpha_j=0,\quad \Pi(\alpha)_{ij}\alpha_i=0,
\end{align}
where we sum on repeated indeces.
So, using \eqref{property of A} and that, from Taylor expansion, $\dot\alpha(t)=v+2wt+o(t)=\frac{\alpha(t)}{t}+wt+o(t)$, we have
\begin{align*}
\left(\left(\Pi(\alpha)\otimes\frac{\alpha}{|\alpha|}\right):(\dot\alpha\otimes\dot\alpha)\right)_k&=\Pi(\alpha)_{ij}\dot\alpha_i\dot\alpha_j\frac{\alpha_k}{|\alpha|}=\Pi(\alpha)_{ij}\left(\frac{\alpha_i}{t}+w_it+o(t)\right)\dot\alpha_j\frac{\alpha_k}{|\alpha|}=\\
&=\Pi(\alpha)_{ij}\left(w_it+o(t)\right)\dot\alpha_j\frac{\alpha_k}{|\alpha|};
\end{align*}
\begin{align*}
\left(\left(\frac{\alpha}{|\alpha|}\otimes \Pi(\alpha)\right):(\dot\alpha\otimes\dot\alpha)\right)_k&=\frac{\alpha_i}{|\alpha|}\Pi(\alpha)_{jk}\dot\alpha_i\dot\alpha_j=\frac{\alpha_i}{|\alpha|}\Pi(\alpha)_{jk}\left(\frac{\alpha_j}{t}+w_jt+o(t)\right)\dot\alpha_i\\
&=\frac{\alpha_i}{|\alpha|}\Pi(\alpha)_{jk}\left(w_jt+o(t)\right)\dot\alpha_i.
\end{align*}
So, the norm of the sum of \eqref{term1} and \eqref{term2} can be easily estimated by
$$
3\left(\frac{f'_r(|\alpha|)}{|\alpha|}+\frac{f_r(|\alpha|)}{|\alpha|^2}\right)(|w|t+o(t))\leq C\left({f'_r(|\alpha|)}+\frac{f_r(|\alpha|)}{|\alpha|}\right),
$$ 
where we used that, for $t$ small, $|\alpha(t)|\geq\frac{t}{2}$.\\
Therefore, \eqref{second derivative} becomes
\begin{align}\label{curvature 2}
	\left|\frac{d^2}{ds^2}\widehat \alpha(t)\right|
\leq& \left|f''_r(|\alpha|)\right|\left|\Pi(V_r(\alpha))\left(
\frac{\frac{\alpha}{|\alpha|}\otimes\frac{\alpha}{|\alpha|}\otimes\frac{\alpha}{|\alpha|}:(\dot\alpha\otimes\dot\alpha)}{|V_r(\alpha)|^{2}}\right)\right|+C\frac{{f'_r(|\alpha|)}+\frac{f_r(|\alpha|)}{|\alpha|}}{|V_r(\alpha)|^2}.
\end{align}
Now we treat the first term of the right hand side of \eqref{curvature 2}.
For $j=1,2$, by definition of $V_r(\alpha)$, using Taylor expansion and \eqref{property of A}, we have
\begin{equation}\label{V_r}
\begin{aligned}
(V_r)_j(\alpha)&=f'_r(|\alpha|)\frac{\alpha_i\alpha_j}{|\alpha|^2}\dot\alpha_i+f_r(|\alpha|)\Pi(\alpha)_{ij}\dot\alpha_i\\
&=f'_r(|\alpha|)\frac{\alpha_i\alpha_j}{|\alpha|^2}\left(\frac{\alpha_i}{t}+w_it+o(t)\right)+f_r(|\alpha|)\Pi(\alpha)_{ij}\left(\frac{\alpha_i}{t}+w_it+o(t)\right)\\
&=f'_r(|\alpha|)\left(\frac{\alpha_j}{t}+\frac{\alpha_i\alpha_j}{|\alpha|^2}w_it+o(t)\right)+f_r(|\alpha|)\Pi(\alpha)_{ij}\left(w_it+o(t)\right)\\
&=f'_r(|\alpha|)\left(\frac{\alpha_j}{t}+o(t)\right)+f_r(|\alpha|)O_j(t),
\end{aligned}
\end{equation}
where in the last equality we used that $\alpha_i w_i=o(t)$, since $v_iw_i=0$ because $|\dot\alpha|=1$, and we setted $O_j(t):=\Pi(\alpha)_{ij}(w_it+o(t))$, meaning that $\lim_{t\rightarrow0^+}|O_j(t)|/t<+\infty$.
Then, we get
$$
\alpha=t\left(\frac{V_r(\alpha)-O(t)}{f'_r(|\alpha|)}+o(t)\right).
$$
So, 
\begin{align*}
\Pi(V_r(\alpha))
\frac{\frac{\alpha}{|\alpha|}\otimes\frac{\alpha}{|\alpha|}\otimes\frac{\alpha}{|\alpha|}:(\dot\alpha\otimes\dot\alpha)}{|V_r(\alpha)|^{2}}=&\frac{\alpha_i\alpha_j}{|\alpha|^2}\dot\alpha_i\dot\alpha_j\Pi(V_r(\alpha))
\frac{\frac{\alpha}{|\alpha|}}{|V_r(\alpha)|^{2}}\\
=&\frac{\alpha_i\alpha_j}{|\alpha|^2}\dot\alpha_i\dot\alpha_j\frac{t}{|\alpha|}\Pi(V_r(\alpha))
\frac{\left(\frac{V_r(\alpha)-O(t)}{f'_r(|\alpha|)}+o(t)\right)}{|V_r(\alpha)|^{2}}\\
=&\frac{\alpha_i\alpha_j}{|\alpha|^2}\dot\alpha_i\dot\alpha_j\frac{t}{|\alpha|}\Pi(V_r(\alpha))
\frac{\left(\frac{O(t)}{f'_r(|\alpha|)}+o(t)\right)}{|V_r(\alpha)|^{2}},
\end{align*}
where we used that 
$\Pi(V_r(\alpha))V_r(\alpha)=0.$ 
For $t$ small, we get
$$
\left|\Pi(V_r(\alpha))
\frac{\frac{\alpha}{|\alpha|}\otimes\frac{\alpha}{|\alpha|}\otimes\frac{\alpha}{|\alpha|}:(\dot\alpha\otimes\dot\alpha)}{|V_r(\alpha)|^{2}}\right|\leq 2\frac{\frac{O(t)}{f'_r(|\alpha|)}+o(t)}{|V_r(\alpha)|^2}.
$$
Finally, from \eqref{curvature 2}, we obtain
$$
\left|\frac{d^2}{ds^2}\widehat \alpha(t)\right|\leq\left|f''_r(|\alpha|)\right|\frac{\frac{O(t)}{f'_r(|\alpha|)}+o(t)}{|V_r(\alpha)|^2}
+C\frac{{f'_r(|\alpha|)}+\frac{f_r(|\alpha|)}{|\alpha|}}{|V_r(\alpha)|^2}.
$$
From the definition of $f_r$, we have that $f_r(|\alpha(t)|)= \frac{r}{2}+t^\frac{1}{3}+o(t^\frac{1}{3})$ for $t$ near to $0$. So, by \eqref{V_r}, we have $|V_r(\alpha(t))|\geq Cf'_r(|\alpha(t)|)=Ct^{-\frac23}+o(t^{-\frac23})$. Then, since $|f_r''(|\alpha(t)|)|=Ct^{-\frac53}+o(t^{-\frac53})$, a straightforward check shows that 
\begin{align}\label{vanishing acceleration}
\frac{d^2}{ds^2}\widehat \alpha(t)\rightarrow 0\quad\mbox{as }t\rightarrow0^+.
\end{align}

We conclude that  the curve $\widehat \alpha$ is $C^2$ up to $0$ with vanishing second derivative, and hence can be extended on the  interval $(-\frac r2,0)$ to a (not relabeled) curve $\widehat \alpha$ whose support is a straight segment connecting $\widehat\alpha(0)$ to $0$ (namely a radius of $B_{r/2}(0)$).
Going back to the curves $\widehat\alpha_\indiceaffine$, we have just proved that we can extend them in $B_{ r/2}(p_i)$ with  $C^2$-regularity using a segment along a radius, reaching $p_i$. In particular, the new supports of $\widehat \alpha_\indiceaffine$'s form a $N^i$-junction point around $p_i$ in $ B_{ r/2}(p_i)$, whose 
circular sectors $\widehat C^i_j$ ($j=1,\dots, N_i$) have amplitudes $\theta_i^1,\dots, \theta_i^{N_i}$ (according to Lemma \ref{lemma_palline}). Up to a reparametrization by arc-length of $\widehat \alpha_\indiceaffine$, we will suppose that 
$\widehat\alpha_\indiceaffine:[\widehat a_\indiceaffine,\widehat b_\indiceaffine]\rightarrow\R^2$ have always derivative of modulus $1$.

\smallskip
\textit{Step 3}: We are ready to extend the map $u_r$ in $B_{r/2}{(p_i)}$.
We eventually observe that, from \eqref{def ur 1}, $u_r(x)=\gamma^i\left(\frac{2}{r}(x-p_i)\right)$ on $\partial B_{ r/2}(p_i)$ (see Lemma \ref{lemma_palline}), and hence it is constant on any arc with angular coordinate in $(a_i^{j-1},a_i^{j})$. Hence we define
\begin{align}\label{def ur 2}
u_r(x):=\gamma^i\left(\frac{x-p_i}{|x-p_i|}\right)\qquad\qquad x\in B_{\frac r2}(p_i).
\end{align}
Now, $u_r$ satisfies the hypotheses of Corollary \ref{cor_multicurves} 
in $\Om_r:=\Om\setminus (\cup_{i=1}^m \overline B_{r/4}(p_i))$, where all the curves $\widehat \alpha_j$ satisfy hypotheses (H3), and they run on a straight segment (along a radius of $B_{r/2}(p_i)$) inside $B_{r/2}(p_i)\setminus B_{r/4}(p_i)$.
Then we introduce a sequence of Lipschitz maps $\widetilde v_k:\Om_r\rightarrow\R^2$ which are defined as in \eqref{v_k}, where, we recall, $\eps=\frac1k$, with $u_r$ in place of $u$ and $\Lambda=$ id; in particular, for $k$ large enough, the trace of $\widetilde v_k$ on $\partial B_{r/3}(p_i)$ is a piecewise affine map coinciding with $\gamma_k$ in \eqref{gamma_k}, with $\beta_i$ in place of $\alpha_i$. Thus, if we introduce also the sequence of Lipschitz maps $\widehat v_k:B_{r/2}(p_i)\rightarrow\R^2$ as in \eqref{def_rec_npoint} (with $B_r$ replaced by $B_{r/2}(p_i)$) we see that 
$\widetilde v_k=\widehat v_k$ on $\partial B_{r/3}(p_i)$.
Therefore we define
\begin{align}
v^r_k:=\begin{cases}
	\widetilde v_k&\text{ in }\Om\setminus(\cup_{i=1}^m B_{r/3}(p_i)) \\
	\widehat v_k&\text{ in }\cup_{i=1}^m B_{r/3}(p_i),
\end{cases}
\end{align}
and we readily see that $v^r_k\rightarrow u_r$ strictly $BV(\Om;\R^2)$.\\
Since the supports of $\alpha_\indiceaffine$ and $\widehat\alpha_\indiceaffine$ coincide out of $\cup_iB_r(p_i)$, there exist $\widehat a_\indiceaffine^r,\widehat b_\indiceaffine^r\in[\widehat a_\indiceaffine,\widehat b_\indiceaffine]$ and $ a_\indiceaffine^r, b_\indiceaffine^r\in[ a_\indiceaffine, b_\indiceaffine]$ such that
$$\widehat\alpha_\indiceaffine([\widehat a_\indiceaffine^r,\widehat b_\indiceaffine^r])=\alpha_\indiceaffine([a_\indiceaffine^r,b_\indiceaffine^r]),\quad\widehat\alpha_\indiceaffine(\widehat a_\indiceaffine^r)=\alpha_\indiceaffine(a_\indiceaffine^r),\quad\widehat\alpha_\indiceaffine(\widehat b_\indiceaffine^r)=\alpha_\indiceaffine(b_\indiceaffine^r).$$
In particular, $\widehat b_\indiceaffine^r-\widehat a_\indiceaffine^r= b_\indiceaffine^r- a_\indiceaffine^r$, so up to a translation of the parameter interval of $[\widehat a_\indiceaffine, \widehat b_\indiceaffine]$, we can suppose $\widehat a_\indiceaffine^r= a_\indiceaffine^r$ and $\widehat b_\indiceaffine^r=b_\indiceaffine^r$. Clearly, $a_\indiceaffine^r\rightarrow a_\indiceaffine$ non increasingly and $ b_\indiceaffine^r\rightarrow b_\indiceaffine$ non decreasingly as $r\rightarrow0^+$.\\
In view of Corollary \ref{cor_multicurves} and Theorem \ref{area n-ple point} 
we conclude 
\begin{align}\label{lsc_bis}
	\overline{\mathcal A}_{BV}(u_r, \Om)&\leq \lim_{k\rightarrow +\infty}\mathcal A(v_k^r, \Om)=\int_{\Omega\setminus(\cup_{i=1}^mB_{r}(p_i)) }|\mathcal{M}(\nabla u)|~dx+\sum_{\indiceaffine=1}^n\int_{[ \widehat a_\indiceaffine, \widehat b_\indiceaffine]\times
		\intervallounitario
	}|\partial_t
	\affinesurface_{\indiceaffine,r}
	\wedge\partial_s\affinesurface_{\indiceaffine,r}|~dtds\nonumber\\
	&\;\;\;+\int_{\cup_{i=1}^m(B_{r}(p_i)\setminus B_{r/3}(p_i))}|\mathcal{M}(\nabla u_r)|~dx
	+m\frac{\pi r^2}{9}+\sum_{i=1}^m\rilP(\gamma^i)\nonumber
	\\
	&=\int_{\Omega\setminus(\cup_{i=1}^mB_{r}(p_i)) }|\mathcal{M}(\nabla u)|~dx+\sum_{\indiceaffine=1}^n\int_{[a^r_\indiceaffine, b^r_\indiceaffine]\times
		\intervallounitario
	}|\partial_t
	\affinesurface_{\indiceaffine}
	\wedge\partial_s\affinesurface_{\indiceaffine}|~dtds+\sum_{i=1}^m\rilP(\gamma^i)\nonumber\\
	&\;\;\;+\int_{\cup_{i=1}^m(B_{r}(p_i)\setminus B_{r/3}(p_i))}|\mathcal{M}(\nabla u_r)|~dx
	+\sum_{\indiceaffine=1}^n\int_{([\widehat a_\indiceaffine^{r/3}, a_\indiceaffine^r]\cup [ b_\indiceaffine^{r},\widehat b_\indiceaffine^{r/3}])\times
		\intervallounitario
	}|\partial_t
	\affinesurface_{\indiceaffine,r}
	\wedge\partial_s\affinesurface_{\indiceaffine,r}|~dtds\nonumber\\
	&\;\;\;+\frac{r}{3}\sum_{i=1}^m\sum_{j=1}^{N_i}|\beta_i^j-\beta_i^{j+1}|+m\frac{\pi r^2}{9},
\end{align} 
where for all $\indiceaffine=1,\dots,n$ we have $\widehat a_\indiceaffine\leq\widehat a_\indiceaffine^{r/3}\leq a_\indiceaffine^r<b_\indiceaffine^r\leq\widehat b_\indiceaffine^{r/3}\leq\widehat b_\indiceaffine$, where $ \widehat\alpha_\indiceaffine(\widehat a_\indiceaffine^{\frac r3})\in\partial B_{r/3}(p_i)$, $\widehat\alpha_\indiceaffine(\widehat b_\indiceaffine^{\frac r3})\in\partial B_{r/3}(p_j)$ for some $i,j\in\{1,\ldots,m\}$, unless one of them belongs to $\partial\Om$, and where $\affinesurface_{\indiceaffine,r}$ is defined as $\affinesurface_{\indiceaffine}$ with $u_r$ replacing $u$.

Now, since by \eqref{psi} $|\psi'_r|\leq C$, $u_r$ is still a piecewise Lipschitz 
map on $\Omega$, hence, by \textit{Step 1}, the last four terms in \eqref{lsc} are negligible as $r\rightarrow 0^+$. 
We then conclude, provided that $u_r\rightarrow u$ strictly $BV(\Om;\R^2)$, that
\begin{align*}
		\overline{\mathcal A}_{BV}(u, \Om)\leq\liminf_{r\rightarrow 0^+}\overline{\mathcal A}_{BV}(u_r, \Om)=\int_{\Omega}|\mathcal{M}(\nabla u)|~dx+\sum_{\indiceaffine=1}^n\int_{[a_\indiceaffine, b_\indiceaffine]\times
			\intervallounitario
		}|\partial_t
		\affinesurface_{\indiceaffine}
		\wedge\partial_s\affinesurface_{\indiceaffine}|~dtds+\sum_{i=1}^m\rilP(\gamma^i),
\end{align*} 
that is the thesis.
In order to check that $u_r\rightarrow u$ strictly $BV(\Om;\R^2)$ it is sufficient to observe that $u=u_r$ outside $\cup_{i=1}^m B_r(p_i)$ and that 
\begin{equation*}
\begin{aligned}
& \limsup_{r\rightarrow 0^+}|D u_r|({\cup_{i=1}^m 
B_r(p_i)})
\\
\leq&  
\limsup_{r\rightarrow 0^+}
\limsup_{k\rightarrow +\infty}\int_{\cup_{i=1}^m B_r(p_i)}\sqrt{1+|\nabla v_k^r|^2}~dx
\\
\leq&  
\limsup_{r\rightarrow 0^+}
\lim_{k\rightarrow +\infty}\mathcal{A}(v_k^r;{\cup_{i=1}^m B_r(p_i)})
\\
 =& \limsup_{r\rightarrow 0^+}\Big( \int_{\cup_{i=1}^m(B_{r}(p_i)\setminus B_{r/3}(p_i))}|\mathcal{M}(\nabla u_r)|~dx
	+m\frac{\pi r^2}{9}
\\
&+ \sum_{\indiceaffine=1}^n\int_{([\widehat a_\indiceaffine^{r/3},\widehat a_\indiceaffine^r]\cup [\widehat b_\indiceaffine^{r},\widehat b_\indiceaffine^{r/3}])\times
		\intervallounitario
	}|\partial_t
	\affinesurface_{\indiceaffine,r}
	\wedge\partial_s\affinesurface_{\indiceaffine,r}|~dtds+\frac{r}{3}\sum_{i=1}^m\sum_{j=1}^{N_i}|\alpha^i_j-\alpha^i_{j+1}|\Big)=0.
\end{aligned}
\end{equation*}
The proof is complete.
\end{proof}

\textsc{Acknowledgements:}
We thank Domenico Mucci for stimulating discussions
and suggestions.
The authors are members of the Gruppo Nazionale per l'Analisi Matematica, la Probabilit\`a e le loro Applicazioni (GNAMPA) of the INdAM of Italy. RS also acknowledges the partial financial support of the F-CUR project number $\textrm{2262-2022-SR-CONRICMIUR\_PC-FCUR202\_002}$  of the University of Siena.


\end{document}